\newcommand{\cL}{\mathcal{L}}
\renewcommand{\d}{\mathrm{d}}
\newcommand{\e}{\mathrm{e}}
\newcommand{\E}{\mathbb{E}}
\renewcommand{\P}{\mathbb{P}}
\newcommand{\R}{\mathbb{R}}
\newcommand{\n}{\mathrm{n}}
\newcommand{\Hess}{\mathrm{Hess}\,}
\newcommand{\N}{\mathbb N}
\newcommand{\1}{\mathbbm 1}
\newcommand{\supp}{\mathrm{supp}}
\newcommand{\largeRadius}{\delta}
\newcommand{\smallRadius}{\gamma}
\newcommand{\Ind}{\mathrm{Ind}}
\newcommand{\Aseti}{\mathbf A_\beta^{(i)}}
\newcommand{\Bseti}[1]{\mathbf B_\beta^{(i),#1}}
\newcommand{\Cseti}{\mathbf C_\beta^{(i)}}
\newcommand{\Dseti}[1]{\mathbf D_\beta^{(i),#1}}
\newcommand{\Eset}{\mathbf E_\beta}
\newcommand{\Fset}{\mathbf F_\beta}
\newcommand{\Gset}[1]{\mathbf G_\beta^{#1}}
\newcommand{\Vstar}{V^\star}
\newcommand{\basin}[1]{\mathcal A(#1)}
\newcommand{\testfuncs}{\mathcal C^\infty_{\mathrm c}}
\newcommand{\epsLimit}[1]{\alpha^{(#1)}} 
\newcommand{\localNeighborhood}[2][]{\mathcal{O}_{#2}^{#1}} 
\newcommand{\hessPassage}[2][]{U^{(#2)#1}} 
\newcommand{\hessEigvec}[2]{v^{(#1)}_{#2}} 
\newcommand{\hessEigval}[2]{\nu^{(#1)}_{#2}} 
\newcommand{\halfSpace}[1]{E^{(#1)}}
\newcommand{\localCutoff}[1]{\chi_\beta^{(#1)}} 
\newcommand{\deltaScalingExp}{s}
\newcommand{\gammaScalingExp}{t}
\newcommand{\Id}{\mathrm{Id}}
\newcommand{\shift}{h} 
\newcommand{\Ki}[1]{K^{(i)}_{#1}}
\newcommand{\psii}[2]{\psi^{(i)}_{#1,#2}}
\newcommand{\wi}[2]{w^{(i)}_{#1,#2}}
\newcommand{\lambdai}[2]{\lambda^{(i)}_{#1,#2}}
\newcommand{\omegai}[2]{\omega^{(i)}_{#1,#2}}
\newcommand{\widesim}[2][1.5]{
  \mathrel{\overset{#2}{\scalebox{#1}[1]{$\sim$}}}
}
\newcommand{\gaussianCutoffConst}{C_\xi}
\newcommand{\energyCutoffConst}{C_\eta}
\newcommand{\Lmu}{L_\beta^2}
\renewcommand{\O}{\mathcal{O}}
\newcommand{\smallo}{\scalebox{0.7}{$\mathcal O$}}
\newcounter{thmlike}
\newtheorem{lemma}[thmlike]{Lemma}
\newtheorem{theorem}[thmlike]{Theorem}
\newtheorem{corollary}[thmlike]{Corollary}
\newtheorem{proposition}[thmlike]{Proposition}
\newtheorem{hypothesis}{Assumption}
\newtheorem{remark}{Remark}
\tikzset{
    cross/.pic = {
    \draw[rotate = 45] (-#1,0) -- (#1,0);
    \draw[rotate = 45] (0,-#1) -- (0, #1);
    }
}
\title{Quantitative low-temperature spectral asymptotics for reversible diffusions in temperature-dependent domains}
\author{No\'e Blassel, Tony Leli\`evre, Gabriel Stoltz}
\tikzset{cross/.style={cross out, draw=black, minimum size=2*(#1-\pgflinewidth), inner sep=0pt, outer sep=0pt},
cross/.default={1pt}}
\begin{document}
\maketitle
     \abstract{We derive novel low-temperature asymptotics for the spectrum of the infinitesimal generator of the overdamped Langevin dynamics. The novelty is that this operator is endowed with homogeneous Dirichlet conditions at the boundary of a domain which depends on the temperature. From the point of view of stochastic processes, this gives information on the long-time behavior of the diffusion conditioned on non-absorption at the boundary, in the so-called quasistationary regime. Our results provide precise estimates of the spectral gap and principal eigenvalue, extending the Eyring--Kramers formula. The phenomenology is richer than in the case of a fixed boundary and gives new insight into the sensitivity of the spectrum with respect to the shape of the domain near critical points of the energy function. Our work is motivated by the optimization of accelerated molecular dynamics algorithms (such as Parallel Replica dynamics), where optimal state definitions naturally vary with the temperature.

    \textit{Keywords: } Metastability, Spectral asymptotics, Eyring--Kramers formula, Accelerated molecular dynamics.}
    \tableofcontents

    \section{Introduction}
    \label{sec:intro}
    We study characteristic timescales of the diffusion process defined as the strong solution~$(X^{\beta}_t)_{t\geq 0}$ to the stochastic differential equation
    \begin{equation}
        \label{eq:overdamped_langevin}
        \d X^{\beta}_{t} = -\nabla V(X^{\beta}_{t})\,\d t + \sqrt{\frac2\beta}\, \d W_t,
    \end{equation}
    where~$(W_t)_{t\geq 0}$ is a standard Brownian motion on~$\R^d$,~$V:\R^d \to \R$ is a smooth function, and~$\beta>0$ is a parameter modulating the magnitude of the noise.
    In the context of atomistic simulation, the process~\eqref{eq:overdamped_langevin} is called the overdamped Langevin dynamics, and is commonly used to model the motion of particles subject to an interaction potential~$V$ at thermal equilibrium with inverse temperature~$\beta=(kT)^{-1}$.
    
    More generally, the function~$V$ is, up to an additive constant, the log-likelihood of the Gibbs measure
    \begin{equation}
        \label{eq:gibbs_measure}\mu(\d x) = \mathcal Z_\beta^{-1}\e^{-\beta V(x)}\,\d x,
    \end{equation}
    which is a probability measure with respect to which the dynamics~\eqref{eq:overdamped_langevin} is known to be reversible, and, under a general set of assumptions, ergodic.
    We refer the reader to~\cite{RB06} for sufficient conditions ensuring the well-posedness and ergodicity of~\eqref{eq:overdamped_langevin} with respect to~$\mu$.

    As it allows sampling from probability measures whose densities are explicit up to the normalization constant~$\mathcal Z_\beta$, the dynamics~\eqref{eq:overdamped_langevin} is also used in Bayesian statistics to sample from the posterior distribution. In theoretical machine learning, the process~\eqref{eq:overdamped_langevin} can also be seen as an idealized model for the stochastic gradient descent algorithm, after an appropriate normalization of the data, in which case $V$ plays the role of the loss function.

    \paragraph{The local approach to metastability.}
    In many cases, the trajectories of~\eqref{eq:overdamped_langevin} are subject to the phenomenon of metastability, which is indicated by the presence of a wide range of well-separated timescales, often exponentially wide in the inverse temperature. This corresponds to the regime in which the Arrhenius law (see~\cite{A67}) applies.
    Longer timescales correspond to rare transitions between attractive regions of the configuration space $\R^d$, which trap the dynamics into long-lived local ensembles of configurations, which we refer to as metastable states. The shorter timescales correspond to thermal fluctuations inside these states.
    
    The nature of the trapping mechanism itself may vary. It may be that energetic barriers tend to confine the dynamics inside a potential well for long times, which is the case on which we focus in this work. It may also happen that different subdomains are joined by low-energy paths, but in narrow configurational corridors, which require well-coordinated collective motion of the system's degrees of freedom to successfully navigate.
    In this case, the obstacle is of an entropic nature, and the dynamics is in fact confined in a free-energetic rather than purely energetic well. 
    
    Moreover, in molecular dynamics (MD) simulations, monitoring the long-time behavior of the dynamics~\eqref{eq:overdamped_langevin} is of crucial importance to reliably estimate macroscopic dynamical properties of materials and biomolecules, as well as parametrizing models on larger scales, such as discrete Markov models or PDEs.
    
    Drawing meaningful long trajectories from metastable dynamics is however challenging with naive techniques. To alleviate this, so-called accelerated dynamics methods~\cite{V97,V98,SV00} have been proposed by Arthur Voter in the late 1990s, all of which rely on a local approach to metastability.

    In this local approach, the notion of metastable state can be formalized using the quasi-stationary distribution (QSD), which, given an arbitrary bounded subset of configuration space~$\Omega\subset\R^d$, can be loosely understood as the long time limiting distribution of the process conditioned on staying trapped inside~$\Omega$.
    Defining the so-called Yaglom limit:
    $$ \nu = \underset{t\to\infty}{\lim}\,\mu_t,\qquad \mu_t := \mathrm{Law}\left(X_t^\beta \middle|\forall\, 0\leq s\leq t,\,X_s^\beta \in \Omega\right),$$
    it can be shown under mild assumptions~(see~\cite{MV12}) that the limit $\nu$ is well-defined, and that it is the unique QSD for the process~\eqref{eq:overdamped_langevin} in~$\Omega$, with moreover~$\mu_t$  converging exponentially fast to~$\nu$ in total variation norm. Local metastability inside~$\Omega$ can then be understood as a large separation between two natural timescales related to the QSD. The first timescale is the average exit time from~$\Omega$ for the dynamics~$X_t^\beta$ with initial distribution~$\nu$:
    $$ \tau_1(\Omega) = \E^\nu\left[\inf\{t>0:\,X_t^\beta \not\in \Omega\}\right].$$
    The second timescale~$\tau_2(\Omega)$ is that at which~$\mu_t$ reaches the QSD and the process~$X_t^\beta$ thus forgets its initial distribution conditioned on not exiting~$\Omega$.
    If~$\tau_1(\Omega) \gg \tau_2(\Omega)$, then the domain $\Omega$ acts as a metastable trap for the dynamics~$X_t^\beta$. Conditioned on~$\{X_s^\beta\in\Omega,\,\forall\,0\leq s\leq t\}$, for~$\tau_2(\Omega)\ll t\ll \tau_1(\Omega)$, the state of the dynamics at time~$t$ is approximately distributed according to the local equilibrium~$\nu$, and remains so until the dynamics exits once again.

    It is possible to show (see~\cite{LBLLP12} and Propositions~\ref{prop:qsd_spectral} and~\ref{prop:decorr} below) that the two local timescales associated with the metastable behavior of the process~$X^\beta_t$ inside~$\Omega$ can be related to the spectrum of the infinitesimal generator of the dynamics~\eqref{eq:overdamped_langevin}, supplemented with Dirichlet boundary conditions on~$\partial \Omega$.
    Namely, writing~$-\lambda_{k,\beta}(\Omega)$ for the~$k$-th smallest Dirichlet eigenvalue, the following holds:
    \begin{itemize}
        \item{The metastable exit rate is given by~$\lambda_{1,\beta}(\Omega)$.}
        \item{The asymptotic convergence rate to the QSD is given by~$\lambda_{2,\beta}(\Omega)-\lambda_{1,\beta}(\Omega)$.}
    \end{itemize}
    Strictly speaking, Proposition~\ref{prop:decorr} only provides an upper bound on the convergence rate, which we expect in practice to depend on the initial condition. Nevertheless, these spectral characterization provide us with a natural and tractable measure of the local metastability associated with~$\Omega$, namely the separation of timescales
    \begin{equation}
        \label{eq:separation}
        J(\Omega) := \frac{\lambda_{2,\beta}(\Omega)-\lambda_{1,\beta}(\Omega)}{\lambda_{1,\beta}(\Omega)}.
    \end{equation}
    If~$J(\Omega)\gg 1$, i.e. $\lambda_{2,\beta}(\Omega)-\lambda_{1,\beta}(\Omega)\gg \lambda_{1,\beta}(\Omega)$, then~$\Omega$ acts as highly locally metastable trap for the dynamics~\eqref{eq:overdamped_langevin}.
    The link between characteristic timescales for conformational dynamics and the spectrum of various operators has been widely discussed in the literature, see~\cite{HS06,NN13,HKN04} for an overview of various approaches.
    
    \paragraph{Shape optimization for the timescale separation.}
    A natural question arises: how to choose the shape of the domain~$\Omega$ in order to maximize~$J(\Omega)$? In other words: how to choose~$\Omega$ in order for the process~$(X_t^\beta)_{t\geq 0}$ to be as locally metastable as possible ?
    This line of investigation is natural from the point of view of spectral geometry, as it generalizes the problem of finding extremal shapes for ratios of eigenvalues of the Dirichlet Laplacian (which corresponds to setting~$V=0$ in our context), which has been addressed with tools from spectral geometry, see for example the Payne--P\'olya--Weinberger conjecture~\cite{PPW56} and its solution by Ashbaugh and Benguria in~\cite{AB92}.
    
    Moreover, the answer to this question has concrete implications for accelerated molecular dynamics simulations. The timescale separation has been directly related to the efficiency of the parallel replica algorithm (ParRep,~\cite{V98}) (see~\cite{LBLLP12,SL13,PUV15,BLS25b}).
    More generally, many algorithms from molecular dynamics explicitly rely on the definition of metastable states and a separation of timescales assumption. This is the case for the other two methods of Arthur Voter, namely Hyperdynamics~\cite{V97} and Temperature-Accelerated dynamics~\cite{SV00}, but also algorithms approximating the dynamics~\eqref{eq:overdamped_langevin} by Markov jump processes, such as in kinetic Monte--Carlo (kMC)~\cite{V07} or Markov state models~\cite{HP18}, which rely on a local separation of timescale assumption to justify the Markovianity of the discrete-space dynamics.
    More recent works~\cite{AGHVP20,AJP23} approximating the dynamics by Markov renewal processes also rely on the definition of metastable states, and identify the timescale ratio~\eqref{eq:separation} as the key parameter governing the quality of this approximation.

    It is therefore natural to seek definitions of metastable states which maximize this timescale separation. In low-dimensional settings, or when low-dimensional representations of the dynamics are available, it is possible to locally optimize the shape of an initial domain~$\Omega$, using shape perturbation results for eigenvalues. This approach is developed and demonstrated in~\cite{BLS25b}.
    However, the explicit computation of these shape perturbations is generally intractable for systems of practical interest, as it requires the solution to high-dimensional boundary value problems, besides the intrinsic difficulty of parametrizing high-dimensional shapes. To circumvent these difficulties, an alternative approach relies on choosing a limiting regime, and finding asymptotically optimal shapes within a low-dimensional parametric family of shapes.
    In this work, we perform the mathematical analysis necessary to realize this strategy in the low-temperature regime~$\beta\to\infty$, for a class of parametrizations which allow for explicit computations.

    The intuitive fact that optimal domain definitions depend on the temperature is known to practitioners of large-scale accelerated molecular dynamics simulations, see for instance~\cite[Figure 6]{PUV15}.
    This observation motivates the study of eigenvalue asymptotics for temperature-dependent domains, in the tractable low-temperature regime.
    This is the framework in which we work, and which we make precise in Section~\ref{subsec:geometric_assumptions}.

    Numerical experiments (see for instance~\cite[Section 5.2, in particular Figure 9]{BLS25b}) suggest in the limit~$\beta\to\infty$, the low-lying eigenvalues are rather insensitive to the geometry of the domain, except when the boundary of the domain nears specific critical points of $V$, where the spectrum experiences sharp transitions.

    \paragraph{Low-temperature spectral asymptotics for metastable stochastic processes.}
    In the low-temperature limit, methods from semi-classical analysis, potential theory and large deviations have been successfully leveraged in previous works to address the problem of finding quantitative spectral estimates for the dynamics~\eqref{eq:overdamped_langevin} as well as the associated Dirichlet eigenvalue problem on a fixed domain~$\Omega$, see for instance~\cite{HKN04,BGK05,HN06,LN15,LLPN19,DGLLPN19,LPN21,LLPN22}.
    In particular, many efforts have been dedicated to rigorously derive precise asymptotics for the principal eigenvalue:
    \begin{equation}
        \label{eq:eyring_kramers_classic}
        \lambda_{1,\beta}(\Omega) \widesim{\beta\to\infty} C(\Omega,\beta)\,\e^{-\beta E(\Omega)},
    \end{equation}
    where~$E(\Omega)$ is analogous to the activation energy in the Arrhenius law, and~$C(\Omega,\beta)$ is a pre-exponential factor whose expression generally depends on both the temperature and the domain, but which can, under various sets of assumptions, be computed, at least to first order in~$\beta$.
    Such results are known as Eyring--Kramers formul\ae, following first proposals concerning the behavior of reaction rates~\cite{E35} guided by chemical intuition, and early computations~\cite{K40} supported by physical modeling. Eyring--Kramers type results have, since the early 2000s, been rigorously proven and generalized in the mathematical community, using tools from dynamical systems, quantum theory, and stochastic processes. We refer to~\cite{B13} for an overview.

    The link with the classical Eyring--Kramers formula, which concerns the closely related problem of computing asymptotics for the exit rate~$\E_x[\tau_\Omega]^{-1}$ for some deterministic initial condition~$x\in\Omega$, requires some discussions. It is justified by the interpretation of~$1/\lambda_{1,\beta}(\Omega)$ as a metastable exit time, i.e.~$1/\lambda_{1,\beta}(\Omega) = \int_{\Omega} \E_x[\tau_\Omega]\d \nu$ (see Proposition~\ref{prop:qsd_spectral} below). The connection can be made fully rigorous by exploiting exponential leveling results, see for instance~\cite{N20}.
    In the context of deterministic initial conditions, let us mention that tools from the theory of Freidlin and Wentzell (see~\cite{FW12}) relying on large deviations are capable of identifying the activation energy~$E(\Omega)$ under very general conditions, including non-reversible diffusions, and these techniques have also been recently extended to a class of non-Markovian processes (so-called self-interacting diffusions, see~\cite{ADMKT24}).

    Let us stress that, while our results are limited to the setting of the overdamped Langevin dynamics~\eqref{eq:overdamped_langevin}, various extensions of our results to other dynamics would be of practical interest and are left to future work. For instance, the underdamped Langevin dynamics (see~\cite{HHS08a,HHS08b,HHS11,BLPM24} for the case without boundary), or non-reversible diffusions with non-degenerate diffusion coefficients, as in~\cite{BR16,LMS19,LPM20,LS22,LPMN25}, are both interesting avenues for future research.

    Previous results from the semi-classical literature motivate the setting of temperature-dependent domains from a theoretical perspective. In the case of fixed domains, these results show that different asymptotic regimes hold depending on whether certain critical points of $V$ (typically, specific saddle points of the potential) lie within the domain, on the boundary of the domain, or outside the domain. See for example~\cite{LPN21} where these three regimes are treated. The leading-order asymptotic behavior of~$\lambda_{1,\beta}(\Omega)$, in particular, is found to be a discontinuous shape-functional of~$\Omega$, with discontinuities occurring when~$\partial\Omega$ crosses these critical points.
    On the other hand, shape-perturbation theory (see for instance~\cite[Theorem 1]{BLS25b}) implies that~$\lambda_{1,\beta}(\Omega)$ itself is a continuous shape-functional of~$\Omega$ at any fixed value of~$\beta>0$.
    These two facts imply that, when~$\beta\gg 1$, $\lambda_{1,\beta}(\Omega)$ is extremely sensitive to~$\Omega$, and in particular to the position of~$\partial\Omega$ relative to specific critical points of~$V$ on some microscopic scale. As it turns out, a similar sensitivity affects every eigenvalue~$\lambda_{k,\beta}(\Omega)$, and the relevant microscopic scale is $1/\sqrt\beta$.
    The framework of temperature-dependent domains is a natural way to interpolate between the three asymptotic regimes, and to study more precisely these steep spectral transitions.

    Finally, let us notice that, because of the fact that energetic barriers contribute dominantly to the slowest timescales in the small-noise limit~$\beta\to\infty$, the literature has focused overwhelmingly on the case where metastability occurs in the presence of energetic barriers, as does the present work.
    However, entropic barriers are often relevant to applications, most notably in the context of cellular biology, and recent works have started to tackle rigorously spectral asymptotics for other metastable systems, in particular the so-called narrow escape problem, where the asymptotic parameter is not the temperature but instead the size of a vanishing exit region for a Brownian motion in a confining spatial domain (which corresponds to a purely entropic trapping mechanism).
    For rigorous results in this direction, we cite~\cite{FNO21} where coarse asymptotics of the spectrum of the Laplacian with mixed Dirichlet--Neumann boundary conditions are given in the limit of a vanishing absorbing region, as well~\cite{AKKL12,LRS24} where finer asymptotics of low-lying eigenvalues and normal derivatives of the associated eigenmodes are derived, in a model two-dimensional situation.

    We finally mention that the previous works~\cite{LN15,DGLLPN19} have applied semiclassical techniques to the Witten Laplacian, motivated by the numerical analysis of Arthur Voter's accelerated dynamics methods, respectively Hyperdynamics~\cite{V97} and Temperature Accelerated Dynamics~\cite{SV00}. This work in some sense continues in this vein, deriving rigorous results to analyze the third and last of the accelerated dynamics methods of Voter, namely the Parallel Replica method~\cite{V98}.

    \paragraph{Contributions and outline.}
    The purpose of this work is to extend previous results on low-temperature spectral asymptotics for the dynamics~\eqref{eq:overdamped_langevin} in the framework of temperature-dependent Dirichlet boundary conditions.
    We develop a set of geometric assumptions on these temperature-dependent boundaries which ensure that these asymptotics can be easily computed.
    We then identify the first-order behavior of the spectrum (Theorem~\ref{thm:harm_approx}), extending to~$\beta$-dependent Dirichlet boundary conditions the so-called harmonic limit from the semi-classical analysis of the Witten Laplacian.
    We generalize the Eyring--Kramers formula (Theorem~\ref{thm:eyring_kramers}) to the case of temperature-dependent boundaries and a single-well domain, and obtain an explicit expression for the prefactor.
    We give explicit asymptotic equivalents of both the metastable exit rate and the asymptotic convergence rate to the QSDs associated with these domains (Corollary~\ref{corr:practical_implication}).
    We discuss the practical implications of these results, and how to optimize the asymptotic separation of timescales with respect to the shape of the boundary, within the class of temperature-dependent domains satisfying our geometric assumptions.

    This work is organized as follows. In Section~\ref{sec:setting_and_notations}, we introduce the necessary notation and present our geometric framework. In Section~\ref{sec:main_results}, we state and discuss our main results. In Section~\ref{sec:proof_harm_approx}, we construct the harmonic approximation and prove the first spectral asymptotics. We finally prove in Section~\ref{sec:proof_ek} the modified Eyring--Kramers formula.
    \section{Setting and notation}
    In this section, we introduce various notation (see Section~\ref{subsec:notation}), define the QSD in Section~\ref{subsec:qsd}, and present in~\ref{subsec:geometric_assumptions} the geometric framework which will be used throughout this work.
    In Section~\ref{subsec:genericity}, we motivate these assumptions, discuss their genericity and compare them with previous related works. 
    \label{sec:setting_and_notations}
    \subsection{Notation}
    \label{subsec:notation}
    We introduce various notation and classical properties which will be used throughout this work.
    \paragraph{Temperature-dependent domains.}
    The basic setting we work in, and the main novelty compared to previous related works, is the presence of a temperature-dependent boundary. To this end, we introduce a family of domains~$\left(\Omega_\beta\right)_{\beta>0}$ which will be considered throughout this work, and which we will always assume to be smooth, open and bounded.
    In fact, we make the following stronger assumption.
    \begin{hypothesis}
        For all~$\beta>0$, the domain~$\Omega_\beta$ is a smooth open set, and there exists a compact set~$\mathcal K \subset \R^d$ such that
        \begin{equation}
            \label{hyp:uniformly_bounded}
            \tag{\bf H0}
            \forall\,\beta>0,\qquad\Omega_\beta \subset \mathcal K.
        \end{equation}
    \end{hypothesis}
    We denote by~$\sigma_{\Omega_\beta}$ the signed Euclidean distance to the boundary of~$\Omega_\beta$:
    \begin{equation}
        \label{eq:sdf}
        \sigma_{\Omega_\beta}(x) = \left\{\begin{aligned}
            d(x,\partial\Omega_\beta)&&\quad\textrm{for }x\in\Omega_\beta,\\
            -d(x,\partial\Omega_\beta)&&\quad\textrm{for }x\not\in\Omega_\beta.
        \end{aligned}\right.
    \end{equation}
    While the opposite sign is sometimes preferred in the definition of the signed distance, our choice of convention is motivated by the identities ~$A\cap B = (\sigma_A \land \sigma_B)^{-1}(\R_+^*)$,~$A\cup B = (\sigma_A \lor \sigma_B)^{-1}(\R_+^*)$ for two open sets~$A,B\subset \R^d$.
    We denote the unit outward normal at a point~$x\in\partial \Omega_\beta$ by~$\n_{\Omega_\beta}(x) = -\nabla \sigma_{\Omega_\beta}(x)$.

    Various other assumptions which will enter in the statement of our results are given in Section~\ref{subsec:geometric_assumptions}.
    \paragraph{Critical points of the potential.}
    We assume throughout this work that the potential~$V$ is a smooth Morse function over~$\R^d$, meaning that at each point~$z$ such that~$\nabla V(z)=0$, the Hessian~$\nabla^2 V(z)$ is non-degenerate.
    Relaxing this (generic) assumption leads to a range of possible behaviors for the dynamics~\eqref{eq:overdamped_langevin}, see~\cite{BG10,LN15,LPNV24,D24,ABM25} for related quantitative studies of metastability in such settings. These effects are however not the primary concern of this work.

    This condition implies in particular that critical points of~$V$ are isolated, and are therefore finitely many inside~$\mathcal K$.
    We also assume that~$V$ is globally bounded from below.
    We recall that the index of a critical point~$z\in\R^d$ for~$V$ is the number of negative eigenvalues of~$\nabla^2 V(z)$:
    \[\Ind(z) = \#\, \left[\mathrm{Spec}(\nabla^2 V(z)) \cap (-\infty,0)\right].\]
    For~$0\leq k \leq d$, denote by~$N_k$ the number of critical points of index~$k$ located in~$\mathcal K$:
    \begin{equation}
        \label{eq:morse_indices}
        N_k = \#\,\left\{z \in \mathcal K\,\middle|\,\nabla V(z)=0,\,\Ind(z)=k\right\},\qquad N:=\sum_{k=0}^d N_k.
    \end{equation}
    Thus,~$V$ has~$N$ critical points in~$\mathcal K$, and, for instance,~$N_0$ local minima.
    Finally, we fix an enumeration~$(z_i)_{0\leq i< N}$
    of the critical points of~$V$ in~$\mathcal K$, chosen so that (with the convention~$N_0+\dots+N_{k-1}=0$ for~$k=0$):
    \[\left\{z_i\,\middle|\,N_0+\dots+N_{k-1}\leq i <N_0+\dots+N_k\right\} = \left\{z \in \mathcal K\,\middle|\,\nabla V(z)=0,\,\Ind(z)=k\right\}.\]
    Such a finite enumeration is guaranteed to exist, owing to the Morse property of~$V$ and the compactness of~$\mathcal K$.
    In particular, the local minima of~$V$ in~$\mathcal K$ are enumerated as~$z_0,\dots,z_{N_0-1}$.

    We also fix an eigendecomposition of the Hessian~$\nabla^2 V$ at each critical point~$z_i$, letting
    \begin{equation}
        \label{eq:eigvals_hessian}
        \mathrm{Spec}(\nabla^2 V(z_i)) = \left\{\hessEigval{i}{1}, \hessEigval{i}{2}, \dotsm, \hessEigval{i}{d}\right\}
    \end{equation}
    denote the spectrum of~$\nabla^2 V$ at~$z_i$, with an associated orthonormal eigenbasis:
    \begin{equation}
        \label{eq:eigvecs_hessian}
        \hessPassage{i} =\begin{pmatrix}\hessEigvec{i}{1}&\dotsm&\hessEigvec{i}{d}\end{pmatrix}\in\R^{d\times d},\quad \mathrm{diag}(\hessEigval{i}{1},\dotsm,\hessEigval{i}{d}) = \hessPassage[\intercal]{i} \nabla^2 V(z_i) \hessPassage{i}.
    \end{equation}
    Since~$V$ has the Morse property,~$\nu_j^{(i)} \neq 0$ for all~${1\leq j\leq d}$ and~${0\leq i < N}$.
    Let us also assume that, in the case where~$\Ind(z_i)=1$,~$\hessEigval{i}{1}<0$ is the unique negative eigenvalue of~$\nabla^2 V(z_i)$. The orientation convention for the first eigenvector~$\hessEigvec{i}{1}$ is either fixed by the geometry of the domains~$\Omega_\beta$ (as made precise in Assumption~\eqref{hyp:locally_flat} below), or else plays no role in the analysis.
    
    The eigenrotation induces a unitary transformation in $L^2$, via:
   \begin{equation}
    \label{eq:eigvecs_unitary}
     \left(\mathcal U^{(i)} f\right)(x) = f\left(U^{(i)\intercal}x\right),\quad  \left(\mathcal U^{(i)*} f\right)(x) = f\left( U^{(i)}x\right).
   \end{equation}

    We will make repeated use of the following half-spaces associated with each critical point, defined for $\theta\in(-\infty,+\infty]$ by:
    \begin{equation}
        \label{eq:half_space}
        \halfSpace{i}(\theta) = \hessPassage{i}\left[(-\infty,\theta)\times\R^{d-1}\right].
    \end{equation}
    Of course, one simply has~$\halfSpace{i}(+\infty) = \R^d$.

    For notational convenience, we also introduce the following local coordinates, adapted to the quadratic behavior of~$V$ near $z_i$:
    \begin{equation}
        \label{eq:local_coordinates}
        y^{(i)}(x) = \hessPassage[\intercal]{i}(x-z_i) = \left(y_1^{(i)}(x),y^{(i)\prime}(x)\right) \in \R \times \R^{d-1}.
    \end{equation}
    Note that~$y^{(i)}$ is a linear isometry, with~$\nabla y_j^{(i)} = \hessEigvec{i}{j}$ and~$\nabla^2 y^{(i)}=0$.

    \paragraph{Gradient flow and invariant manifolds.}
    We denote by~$(\phi_t)_{t\in \R}$ the flow associated with the dynamics~$X'(t) = -\nabla V(X(t))$, which can be seen as the formal limit of the process~\eqref{eq:overdamped_langevin} as~$\beta\to+\infty$. For~$0\leq i< N$, we introduce the following sets:
    \begin{equation}
        \label{eq:stable_manifold}
        \mathcal W^{\pm}(z_i) = \left\{x\in\R^d\middle|\underset{t\to\pm\infty}{\lim}\,\phi_t(x)=z_i\right\}.
    \end{equation}
    The stable manifold theorem (see~\cite[Section 9.2]{T24}) asserts that these sets are smooth submanifolds with boundary. The manifold~$\mathcal W^+(z_i)$ is called the stable manifold and~$\mathcal W^-(z_i)$ is called the unstable manifold. Moreover their tangent spaces at~$z_i$ are given by
    \[T_{z_i}(\mathcal W^{\pm}) = \mathrm{Vect}\,\left\{\hessEigvec{i}{j},\,1\leq j\leq d:\,\pm\hessEigval{i}{j} > 0 \right\},\]
    so that~$\mathcal W^+(z_i)$ and~$\mathcal W^-(z_i)$ are of dimensions~$d-\mathrm{Ind}(z_i)$ and~$\mathrm{Ind}(z_i)$, respectively.

    For~$0\leq i < N_0$, we alternatively denote
    \begin{equation}
        \label{eq:basin}
        \basin{z_i} := \mathcal W^+(z_i),
    \end{equation}
    which we refer to as the basin of attraction for the critical point~$z_i$.
    \paragraph{Weighted Sobolev spaces.}
    Let us introduce the following Hilbert spaces, defined for an open set~$\Omega\subset \R^d$ by
    \begin{equation}
        \label{eq:sobolev_spaces}
        \begin{aligned}
        &\Lmu(\Omega_\beta) = \left\{u\text{ measurable}\,\middle|\,\|u\|^2_{\Lmu(\Omega_\beta)} :=\int_{\Omega_\beta} u^2 \,\e^{-\beta V} < +\infty\right\},\\
         &H^{k}_\beta(\Omega_\beta) = \left\{u\in \Lmu(\Omega_\beta)\middle|\,\partial^{\alpha}u\in \Lmu(\Omega_\beta),\,\forall\, |\alpha|\leq k\right\},
        \end{aligned}
    \end{equation}
    where~$\partial^\alpha = \partial_{x_1}^{\alpha_1}\dots\partial_{x_d}^{\alpha_d}$ denotes the weak differentiation operator associated to a multi-index~$\alpha = (\alpha_1,\dots,\alpha_d)\in \R^d$. For the flat case (i.e. when~$V\equiv 0$), we simply write~$L^2(\Omega_\beta)$ and~$H^k(\Omega_\beta)$. As in the flat case, we let~$H_{0,\beta}^k(\Omega_\beta)$ denote the~$H_\beta^k(\Omega_\beta)$-norm closure of~$\testfuncs(\Omega_\beta)$. 

    \paragraph{Dirichlet realizations of the generator.}
    The infinitesimal generator~$\cL_\beta$ of the evolution semigroup associated with the dynamics~\eqref{eq:overdamped_langevin} is formally defined by:
    \begin{equation}
        \label{eq:generator}
        \forall u \in \testfuncs(\R^d),\qquad-\cL_\beta u = - \frac1\beta \Delta u+\nabla V \cdot \nabla u .
    \end{equation}
    A direct computation shows that~$-\cL_\beta$ is a symmetric positive operator on~$\Lmu(\R^d)$, with associated quadratic form:
    \begin{equation}
        \label{eq:quadratic_form_generator}
        \left\langle -\cL_\beta u, u \right\rangle_{\Lmu(\R^d)} = \frac1\beta \int_{\R^d} |\nabla u|^2\,\e^{-\beta V}.
    \end{equation}
    This identity is the analytic formulation of the reversibility of the dynamics~\eqref{eq:overdamped_langevin} with respect to~$\mu$.

    For bounded open domains~$\Omega_\beta\subset\R^d$ with Lipschitz boundary, we will still denote by~$\cL_\beta$ the Dirichlet realization of the generator, defined as the Friedrichs extension (see~\cite{RS75}) of the quadratic form~\eqref{eq:quadratic_form_generator} on~$\testfuncs(\Omega_\beta)$.
    This is a self-adjoint operator, with domain~$$\mathcal D(\cL_\beta)={H_{0,\beta}^1(\Omega_\beta)\cap H^2_{\beta}(\Omega_\beta) \subset \Lmu(\Omega_\beta)}.$$
    We will also consider the Dirichlet realization of the differential operator~$\cL_\beta$ on other domains than~$\Omega_\beta$, this will be made precise when needed.
    
    The compact injection~$H_{0,\beta}^1(\Omega_\beta)\to \Lmu(\Omega_\beta)$ (which follows immediately from the Rellich--Kondrachov theorem since~$V$ is smooth and~$\Omega_\beta$ is smooth and bounded) implies that~$-\cL_\beta$ seen as an operator on~$L^2_\beta(\Omega_\beta)$ has a compact resolvent, so that its spectrum consists of non-negative, isolated eigenvalues of finite multiplicity tending to~$+\infty$:
   ~$$0\leq\lambda_{1,\beta} \leq \lambda_{2,\beta}\leq \dotsm \leq\lambda_{N,\beta} \xrightarrow{N\to\infty}+\infty,$$
   where for~$k\geq 1$,~$\lambda_{k,\beta}$ denotes the $k$-th smallest eigenvalue counted with multiplicity.
    Furthermore, standard arguments~(see~\cite{LBLLP12}) show that the first eigenvalue is strictly positive and non-degenerate, so that~$0<\lambda_{1,\beta} < \lambda_{2,\beta}$, with an associated eigenmode~$u_{1,\beta} > 0$ on~$\Omega_\beta$.
    
    We also consider eigenmodes~$u_{k,\beta}$ associated with~$\lambda_{k,\beta}$ with, for any integers~$i,j\geq 1$, the normalization convention:
    \begin{equation}
        \label{eq:eigfunc_normalization}
        \int_{\Omega_\beta}u_{i,\beta}u_{j,\beta} \e^{-\beta V} = \delta_{ij}.
    \end{equation}

    When considering the Dirichlet realization of~$\cL_\beta$ on a domain~$A\subset\R^d$ other than~$\Omega_\beta$, we will explicitly denote the dependence of the eigenelements on~$A$ as follows: for all~$k,\ell\geq 1$,
    \begin{equation}
        \label{eq:eigenpairs_alt_domain}
        -\cL_\beta u_{k,\beta}(A) = \lambda_{k,\beta}(A)u_{k,\beta}(A),\qquad \int_A u_{k,\beta}(A)u_{\ell,\beta}(A)\,\e^{-\beta V}=\delta_{k\ell}.
    \end{equation}
    \paragraph{Witten Laplacian.}
    It it sometimes convenient to change representations to a flat~$L^2$ setting, using the unitary map $u\mapsto \e^{-\frac{\beta V}2}u$ from~$\Lmu(\Omega_\beta)$ to~$L^2(\Omega_\beta)$. The conjugated operator associated to~$-\cL_\beta$ is proportional to the celebrated Witten Laplacian~\cite{W82} acting on 0-forms (or functions):
    \begin{equation}
        \label{eq:witten_laplacian}
        H_{\beta} = -\beta \e^{-\frac{\beta V}2} \cL_\beta \e^{\frac{\beta V}2}\cdot = - \Delta + U_\beta ,\qquad U_\beta = \frac{\beta^2}4|\nabla V|^2 - \frac{\beta}2 \Delta V,
    \end{equation}
    with domain~$\mathcal D(H_\beta)=H_0^1(\Omega_\beta)\cap H^2(\Omega_\beta) \subset L^2(\Omega_\beta)$. The operator~$\beta^{-2}H_\beta$ closely resembles a Schr\"odinger operator (with the semiclassical parameter~$h$ proportional to~$1/\beta$), and a potential perturbed by an order $h$ term.
    For convenience, we give the correspondence with the notation commonly used in the semiclassical literature for the Witten Laplacian, following~\cite{HKN04}:
    \begin{equation}
        \beta^{-2}H_\beta = \Delta_{f,h}^{(0)} := - h^2 \Delta+|\nabla f|^2 -h\Delta f  ,\quad\text{with } h := 1/\beta\text{ and } f := V/2.
    \end{equation}
    We finally denote by
    \begin{equation}
        \label{eq:witten_quad_form}
        Q_\beta(u) = \left\langle H_\beta u , u\right\rangle_{L^2(\Omega_\beta)} = \|\nabla u\|_{L^2(\Omega_\beta)}^2+\left\langle U_\beta u ,u \right\rangle_{L^2(\Omega_\beta)},
    \end{equation}
    the quadratic form associated with~$H_\beta$, with form domain~$\mathcal D(Q_\beta) = H_0^1(\Omega_\beta)$.
    \subsection{Quasistationary distributions and the Dirichlet spectrum}
    \label{subsec:qsd}
    Given a subdomain~$\Omega_\beta\subset \R^d$, define the first exit time out of~$\Omega_\beta$ by
    \[\tau_{\Omega_\beta} = \inf\left\{t\in \R_+\,\middle| X^\beta_t\not\in \Omega_\beta\right\}.\]
    A QSD in~$\Omega_\beta$ is a probability measure~$\nu$ satisfying:
    \begin{equation}
        \label{eq:qsd}
        \forall\,t>0,\,\forall\,A \in \mathcal B\left(\Omega_\beta\right),\qquad \P^\nu\left(X^\beta_t\in A\,\middle|\,\tau_{\Omega_\beta}>t\right) = \nu(A).
    \end{equation}
    In other words, the QSD is stationary for the process~$X^\beta_t$ conditioned on not being absorbed at the boundary of~$\Omega_\beta$ up to the time $t$.
    Under generic assumptions,~$\nu$ may also be defined by the Yaglom limit:
    \begin{equation}
        \label{eq:yaglom}
        \forall x_0\in\Omega_\beta,\qquad\nu(A) = \underset{t\to\infty}{\lim}\,\P^{x_0}\left(X^\beta_t\in A\,\middle|\,\tau_{\Omega_\beta}>t\right).
    \end{equation}
    Quasi-stationary distributions were initially introduced to study long-lived phases of populations in the pre-extinction time regime.
    We refer to~\cite{MV12} for a review and examples on this topic, as well as~\cite{CMSM13} for general background material on QSDs.

    It happens that the QSD inside a smooth bounded domain~$\Omega_\beta$ is directly related to the spectrum of the Dirichlet realization of the infinitesimal generator~\eqref{eq:generator}.
    More precisely, the following result, adapted from~\cite{LBLLP12}, holds.
    \begin{proposition}[\cite{LBLLP12}]
        \label{prop:qsd_spectral}
        Let~$(\lambda_{1,\beta},u_{1,\beta})$ be the principal Dirichlet eigenpair of~$-\cL_\beta$ in~$\Omega_\beta$ (which is unique up to the sign of~$u_{1,\beta}$), i.e.
        \begin{equation}
            \lambda_{1,\beta} = \underset{u\in H_{0,\beta}^1(\Omega_\beta)}{\inf}\,\frac{\left\langle-\cL_\beta u,u\right\rangle_{\Lmu(\Omega_\beta)}}{\left\|u\right\|^2_{\Lmu(\Omega_\beta)}} = \frac1\beta\int_{\Omega_\beta}|\nabla u_{1,\beta}|^2\,\e^{-\beta V}.
        \end{equation}
        Then the probability measure
        \begin{equation}
            \label{eq:qsd_spectral}
            A\mapsto\nu(A) = {\frac{\displaystyle\int_A u_{1,\beta}\,\e^{-\beta V}}{\displaystyle\int_{\Omega_\beta}u_{1,\beta}\,\e^{-\beta V}}}
        \end{equation}
        is the unique QSD for the process~\eqref{eq:overdamped_langevin} on~$\Omega_\beta$.
        Moreover, the exit time~$\tau_{\Omega_\beta}$ is exponentially distributed with rate~$\lambda_{1,\beta}$ when the initial conditions follow~$\nu$, and is independent from the exit point:
            \begin{equation}
            \forall\,\varphi\in L^\infty(\partial\Omega_\beta),\qquad\E^\nu\left[\varphi(X^\beta_{\tau_{\Omega_\beta}})\1_{\tau_{\Omega_\beta}>t}\right] = \e^{-\lambda_{1,\beta}t}\E^\nu\left[\varphi(X^\beta_{\tau_{\Omega_\beta}})\right].
        \end{equation}
        \end{proposition}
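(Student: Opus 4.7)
The plan is to reduce all three statements to spectral manipulations of the Dirichlet semigroup $(P_t^{\Omega_\beta})_{t\geq 0}=(\e^{t\cL_\beta})_{t\geq 0}$ on $\Lmu(\Omega_\beta)$, together with the Feynman--Kac formula $(P_t^{\Omega_\beta} f)(x)=\E^x[f(X_t^\beta)\1_{\tau_{\Omega_\beta}>t}]$ for $f\in L^\infty(\Omega_\beta)$ (extended by zero outside $\Omega_\beta$), and the strong Markov property. The only spectral facts needed are the self-adjointness of $-\cL_\beta$ in $\Lmu(\Omega_\beta)$, the eigenrelation $P_t^{\Omega_\beta}u_{1,\beta}=\e^{-\lambda_{1,\beta}t}u_{1,\beta}$, and the simplicity of $\lambda_{1,\beta}$ together with the strict positivity of $u_{1,\beta}$ recalled in Section~\ref{subsec:notation}. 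To establish that $\nu$ is a QSD, I write $Z_\nu=\int_{\Omega_\beta}u_{1,\beta}\e^{-\beta V}$ and transfer $P_t^{\Omega_\beta}$ onto $u_{1,\beta}$ via self-adjointness:
\[
\int_{\Omega_\beta}(P_t^{\Omega_\beta}f)\,d\nu = \frac{1}{Z_\nu}\int_{\Omega_\beta}(P_t^{\Omega_\beta}u_{1,\beta})f\,\e^{-\beta V} = \e^{-\lambda_{1,\beta}t}\int_{\Omega_\beta}f\,d\nu
\]
for any $f\in L^\infty(\Omega_\beta)$. Specialising to $f=\1_A$ and $f\equiv 1$ yields both $\P^\nu(\tau_{\Omega_\beta}>t)=\e^{-\lambda_{1,\beta}t}$ and $\P^\nu(X_t^\beta\in A\mid\tau_{\Omega_\beta}>t)=\nu(A)$, so that $\nu$ is a QSD with extinction rate $\lambda_{1,\beta}$ and the exponential law of the exit time is obtained in passing.

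For uniqueness, let $\nu'$ be any QSD with some rate $\lambda'>0$. The uniform ellipticity of $-\cL_\beta$ with smooth coefficients produces a smooth strictly positive Dirichlet transition density $p_t^{\Omega_\beta}(x,y)$ for $t>0$, whence the identity $\nu'(A)=\e^{\lambda't}\int p_t^{\Omega_\beta}(x,y)\,d\nu'(x)\,dy$ shows that $\nu'$ automatically admits a smooth strictly positive density $\rho'$ on $\Omega_\beta$ with respect to $\e^{-\beta V}dx$. Rewriting the QSD identity $\int(P_t^{\Omega_\beta}f)\,d\nu'=\e^{-\lambda't}\int f\,d\nu'$ by self-adjointness and varying $f\in L^\infty(\Omega_\beta)$, this density satisfies $P_t^{\Omega_\beta}\rho'=\e^{-\lambda't}\rho'$ in $\Lmu(\Omega_\beta)$; differentiating at $t=0$ identifies $\rho'$ as a positive Dirichlet eigenfunction of $-\cL_\beta$ with eigenvalue $\lambda'$. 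The Perron--Frobenius/minimum-principle argument underlying the simplicity of $\lambda_{1,\beta}$ ensures that $\lambda_{1,\beta}$ is the only Dirichlet eigenvalue admitting a strictly positive eigenfunction, so $\lambda'=\lambda_{1,\beta}$ and $\rho'\propto u_{1,\beta}$, i.e.\ $\nu'=\nu$.

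For the announced factorisation, I invoke the strong Markov property at time $t$: conditionally on $\{\tau_{\Omega_\beta}>t\}$, the law of $X_t^\beta$ under $\P^\nu$ equals $\nu$ (by the QSD property just proven), and the post-$t$ trajectory is independent of the history up to $t$. Applied to $\varphi(X_{\tau_{\Omega_\beta}}^\beta)$, this yields
\[
\E^\nu\!\left[\varphi(X_{\tau_{\Omega_\beta}}^\beta)\1_{\tau_{\Omega_\beta}>t}\right] = \P^\nu(\tau_{\Omega_\beta}>t)\,\E^\nu\!\left[\varphi(X_{\tau_{\Omega_\beta}}^\beta)\right] = \e^{-\lambda_{1,\beta}t}\,\E^\nu\!\left[\varphi(X_{\tau_{\Omega_\beta}}^\beta)\right],
\]
which is both the exponential law of $\tau_{\Omega_\beta}$ under $\P^\nu$ and its independence from the exit point.

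I expect the main delicate point to be the uniqueness step: namely, the a priori regularisation claim that any QSD admits a smooth density against $\e^{-\beta V}dx$, and the careful passage from the integrated QSD identity to the pointwise eigenvalue equation $-\cL_\beta\rho'=\lambda'\rho'$ (including checking that $\rho'$ actually lies in the Dirichlet domain of $-\cL_\beta$, which follows from standard boundary regularity since $\Omega_\beta$ is smooth). All other manipulations are either routine applications of the self-adjoint functional calculus or classical consequences of the strong Markov property of $X^\beta$.
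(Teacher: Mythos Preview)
The paper does not supply its own proof of this proposition: it is stated as an adaptation of a result from~\cite{LBLLP12} and invoked without argument. There is therefore no ``paper's proof'' to compare against; I can only assess your argument on its own merits.

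Your approach is the standard one and is essentially correct. A few minor points: what you invoke for the factorisation is the ordinary Markov property at the deterministic time~$t$, not the strong Markov property (you never condition at a stopping time). In the uniqueness step, the chain of implications---existence of a smooth positive transition kernel, hence absolute continuity of~$\nu'$, hence $P_t^{\Omega_\beta}\rho'=\e^{-\lambda't}\rho'$ by duality, hence $\rho'$ is a positive Dirichlet eigenfunction, hence $\lambda'=\lambda_{1,\beta}$ by Krein--Rutman/Perron--Frobenius---is the right skeleton, and your own caveat about verifying $\rho'\in\mathcal D(-\cL_\beta)$ via boundary regularity is well placed; this is indeed where the work lies. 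One small gap you do not flag: to pass from the QSD identity to the eigenrelation by self-adjointness you need $\rho'\in\Lmu(\Omega_\beta)$, not just $\rho'\in L^1(\e^{-\beta V}dx)$; this follows from the smoothing estimate (the density at any positive time is bounded), but is worth stating.
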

    The Dirichlet spectrum also provides an upper bound on the relaxation timescale to the QSD.
    Let us define the total variation distance between signed measures~$\nu,\nu'\in\mathcal M(\Omega_\beta)$ by
    \[\|\nu-\nu'\|_{\mathrm{TV}} = \underset{A\in\mathcal B(\Omega_\beta)}{\sup} |\nu(A)-\nu'(A)|.\]
    We have the following result, adapted again from~\cite{LBLLP12}.
    \begin{proposition}[\cite{LBLLP12}]
        \label{prop:decorr}
        Assume that the initial law~$\mu_0 := \mathrm{Law}(X^\beta_0)$ has an~$\Lmu(\Omega_\beta)$ Radon--Nikodym derivative with respect to~$\e^{-\beta V(x)}\,\d x$. Then there exist~$C_1,C_2>0$ such that, for all~$t>0$,
        \begin{equation}
            \label{eq:decorr}
            \left\|\mathrm{Law}^{\mu_0}\left[X_t^\beta\,\middle|\,\tau_{\Omega_\beta}>t\right] -\nu\right\|_{\mathrm{TV}} \leq C_1 \e^{-(\lambda_{2,\beta}-\lambda_{1,\beta})t},
        \end{equation}
        \begin{equation}
            \label{eq:decorr_event}
            \left\|\mathrm{Law}^{\mu_0}\left[\left(X^\beta_{\tau_{\Omega_\beta}},\tau_{\Omega_{\beta}}-t\right)\,\middle|\,\tau_{\Omega_\beta}>t\right] -\mathrm{Law}^\nu\left[\left(X^\beta_{\tau_{\Omega_\beta}},\tau_{\Omega_\beta}\right)\right]\right\|_{\mathrm{TV}} \leq C_2 \e^{-(\lambda_{2,\beta}-\lambda_{1,\beta})t}.
        \end{equation}
    \end{proposition}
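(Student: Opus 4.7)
The plan is to exploit the spectral decomposition of the self-adjoint Dirichlet realization of $-\cL_\beta$ on $\Lmu(\Omega_\beta)$. I would introduce the sub-Markovian semigroup $P_t^{\Omega_\beta}f(x) := \E^x[f(X_t^\beta)\1_{\tau_{\Omega_\beta}>t}]$, and write the Radon--Nikodym derivative as $\phi_0 := \mathrm d\mu_0/(\e^{-\beta V}\mathrm dx)\in\Lmu(\Omega_\beta)$, with $\phi_0\geq 0$ and $\int\phi_0\,\e^{-\beta V}=1$. By reversibility, for any Borel set $A\subset\Omega_\beta$,
\[
\P^{\mu_0}\bigl(X_t^\beta\in A,\,\tau_{\Omega_\beta}>t\bigr) = \int_A f_t(x)\,\e^{-\beta V(x)}\mathrm dx, \qquad f_t := P_t^{\Omega_\beta}\phi_0,
\]
and expanding $\phi_0 = \sum_{k\geq 1}c_k\,u_{k,\beta}$ with $c_k := \langle\phi_0,u_{k,\beta}\rangle_{\Lmu(\Omega_\beta)}$, the eigendecomposition yields $f_t = c_1\e^{-\lambda_{1,\beta}t}u_{1,\beta} + r_t$, where $r_t := \sum_{k\geq 2}c_k\e^{-\lambda_{k,\beta}t}u_{k,\beta}$ satisfies $\|r_t\|_{\Lmu(\Omega_\beta)}\leq \e^{-\lambda_{2,\beta}t}\|\phi_0\|_{\Lmu(\Omega_\beta)}$ by Bessel's inequality.

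Next I would verify that $c_1>0$: by the strong maximum principle applied to the principal eigenfunction, $u_{1,\beta}>0$ strictly inside $\Omega_\beta$, while $\phi_0\geq 0$ is nontrivial, so $c_1 = \int\phi_0 u_{1,\beta}\e^{-\beta V}>0$. Let $a_t := \int f_t\,\e^{-\beta V} = \P^{\mu_0}(\tau_{\Omega_\beta}>t)$, $b := \int u_{1,\beta}\e^{-\beta V}>0$, and $r_t' := \int r_t\,\e^{-\beta V}$. Cauchy--Schwarz on the bounded domain gives $|r_t'|\leq\|\1\|_{\Lmu(\Omega_\beta)}\|r_t\|_{\Lmu(\Omega_\beta)}\leq C_0\,\e^{-\lambda_{2,\beta}t}$ with $C_0$ depending only on $\phi_0$ and $\Omega_\beta$, so there exists $t_\star>0$ such that $a_t \geq \tfrac12 c_1 b\,\e^{-\lambda_{1,\beta}t}$ for $t\geq t_\star$. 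The algebraic identity
\[
\frac{f_t}{a_t} - \frac{u_{1,\beta}}{b} = \frac{b\,r_t - r_t'\,u_{1,\beta}}{a_t\,b},
\]
integrated against $|\cdot|\,\e^{-\beta V}$, gives
\[
\bigl\|\mathrm{Law}^{\mu_0}(X_t^\beta\mid\tau_{\Omega_\beta}>t) - \nu\bigr\|_{\mathrm{TV}} \leq \frac{2\|r_t\|_{L^1_\beta(\Omega_\beta)}}{a_t}\lesssim \e^{-(\lambda_{2,\beta}-\lambda_{1,\beta})t}
\]
for $t\geq t_\star$, while for $t<t_\star$ the bound is trivial (the TV distance is at most $1$, absorbed in $C_1$).

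For the joint bound~\eqref{eq:decorr_event}, I would apply the strong Markov property at time $t$: for any $\varphi\in L^\infty(\partial\Omega_\beta)$ and $g\in L^\infty(\R_+)$ with $\|\varphi\|_\infty,\|g\|_\infty\leq 1$, setting $G(x) := \E^x[\varphi(X^\beta_{\tau_{\Omega_\beta}})g(\tau_{\Omega_\beta})]$, one has
\[
\E^{\mu_0}\bigl[\varphi(X^\beta_{\tau_{\Omega_\beta}})g(\tau_{\Omega_\beta}-t)\,\big|\,\tau_{\Omega_\beta}>t\bigr] - \E^\nu\bigl[\varphi(X^\beta_{\tau_{\Omega_\beta}})g(\tau_{\Omega_\beta})\bigr] = \int G\,\mathrm d\bigl(\mathrm{Law}^{\mu_0}(X_t^\beta|\tau_{\Omega_\beta}>t) - \nu\bigr),
\]
where the right-hand side under $\nu$ uses Proposition~\ref{prop:qsd_spectral} to replace conditioning on $\tau_{\Omega_\beta}>t$ by the unconditional law (by stationarity of $\nu$ under conditioning). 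Since $\|G\|_\infty\leq 1$, this reduces~\eqref{eq:decorr_event} to~\eqref{eq:decorr} with $C_2 = C_1$. The only delicate step in this scheme is the uniform lower bound on $a_t$, which rests on the Perron-type positivity of $u_{1,\beta}$; the rest is a routine spectral argument.
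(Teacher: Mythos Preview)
The paper does not give its own proof of this proposition; it is stated as a result ``adapted from~\cite{LBLLP12}'' and no argument is supplied. Your spectral-decomposition proof is correct and is essentially the standard route taken in that reference: expand the initial density in the Dirichlet eigenbasis, isolate the principal mode, control the remainder by the spectral gap, and use the Markov property at time~$t$ to reduce the second bound to the first.

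Two minor remarks. First, for~\eqref{eq:decorr_event} you test only against product functions~$\varphi(x)g(s)$, which a priori does not recover the full total-variation norm on~$\partial\Omega_\beta\times\R_+$; but your argument goes through verbatim with a general bounded measurable~$H(x,s)$ in place of~$\varphi(x)g(s)$, setting~$G(x):=\E^x[H(X^\beta_{\tau_{\Omega_\beta}},\tau_{\Omega_\beta})]$, so this is a one-line fix. Second, the positivity~$u_{1,\beta}>0$ that you invoke is already recorded in the paper (Section~\ref{subsec:notation}), so no additional Perron--Frobenius argument is needed here.
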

    Equation~\eqref{eq:decorr} states that the time-marginal of the process~$X_t^\beta$, conditioned on remaining in the domain during a positive time~$t$ converges to the QSD exponentially fast, and Equation~\eqref{eq:decorr_event} states an analogous result for the law of the exit event.
    The requirement on the initial condition can be considerably weakened by using estimates on the heat semigroup for the diffusion process~\eqref{eq:overdamped_langevin}, see for example~\cite{SL13}.    
    \subsection{Geometric assumptions}
    \label{subsec:geometric_assumptions}
    We now discuss the geometric setting and basic assumptions which will be used in the remainder of this work.
    When considering a Gaussian approximation of the Gibbs measure~\eqref{eq:gibbs_measure} around a minimum of~$V$, one finds that the covariance matrix scales as~$\beta^{-1}$ when~$\beta\to+\infty$. Thus, it appears that the relevant scale to analyze the localization of low-temperature distributions is~$\beta^{-\frac 12}$.
    This heuristic is justified by analysis, and motivates the scaling with respect to~$\beta$ in the following geometric hypothesis.

    \begin{hypothesis}
    The following limit is well-defined in~$\R\cup\{+\infty\}$ for each~$0\leq i < N$:
    \begin{equation}
        \label{hyp:alpha_exists}
        \tag{\bf H1}
        \epsLimit{i} =\underset{\beta\to\infty}{\lim}\, \sqrt\beta\sigma_{\Omega_\beta}(z_i) \in (-\infty,+\infty].
    \end{equation}
    \end{hypothesis}

    Let~$0\leq i < N$. We distinguish two regimes depending on the nature of~$\epsLimit{i}$:
    \begin{itemize}
        \item If~$\epsLimit{i}= +\infty$, we say that~$z_i$ is {\bf far} from the boundary.
        \item If~$\epsLimit{i}$ is finite, we say that~$z_i$ is {\bf close} to the boundary.
    \end{itemize}
    
    When~$\epsLimit{i}<+\infty$, $z_i$ is at a distance of order~$1/\sqrt\beta$ from~$\partial\Omega_\beta$.  Since~$V$ is a Morse function, this implies any path connecting~$z_i$ to the boundary undergoes energy variations of order~$\beta^{-1}$. This is the characteristic energy scale in the Eyring--Kramers formula, which explains why the scaling in Assumption~\eqref{hyp:alpha_exists} is natural.

    \begin{remark}
        Note that~\eqref{hyp:alpha_exists}, excludes the case~$\epsLimit{i}=-\infty$, so that critical points which are far from the boundary but outside the domain do not appear.
        This is merely for convenience, since the forthcoming analysis would be unaffected by the presence of such points.
        Notice in particular that if~$z$ is a critical point of $V$ such that~$\d(z,\Omega_\beta)>c$ for some~$c>0$ and all~$\beta$ sufficiently large, one may ignore it by considering~$\mathcal K\setminus B(z,c)$ instead of~$\mathcal K$.
    \end{remark}
    If a critical point~$z_i \in \Omega_\beta$ is far from the boundary,~$\Omega_\beta$ contains a ball centered around~$z_i$ of radius much larger than~$\beta^{-\frac12}$, namely of radius~$\frac{\sigma_{\Omega_\beta}(z_i)}2$.
    When~$z_i$ is close to the boundary, this is not the case. In order to make quantitative statements on the spectrum, we need to constrain the local geometry of~$\partial\Omega_\beta$ around~$z_i$.
    \begin{hypothesis}     
        There exist functions~$\largeRadius,\smallRadius:\R_+^*\to\R_+$ such that the following holds for~$\beta$ large enough and each~$0\leq i < N$:
        \begin{equation}
            \label{hyp:locally_flat}
            \tag{\bf H2}
            \left\{\begin{aligned}
                &\sqrt\beta\largeRadius(\beta)\xrightarrow{\beta\to\infty}\,+\infty,\\
                &\sqrt\beta\smallRadius(\beta)\xrightarrow{\beta\to\infty}\,0,\\
                &\localNeighborhood[-]{i}(\beta) \subseteq B(z_i,\largeRadius(\beta))\cap\Omega_\beta \subseteq \localNeighborhood[+]{i}(\beta),
            \end{aligned}\right.
        \end{equation}
        where we denote
        \begin{equation}
            \label{eq:capped_balls}
            \localNeighborhood[\pm]{i}(\beta) = z_i + \left[B(0,\largeRadius(\beta)) \cap \halfSpace{i}\left(\frac{\epsLimit{i}}{\sqrt\beta}\pm\smallRadius(\beta)\right)\right],
        \end{equation}
        recalling the definition~\eqref{eq:half_space} of the half-space.
    \end{hypothesis}
    In the case~$\epsLimit{i} = +\infty$, one has~$\localNeighborhood[\pm]{i}(\beta)=B(z_i,\largeRadius(\beta))$, and so Assumption~\eqref{hyp:locally_flat} only imposes~$B(z_i,\largeRadius(\beta))\subset \Omega_\beta$.
    We schematically represent in Figure~\ref{fig:local_neighborhood} the local geometry of~$\partial\Omega_\beta$ under Assumption~\eqref{hyp:locally_flat}, in a case where~$z_i$ is close to and inside the boundary.
    
    Geometrically, the sets~\eqref{eq:capped_balls} correspond to hyperspherical caps centered around~$z_i$ and cut in the eigendirection~$\hessEigvec{i}{1}$ of~$\nabla^2 V(z_i)$. Thus, the condition~\eqref{hyp:locally_flat} fixes the orientation convention for~$\hessEigvec{i}{1}$ in the case where~$z_i$ is close to the boundary, namely that~$\hessEigvec{i}{1}$ always points outwards from~$\partial\Omega_\beta$ (including when~$z_i$ is outside the domain).
    The content of Assumption~\eqref{hyp:locally_flat} is that the boundary can be approximated by a hyperplane normal to~$\hessEigvec{i}{1}$, up to negligible perturbations relative to~$\beta^{-\frac12}$, in a local neighborhood of size~$\largeRadius(\beta)$ around~$z_i$.
    Note that in the case~$d=1$, Assumption~\eqref{hyp:locally_flat} is a direct consequence of Assumption~\eqref{hyp:alpha_exists}. Moreover, from the boundedness~\eqref{hyp:uniformly_bounded},~$\localNeighborhood[-]{i}(\beta)\subset\mathcal K$ and~$\largeRadius(\beta)$ is necessarily uniformly bounded with respect to~$\beta$ under Assumption~\eqref{hyp:locally_flat}.
    
    \begin{remark}
        We will sometimes need~$\delta(\beta)$ to be sufficiently small so that various local approximations are sufficiently precise.
    This desideratum is not restrictive, as Assumption~\eqref{hyp:locally_flat} remains valid upon reducing~$\delta$ or augmenting~$\gamma$ by a constant multiplicative factor.
    Consequently, we will often assume at no cost to generality that~$\largeRadius$ is sufficiently small or that~$\smallRadius$ is sufficiently large for the purposes of the analysis.

    Let us also note that since $\mathcal K$ contains only finitely many critical points,~\eqref{hyp:locally_flat} needs only to be verified locally around each critical point in order to hold globally.
    \end{remark}
    
    The second geometric assumption we make is more technical in nature and relates to the scaling of the quantity~$\delta$ constraining the local geometry of the domains in~\eqref{eq:capped_balls}.
    More precisely, we will sometimes need~$\sqrt\beta\delta(\beta)$ to grow sufficiently fast for the neighborhoods~\eqref{eq:capped_balls} to contain the bulk of the support for various quasimodes.

    The rather mild growth condition on~$\sqrt\beta\largeRadius(\beta)$ we will require is the following.
    \begin{hypothesis}
        The following limit holds for any~$0\leq i<N$:
        \begin{equation}
            \tag{\bf H3}
            \label{hyp:scaling_deltai}
            \underset{\beta\to\infty}{\lim}\, \largeRadius(\beta)\sqrt{\frac{\beta}{\log\beta}}= +\infty.
        \end{equation}
    \end{hypothesis}

    We stress that~\eqref{hyp:scaling_deltai} forces in particular critical points which are far from~$\partial \Omega_\beta$ to be sufficiently far relatively to~$1/\sqrt\beta$, namely further than~$\sqrt{\log \beta}/\sqrt{\beta}$.
    The main use of Assumption~\eqref{hyp:scaling_deltai} is to ensure that for any univariate polynomial $P$, one has~$|P(\beta)|\e^{-\beta \delta(\beta)^2}\xrightarrow{\beta\to+\infty} 0$ as $\beta\to+\infty$ (that is,~$\e^{-\beta \delta(\beta)^2}$ decays superpolynomially), a fact which will be used throughout this work to absorb various error terms.

    We summarize relevant length scales of Assumptions~\eqref{hyp:locally_flat},~\eqref{hyp:scaling_deltai} by the following chain of scaling inequalities:
    \begin{equation}
        \label{eq:scaling_inequalities}
        \smallRadius(\beta)\ll \beta^{-\frac12} \ll \sqrt{\frac{\log \beta}{\beta}} \ll \largeRadius(\beta) .
    \end{equation}
    \begin{figure}
    \center        
    \begin{tikzpicture}[rotate = 20,scale=1.5, transform shape=false]

        \draw[black, thick] plot [smooth] coordinates {(0,-0.7) (1,-1.85) (2,-1.65) (3,-1.8) (4,-1.7) (5,-1.6) (6,-3)}; 
        \draw[dashed] (3,-2.5) circle (0.8  ); 
        \draw (3,-2.5) ++(35:1.6) arc (35:-215:1.6); 
        \draw (3,-2.5) +(25:1.6) -- +(-205:1.6); 
        \draw[dashed] (3,-2.5) +(30:1.6) -- +(-210:1.6); 
        \draw (3,-2.5) +(35:1.6) -- +(-215:1.6); 
        
        \draw[<->,dotted] (3,-2.5) +(0:0) -- +(-110:1.6);
        \draw[->,thick,blue ] (3,-2.5) +(0:0) -- +(90:0.8);
        \draw[<->,dotted] (3,-2.5) +(1.6,0.66) -- +(1.6,0.9);
        
        \draw (0,-0.7) node[above left,scale=0.7,fill=white] {$\partial \Omega_\beta$};
        \draw (3,-2.5) node[left,scale=0.7] {$z_i$};
        \draw (3,-2.5) +(1.7,0.66) node[above right,scale=0.7,fill=white] {$2\smallRadius(\beta)$};
        \draw (3.1,-2.5) +(90:0.4) node[right,scale=0.69,fill=white] {$\beta^{-\frac12}\epsLimit{i}\hessEigvec{i}{1}$};
        \draw (3.1,-2.5) +(-110:1.2) node[above right,scale=0.7,fill=white] {$\largeRadius(\beta)$};
    \end{tikzpicture}
        \caption{\label{fig:local_neighborhood}
        Local geometry of~$\Omega_\beta$ in the neighborhood of a critical point~$z_i$ close to the boundary and inside the domain~$\Omega_\beta$, as prescribed by Assumption~\eqref{hyp:locally_flat}. The relevant length scales are $\smallRadius(\beta)\ll \beta^{-\frac12} \ll \largeRadius(\beta)$, see~\eqref{eq:scaling_inequalities}.
        }
    \end{figure}
    Assumptions~\eqref{hyp:uniformly_bounded},~\eqref{hyp:alpha_exists},~\eqref{hyp:locally_flat} and~\eqref{hyp:scaling_deltai} are assumed to hold for the remainder of this work.
    \paragraph{Additional assumptions in Theorem~\ref{thm:eyring_kramers}.}
    For the purpose of deriving asymptotics for the metastable exit time~$\lambda_{1,\beta}^{-1}$, we will restrict our setting to the case of domains containing essentially a unique local minimum~$z_0$ far from the boundary (see~\eqref{hyp:one_minimum} below), which moreover contains sufficiently large sublevel sets of~$V$ (see~\eqref{hyp:energy_well} below). Let us make this precise.
    Formally, we first assume the following.
    \begin{hypothesis} 
        The point~$z_0$ is the only local minimum of~$V$ in~$\mathcal K$ which is far from the boundary: 
        \begin{equation}
            \tag{\bf EK1}
            \label{hyp:one_minimum}
           N_0 \geq 1,\qquad \alpha^{(0)} = +\infty,\qquad \forall\,1\leq i<N_0,\,\quad\alpha^{(i)}<+\infty.
        \end{equation}
    \end{hypothesis}
    Note that the basin of attraction~$\basin{z_0}$ (see Equation~\eqref{eq:basin}) is a natural candidate for the metastable state associated with the potential well around~$z_0$.
    It is also a convenient implicit definition from a numerical perspective, since determining whether~$x$ belongs to~$\basin{z_0}$ is as simple as estimating the gradient flow~$\phi_t(x)$ for a sufficiently long time.
    The definition~$\Omega_\beta=\basin{z_0}$ is actually commonly used in accelerated MD, see~\cite{V98}. However, in the context of the ParRep algorithm~(see~\cite{V98}), this definition is not expected to be optimal, and one of the motivations of this work is to rigorously show this and derive improved definitions for the metastable well associated with an energy minimum.
    
    Our analysis almost applies to this special case, with the slight caveat that~$\basin{z_0}$ is typically not a smooth domain, but is instead piecewise smooth (see the proof of Lemma~\ref{lemma:local_minimum} below).
    However, the points at which the boundary~$\partial\basin{z_0}$ fails to be smooth are critical points of~$V$ with index greater than~$1$, which are typically too high in energy to be visited by the dynamics~\eqref{eq:overdamped_langevin} in any reasonable amount of time.
    Thus, one can often circumvent this technical obstacle by considering a local regularization of the boundary excluding higher index saddle points from~$\Omega$. In our geometric setting, this situation corresponds to the following parameter values:~$N_0=1$,~$N=N_0+N_1$,~$\epsLimit{0}=+\infty$ and~$\epsLimit{i}=0$ for~$1\leq i<N$.

    It is shown in~\cite{DGLLPN19,LLPN22} that when the domain coincides with the basin~$\basin{z_0}$ (up to a local regularization), the exit distribution starting from the QSD concentrates on the index 1 saddle points of lowest energy separating~$\basin{z_0}$ from the basin of attraction of another local minimum for~$V$. These so-called separating saddle points will also play a distinguished role in our analysis, as in~\cite{HS85,HKN04,BGK05,LPN21,LLPN22}.

    Indeed, we relate the asymptotic behavior of the eigenvalues with local perturbations of the domain near critical points, as formalized by the geometric assumptions~\eqref{hyp:alpha_exists} and~\eqref{hyp:locally_flat}.
    In this context, the asymptotic behavior of the smallest eigenvalue will be especially sensitive to the shape of the boundary near separating critical points with the lowest energy, which turn out to be index~$1$ saddle points.

    Let~$\mathcal M(V)$ denote the set of local minima of~$V$ over~$\R^d$, which is discrete since~$V$ is Morse.
    It is natural to introduce the following quantity
    \begin{equation}
        \label{eq:vstar}
        \Vstar = \underset{z\in \mathcal S(z_0)}{\inf}\, V(z),\qquad \mathcal S(z_0)=\partial \basin{z_0}\cap \left(\bigcup_{m\in\mathcal M(V)\setminus \{z_0\}} \partial\basin{m}\right),
    \end{equation}
    so that~$\Vstar-V(z_0)$ gives the height of the energy barrier separating~$z_0$ from some other basin of attraction for the steepest descent dynamics.
    The set~$\mathcal S(z_0)$ loosely coincides with~$\partial\basin{z_0}$, up to a submanifold consisting of the union of the stable manifolds for critical points separating~$\basin{z_0}$ from itself. We refer to~\cite[Theorem B.13]{MS14} or the proof of Lemma~\ref{lemma:local_minimum} below for more precision on this point.
    In particular, it holds that
    \begin{equation}
    \label{eq:s_alt_rep}
    \mathcal S(z_0) = \partial{\overline{\basin{z_0}}}.
    \end{equation}

    We will make the following boundedness assumption.
    \begin{hypothesis}
        \begin{equation}
            \tag{\bf EK2}
            \label{hyp:energy_well_bounded}
            \text{The set }\basin{z_0}\cap \{V<\Vstar\}\text{ is bounded.}
        \end{equation}
    \end{hypothesis}
    In the case~$\Vstar<+\infty$ (which is of course the case of interest), a natural sufficient condition to ensure~\eqref{hyp:energy_well_bounded} is to assume growth conditions on~$V$ at infinity, which are standard in the theoretical study of the stochastic process~\eqref{eq:overdamped_langevin}.

    In turn, by the regularity of~$V$,~\eqref{hyp:energy_well_bounded} implies that~$\Vstar<+\infty$, and thus the set
    \begin{equation}
        \underset{z\in\mathcal S(z_0)}{\mathrm{Argmin}}\,V(z)
    \end{equation}
    is non-empty. In fact, by virtue of Lemma~\ref{lemma:local_minimum} below and the Morse property satisfied by~$V$, the infimum in~\eqref{eq:vstar} is attained at index~$1$ saddle points of~$V$. Let us denote by~$I_{\mathrm{min}}$ the indices associated with these low-energy saddle points, so that we may also write
    \begin{equation}
        \label{eq:vstar_bis}
        I_{\min} = \underset{\stackrel{1\leq i < N_1}{z_i\in\mathcal S(z_0)}}{\mathrm{Argmin}}\,V(z_i),\qquad \Vstar = \underset{\stackrel{1\leq i < N_1}{z_i\in\mathcal S(z_0)}}{\min}\,V(z_i).
    \end{equation}

    Notice that, for~$i\in I_{\min}$ such that~$\epsLimit{i} = +\infty$, the orientation convention for~$\hessEigvec{i}{1}$ has not yet been fixed. However, since in this case~$z_i \in \partial \basin{z_0} \cap \partial\left[\R^d\setminus \basin{z_0}\right]$, the stable manifold theorem implies that the outward normal~$\n_{\basin{z_0}}(z_i)$ to~$\partial\basin{z_0}$ is well-defined at~$z_i$, and is furthermore a unit  eigenvector of~$\nabla^2 V(z_i)$ for the negative eigenvalue~$\hessEigval{i}{1}$.
    We therefore convene that in this case~$\hessEigvec{i}{1}=\n_{\basin{z_0}}(z_i)$. 

    We require that the domains~$\Omega_\beta$ contain a small energetic neighborhood of the principal energy well~$\basin{z_0}\cap\{V<\Vstar\}$.
    \begin{hypothesis}
        There exists~$C_V,\beta_0>0$ such that, for all~$\beta>\beta_0$,
        \begin{equation}
            \tag{\bf EK3}
            \label{hyp:energy_well}
            \left[\basin{z_0}\cap\{V<\Vstar+C_V\largeRadius(\beta)^2\}\right]\setminus \bigcup_{i\in I_{\min}} B(z_i,\largeRadius(\beta)) \subset \Omega_\beta.
        \end{equation}
    \end{hypothesis}
    Assumption~\eqref{hyp:energy_well} serves as a counterpart to~\eqref{hyp:locally_flat}, mildly constraining the geometry of~$\partial\Omega_\beta$ away from the low-energy saddle points.
    We schematize its meaning in Figure~\ref{fig:energy_neighborhood}.

    For technical reasons, we finally require that the quantity~$\delta(\beta)$ appearing in Assumption~\eqref{hyp:locally_flat} to be sufficiently small asymptotically, namely smaller than a positive constant~$\varepsilon_0(V,z_0)>0$ depending only on~$V$ and~$z_0$, whose expression is given in the proof of~Proposition~\ref{prop:suff_condition_delta} (see Section~\ref{subsec:local_energy_estimates} below).
    \begin{hypothesis}
        There exists~$\beta>\beta_0$ such that for all~$\beta>\beta_0$,
        \begin{equation}
            \tag{\bf EK4}
            \label{hyp:bound_delta}
            \largeRadius(\beta)<\varepsilon_0(V,z_0),
        \end{equation}
        where~$\varepsilon_0(V,z_0)$ is introduced in Proposition~\ref{prop:suff_condition_delta} below. Additionally, for each~$0\leq i<N$,~$z_i$ is the unique critical point of~$V$ in~$B(z_i,2\varepsilon_0(V,z_0))$.
    \end{hypothesis}
    The role of this hypothesis is to ensure that~$V$ is sufficiently well approximated by its second-order expansion on the $\delta(\beta)$ scale around each~$(z_i)_{i\in I_{\min}}$.
    Since~\eqref{hyp:locally_flat} only constrains the geometry on the scale~$\delta(\beta)$, Assumption~\eqref{hyp:bound_delta} is rather non-restrictive.

    \begin{remark}
        \label{rem:non_ssp}
            Let us note that, together with the regularity of~$\Omega_\beta$, Assumption~\eqref{hyp:energy_well} may force critical points which are low in energy to be far from the boundary, i.e.~$\epsLimit{i}=+\infty$. This may occur when the set
    \begin{equation}
        \label{eq:non_removable_points}
        \mathcal X(z_0) = \left\{\,1\leq i < N\middle|\,z_i\in\partial\basin{z_0}\setminus \mathcal{S}(z_0):\,V(z_i) < \Vstar\right\}
    \end{equation}
    is non-empty.
    Indeed, recall that~$\partial \basin{z_0}$ has Lebesgue measure 0 (see the proof of Lemma~\ref{lemma:stable_manifold} below). It follows that for~$i\in\mathcal X(z_0)$, there exists~$h>0$ such that almost all points in~$B(z_i,h)$ are contained in~$\basin{z_0}$. If not,~$z_i\in\overline{\basin{m}}$ for some local minimum $m\neq z_0$ (since $V$ is bounded from below), which is forbidden by the definitions of~$\mathcal X(z_0)$ and~$\mathcal S(z_0)$. By continuity, we may assume that~$B(z_i,h)\subset\{V<\Vstar\}$ and~$h<\varepsilon_0(V,z_0)$, so that, for sufficiently large~$\beta$, almost all points in~$B(z_i,h)$ are contained in~$\Omega_\beta$ by Assumption~\eqref{hyp:energy_well} (because~$h<\varepsilon_0(V,z_0) \implies B(z_i,h) \cap\bigcup_{i\in I_{\min}} B(z_i,\largeRadius(\beta))=\varnothing$ by Assumption~\eqref{hyp:bound_delta}). Thus~$z_i\in\overline{\Omega}_\beta$, but since~$\Omega_\beta$ is a smooth manifold with boundary, it must in fact hold that~$B(z_i,h)\subset \Omega_\beta$. This obviously implies~$\epsLimit{i}=+\infty$.
    
    Note that, since minima lie in the interior of their basin of attraction,~$i\geq N_0$ for any~$i\in\mathcal X(z_0)$, so that Assumption~\eqref{hyp:energy_well} can never lead to a situation in which~Assumption~\eqref{hyp:one_minimum} cannot be verified.
    \end{remark}

    In Figure~\ref{fig:basin}, we give a schematic representation of the sets~$\basin{z_0},\,\mathcal S(z_0)$ and~$I_{\min}$ in a two-dimensional situation in which~$\mathcal X(z_0)\neq\varnothing$.

    \begin{figure}
        \center
    \begin{tikzpicture}[scale=1.25]
        \node (background) at (0,-0.16) {\includegraphics[width=0.75\linewidth]{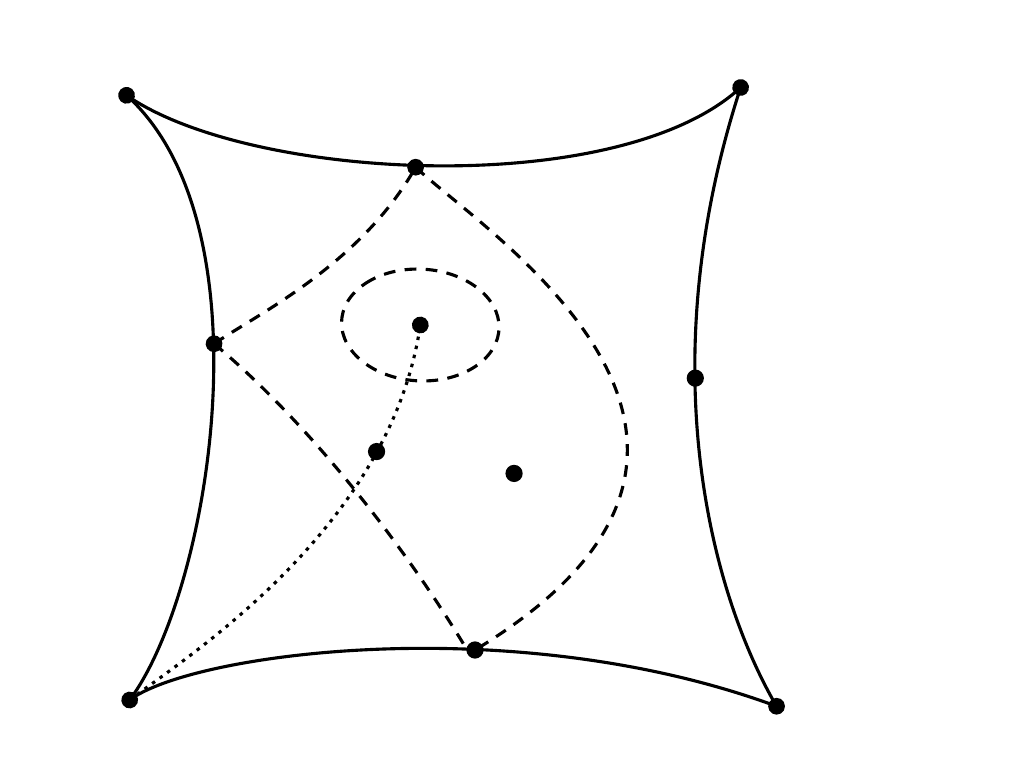}};
        \node[scale=0.8] at (0.2,-0.7) {$z_0$};
        \node[scale=0.8] at (-0.9,-0.9) {$z_1$};
        \node[scale=0.8] at (-0.8,2) {$z_2$};
        \node[scale=0.8] at (-3,0.2) {$z_3$};
        \node[scale=0.8] at (-0.3,-2.8) {$z_4$};
        \node[scale=0.8] at (1.8,-0.1) {$z_5$};
        \node[scale=0.8] at (-0.6,0.1) {$z_6$};
        \node[scale=0.8] at (-3.7,-3) {$z_7$};
        \node[scale=0.8] at (-3.7,2.5) {$z_9$};
        \node[scale=0.8] at (2.2,2.5) {$z_8$};
        \node[scale=0.8] at (1.8,-3) {$z_{10}$};
    \end{tikzpicture}
    \caption{Depiction of a basin~$\basin{z_0}$ in dimension~$d=2$ (see~\eqref{eq:basin}). In solid lines, the set~$\mathcal S(z_0)$ defined in~\eqref{eq:vstar}. The dotted line is the set~$\partial\basin{z_0}\setminus\mathcal S(z_0)$. The dashed line is the level set~$\{V=\Vstar\}$, with~$\Vstar$ defined in~\eqref{eq:vstar_bis}. There are eleven critical points including the minimum~$z_0$, five index 1 saddle points~$z_1,z_2,z_3,z_4$ and~$z_5$. The point $z_1$ is a non-separating saddle point. The remaining points are index-2 saddle points (local maxima). Here, the set~$I_{\min}$ defined in~\eqref{eq:vstar_bis} is given by~$I_{\min}=\{2,3,4\}$, and the set~$\mathcal X(z_0)$ defined in~\eqref{eq:non_removable_points} is given by~$\mathcal X(z_0)=\{1\}$, so that under~\eqref{hyp:energy_well}, it holds in particular~$\epsLimit{1}=+\infty$. }
    \label{fig:basin}    
    \end{figure}

    \begin{figure}
        \center
        \begin{tikzpicture}[scale=1.25]
            \node (background) at (0,-0.16) {\includegraphics[width=0.75\linewidth]{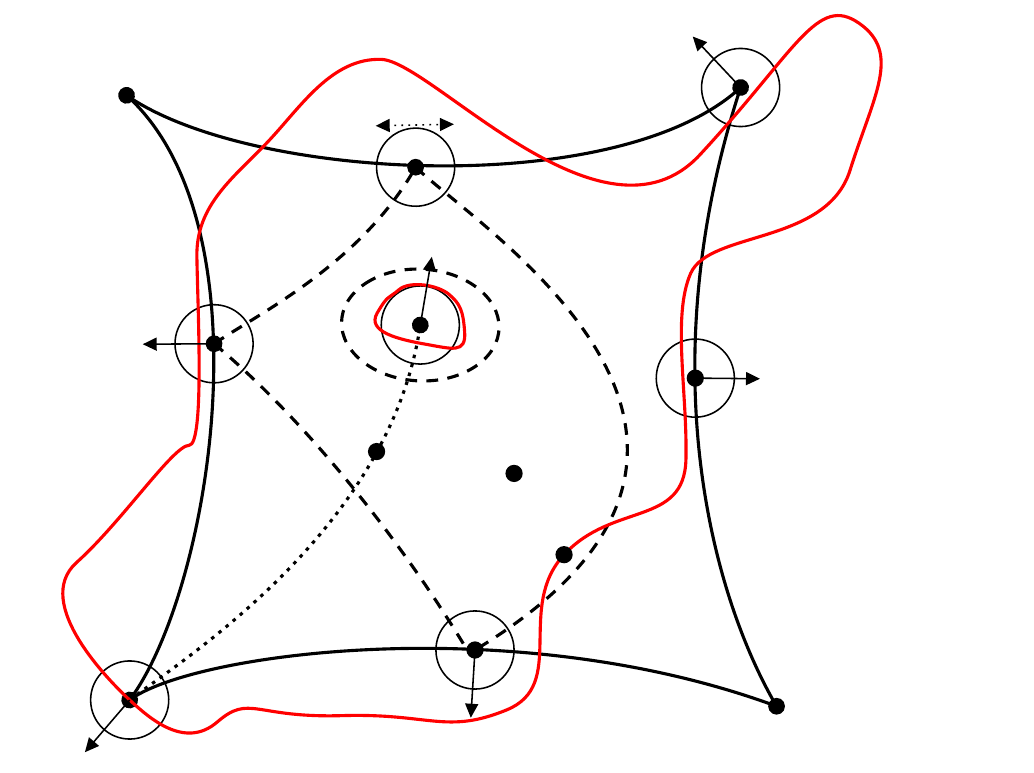}};
            \node[scale=0.8] at (0.4,-1.3) {$z^*$};
            \node[scale=0.8] at (-0.9,2.3) {$\largeRadius(\beta)$};
            \node[scale=0.8] at (-3.5,0.2) {$\hessEigvec{3}{1}$};
            \node[scale=0.8] at (-4,-1.0) {$\partial\Omega_\beta$};
            \node[scale=0.8] at (2.3,-0.1) {$\hessEigvec{5}{1}$};
            \node[scale=0.8] at (-0.6,1.2) {$\hessEigvec{6}{1}$};
            \node[scale=0.8] at (-4,-3.2) {$\hessEigvec{7}{1}$};
            \node[scale=0.8] at (1.8,3) {$\hessEigvec{8}{1}$};
            \node[scale=0.8] at (-0.8,-2.8) {$\hessEigvec{4}{1}$};
        \end{tikzpicture}
        \caption{Illustration of the hypothesis~\eqref{hyp:energy_well} around the basin depicted in Figure~\ref{fig:basin}. In red, the boundary of a domain satisfying the geometric constraint~\eqref{hyp:locally_flat} (at a fixed value of~$\beta$), but violating~\eqref{hyp:energy_well} is depicted. The boundary crosses the level set~$\{V=\Vstar\}$ (see~\eqref{eq:vstar_bis}), and~$z^*$ is therefore a low-energy generalized saddle point~(see Section~\ref{subsec:genericity} below) for this domain. The critical points~$z_9$ and~$z_{10}$ are assumed not to be in the enclosing set~$\mathcal K$ (see Assumption~\eqref{hyp:uniformly_bounded}), so that~$N=9$ here. The points~$z_0,z_1,z_2$ and~$z_4$ are far from the boundary, while the others are close. Note that the orientation convention for~$\hessEigvec{4}{1}$ is fixed by the geometry of~$\basin{z_0}$.}
        \label{fig:energy_neighborhood}
    \end{figure}

    \subsection{Genericity of the assumptions and comparison with previous work}
    \label{subsec:genericity}
    Let us briefly discuss how the assumptions listed in Sections~\ref{subsec:notation} to~\ref{subsec:geometric_assumptions} relate to previous works on the spectral asymptotics of the Witten Laplacian.
    We stress that our choice of geometric setting is heavily biased towards the optimization problem for~\eqref{eq:separation} mentioned in the introduction. As such, it would be possible to obtain Eyring--Kramers-type asymptotics in more general settings, but for which the separation of timescales~\eqref{eq:separation} would be asymptotically smaller than those domains to which we restrict ourselves, and so we filter out many such cases through our choice of geometric framework.

    \paragraph{Euclidean setting.} First, we stress that our results are for the time being restricted to the case of parameter dependent subsets of the Euclidean space $\R^d$, whereas many works from the semiclassical literature~\cite{HKN04,LP10,LN15,LPN21,DGLLPN19} consider the more general setting of Riemannian manifolds, with or without boundaries.
    The distinction is not entirely anecdotal, since the presence of a metric introduces another technical difficulty, which is typically taken care of by a suitable choice of local coordinates.

    Since the natural counterpart of the main geometric assumption~\eqref{hyp:locally_flat} would most likely involve expressing the asymptotic shape of the boundary in such a system of local coordinates, this would lead to conditions which would be difficult to verify in practice. We avoid this difficulty by considering for now the Euclidean case. 
    However, extending the results to the genuine Riemannian case would be of practical interest, since many atomistic trajectories evolve on manifolds (at least, flat tori), and would also allow to analyze the case of effective metastable dynamics in free-energy wells, which involve multiplicative noise in the associated diffusion process, see for instance the discussion in~\cite[Section 2.3]{ZHS16}.

    \paragraph{On the choice of the local geometry.}
    Assumption~\eqref{hyp:locally_flat} is of course restrictive, since it states that the shape of the boundary may be roughly parametrized in the local neighborhood of a critical point~$z_i$ close to the boundary by a single scalar parameter~$\epsLimit{i}$. If~$z_i$ is far from the boundary, there is no particular assumption on the local shape of the boundary, except that it must be sufficiently far from~$z_i$, according to~\eqref{hyp:scaling_deltai}.
    However, all works on spectral asymptotics of the Witten Laplacian dealing with boundary conditions impose geometric restrictions at the boundary, either for technical reasons, or for the sake of obtaining analytical formul\ae.
    To assess the restrictiveness of the condition~\eqref{hyp:locally_flat}, and compare it to previous works, it is thus helpful to consider the standard case~$\Omega_\beta = \Omega$ for all $\beta>0$, where~$\Omega\subset\R^d$ is a fixed smooth, bounded and open domain.
    Let~$z_i\in\overline\Omega$ be a critical point. We may ignore all critical points in~$\R^d\setminus \overline\Omega$ by considering~$\mathcal K$ to be a sufficiently small closed neighborhood of~$\overline\Omega$ in~\eqref{hyp:uniformly_bounded}.
    We distinguish two cases.
    \begin{enumerate}
        \item{If~$z_i\in\Omega$, then~$\sigma_\Omega(z_i)>0$ and~$z_i$ is thus far from the boundary. Then,~\eqref{hyp:locally_flat} and~\eqref{hyp:scaling_deltai} hold locally without restriction by setting~$\delta(\beta) = \frac{\sigma_\Omega(z_i)}2,$ and~$\gamma(\beta)=0$ for all $\beta>0$.}
        \item{If~$z_i\in\partial\Omega$, then~$z_i$ is close to the boundary with~$\epsLimit{i}=0$. This is the case of critical points on the boundary analyzed in~\cite{LPN21,LLPN22}. In~\cite{LPN21}, it is assumed that for each $z_i\in\partial \Omega$ such that~$\Ind(z_i)=1$, the outward normal~$\n_{\Omega}(z_i)$ is an eigenvector for the unique negative eigenvalue of~$\Hess V(z_i)$. As we show in Lemma~\ref{lemma:geometry_equivalent} below, this requirement is equivalent to~\eqref{hyp:locally_flat} in the case of a fixed domain for the order-one saddle points lying on the boundary. In~\cite{LLPN22}, for the purpose of obtaining finer estimates on the normal derivative of low-lying eigenmodes, this condition is replaced with the much stronger requirement that~$\partial \Omega$ coincides with the stable manifold~$\mathcal{W}^+(z_i)$ in a neighborhood of~$z_i$.}
    \end{enumerate}

    \begin{lemma}
        \label{lemma:geometry_equivalent}
    Let~$\Omega\subset\R^d$ be a smooth open domain, and set~$\Omega_\beta=\Omega$ for all~${\beta>0}$. Then~\eqref{hyp:uniformly_bounded},~\eqref{hyp:alpha_exists},~\eqref{hyp:locally_flat} and~\eqref{hyp:scaling_deltai} hold if and only if~$\Omega$ is bounded, and for each~$z_i\in\overline\Omega$ such that~$\nabla V(z_i)=0$, either~$z_i\in\Omega$ or~$\n_\Omega(z_i)=\hessEigvec{i}{1}$.
    \end{lemma}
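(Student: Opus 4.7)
My plan is to prove the equivalence by treating each implication separately, with the bulk of the effort concentrated in the forward direction at boundary critical points.

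\textbf{Forward direction.} Suppose~\eqref{hyp:uniformly_bounded}--\eqref{hyp:scaling_deltai} hold with $\Omega_\beta = \Omega$. Boundedness of $\Omega$ follows immediately from~\eqref{hyp:uniformly_bounded}. Because $\sigma_\Omega(z_i)$ is $\beta$-independent and non-negative for $z_i \in \overline{\Omega}$, the limit in~\eqref{hyp:alpha_exists} equals $+\infty$ when $z_i \in \Omega$ and $0$ when $z_i \in \partial\Omega$ (the value $-\infty$ is excluded by~\eqref{hyp:alpha_exists}, which is why $\mathcal K$ must be chosen to contain no critical point outside $\overline \Omega$). The interior case gives nothing more, so fix $z_i \in \partial\Omega$ and assume for contradiction that $\mathrm{n}_\Omega(z_i) \neq \hessEigvec{i}{1}$. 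I split into two subcases. First, if the two vectors are linearly independent, there exists a unit tangent direction $w \in T_{z_i}\partial\Omega$ with $\eta := w \cdot \hessEigvec{i}{1} > 0$ (possibly after negating $w$). By smoothness of $\partial\Omega$, there exist $t_0, C > 0$ such that the point $x(t) := z_i + tw - Ct^2\,\mathrm{n}_\Omega(z_i)$ lies in $\Omega$ for all $0 < t < t_0$. Choosing $t := \min(t_0/2, \largeRadius(\beta)/2)$, one has $x(t) \in B(z_i, \largeRadius(\beta)) \cap \Omega$, so the upper inclusion in~\eqref{hyp:locally_flat} (with $\alpha^{(i)} = 0$) gives $t\eta - Ct^2\, \mathrm{n}_\Omega(z_i)\cdot\hessEigvec{i}{1} < \smallRadius(\beta)$. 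For $t$ small this left-hand side exceeds $t\eta/2$, so multiplying through by $\sqrt\beta$ we obtain $\sqrt\beta\, t\eta/2 \leq \sqrt\beta\,\smallRadius(\beta)$, whose right side tends to $0$ while the left side diverges (either $t$ is constant, or $t \sim \largeRadius(\beta)$ with $\sqrt\beta\largeRadius(\beta)\to\infty$), a contradiction. Second, if $\mathrm{n}_\Omega(z_i) = -\hessEigvec{i}{1}$, then $\hessEigvec{i}{1}$ points inward; the point $z_i + (\largeRadius(\beta)/2)\hessEigvec{i}{1}$ lies in $\Omega$ for large $\beta$, yet its $\hessEigvec{i}{1}$-projection equals $\largeRadius(\beta)/2 \gg \smallRadius(\beta)$, directly violating the upper inclusion of~\eqref{hyp:locally_flat}. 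Hence $\mathrm{n}_\Omega(z_i) = \hessEigvec{i}{1}$.

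\textbf{Backward direction.} Assume $\Omega$ is bounded and that the normal condition holds at every critical point in $\overline\Omega$. Since $V$ is Morse, its critical points are isolated, so we may take $\mathcal K$ to be a closed $\varepsilon$-neighborhood of $\overline\Omega$ containing no critical point outside $\overline\Omega$, which establishes~\eqref{hyp:uniformly_bounded}. For each critical point $z_i \in \mathcal K = \overline\Omega$-neighborhood (after restriction, exactly those in $\overline\Omega$), the same case split as above shows $\alpha^{(i)} \in \{0, +\infty\}$, giving~\eqref{hyp:alpha_exists}. For~\eqref{hyp:locally_flat} and~\eqref{hyp:scaling_deltai}, I will take $\largeRadius(\beta) := \beta^{-1/3}$, so that $\sqrt\beta\,\largeRadius(\beta) = \beta^{1/6} \to \infty$, $\sqrt\beta\,\largeRadius(\beta)^2 = \beta^{-1/6} \to 0$, and $\largeRadius(\beta)\sqrt{\beta/\log\beta} = \beta^{1/6}/\sqrt{\log\beta} \to \infty$. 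For each interior critical point, $B(z_i, \largeRadius(\beta)) \subset \Omega$ for $\beta$ large, and since $\mathcal O_i^{\pm}(\beta) = B(z_i, \largeRadius(\beta))$ when $\alpha^{(i)} = +\infty$, the local inclusion is trivially satisfied with any $\smallRadius(\beta) \geq 0$. For each boundary critical point $z_i$, the condition $\mathrm{n}_\Omega(z_i) = \hessEigvec{i}{1}$ allows one to locally parametrize $\partial\Omega$ in the coordinates $y^{(i)}$ as a graph $y_1 = f_i(y')$ with $f_i(0) = 0$ and $\nabla f_i(0) = 0$, so that $|f_i(y')| \leq C_i |y'|^2$ on a fixed neighborhood of the origin. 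Taking $\smallRadius(\beta) := (\max_i C_i)\largeRadius(\beta)^2$ yields both inclusions of~\eqref{hyp:locally_flat} (the upper via $|f_i(y')| \leq C_i \largeRadius(\beta)^2$ on the local ball, the lower because points with $y_1 < -C_i \largeRadius(\beta)^2 \leq f_i(y')$ lie strictly inside $\Omega$), while $\sqrt\beta\,\smallRadius(\beta) = O(\beta^{-1/6}) \to 0$.

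The only non-routine step is the geometric argument in the forward direction at boundary critical points; the rest is a matter of matching scales. In particular, the forward argument shows that the quantitative sandwiching in~\eqref{hyp:locally_flat}, combined with the strict separation $\smallRadius(\beta) \ll \beta^{-1/2} \ll \largeRadius(\beta)$ encoded in~\eqref{eq:scaling_inequalities}, precisely captures the infinitesimal information that the outward normal coincides with the negative-eigenvalue eigenvector at a boundary saddle, recovering the condition imposed ``by hand'' in the fixed-domain works~\cite{LPN21,LLPN22}.
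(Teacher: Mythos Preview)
Your proof is correct in spirit and reaches the same conclusion, but the route differs from the paper's in both directions, and there is one minor technical point to tighten.

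\textbf{Comparison of approaches.} In the forward direction, the paper argues directly with the signed distance: for any unit $u \perp \hessEigvec{i}{1}$ it places the point $z_i + \beta^{-1/2}u$ inside the strip $\localNeighborhood[+]{i}(\beta)\setminus\localNeighborhood[-]{i}(\beta)$ and deduces $d(z_i+\beta^{-1/2}u,\partial\Omega)\leq 2\smallRadius(\beta)$, whence $u^\intercal\nabla\sigma_\Omega(z_i)=0$ by letting $\beta\to\infty$; collinearity of $\nabla\sigma_\Omega(z_i)$ with $\hessEigvec{i}{1}$ follows, and the sign is fixed by the orientation convention built into~\eqref{hyp:locally_flat}. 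Your argument is by contradiction with a case split (linearly independent versus antipodal), producing explicit interior points that violate the upper inclusion. Both are valid; the paper's approach is shorter and avoids the case split, while yours has the pedagogical merit of exhibiting the offending points. In the backward direction, the paper Taylor-expands $\sigma_\Omega$ at $z_i$ and uses scales $\largeRadius(\beta)=\beta^{s-1/2}$, $\smallRadius(\beta)=\beta^{t-1}$ with $0<2s<t<\tfrac12$; you instead parametrize $\partial\Omega$ locally as a graph $y_1=f_i(y')$ and take $\largeRadius(\beta)=\beta^{-1/3}$, $\smallRadius(\beta)=C\largeRadius(\beta)^2$. These are equivalent bookkeeping choices.

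\textbf{A small gap.} In both subcases of your forward argument you implicitly treat $\largeRadius(\beta)$ as eventually small: in subcase~2 the claim that $z_i+\tfrac{\largeRadius(\beta)}{2}\hessEigvec{i}{1}\in\Omega$ requires $\largeRadius(\beta)/2$ to be below the inward reach at $z_i$, and in subcase~1 the choice $t=\min(t_0/2,\largeRadius(\beta)/2)$ does not automatically give $|x(t)-z_i|<\largeRadius(\beta)$ when $\largeRadius(\beta)$ is of order one. Nothing in~\eqref{hyp:locally_flat}--\eqref{hyp:scaling_deltai} forces $\largeRadius(\beta)\to 0$ (e.g.\ $\largeRadius(\beta)\equiv c$ is admissible). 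The fix is immediate: either note that~\eqref{hyp:locally_flat} is stable under reducing $\largeRadius$ and work WLOG with $\largeRadius(\beta)\to 0$, or, as the paper does, run the argument at the intrinsic scale $\beta^{-1/2}$ rather than $\largeRadius(\beta)$, which automatically satisfies $\smallRadius(\beta)\ll\beta^{-1/2}\ll\largeRadius(\beta)$.
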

    \begin{proof}
            Assume that \eqref{hyp:uniformly_bounded}--\eqref{hyp:scaling_deltai} hold. Then~$\Omega$ is bounded, according to~\eqref{hyp:uniformly_bounded}. We only need to check the case of a critical point~$z_i\in\partial\Omega$, in which case~$\epsLimit{i}=0$. Since~$\Omega$ is smooth, $\n_\Omega(z_i) = -\nabla \sigma_{\Omega}(z_i) \neq 0$. Let $u\in\R^d$ with~$|u|=1$ and~$u^\intercal\hessEigvec{i}{1}=0$. According to~\eqref{hyp:locally_flat}, for~$\beta>\beta_0$ such that~$\beta^{-\frac12}<\largeRadius(\beta)$ for all~$\beta>\beta_0$, 
            the ball $B(z_i+\beta^{-\frac12}u,2\smallRadius(\beta))$ intersects both~$\localNeighborhood[-]{i}(\beta)\subset \Omega$ and~$B(z_i,\largeRadius(\beta))\setminus \localNeighborhood[+]{i}(\beta)\subset \Omega^c$. It follows that
            $\d(z_i + \beta^{-\frac12}u,\partial\Omega) \leq 2\smallRadius(\beta)$, hence:
            \[ \left|u^\intercal\nabla \sigma_{\Omega}(z_i)\right| \leq \underset{\beta\to\infty}{\lim\,\sup}\,\frac{\d(z_i+\beta^{-\frac12}u,\partial\Omega)}{\beta^{-\frac12}} \leq \underset{\beta\to\infty}{\lim\,\sup}\,2\sqrt\beta\smallRadius(\beta)=0.\]
            Since~$u$ was arbitrary orthogonal to~$\hessEigvec{i}{1}$, and~$\nabla\sigma_{\Omega}(z_i)\neq 0$,~$\hessEigvec{i}{1}$ is collinear with the unit outward normal~$\mathrm{n}_{\Omega}= -\nabla \sigma_{\Omega}$, so that they are in fact equal, according to the orientation convention imposed by~\eqref{hyp:locally_flat}.

            For the reverse implication, since~$\Omega$ is bounded,~\eqref{hyp:uniformly_bounded} holds with~$\mathcal K = \overline \Omega$. For a critical point~$z_i\in\Omega$,~\eqref{hyp:alpha_exists},~\eqref{hyp:locally_flat} hold locally with~$\epsLimit{i}=+\infty,\largeRadius(\beta) = \sigma_\Omega(z_i)/2$ and $\smallRadius(\beta)=0$.
            
            If~$z_i\in\partial\Omega$,~\eqref{hyp:alpha_exists} holds locally with~$\epsLimit{i}=0$, and by hypothesis~$\n_\Omega(z_i) = \hessEigvec{i}{1}$. We once again rely on the smoothness of~$\partial \Omega$ to write the Taylor expansion
            \[\sigma_{\Omega}(z_i+h) = \sigma_{\Omega}(z_i) + \nabla\sigma_{\Omega}(z_i)^\intercal h + R_i(h) = -h^\intercal\hessEigvec{i}{1} + R_i(h),\]
            where there exists~$C_i,h_{0,i}>0$ such that~$|R_i(h)| \leq C_i|h|^2$ for all~$|h|<h_{0,i}$. Set~$\largeRadius(\beta) = \beta^{\deltaScalingExp-\frac12}$,~$\smallRadius(\beta)=\beta^{\gammaScalingExp-1}$ with~$0<2\deltaScalingExp < \gammaScalingExp < \frac12$. Then~$\largeRadius,\smallRadius$ satisfy the scaling assumptions of~\eqref{hyp:locally_flat} and~\eqref{hyp:scaling_deltai}.
            Recalling the definition of the capped balls~\eqref{eq:capped_balls}, we have~$\sigma_\Omega(\localNeighborhood[-]{i}) \subset [\smallRadius(\beta) - C_i\delta(\beta)^2,+\infty)$ for $\beta>h_{0,i}^{\frac1{s-1/2}}$, and likewise~$\sigma_\Omega(B(z_i,\largeRadius^{(i)}(\beta))\setminus\localNeighborhood[+]{i}) \subset (-\infty,-\smallRadius(\beta) +C_i\delta(\beta)^2]$. It is thus clear that the inclusion~\eqref{hyp:locally_flat} holds locally around~$z_i$ for~$\beta>\beta_{0,i} := \max\{h_{0,i}^{\frac1{s-1/2}},C_i^{\frac1{t-2s}}\}$.
            
            Since~$V$ has finitely many critical points in~$\overline{\Omega}$, the functions~$\largeRadius$ and~$\smallRadius$ satisfy by construction the requirements of Assumption~\eqref{hyp:locally_flat} with~$\beta_0 := \underset{0\leq i < N}{\max}\,\beta_{0,i}$.

    \end{proof}
    Lemma~\ref{lemma:geometry_equivalent} shows that our setting can be understood as a generalization of that of~\cite{LPN21} in the case of temperature-dependent domains in Euclidean space.

    \begin{remark}
        In the general case, the geometry of the boundary~$\partial\Omega_\beta$ may be quite degenerate under~\eqref{hyp:locally_flat}, even close to critical points, as no particular restrictions on~$\partial\Omega_\beta$ are imposed in the strip~$\localNeighborhood[+]{i}(\beta)\setminus\localNeighborhood[-]{i}(\beta)$, except for its smoothness at each~$\beta>0$. For example, curvatures at any boundary point within the strip may diverge arbitrarily fast as~$\beta\to\infty$, or it may happen that~$z_i\in\partial\Omega_\beta$ for all~$\beta>0$ and some~$0\leq i < N$, but that~$\n_{\Omega_\beta}(z_i)$ fails to converge.
        In this respect, our analysis goes beyond the standard setting used for example in~\cite{LPN21}.
    \end{remark}

    \paragraph{On Assumptions~\eqref{hyp:one_minimum} and~\eqref{hyp:energy_well}.}
    A phenomenon  of interest in the semiclassical approach to metastability is the interaction between potential wells, see~\cite{HS85,DS99,HKN04,BGK05,LN15,LLPN22}. However, we consider here, as in~\cite{DGLLPN19}, the case where the unique local minimum of~$V$ in~$\mathcal K$ which is far from the boundary is~$z_0$ (we stress however that this minimum need not be global nor unique on~$\mathcal K$, provided the other local minima are close to the boundary), and thus there is in some sense only one potential well.
    The motivation for concentrating on this setting is the conclusion of Theorem~\ref{thm:eyring_kramers} below, which shows that the modification to the standard Eyring--Kramers formula arises when low-lying index 1 saddle points are close to the boundary, which therefore prevents any interaction between energy wells separated by these critical points.

    Besides, from the perspective of the shape optimization problem mentioned in the introduction, the more interesting situation occurs when both the first and second eigenvalues are sensitive to the position of the boundary. In the context of MD, domains containing multiple energy minima can be seen as energy superbasins, for which the relaxation timescale is related to the crossing of energy barriers internal to the domain, and which will therefore be insensitive to the position of the boundary.
    We believe however that the analysis can be extended to handle the case of multiple energy wells, by adapting standard arguments (see for instance~\cite{LPN21}) to our temperature-dependent setting.

    In the analysis of the Witten Laplacian with Dirichlet boundary~\cite{HN06,LN15,DGLLPN19,LPN21}, the interesting phenomenon of so-called generalized saddle points (which are not genuine critical points) also plays a role. These are local minima~$z$ of~$V|_{\partial \Omega_\beta}$ for which the normal derivative of~$V$ is positive, i.e.~$\nabla V(z)\cdot \n_{\Omega_\beta}(z) > 0$. In particular, such points are not critical points of~$V$, and can be seen informally as index-one saddle points for the extension of~$V$ by~$-\infty$ outside of~$\Omega_\beta$.
    The role of Assumption~\eqref{hyp:energy_well} is to ensure that generalized saddle points are sufficiently high in energy so that their contribution to the smallest eigenvalue is negligible, and that the main contribution comes from intrinsic low-lying saddle points, indexed by~$I_{\min}$.

    Since the definition of generalized saddle points depends on the geometry of~$\partial\Omega_\beta$ which varies as~$\beta\to\infty$, one way to analyze their contribution would be to place strong geometric constraints on the domains, using analogs of Assumption~\eqref{hyp:locally_flat} and~Proposition~\ref{prop:laplace} in the~$1/\beta$ scaling around a predetermined limiting geometry. 
    However, we do not pursue this direction. Besides, the contribution of non-critical generalized saddle points to the pre-exponential factor is of order~$\sqrt\beta$ rather than~$1$ in the case of a usual saddle point (see for example~\cite{HN06}), while they do not contribute to the harmonic spectrum. From the point of view of maximizing the separation of timescales~\eqref{eq:separation}, it is therefore always asymptotically preferable to consider domains~$\Omega_\beta$ which do not contain generalized order 1 saddle points at the energy level~$\Vstar$.
 
    \section{Statement of the main results}
    \label{sec:main_results}
    We present in this section the main results of this work, giving the leading-order asymptotics of the eigenvalues of~$-\cL_\beta$ in the small temperature limit.
    The resulting formulas are explicit in terms of easily computable functions, and give in particular quantitative estimates for crucial timescales related to the dynamics~\eqref{eq:overdamped_langevin} conditioned on non-absorption at the boundary,
    namely the metastable exit timescale~$\lambda_{1,\beta}(\Omega_\beta)^{-1}$, and the local relaxation timescale~$\left(\lambda_{2,\beta}(\Omega_\beta)-\lambda_{1,\beta}(\Omega_\beta)\right)^{-1}$.
    As noted in the introduction, this is of practical interest for assessing the efficiency of acceleration methods in MD, such as ParRep~\cite{V98,PUV15,LBLLP12} and ParSplice~\cite{PCWKV16}, as well as providing quantitative estimates for the decorrelation time.
    To estimate ~$\lambda_{2,\beta}(\Omega_\beta)$, we make use of a harmonic approximation result (Theorem~\ref{thm:harm_approx}), which we state in Section~\ref{subsec:harm}.
    Under~\eqref{hyp:one_minimum}, the harmonic approximation shows that~$\lambda_{1,\beta}\to 0$ in the limit~$\beta\to\infty$, but without any estimate on the asymptotic rate of decay. To get useful estimates of the separation of timescales~\eqref{eq:separation}, we therefore derive finer asymptotics for~$\lambda_{1,\beta}(\Omega_\beta)$, extending the Eyring--Kramers formula (Theorem~\ref{thm:eyring_kramers}). This result is discussed in Section~\ref{subsec:ek}.
    In Section~\ref{subsec:takeaways}, we present in Corollary~\ref{corr:practical_implication} a consequence of the first two main results, and discuss its relevance to the optimization of the efficiency of parallel replica methods from accelerated MD, with respect to the definitions of the metastable states.

    \subsection{Harmonic approximation of the Dirichlet spectrum}
    \label{subsec:harm}
    We generalize the harmonic approximation of~\cite{S83,CFKS87} to the case of a homogeneous Dirichlet conditions in a temperature-dependent domain, treating the case in which the asymptotic geometry near critical points is prescribed by the assumptions of Section~\ref{subsec:geometric_assumptions}.

    The first main result involves an auxiliary unbounded operator, defined by a direct sum of quantum harmonic oscillators
    \begin{equation}
        \label{eq:harm_approx_informal}
        H_{\beta,\alpha}^{\mathrm{H}} = \bigoplus_{i=0}^{N-1} H_{\beta,\epsLimit{i}}^{(i)},
    \end{equation}
    where each harmonic oscillator $H_{\beta,\epsLimit{i}}^{(i)}$, understood as some local model associated to the critical point $z_i$, is endowed with homogeneous Dirichlet boundary conditions at the boundary of the half-space~$z_i + \halfSpace{i}\left(\frac{\epsLimit{i}}{\sqrt\beta}\right)$, recalling the definition~\eqref{eq:half_space}.

    A precise definition of the operator~$H_{\beta,\alpha}^{\mathrm{H}}$ is given by the construction contained in Sections~\ref{subsec:local_oscillators}--\ref{subsec:global_harm_operator}.
    This construction shows that for any~$k\geq 1$, the $k$-th smallest eigenvalue of $H_{\beta,\alpha}^{\mathrm{H}}$ is given by~$\beta\lambda_{k,\alpha}^{\mathrm{H}}$ for some $\lambda_{k,\alpha}^{\mathrm{H}}\geq 0$ independent from $\beta$. We prove the following result.

    \begin{theorem}
        \label{thm:harm_approx}
        Assume that~\eqref{hyp:uniformly_bounded}~\eqref{hyp:alpha_exists},~\eqref{hyp:locally_flat} and~\eqref{hyp:scaling_deltai} hold.
        For any~$k\geq 1$, the following limit holds:
        \begin{equation}
            \label{eq:harm_limit}
            \underset{\beta\to\infty}{\lim}\,\lambda_{k,\beta} = \lambda_{k,\alpha}^{\mathrm{H}},
        \end{equation}
        where
        \begin{equation}
            \label{eq:global_alpha}
            \alpha = \left(\epsLimit{i}\right)_{0\leq i < N}
        \end{equation}
        where the~$\epsLimit{i}$ are defined in~\eqref{hyp:alpha_exists} and~$\lambda_{k,\beta}$ is the~$k$-th Dirichlet eigenvalue of the operator~\eqref{eq:generator} considered on~$\Lmu(\Omega_\beta)$.
    \end{theorem}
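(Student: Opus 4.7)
I would follow the classical harmonic-approximation scheme (in the spirit of~\cite{S83,CFKS87,HKN04}) and prove the two matching inequalities~$\limsup_\beta \lambda_{k,\beta} \leq \globalEigval{k}{\alpha} \leq \liminf_\beta \lambda_{k,\beta}$ by separate min--max arguments, working directly with the quadratic form~\eqref{eq:quadratic_form_generator} (or equivalently with~\eqref{eq:witten_quad_form}). The key geometric input is that, under the linear isometry~$\eta := \sqrt\beta\,\hessPassage[\intercal]{i}(x-z_i)$, the cap~$B(z_i,\largeRadius(\beta))\cap \Omega_\beta$ is trapped between the rescalings of~$\localNeighborhood[-]{i}(\beta)$ and~$\localNeighborhood[+]{i}(\beta)$, both of which converge to the half-space~$\halfSpace{i}(\epsLimit{i})$ by~\eqref{hyp:locally_flat} (since~$\sqrt\beta\smallRadius(\beta)\to 0$), while their intersection with~$\R^d$ exhausts~$\halfSpace{i}(\epsLimit{i})$ by~\eqref{hyp:scaling_deltai} ($\sqrt\beta\largeRadius(\beta)\to \infty$). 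Combined with the Taylor expansion~$V(x)=V(z_i)+\tfrac{1}{2\beta}\sum_j\hessEigval{i}{j}\eta_j^2+O(|\eta|^3/\beta^{3/2})$, the rescaled quadratic form converges formally to that of the local rescaled Ornstein--Uhlenbeck operator~$\scaledLocalOscillator{i}$ on~$\halfSpace{i}(\epsLimit{i})$ (Dirichlet condition when~$\epsLimit{i}<+\infty$, no condition otherwise), whose spectrum enumerated across all~$i$ in increasing order yields~$\globalEigval{k}{\alpha}$, in agreement with~\eqref{eq:harm_spectrum_enumeration}.

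\textbf{Upper bound via quasimodes.} For each of the first~$k$ values~$\globalEigval{m}{\alpha}$, pick pairs~$(i_m,n_m)$ with~$\localEigval{i_m}{n_m}=\globalEigval{m}{\alpha}$ and associated orthonormal eigenmodes~$\scaledHarmMode{i_m}{n_m}$ of~$\scaledLocalOscillator{i_m}$. Pull each one back to~$\Omega_\beta$ via the isometry above and multiply by a smooth cutoff~$\localCutoff{i_m}$ supported in~$B(z_{i_m},\largeRadius(\beta))$ with~$|\nabla\localCutoff{i_m}|\lesssim 1/\largeRadius(\beta)$, producing quasimodes~$\harmQuasiMode{i_m}{n_m}\in H^1_{0,\beta}(\Omega_\beta)$. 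The Dirichlet trace on~$\partial\Omega_\beta\cap\supp\localCutoff{i_m}$ vanishes to the required order because, in rescaled coordinates, the actual boundary sits in the thin strip~$\{|\eta_1-\epsLimit{i_m}|\leq \sqrt\beta\smallRadius(\beta)\}$, on which the harmonic modes are uniformly small (Lipschitz estimate plus the Gaussian decay). Three verifications then conclude this step: disjoint supports at large~$\beta$; the Gram matrix equals the identity up to~$o(1)$ in~$\Lmu(\Omega_\beta)$; and each Rayleigh quotient converges to~$\localEigval{i_m}{n_m}$, with remainders controlled by the cubic Taylor error (of relative size~$\largeRadius(\beta)$) and by the Gaussian tail~$\lesssim\e^{-c\sqrt\beta\largeRadius(\beta)}$ outside the cutoff support. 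The min--max principle then yields the desired upper bound.

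\textbf{Lower bound via IMS localization.} Fix a smooth partition of unity~$\localCutoff{0}^2 + \sum_{i=0}^{N-1}\localCutoff{i+1}^2 = 1$ on~$\overline{\mathcal K}$ with~$\supp\localCutoff{i+1}\subset B(z_i,\largeRadius(\beta))$ and~$\supp\localCutoff{0}$ contained in a region where~$|\nabla V|\geq c>0$. The IMS-type identity
\[
\langle -\cL_\beta u,u\rangle_{\Lmu(\Omega_\beta)} = \sum_{i\geq 0}\langle -\cL_\beta(\localCutoff{i}u),\localCutoff{i}u\rangle_{\Lmu(\Omega_\beta)} - \frac{1}{\beta}\int_{\Omega_\beta}\Bigl(\sum_i|\nabla \localCutoff{i}|^2\Bigr)u^2\,\e^{-\beta V}
\]
holds on~$H^1_{0,\beta}(\Omega_\beta)$, and the last term is~$O\bigl(\bigl(\beta\largeRadius(\beta)^2\bigr)^{-1}\bigr)\|u\|_{\Lmu(\Omega_\beta)}^2 = o(1)\|u\|_{\Lmu(\Omega_\beta)}^2$ by~\eqref{hyp:scaling_deltai}. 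On~$\supp\localCutoff{0}$, the Witten potential~$U_\beta$ of~\eqref{eq:witten_laplacian} satisfies~$U_\beta\geq c^2\beta^2/4-O(\beta)$, forcing the mass of any eigenfunction with bounded eigenvalue to concentrate near critical points. For~$i\geq 1$, rescaling around~$z_i$ reduces the remaining piece to a Dirichlet eigenvalue problem for an~$o(1)$ perturbation of~$\scaledLocalOscillator{i}$ on a domain bracketed by the rescaled~$\localNeighborhood[\pm]{i}(\beta)$. Monotonicity of Dirichlet spectra under domain inclusion and continuity of~$\localEigval{i}{n}$ with respect to~$\epsLimit{i}$ give convergence to~$\localEigval{i}{n}$, and a second min--max argument then yields~$\liminf\lambda_{k,\beta}\geq \globalEigval{k}{\alpha}$.

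\textbf{Main obstacle.} The most delicate point is the lower bound near critical points close to the boundary. Assumption~\eqref{hyp:locally_flat} only constrains~$\partial\Omega_\beta$ within the~$\smallRadius(\beta)$-thin strip~$\localNeighborhood[+]{i}(\beta)\setminus\localNeighborhood[-]{i}(\beta)$ and tolerates essentially arbitrary geometric degeneracy therein, so standard smooth shape-perturbation arguments do not apply. The fix is to bracket the true Dirichlet eigenvalues between those of the pure half-space problems on~$\localNeighborhood[\pm]{i}(\beta)$ (which, after rescaling, become Dirichlet problems for~$\scaledLocalOscillator{i}$ on~$\halfSpace{i}(\epsLimit{i}\pm\sqrt\beta\smallRadius(\beta))\cap B(0,\sqrt\beta\largeRadius(\beta))$), and then exploit continuity of the spectrum of~$\scaledLocalOscillator{i}$ under translation of the cutting hyperplane and truncation by a ball of divergent radius. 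This continuity is itself a compactness statement rooted in the exponential confinement of the harmonic modes in the Gaussian weight, and is where the precise sandwich form of~\eqref{eq:capped_balls} becomes essential.
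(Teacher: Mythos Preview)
Your two-sided min--max strategy with localized harmonic quasimodes and IMS localization is the paper's, and you correctly identify continuity of the Dirichlet-oscillator spectrum in the boundary parameter as a key ingredient. But two steps fail as written. For the upper bound, your quasimodes are built from eigenmodes of $K^{(i_m)}_{\epsLimit{i_m}}$ at the \emph{limiting} parameter; these vanish on the hyperplane $\{y_1^{(i_m)}=\epsLimit{i_m}/\sqrt\beta\}$, not on $\partial\Omega_\beta$, which by~\eqref{hyp:locally_flat} may sit anywhere in the $\smallRadius(\beta)$-strip. ``Vanishing to the required order'' is not enough: the Courant--Fischer principle needs test functions in $H^1_{0,\beta}(\Omega_\beta)$, i.e.\ with exactly zero trace. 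The paper repairs this by using eigenmodes of $K^{(i_m)}_{\epsLimit{i_m}-h}$ for a fixed $h>0$, which for large $\beta$ are supported in $\localNeighborhood[-]{i_m}(\beta)\subset\Omega_\beta$, and then letting $h\to0$ via continuity of $\lambda^{\mathrm H}_{k,\alpha}$ in $\alpha$.

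The lower bound has a subtler mismatch that your bracketing sketch does not resolve. After IMS you want $\langle H^{(i)}_{\beta,\theta}\,\localCutoff{i}u,\localCutoff{i}u\rangle\geq\beta\lambda^{(i)}_{\mathfrak N_i(k-1)+1,\theta}\|\localCutoff{i}u\|^2$ under the constraint $u\perp\varphi_1,\dots,\varphi_{k-1}$. This needs $\localCutoff{i}u$ to lie in the form domain of the half-space oscillator $K^{(i)}_\theta$ \emph{and} to be orthogonal to its first $\mathfrak N_i(k-1)$ eigenmodes. Domain inclusion only places $\localCutoff{i}u$ in $H^1_0$ of the \emph{outer} half-space (rescaled parameter $\epsLimit{i}+\sqrt\beta\smallRadius(\beta)$), while the $\varphi_j$---which must themselves lie in $H^1_{0,\beta}(\Omega_\beta)$---can only be built from eigenmodes of the \emph{inner} half-space (parameter $\leq\epsLimit{i}-\sqrt\beta\smallRadius(\beta)$). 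These are different operators, and orthogonality to one family does not give orthogonality to the other. The paper's resolution is not a bracketing argument but a geometric one: it constructs (Proposition~\ref{prop:domain_extension}) a smooth extension $\Omega_\beta^h\supset\Omega_\beta$ whose boundary coincides \emph{exactly} with $\partial\halfSpace{i}\bigl((\epsLimit{i}+h)/\sqrt\beta\bigr)$ inside $B(z_i,\tfrac12\largeRadius(\beta))$, passes to $\Omega_\beta^h$ by domain monotonicity $\lambda_{k,\beta}(\Omega_\beta)\geq\lambda_{k,\beta}(\Omega_\beta^h)$, and runs the whole IMS/max--min argument there, where both the form domain of $\localCutoff{i}u$ and the quasimodes refer to the \emph{same} half-space oscillator $K^{(i)}_{\epsLimit{i}+h}$; one again sends $h\to0$ at the end.
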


    Crucially, the limiting eigenvalue~$\lambda_{k,\alpha}^{\mathrm{H}}$ is expressible in terms of the eigenvalues of one-dimensional harmonic oscillators on the real-half line (with homogeneous Dirichlet boundary conditions) making their numerical approximation computationally feasible.
    
    The fact that the spectrum of $H_{\beta,\alpha}^{\mathrm{H}}$ scales linearly with $\beta$ is related to the expression~\eqref{eq:witten_laplacian} for the Witten Laplacian~$H_\beta$, which is unitarily equivalent to~$-\beta \cL_\beta$. In fact the operator~$H_{\beta,\alpha}^{\mathrm{H}}$ should be understood as a harmonic approximation to~$H_\beta$, in the spirit of~\cite[Chapter 11]{CFKS87}.
    The proof of Theorem~\ref{thm:harm_approx}, relies on the construction of approximate eigenvectors for~$H_\beta$, or so-called harmonic quasimodes, and is given in Section~\ref{sec:proof_harm_approx}.

    \subsection{A modified Eyring--Kramers formula}
    \label{subsec:ek}

    When~\eqref{hyp:one_minimum} is satisfied, Theorem~\ref{thm:harm_approx} implies that~$0=\lambda_{1,\alpha}^{\mathrm{H}}<\lambda_{2,\alpha}^{\mathrm{H}}$, which is insufficiently precise to obtain an asymptotic equivalent for the principal eigenvalue~$\lambda_{1,\beta}$ in the limit~${\beta\to +\infty}$.
    The second main result provides such an equivalent, under the additional assumptions discussed in Section~\ref{subsec:geometric_assumptions}.
    \begin{theorem}
        \label{thm:eyring_kramers}
        Assume that the hypotheses of Theorem~\ref{thm:harm_approx} hold, as well as~\eqref{hyp:one_minimum},~\eqref{hyp:energy_well_bounded},~\eqref{hyp:energy_well} and~\eqref{hyp:bound_delta}.
        The following estimate holds in the limit~$\beta\to+\infty$:
        \begin{equation}
            \label{eq:eyring_kramers}
            \lambda_{1,\beta} = \e^{-\beta(\Vstar-V(z_0))} \left[\sum_{i\in I_{\min}} \frac{|\hessEigval{i}{1}|}{2\pi\Phi\left(|\hessEigval{i}{1}|^{\frac12}\epsLimit{i}\right)}\sqrt{\frac{\det \nabla^2 V(z_0)}{\left|\det \nabla ^2 V(z_i)\right|}} \left(1 +\varepsilon_i(\beta)\right)\right],
        \end{equation}
        where
        \begin{equation}
            \label{eq:gaussian_cdf}
            \Phi(x) =  \displaystyle\frac1{\sqrt{2\pi}}\displaystyle\int_{-\infty}^x\e^{-\frac{t^2}2}\,\d t,\qquad \Phi(+\infty) = 1,
        \end{equation}
        and
        \begin{equation}
            \label{eq:eyring_kramers_error}
            \varepsilon_i(\beta) =\begin{cases}\O(\beta^{-1}\largeRadius(\beta)^{-2}),&\epsLimit{i}=+\infty,\\
                \O(\sqrt\beta\smallRadius(\beta)+\beta^{-1}\largeRadius(\beta)^{-2}+\beta^{-\frac12}),&\epsLimit{i}<+\infty.\end{cases}
        \end{equation}
    \end{theorem}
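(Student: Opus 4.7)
The strategy is to combine upper and lower matching bounds on the Rayleigh quotient
\begin{equation}
\lambda_{1,\beta} = \inf_{u\in H^1_{0,\beta}(\Omega_\beta)\setminus\{0\}} \frac{\beta^{-1}\int_{\Omega_\beta}|\nabla u|^2 \e^{-\beta V}}{\int_{\Omega_\beta} u^2\e^{-\beta V}},
\end{equation}
by isolating the contributions of the saddle points indexed by $I_{\min}$ to the numerator (``capacity'' term) and the principal well around $z_0$ to the denominator. The denominator is by far the easier half: the $L^2_\beta$ mass of any reasonable test function concentrates at $z_0$, and standard Laplace asymptotics under \eqref{hyp:one_minimum}--\eqref{hyp:energy_well} give $\int_{\Omega_\beta} e^{-\beta V} = (2\pi/\beta)^{d/2}(\det\nabla^2 V(z_0))^{-1/2}\e^{-\beta V(z_0)}(1+o(1))$, the other minima being close to the boundary and hence negligible. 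So the heart of the proof is evaluating the Dirichlet form contribution near each low saddle $z_i$, $i\in I_{\min}$, which must produce the factor $|\nu^{(i)}_1|/\bigl(2\pi\Phi(|\nu^{(i)}_1|^{1/2}\epsLimit{i})\bigr)$ times the usual geometric $\sqrt{\det\nabla^2 V(z_0)/|\det\nabla^2 V(z_i)|}\e^{-\beta V^\star}$.

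For the upper bound, I would build a quasimode $\widetilde u$ that equals $1$ inside a slightly shrunk version of $\basin{z_0}\cap\{V<\Vstar\}$ away from the low saddles, vanishes on $\partial\Omega_\beta$ and outside $\basin{z_0}\cap\{V<\Vstar+C\delta(\beta)^2\}$, and which in each local neighborhood $\localNeighborhood[+]{i}$ ($i\in I_{\min}$) depends only on the coordinate $y_1^{(i)}$ via a one-dimensional profile $f_i$. The right choice of $f_i$ is the solution on the half-line $\{y_1^{(i)} < \epsLimit{i}/\sqrt\beta\}$ of the one-dimensional quadratic approximation of $\mathcal{L}_\beta$; concretely, after rescaling $\tilde y = \sqrt\beta y_1^{(i)}$, it is (a smooth truncation of)
\begin{equation}
f_i(\tilde y) = \frac{1}{\Phi(|\nu^{(i)}_1|^{1/2}\epsLimit{i})}\int_{\tilde y}^{\epsLimit{i}} \frac{|\nu^{(i)}_1|^{1/2}}{\sqrt{2\pi}}\,\e^{\nu^{(i)}_1 s^2/2}\,\d s \cdot \e^{-\nu^{(i)}_1 \tilde y^2/2}
\end{equation}
(or equivalently constructed so that its $y_1^{(i)}$-derivative is a Gaussian cut at the boundary position). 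Plugging $\widetilde u$ into the Rayleigh quotient, Taylor-expanding $V$ up to second order on the $\delta(\beta)$-scale (admissible by \eqref{hyp:bound_delta}) and changing variables in the $(d-1)$ transverse directions yields, for each $i\in I_{\min}$, a Gaussian integral on $(-\infty,\epsLimit{i})\times\R^{d-1}$ whose longitudinal part is exactly $|\nu^{(i)}_1|\Phi(|\nu^{(i)}_1|^{1/2}\epsLimit{i})/(2\pi)$ times the transverse Gaussian, producing the announced prefactor. The uncertainty in the exact boundary position within the strip of width $\gamma(\beta)$ delivers the $\O(\sqrt\beta\gamma(\beta))$ term in \eqref{eq:eyring_kramers_error}, whereas truncating $V$ to its quadratic approximation and cutting off $\widetilde u$ at radius $\delta(\beta)$ yields the $\O(\beta^{-1}\delta(\beta)^{-2})$ and $\O(\beta^{-1/2})$ contributions.

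For the lower bound I would decompose the true eigenmode $u_{1,\beta}$ as $u_{1,\beta} = a\widetilde u + r$ with $r\perp_{L^2_\beta}\widetilde u$, and crucially use Theorem~\ref{thm:harm_approx}: under \eqref{hyp:one_minimum} one has $\lambda_{2,\beta}\to \lambda^{\mathrm H}_{2,\alpha}>0$, while $\lambda_{1,\beta}\to 0$, so the min-max principle forces $\|r\|_{L^2_\beta}^2\lesssim \lambda_{1,\beta}/\lambda_{2,\beta}$. Expanding $\langle -\mathcal{L}_\beta u_{1,\beta},u_{1,\beta}\rangle=\lambda_{1,\beta}\|u_{1,\beta}\|^2$ and using the computed upper bound on $\langle -\mathcal{L}_\beta \widetilde u,\widetilde u\rangle$, I would show that the cross-term $\langle -\mathcal{L}_\beta \widetilde u,r\rangle$ is negligible (by integration by parts and the fact that $-\mathcal{L}_\beta\widetilde u$ is supported near the saddles where the quasimode is almost a null eigenfunction of the local operator), yielding a matching lower bound on $\lambda_{1,\beta}$. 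Equivalently, one can argue via capacities: bound the exit rate from $\basin{z_0}$ from below by restricting a candidate flux to narrow Agmon tubes around the separatrix at each $z_i$, where the one-dimensional truncated profile is optimal up to the stated errors.

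The main obstacle will be the local analysis near a saddle $z_i$ with $\epsLimit{i}<\infty$. Classically, the Eyring--Kramers prefactor is obtained from a full Gaussian integral around the saddle; here the Dirichlet boundary truncates this Gaussian, and moreover the boundary is not exactly at $\epsLimit{i}/\sqrt\beta$ but sits in a $\gamma(\beta)$-strip of possibly wild geometry by \eqref{hyp:locally_flat}. Showing that this geometric uncertainty indeed produces only the $\O(\sqrt\beta\gamma(\beta))$ correction and does not spoil the $\Phi(|\nu^{(i)}_1|^{1/2}\epsLimit{i})$ prefactor requires carefully controlling the Dirichlet form of the quasimode on the strip by exploiting that the integrand there is exponentially smaller than near $z_i$ by a factor $\e^{-c\beta\gamma(\beta)^2}$ thanks to the curvature in the $y_1^{(i)}$ direction, and a matching lower bound on the capacity that cannot exploit a specific boundary shape. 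Handling all error terms uniformly in the scaling regime \eqref{eq:scaling_inequalities} is the bookkeeping part where I expect to spend most of the technical effort.
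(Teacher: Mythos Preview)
Your overall strategy---quasimode with a one-dimensional saddle profile, plus the spectral gap from Theorem~\ref{thm:harm_approx} to upgrade the upper bound to a two-sided estimate---is exactly the paper's. The paper packages the lower-bound step as a resolvent estimate (project $\widetilde u$ onto $u_{1,\beta}$ and bound $\|(1-\pi_\beta)\widetilde u\|$ by $\|\cL_\beta\widetilde u\|/\lambda_{2,\beta}$), which is equivalent to your decomposition but cleaner to execute; your version projects $u_{1,\beta}$ onto $\widetilde u$, which still works once you observe $\|r\|^2\le\langle-\cL_\beta\widetilde u,\widetilde u\rangle/\lambda_{2,\beta}$ and then bound the cross term by $\|\cL_\beta\widetilde u\|\,\|r\|$, so the essential input is the same: $\|\cL_\beta\widetilde u\|_{L^2_\beta}^2=\O(\beta^{-2}\delta(\beta)^{-2})\|\nabla\widetilde u\|_{L^2_\beta}^2$.

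The substantive difference is how the geometric uncertainty in $\partial\Omega_\beta$ is handled. You propose to work directly on $\Omega_\beta$; the paper instead sandwiches $\Omega_\beta^-\subseteq\Omega_\beta\subseteq\Omega_\beta^+$ by two auxiliary domains whose boundaries are \emph{exactly flat} hyperplanes in $B(z_i,\tfrac12\delta(\beta))$ (Proposition~\ref{prop:domain_extension}), builds the quasimodes $\psi_\beta^\pm$ there, and invokes domain monotonicity $\lambda_{1,\beta}(\Omega_\beta^+)\le\lambda_{1,\beta}\le\lambda_{1,\beta}(\Omega_\beta^-)$. This decouples the Dirichlet-condition issue from the quasimode accuracy: on $\Omega_\beta^\pm$ the profile $\varphi_\beta^{(i),\pm}$ vanishes exactly on the flat boundary and the Laplace asymptotics are clean, while the $\O(\sqrt\beta\gamma(\beta))$ error enters only through the $\pm2\gamma(\beta)$ shift in the upper integration limit. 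Your direct approach can be made to work by placing the quasimode boundary at the inner cap $\{y_1^{(i)}=\epsLimit{i}/\sqrt\beta-\gamma(\beta)\}$, but you then need to argue separately that this same object gives the matching lower bound, and the paper's sandwich avoids that asymmetry.

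Two local corrections. First, your profile $f_i$ carries a spurious factor $\e^{-\nu_1^{(i)}\tilde y^2/2}$; the correct $\cL_\beta$-representation quasimode is simply the (truncated) Gaussian integral $\varphi_\beta^{(i)}(x)\propto\int_{y_1^{(i)}(x)}^{\epsLimit{i}/\sqrt\beta}\e^{-\beta|\nu_1^{(i)}|t^2/2}\,\d t$, whose gradient is a Gaussian in $y_1^{(i)}$. Second, your claim that the integrand on the $\gamma$-strip is suppressed by $\e^{-c\beta\gamma(\beta)^2}$ is false: since $\sqrt\beta\gamma(\beta)\to 0$, that factor tends to $1$. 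The $\O(\sqrt\beta\gamma(\beta))$ error comes not from exponential smallness but from the strip having width $\gamma(\beta)$ while the one-dimensional Gaussian normalization is of size $\beta^{-1/2}$; equivalently, $\Phi(|\nu_1^{(i)}|^{1/2}(\epsLimit{i}\pm\sqrt\beta\gamma))=\Phi(|\nu_1^{(i)}|^{1/2}\epsLimit{i})(1+\O(\sqrt\beta\gamma))$.
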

    According to Proposition~\ref{prop:qsd_spectral}, Theorem~\ref{thm:eyring_kramers} gives an asymptotic equivalent for the inverse of the mean exit time of the dynamics~\eqref{eq:overdamped_langevin} initially distributed according to the QSD in~$\Omega_\beta$.
     \begin{remark}
        By restricting oneself to the geometric setting in which the boundary is flat in a positive neighborhood of each~$(z_i)_{i\in I_{\min}}$ (that is~$\smallRadius(\beta)=0$ and~$\largeRadius(\beta)=\eta$ in~\eqref{hyp:locally_flat} for some small~$\eta>0$), one recovers the optimal error terms
        \[
        \varepsilon_i(\beta)=\begin{cases}
            \O(\beta^{-1}),&\epsLimit{i}=+\infty\\
            \O(\beta^{-\frac12}),&\epsLimit{i}<+\infty,
        \end{cases}
        \]
        in accordance with the analysis performed in~\cite{LPN21}. On the other hand, aiming for maximum geometric flexibility, Assumptions~\eqref{hyp:locally_flat} and~\eqref{hyp:scaling_deltai} only imply that, in the limit~$\beta\to\infty$,
        \[\sqrt\beta\smallRadius(\beta)=\smallo(1),\qquad\beta^{-1}\largeRadius(\beta)^{-2}=\O(|\log\beta|^{-1}),\]
        and thus the remainder~\eqref{eq:eyring_kramers_error} is~$\O(|\log\beta|^{-1})$ in the case~$\epsLimit{i}=+\infty$, and not quantitative in the case~$\epsLimit{i}<+\infty$.
    \end{remark}

    \begin{remark}
    The modification with respect to the Eyring--Kramers formula obtained in the boundaryless case in~\cite{HS85,BGK05,HKN04} and in the case with boundary in~\cite{HN06,LPN21} concerns the prefactor which depends on the asymptotic distances of the low-lying saddle points to the boundary.
    Let us fix a family of domains~$(\Omega_t)_{t\in[-1,1]}$ and~$i\in I_{\min}$ such that~$\sigma_{\Omega_t}(z_i)=at$ for all~$-1\leq t \leq 1$, i.e. the boundary crosses~$z_i$ in such a way that the geometry of the boundary is prescribed by~\eqref{hyp:locally_flat} (say with~$\epsLimit{i}(t)=a t\sqrt\beta$,~$\largeRadius(\beta)=\varepsilon>0$ and~$\smallRadius(\beta)=0$) for some large~$\beta$. One can informally see Theorem~\ref{thm:eyring_kramers} as an analysis of the transition in the pre-exponential factor of~$\lambda_{1,\beta}(\Omega_t)$ as the saddle point crosses the boundary.
    Assuming for simplicity that~$I_{\min}=\{i\}$, writing~$\lambda_{1,\beta}(\Omega_t)=C(t)\e^{-\beta(\Vstar-V(z_0))}$, and formally substituting~$\epsLimit{i}(t)$ in~\eqref{eq:eyring_kramers}, Theorem~\ref{thm:eyring_kramers} suggests that
    \[C(t)\approx \frac{|\hessEigval{i}{1}|}{2\pi\Phi\left(|\hessEigval{i}{1}|^{\frac12}at\sqrt\beta\right)}\sqrt{\frac{\det\,\nabla^2 V(z_0)}{|\det\,\nabla^2 V(z_i)|}}.\]
    We stress this is only a formal interpretation, since the remainder term~\eqref{eq:eyring_kramers_error} depends non-uniformly on~$t$.
    The prefactor is halved as~$t$ goes from $0$ to $1$, with a sharp transition occurring on the scale~$\left(\beta|\hessEigval{i}{1}|\right)^{-\frac12}$. This is in accordance with the probabilistic interpretation of~$\lambda_{1,\beta}$, since half of the trajectories of the dynamics~\eqref{eq:overdamped_langevin} which reach~$z_i$ are expected to return to a small neighborhood of~$z_0$ before transitioning to another potential well.
    On the other hand, the asymptotic estimate, in the limit~$x\to-\infty$,
    \begin{equation}
        \label{eq:phi_tail_asymptotic}
        \Phi(x) = \frac{1+\O(|x|^{-2})}{|x|\sqrt{2\pi}}\e^{-\frac{x^2}2}
    \end{equation}
    suggests again that
    \[\lambda_{1,\beta}(\Omega_{-1})\approx a\sqrt{\frac{\beta}{2\pi}}|\hessEigval{i}{1}|^{\frac32}\sqrt{\frac{\det\,\nabla^2 V(z_0)}{|\det\,\nabla^2 V(z_i)|}}\e^{-\beta(\Vstar-\frac12|\hessEigval{i}{1}|a^2-V(z_0))}.\]
    Although the case~$\epsLimit{i}=-\infty$ is not covered in this work (we nevertheless expect that our setting can be extended to handle this case as well), we note that this heuristic is in agreement with the results of~\cite{HN06}. Indeed, writing~$z^*(a)=z_i-a\hessEigvec{i}{1}$, for small~$a$, this approximation corresponds to
    \[\lambda_{1,\beta}(\Omega_{-1})\approx \sqrt{\frac{\beta}{2\pi}}|\nabla  V(z^*(a))|\sqrt{\frac{\det\,\nabla^2 V(z_0)}{\det\,\nabla^2_{\partial \Omega_{-1}}V(z^*(a))}}\e^{-\beta (V(z^*(a))-V(z_0))},\]
    where~$\nabla^2_{\partial \Omega_{-1}}$ denotes the Hessian on the submanifold~$\partial \Omega_{-1}$. This corresponds to the standard asymptotic behavior in the case where~$z^*(a)\in \partial \Omega_{-1}$ is a so-called generalized saddle point (see for instance~\cite{HN06,LPN21}).
    \end{remark}

    The proof of Theorem~\ref{thm:eyring_kramers}, which relies on the construction of accurate quasimodes for the principal Dirichlet eigenvector inspired by~\cite{LPN21}, and a modified Laplace method (Proposition~\ref{prop:laplace}), is performed in Section~\ref{sec:proof_ek}.

    Before proving Theorems~\ref{thm:harm_approx} and~\ref{thm:eyring_kramers}, we briefly and informally discuss some implications of these results for the problem of maximizing the timescale separation~\eqref{eq:separation}.
    \subsection{Practical implications of the asymptotic analysis.}
    \label{subsec:takeaways}
    In this section, we briefly and informally highlight the key implications of our results for the selection of metastable states and the estimation of associated timescales in MD simulations. For this purpose, we assume in this section that Assumptions~\eqref{hyp:uniformly_bounded}--\eqref{hyp:scaling_deltai} and~\eqref{hyp:one_minimum}--\eqref{hyp:bound_delta} are satisfied.
    
    Theorems~\ref{thm:harm_approx} and~\ref{thm:eyring_kramers} give asymptotic equivalents for the metastable exit rate~$\lambda_{1,\beta}$ and the convergence rate~$\lambda_{2,\beta}-\lambda_{1,\beta}$ to the QSD inside~$\Omega_\beta$ (also known as the decorrelation rate).
    In the small temperature regime,~$\lambda_{1,\beta}$ converges exponentially fast to zero, and~$\lambda_{2,\beta}$ is asymptotically bounded from below.
    Explicitly, we obtain from the statement of Theorem~\ref{thm:eyring_kramers} and the proof of Theorem~\ref{thm:harm_approx} the following result.

    \begin{corollary}
        \label{corr:practical_implication}
        Suppose that Assumptions~\eqref{hyp:uniformly_bounded}--\eqref{hyp:scaling_deltai} and~\eqref{hyp:one_minimum}--\eqref{hyp:bound_delta} hold. Then
        \begin{equation}
            \label{eq:eigval_asymptotics}        
            \begin{aligned}
                \lambda_{1,\beta} &\widesim{\beta\to\infty}\e^{-\beta(\Vstar-V(z_0))} \left[\sum_{i\in I_{\min}} \frac{|\hessEigval{i}{1}|}{2\pi\Phi\left(|\hessEigval{i}{1}|^{\frac12}\epsLimit{i}\right)}\sqrt{\frac{\det \nabla^2 V(z_0)}{\left|\det \nabla ^2 V(z_i)\right|}}\right],\\
                \lambda_{2,\beta}&\widesim{\beta\to\infty}\min\left\{\underset{1\leq j\leq d}{\min}\,\hessEigval{0}{j},\underset{1\leq i<N}{\min}\,\left[\abs{\hessEigval{i}{1}}\mu_{0,\epsLimit{i}\sqrt{|\hessEigval{i}{1}|/2}}-\frac{\hessEigval{i}{1}}2 + \sum_{2\leq j\leq d}\abs{\hessEigval{i}{j}}\1_{\hessEigval{i}{j}<0}\right]\right\},
            \end{aligned}
        \end{equation}
        where~$\mu_{0,\epsLimit{i}\sqrt{|\hessEigval{i}{1}|/2}}$ is the principal eigenvalue of the canonical harmonic oscillator~$-\frac12\left(\partial_x^2-x^2\right)$ acting on the spatial domain~$\left(-\infty,\epsLimit{i}\sqrt{|\hessEigval{i}{1}|/2}\right)$ with homogeneous Dirichlet boundary conditions.
    \end{corollary}
    \begin{proof}
        The asymptotic equivalent for $\lambda_{1,\beta}$ follows immediately from Theorem~\ref{thm:eyring_kramers}, while the equivalent for~$\lambda_{2,\beta}$ follows from the proof of Theorem~\ref{thm:harm_approx}, namely the expression~\eqref{eq:lambda_2_expr} in Section~\ref{subsec:global_harm_operator} below.
    \end{proof}
    Both the asymptotic equivalents appearing in the formulas~\eqref{eq:eigval_asymptotics} can be easily computed in practice, given the knowledge of all the critical points of~$V$ inside~$\mathcal K$ and the eigenvalues of the Hessian~$\nabla^2 V$ at these points. The formulas~\eqref{eq:eigval_asymptotics} provide insight into how to tune acceleration methods such as ParRep in the limit of small temperature.
    \paragraph{Harmonic approximation of the decorrelation time.} A key choice in ParRep-like algorithms is the selection of a decorrelation time associated to a domain~$\Omega_\beta$ (which here we take for the sake of generality to be temperature-dependent). Heuristically, Proposition~\ref{prop:qsd_spectral} suggests that a natural decorrelation time is given by~$n_{\mathrm{corr}}/(\lambda_{2,\beta}-\lambda_{1,\beta})$, where~$n_{\mathrm{corr}}>0$ is a tolerance hyperparameter of our choosing.
    This choice is common in materials science, and~$\lambda_{2,\beta}$ is then usually further approximated by modelling the basin of attraction~$\basin{z_0}$ as a harmonic potential well, see for instance~\cite[Section 2.10]{PUV15}. In our setting, this corresponds to the approximation~$\lambda_{2,\beta}\approx \underset{1\leq j\leq d}{\min}\,\hessEigval{0}{j}$. The formula~\eqref{eq:eigval_asymptotics} shows that this approximation is however not valid in general (even in the case~$\Omega_\beta=\basin{z_0}$), but that it is possible to get a valid approximation by taking into account the eigenvalues of harmonic oscillators at the other critical points.
    \paragraph{Optimization of the asymptotic timescale separation~\eqref{eq:separation} with respect to~$\left(\epsLimit{i}\right)_{i\in I_{\min}}$.}
    The formulas~\eqref{eq:eigval_asymptotics} also provide insight into the problem of maximizing the separation of timescales ${(\lambda_{2,\beta}-\lambda_{1,\beta})/\lambda_{1,\beta}}$ with respect to the shape of the domain. Equivalently, we aim to maximize~$\lambda_{2,\beta}/\lambda_{1,\beta}$. As mentioned in the introduction, this optimization problem can be addressed using numerical methods, at least in cases where low-dimensional representations of the system can be used, see~\cite{BLS25b}. Here we focus on the optimization in the semiclassical limit.
    Obviously, there is a caveat in the fact that our geometric assumptions (see Section~\ref{subsec:geometric_assumptions}) restrict the class of domains with respect to which we optimize, and our results only give asymptotically optimal prescriptions in the limit~${\beta\to\infty}$ for the choice of~$(\epsLimit{i})_{0\leq i < N}$. We can nevertheless make some observations.

    Note firstly that the ratio~$\lambda_{2,\beta}/\lambda_{1,\beta}$ diverges at an exponential rate which is independent of the parameter~$\alpha$ in the limit~$\beta\to\infty$. However, the prefactor~$\e^{-\beta(\Vstar-V(z_0))}\lambda_{2,\beta}/\lambda_{1,\beta}$ converges to a finite limit, which is a function of~$\alpha$, and thus it is in fact this quantity that we wish to maximize.
    Second, the parameters~$\left(\epsLimit{i}\right)_{i\in\{0\}\cup\mathcal X(z_0)}$ are set to~$+\infty$ by our geometric assumptions, where we recall the definition~\eqref{eq:non_removable_points} of the set~$\mathcal X(z_0)$.
    Next, each critical point~$z_i$ with~$1\leq i<N$ contributes a lowest eigenvalue
    \[\lambda_i(\alpha^{(i)}) := \abs{\hessEigval{i}{1}}\mu_{0,\epsLimit{i}\sqrt{|\hessEigval{i}{1}|/2}}-\frac{\hessEigval{i}{1}}2 + \sum_{2\leq j\leq d}\abs{\hessEigval{i}{j}}\1_{\hessEigval{i}{j}<0}\]
    to the harmonic spectrum.
    These last two facts imply that there is an upper bound on the asymptotics of~$\lambda_{2,\beta}$, with respect to~$(\epsLimit{i})_{i\in I_{\min}}$, namely
    \begin{equation}
        \label{eq:upper_bound}
        \ell(z_0)  = \min\left\{\underset{1\leq j\leq d}{\min}\,\hessEigval{0}{j},\underset{i\in\mathcal X(z_0)}{\min}\,\lambda_i(+\infty)\right\}=\min\left\{\underset{1\leq j\leq d}{\min}\,\hessEigval{0}{j},\underset{i\in\mathcal X(z_0)}{\min}\,\sum_{1\leq j\leq d}\abs{\hessEigval{i}{j}}\1_{\hessEigval{i}{j}<0}\right\}>0.
    \end{equation}
    In many cases, the set~$\mathcal X(z_0)$ is empty, and so~$\ell(z_0)$ is in this case simply given by the smallest eigenvalue of the Hessian~$\nabla^2 V(z_0)$ at the minimum.
    If, for~$i\in I_{\min}$, it holds that~$\lambda_i(+\infty)=|\hessEigvec{i}{1}|\geq\ell(z_0)$ (i.e. when the unstable mode at the saddle point is sharp enough), then the choice~$\epsLimit{i}=+\infty$ is in fact optimal, since it maximally decreases the asymptotic behavior of~$\lambda_{1,\beta}$ without affecting that of~$\lambda_{2,\beta}$.
    If for one or more~$z_i$ with~$i\in I_{\min}$, it holds that~$\lambda_i(+\infty)<\ell(z_0)$, then there is a genuine optimization problem, but which typically involves a small number of parameters.
    
    In the latter case, there exists indeed an optimal value~$\left(\alpha^{(i)\star}\right)_{i\in I_{\min}}\in (-\infty,+\infty]^{|I_{\min}|}$. To see this, we rely on the Gaussian tail estimate~\eqref{eq:phi_tail_asymptotic} and Lemma~\ref{lemma:principal_eigenvalue_dirichlet} to show that, as~$\alpha^{(i)}$ tends to~$-\infty$, the value~$\lambda_i(\epsLimit{i})$ will eventually become larger than~$\ell(z_0)$, whereas the prefactor for~$\lambda_{1,\beta}$ will tend to~$+\infty$. This implies that any maximizing sequence of parameters is bounded from below by some~$C>-\infty$ in each of its components.
    We endow~$(-\infty,+\infty]$ with the one-point compactified topology at~$+\infty$.
    In this topology, the claim then follows from compactness of~$[C,+\infty]^{|I_{\min}|}$, the continuity at~$+\infty$ of~$\Phi$ (defined in~\eqref{eq:gaussian_cdf}) and of the principal Dirichlet eigenvalue~$\theta\mapsto\mu_{0,\theta}$ of the one-dimensional Dirichlet oscillator (which will be obtained in~\eqref{eq:principal_eigenvalue_infinity} from Lemma~\ref{lemma:principal_eigenvalue_dirichlet} below).
    
    \paragraph{Optimization with respect to the other parameters.}
    The remaining tunable parameters are those not corresponding to low-lying separating saddle points. These are given by the vector of parameters~$\mathcal P(\alpha) =\left(\epsLimit{i},\,i\not\in \{0\}\cup\mathcal X(z_0)\cup I_{\min}\right)$. Indeed for~$i\in \{0\}\cup\mathcal X(z_0)$, it necessarily holds that~$\epsLimit{i}=+\infty$ (see Remark~\ref{rem:non_ssp} above), and the case~$i\in I_{\min}$ is treated in the previous paragraph.
    Noticing that the asymptotic behavior of~$\lambda_{1,\beta}$ is only a function of~$(\epsLimit{i})_{i\in I_{\min}}$, it follows that the asymptotic optimization problem reduces to a maximization of~$\lim_{\beta\to\infty}\lambda_{2,\beta}$ with respect to the parameter~$\mathcal P(\alpha)$. By domain monotonicity (see Proposition~\ref{prop:comparison_principle}), all the quantities~$\lambda_i(\epsLimit{i})$
    are decreasing functions of~$\alpha^{(i)}$.
    In particular, the components of~$\mathcal P(\alpha)$ may be sent to~$-\infty$ without affecting the asymptotic prefactor~$\e^{-\beta(\Vstar-V(z_0))}\lambda_{2,\beta}/\lambda_{1,\beta}$.
    
    One option is to entirely disregard the corresponding critical points by not including them in~$\mathcal K$ from the start. This reduces the problem to that of finding an asymptotically optimal perturbation of the set~$\basin{z_0}\cap\{V<\Vstar\}$.

    However, in general, many values of ~$\mathcal P(\alpha)$ will yield asymptotically optimal parameters. We now describe a full optimization procedure using the following steps.
    \begin{itemize}
        \item{First, solve the optimization problem with respect to~$(\epsLimit{i})_{i\in I_{\min}}$, following the procedure described in the previous paragraph, and neglecting the contribution of critical points which are not low energy separating saddle points. In other words, find
        \begin{equation}
            \label{eq:reduced_opt_problem}
            \alpha^{\star}_{I_{\min}}\in\underset{\alpha\in (-\infty,+\infty]^{|I_{\min}|}}{\mathrm{Argmax}}\frac{\displaystyle\min\left\{\ell(z_0),\underset{i\in I_{\min}}{\min}\,\lambda_i(\epsLimit{i})\right\}}{\displaystyle\sum_{i\in I_{\min}} \frac{|\hessEigval{i}{1}|}{2\pi\Phi\left(|\hessEigval{i}{1}|^{\frac12}\epsLimit{i}\right)}\sqrt{\frac{\det \nabla^2 V(z_0)}{\left|\det \nabla ^2 V(z_i)\right|}}}.
        \end{equation}
        For each optimum~$\alpha^{\star}_{I_{\min}} =(\alpha^{(i)\star})_{i\in I_{\min}}$, there exists an associated optimal harmonic eigenvalue
        \[\lambda\left(\alpha^{\star}_{I_{\min}}\right) = \min\left\{\ell(z_0),\underset{i\in I_{\min}}{\min}\,\lambda_i(\alpha^{(i)\star})\right\}.\]}
        \item{Then, any value of~$\alpha^{(i)}\in\mathcal P(\alpha)$ for which $\lambda_i(\alpha^{(i)})$ is larger than~$\lambda(\alpha^\star_{I_{\min}})$ is optimal. In other words, the structure of the set of optimal parameters with respect to~$\mathcal P(\alpha)$ is particularly simple: it is simply the Cartesian product
    \[\prod_{i\not\in \{0\}\cup\mathcal X(z_0)\cup I_{\min}} (-\infty,\alpha^{(i)\star}],\qquad \text{where }\lambda_i(\alpha^{(i)\star})=\lambda(\alpha^{\star}_{I_{\min}}).\]}
    The full set of optimal~$\alpha$ can be deduced by taking a union over the set of optimizers~$\alpha^{\star}_{I_{\min}}$.
    \end{itemize}

    For a system which is reasonably isotropic (in the sense that the~$\hessEigval{i}{j}$ do not span many orders of magnitude), it is sensible to expect that for many saddle points~$z_i$ such that~$\alpha^{(i)}\in \mathcal P(\alpha)$, any~$\alpha^{(i)\star}\in(-\infty,+\infty]$ is optimal, particularly if~$\mathrm{Ind}(z_i)\gg 1$. For such points, the asymptotic separation of timescales is insensitive at leading order to the choice of~$\epsLimit{i}$.

    \paragraph{The effect of other minima.}
    In the particular case that~$z_i$ is a local minimum (i.e. $1\leq i<N_0$), it holds from the second item in Lemma~\ref{lemma:principal_eigenvalue_dirichlet}~that~$\underset{\epsLimit{i}\to+\infty}{\lim}\,{\lambda_i(\alpha^{(i)})}=0$, and thus~$\alpha^{(i)\star}$ is finite.
    This indicates that, in the low-temperature regime, the separation of timescales ultimately degrades when moving from a domain containing one minimum far from the boundary (such as a neighborhood of~$\basin{z_0}$) to one containing several, such as an energy superbasin. This suggests the existence of a locally optimal domain around~$\basin{z_0}$, which is indeed observed numerically in~\cite{BLS25b}.
    This observation also motivates a posteriori the choice of Assumption~\eqref{hyp:one_minimum}, which restricts the class of considered domains to the vicinity of~$\basin{z_0}\cap\{V<\Vstar\}$.

    \section{Proof of Theorem~\ref{thm:harm_approx}}\label{sec:proof_thm1}
    \label{sec:proof_harm_approx}
    In this section, we perform the construction of the harmonic approximation to the Witten Laplacian~\eqref{eq:global_harmonic_approximation_conj}, and give the proof of Theorem~\ref{thm:harm_approx}.
    The construction relies on the definition of local models for the Witten Laplacian~$H_\beta$ defined in~\eqref{eq:witten_laplacian}, and a family of approximate eigenmodes, or quasimodes, thereof.
    These quasimodes correspond in fact to exact eigenmodes of the harmonic approximation, or of a carefully chosen realization thereof, pointwise multiplied by a smooth cutoff function to localize the analysis.
    The harmonic approximation itself is obtained by considering a direct sum of local models around each critical point~$z_i$, which are quantum harmonic oscillators supplemented with appropriate Dirichlet boundary conditions, depending on the value of the limit~$\epsLimit{i} \in (-\infty,+\infty]$.
    
    In Section~\ref{subsec:local_oscillators}, we define formally the local models serving in the construction of the harmonic approximation, before discussing in Section~\ref{subsec:dirichlet_oscillators} their Dirichlet realization and obtaining the required properties of their spectral decomposition. In Section~\ref{subsec:global_harm_operator}, we define the global harmonic approximation and the associated harmonic quasimodes in Section~\ref{subsubsec:harm_quasimodes}, obtaining also crucial localization estimates.
    In Section~\ref{subsec:domain_extension}, we derive a key technical result related to the construction of an extended domain with a precise control on the shape of the boundary near critical points, and finally prove Theorem~\ref{thm:harm_approx} in Section~\ref{subsubsec:harm_approx_final_proof}.

    As the construction of the harmonic approximation involves many intermediate operators, we provide for the reader a summary of the notations used throughout this section in Table~\ref{tab:harm_notation} below.

        \begin{table}
        \makebox[\textwidth]{ 
        \hspace*{-\dimexpr\hoffset\relax} 
        \renewcommand{\arraystretch}{2.0}
        \begin{tabular}{|c|c|c|c|}
            \hline
            Operator & Spatial domain & Eigenstates & Eigenvalues\\
            \hline
            $\frac12(-\partial_x^2 + x^2)$ & $(-\theta,+\infty)$ & $v_{k,\theta}$ & $\mu_{k,\theta}$\\
            \hline
            $K_\theta^{(i)},\,d=1,\theta\in\R\cup\{+\infty\}$ & $(-\infty,\theta)$ & $w_{k,\theta}^{(i)}$,\,$k\geq 0$ & $\omega_{k,\theta}^{(i)}$,\,$k\geq 0$\\
            \hline
            $K_{\theta}^{(i)},\,d\geq 2,\theta\in\R\cup\{+\infty\}$ & $(-\infty,\theta)\times \R^{d-1}$ & $ \psi_{k,\theta}^{(i)},\,k\in\N^d$ or $\N^*$ & $\lambda_{k,\theta}^{(i)},\,k\in\N^d$ or~$\N^*$ \\
            \hline
            $H_{\beta,\theta}^{(i)},\,\theta\in\R\cup\{+\infty\}$ & $z_i + \halfSpace{i}\left(\frac{\theta}{\sqrt\beta}\right)$ & $\psi_{\beta,k,\theta}^{(i)}$,\,$k\geq 1$&$\beta\lambda_{k,\theta}^{(i)}$,\,$k\geq 1$\\
            \hline
            $\displaystyle H_{\beta,\alpha}^{\mathrm{H}},\,\alpha\in \left(\R\cup\{+\infty\}\right)^N$ & $\displaystyle\prod_{i=0}^{N-1}\left[z_i + \halfSpace{i}\left(\frac{\alpha_i}{\sqrt\beta}\right)\right]$ & $\psi_{\beta,k,\alpha}^{\mathrm{H}} = \psi_{\beta,n_k,\alpha_{i_k}}^{(i_k)},\,k\geq 1$ & $\beta\lambda_{k,\alpha}^{\mathrm{H}} = \beta \lambda_{n_k,\alpha_{i_k}}^{(i_k)},\,k\geq 1$\\
            \hline
        \end{tabular}
        }
        \caption{Notations used Section~\ref{sec:proof_harm_approx}. Top row: one-dimensional Dirichlet oscillators from Section~\ref{subsec:dirichlet_oscillators}, see Equations~{\eqref{eq:ho_first_op}--\eqref{eq:holomorphic_eigensytem}}. Second row: rescaled local harmonic models from Section~\ref{subsec:dirichlet_oscillators} (one-dimensional case), see Equations~{\eqref{eq:local_ho_1d}--\eqref{eq:full_oscillator_domain}}. Third row: rescaled local harmonic models from Section~\ref{subsec:dirichlet_oscillators} (multi-dimensional case), see Equations~{\eqref{eq:full_ho_nd}--\eqref{eq:ki_enumeration}}. Fourth row: local harmonic models from Section~\ref{subsec:global_harm_operator}, see Equations~\eqref{eq:local_harm_domain}--\eqref{eq:harmonic_eigenmode}. Bottom row: global harmonic approximation from Section~\ref{subsec:global_harm_operator}, see Equations~{\eqref{eq:global_harmonic_approximation_conj}--\eqref{eq:harm_spectrum_enumeration}}.}
        \label{tab:harm_notation}
    \end{table}

    \subsection{Local harmonic models}\label{subsec:local_oscillators}
    The potential part of~$H_\beta = - \Delta + U_\beta $, given by~$U_\beta=\frac12\left(\beta^2\frac{|\nabla V|^2}2-\beta\Delta V\right)$ is, at dominant order in~$\beta$, comprised of wells centered around the critical points of~$V$, which become steeper as~$\beta\to\infty$. 
    The purpose of the harmonic approximation is to approximate~$H_\beta$ using independent local models consisting of shifted harmonic oscillators centered around each one of these wells, with frequencies prescribed by the eigenvalues of the Hessian~$\nabla^2 V$ at~$z_i$.
    This very simple approximation is sufficient to estimate the first-order behavior of the bottom of the spectrum of~$-\cL_\beta$.

    Introduce
    \[\Sigma^{(i)} = \frac12\nabla^2 \left(\frac14|\nabla V|^2\right)(z_i)  = \frac12\left[\frac12 D^3 V \nabla V + \frac12 \left( \nabla^2 V\right)^2 \right](z_i) = \frac14 \left(\nabla^2 V\right)^2(z_i).\]
    We define local harmonic approximations to~$H_\beta$ around each critical point as
    \begin{equation}
        \label{eq:local_harmonic_approx}
        H_\beta^{(i)} = -\Delta + \beta^2 (x-z_i)^\intercal \Sigma^{(i)}(x-z_i) - \beta \frac{\Delta V(z_i)}2.
    \end{equation}

    \begin{remark}
    The operators~$H_\beta^{(i)}$ have a natural interpretation in terms of the original stochastic dynamics~\eqref{eq:overdamped_langevin}. Indeed, a direct computation shows that~$\beta^{-1}H_\beta^{(i)}$ is formally conjugate (up to an additive constant) to
    \begin{equation}
        \label{eq:generator_ou}
        -\cL_\beta^{(i)} = - \frac1\beta\Delta+x^\intercal\nabla^2 V(z_i) \nabla ,
    \end{equation}
    under the change of representation~$u\mapsto \e^{-\beta V^{(i)}/2}u(z_i+\cdot)$ to the flat~$L^2$ coordinates, where~${V^{(i)}(x)}$ is the local harmonic approximation to~$V$, namely $V^{(i)}(x)={V(z_i) + x^\intercal \frac{\nabla^2 V(z_i)}2 x}$. 
    Hence,~$\cL_\beta^{(i)}$ may be seen as the generator of a diffusion of the form~\eqref{eq:overdamped_langevin}, in which the potential has been replaced by its harmonic approximation around~$z_i$, so that the resulting stochastic process is a generalized Ornstein--Uhlenbeck process. We stress that this interpretation is merely formal, as the operator~$\cL_\beta^{(i)}$ is typically not well-behaved, since the measure~$\e^{-\beta V^{(i)}(x)}\,\d x$ is not even finite if~$i\geq N_0$, due to the presence of repulsive modes in the harmonic approximation.
    \end{remark}
    We then define the shifted harmonic oscillators:
    \begin{equation}
        \label{eq:rescaled_oscillator}
        \Ki{} = -\Delta  + x^\intercal \mathcal D^{(i)} x -\frac{\Delta V(z_i)}2,\end{equation}
    where~$\mathcal D^{(i)}$ denotes the diagonal matrix~$\mathcal D^{(i)}=\mathrm{diag}\left(\nu_j^{(i)2}/4\right)_{j=1,\dots,d}$.
    By dilation~$D_\lambda f(x) = \lambda^{d/2}f(\lambda x)$, translation~$T_b f(x) = f(x-b)$ and rotation~$\mathcal U^{(i)}$, where we recall the notation~\eqref{eq:eigvecs_unitary}, a direct computation shows that
    the Dirichlet realization of~$H_{\beta}^{(i)}$ on~$L^2(\Omega_\beta)$ is unitarily equivalent to that of~$\beta \Ki{}$ on~$L^2(\sqrt{\beta}U^{(i)\intercal}(\Omega_\beta-z_i))$:
    \begin{equation}
        \label{eq:harmonic_conjugation}
        H_{\beta}^{(i)} = T_{z_i}D_{\sqrt\beta}\mathcal U^{(i)}\left(\beta \Ki{}\right)\mathcal U^{(i)*}D_{1/\sqrt\beta}T_{-z_i}.
    \end{equation}
    This is precisely the construction performed in~\cite{CFKS87} on~$L^2(\R^d)$, up to our choice of conjugating~$H_\beta^{(i)}$ by~$\mathcal U^{(i)}$ to simplify the explicit form of tensorized eigenmodes.
    We next proceed in Section~\ref{subsec:dirichlet_oscillators} to compute the eigendecomposition for a family of self-adjoint realizations of the harmonic oscillators~$\Ki{}$, corresponding to specific Dirichlet boundary conditions in which the boundary is a hyperplane transverse to the eigendirection~$\hessEigvec{i}{1}$ whenever~$\epsLimit{i}<+\infty$. These operators in turn will serve as local approximations of~$H_\beta$ around each critical point, allowing the construction of approximate low-temperature quasimodes for~$H_\beta$.

    Before this, we recall standard results concerning the full one-dimensional harmonic oscillator, see for instance~\cite{T14}, and introduce some notation.
    The operator~$\frac12\left(-\partial_x^2+x^2\right)$ considered on~$L^2(\R)$, the canonical oscillator, which we denote by~${\mathfrak H}_{\infty}$, is self-adjoint as the Friedrich extension of a positive quadratic form.
    We denote, for~$k\in\N$,
    \begin{equation}
        \label{eq:hermite_eigenfunction}
        v_{k,\infty}(x) = \frac{1}{\sqrt{2^k k! \sqrt\pi}}\e^{-\frac{x^2}2}H_k(x),\qquad H_k(x) = \e^{x^2}\partial_x^k\,\e^{-x^2}
    \end{equation}
    where~$H_k$ is the~$k$-th Hermite polynomial. The function~$v_{k,\infty}$ is the~$k$-th eigenstate of~${\mathfrak H}_\infty$, with
    \begin{equation}
        \label{eq:hermite_eigenproblem}
         \mathfrak{H}_{\infty}v_{k,\infty} = \mu_{k,\infty} v_{k,\infty},\qquad \mu_{k,\infty} = k + \frac12.
    \end{equation}
    The full harmonic oscillator will serve, as in~\cite[Chapter 11]{CFKS87}, as the base operator to construct local models for~$H_\beta$ associated with critical points which are far from the boundary, and will also be useful to capture the behavior of modes transverse to the first eigendirection of the Hessian for critical points which are close to the boundary.
    For the first eigendirection, however, we need to use another model, which is a harmonic oscillator with a Dirichlet boundary condition on the asymptotic hyperplane~$z_i+\partial E(\epsLimit{i}/\sqrt\beta)$.
    In fact, to handle the possibly irregular nature of~$\partial\Omega_\beta$ in the vicinity of critical points which are close to the boundary, we need to define these Dirichlet oscillators for a range of boundary conditions.

    \subsection{Dirichlet oscillators}\label{subsec:dirichlet_oscillators}
    In this section, we introduce the appropriate Dirichlet realizations for the harmonic oscillator, which serve as the basis for the construction of local models for~$H_\beta$ around critical points which are close to the boundary.
    We consider the following dense subspace of~$L^2(\R_+^*)$:
    \[\mathcal D(\widetilde{\mathfrak H_0}) = \left\{f:x^2 f\in L^2(\R_+^*),\,f\text{ is differentiable, }f'\text{ is absolutely continuous, }f''\in L^2(\R_+^*),\,f(0)=0\right\}.\]
    We first recall classical properties of the Dirichlet harmonic half-oscillator (see for instance~\cite[Chapter X.1]{RS75} or~\cite[Section 5.1.2]{BS12} for a closely related construction): the symmetric operator
   ~$$ \widetilde{\mathfrak H}_0 = \frac12(-\partial_x^2+x^2)$$
    with domain~$\mathcal D(\widetilde{\mathfrak H}_0)$ is essentially self-adjoint. Its closure, denoted by~$\mathfrak{H}_0$, has a complete family of eigenfunctions which are given explicitly by the odd states of the full harmonic oscillator:
    \begin{equation}
        \label{harmonic_half_oscillator}
        \mathfrak{H}_0 v_{2k+1,\infty} = \left(2k + \frac32\right) v_{2k+1,\infty},\qquad k\in \N.
    \end{equation}
    The family~$(\sqrt 2 v_{2k+1,\infty})_{k\geq 0}$ is an orthonormal eigenbasis for~$\mathfrak{H}_0$, where the factor~$\sqrt 2$ enforces normalization in~$L^2(\R_+)$. Furthermore,~$\mathfrak{H}_0$ has compact resolvent, with~$\mathcal D({\mathfrak{H}}_0) \subset H_0^1(\R_+)$.
    Our aim is to make precise, for~$\theta\in\R$, the spectrum of a self-adjoint realization of the canonical oscillator~$\frac12(-\partial_x^2+x^2)$ with Dirichlet boundary conditions on~$[-\theta,+\infty)$. Specifically, we show that the spectrum is well-defined, purely discrete, and depends continuously on the position of the boundary~$\theta$.
    To this end, we use analytic perturbation theory, noticing that by translation, the spectral properties of the operator
   ~\begin{equation}
    \label{eq:ho_first_op}
   \widetilde{\mathfrak H}_\theta = \frac12(-\partial_x^2+x^2),\qquad \mathcal D(\widetilde{\mathfrak H}_\theta) = T_{-\theta}\mathcal D(\widetilde{\mathfrak H}_0)
   \end{equation}
    can be deduced from those of the conjugate operator
   ~\begin{equation}
    \label{eq:perturbation_conjugate}
    T_{\theta} \widetilde{\mathfrak H}_\theta  T_{-\theta}=\frac12(-\partial_x^2+x^2) - \theta x +\frac{\theta^2}2,
   \end{equation}  
   with domain~$\mathcal D(\widetilde{\mathfrak H_0})$.
    Since the constant~$\frac{\theta^2}2$ only shifts the spectrum by an analytic function of~$\theta$, it is sufficient to study the operator
   ~$$\widetilde{\mathfrak{G}}_\theta = \widetilde{\mathfrak{H}}_0-\theta x.$$
   We show the following result.
   \begin{lemma}
    \label{lemma:spectral_analysis}
    The operator~$\widetilde{\mathfrak{G}}_\theta$ is essentially self-adjoint, and its closure~$\mathfrak{G}_\theta$ has compact resolvent. For any~$k\geq 0$, the normalized eigenpairs
    $\left(\widetilde\mu_{k,\theta},\widetilde v_{k,\theta}\right) \in \R\times L^2(\R_+)$
    are defined by
    \begin{equation}
        \label{eq:tilde_eigenrelations}
        \mathfrak{G}_\theta \widetilde v_{k,\theta} = \widetilde\mu_{k,\theta}\widetilde v_{k,\theta},\qquad \|\widetilde v_{k,\theta}\|^2_{L^2(\R_+)} =1,
    \end{equation}
    with the enumeration convention fixed by the condition that~$\widetilde v_{k,0}$ is an eigenstate of the half-harmonic oscillator:~$\widetilde v_{k,0} = \sqrt 2 v_{2k+1,\infty}$.
    Furthermore, the eigenpairs~$(\mu_{k,\theta},\widetilde v_{k,\theta})$ can be chosen to be holomorphic functions of~$\theta$.
   \end{lemma}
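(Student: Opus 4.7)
The plan is to treat $\widetilde{\mathfrak{G}}_\theta = \widetilde{\mathfrak{H}}_0 - \theta\, x$ as an analytic perturbation of the Dirichlet half-oscillator $\widetilde{\mathfrak{H}}_0$, whose core properties (essential self-adjointness, explicit eigenbasis $\{\sqrt{2}\,v_{2k+1,\infty}\}_{k\geq 0}$, compactness of the resolvent) have already been recalled. The key preliminary estimate is that the multiplication operator $x$ is $\mathfrak{H}_0$-bounded with relative bound $0$: for any $f\in\mathcal{D}(\widetilde{\mathfrak{H}}_0)$, integration by parts against $\mathfrak{H}_0 = \frac{1}{2}(-\partial_x^2 + x^2)$ yields
\[
\|x f\|^2_{L^2(\R_+)} \leq 2\,\langle \mathfrak{H}_0 f, f\rangle_{L^2(\R_+)} \leq 2\,\|\mathfrak{H}_0 f\|_{L^2(\R_+)}\,\|f\|_{L^2(\R_+)},
\]
whence by Young's inequality $\|x f\| \leq \varepsilon\,\|\mathfrak{H}_0 f\| + \varepsilon^{-1}\,\|f\|$ for any $\varepsilon>0$.

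Armed with this, I would first invoke Kato--Rellich to conclude that $\widetilde{\mathfrak{G}}_\theta$ is essentially self-adjoint on $\mathcal{D}(\widetilde{\mathfrak{H}}_0)$ for every $\theta\in\R$. Allowing $\theta$ to range over $\C$, the same bound makes $\{\mathfrak{G}_\theta\}_{\theta\in\C}$ a holomorphic family of type (A) in the sense of Kato, self-adjoint for real $\theta$. To upgrade from the known compactness of $(\mathfrak{H}_0+C)^{-1}$ to that of $(\mathfrak{G}_\theta+C)^{-1}$, I would use the second resolvent identity
\[
(\mathfrak{G}_\theta+C)^{-1} - (\mathfrak{H}_0+C)^{-1} = \theta\,(\mathfrak{G}_\theta+C)^{-1}\,x\,(\mathfrak{H}_0+C)^{-1},
\]
with $C$ in the resolvent set of both operators: the operator $x(\mathfrak{H}_0+C)^{-1}$ is bounded by the relative bound just established, so the right-hand side is compact, and $(\mathfrak{G}_\theta+C)^{-1}$ is then a sum of two compact operators, hence compact.

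It remains to extract the holomorphic enumeration of eigenpairs. Since $\mathfrak{G}_\theta$ is a one-dimensional Schrödinger operator on $\R_+$ with Dirichlet boundary condition at $0$, uniqueness of solutions to the associated Cauchy problem forces every eigenvalue to be geometrically simple for each $\theta\in\R$. Combined with the type-(A) structure, Rellich's theorem then provides, locally near every $\theta_0\in\R$, holomorphic branches $\theta\mapsto(\widetilde{\mu}_{k,\theta},\widetilde{v}_{k,\theta})$. The absence of crossings on the real axis, a direct consequence of simplicity, allows these local branches to be glued into globally defined real-analytic maps on $\R$, with initial values pinned down by~\eqref{harmonic_half_oscillator}: $\widetilde{\mu}_{k,0} = 2k+3/2$ and $\widetilde{v}_{k,0} = \sqrt{2}\,v_{2k+1,\infty}|_{\R_+}$. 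Complex-analytic extension to a neighborhood of $\R$ in $\C$ then follows from the type-(A) structure itself.

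The main subtlety I anticipate lies not in self-adjointness (which is routine once the relative bound is in hand), but in the global consistency of the holomorphic labeling: Rellich's theorem only supplies local branches, and one must rule out real crossings in order to continue them coherently over all of $\R$. The specifically one-dimensional nature of the problem is what makes this possible, via the Wronskian/ODE-uniqueness argument that enforces simple spectrum for every $\theta$; in higher dimensions, one could not expect such a clean global enumeration.
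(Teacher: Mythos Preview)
Your proposal is correct and follows essentially the same approach as the paper: establish that $x$ has $\mathfrak{H}_0$-relative bound $0$, invoke Kato--Rellich for essential self-adjointness, deduce compactness of the resolvent, and appeal to Kato's analytic perturbation theory for self-adjoint holomorphic families of type~(A). Your relative-bound computation via the quadratic form is slightly more direct than the paper's splitting argument, and your added observation about simple spectrum is correct but not strictly needed---Rellich's theorem for self-adjoint type-(A) families already yields globally real-analytic eigenbranches even in the presence of crossings, so the ODE-uniqueness argument is a pleasant bonus rather than an essential step.
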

   \begin{proof}
    We check that~$\theta x$ is~$\widetilde{\mathfrak H}_0$-bounded with relative bound~$0$. In the following, norms are on~$L^2(0,\infty)$. For~$\varphi \in \testfuncs([0,+\infty))$ and any~$M>0$, we compute:
    \begin{equation}
        \begin{aligned}
            \|\theta x\varphi\|^2 &= \theta^2\langle x^2\varphi,\varphi\rangle\\
            &\leq 2\theta^2 \left\langle \widetilde{\mathfrak H}_0\varphi,\varphi\right\rangle\\
            &\leq \theta^2 M^2\|\varphi\|^2 + \frac{\theta^2}{M^2}\|\widetilde{\mathfrak H}_0\varphi\|^2,
        \end{aligned}
    \end{equation}
    using Cauchy--Schwarz and Young inequalities in the last line.

    It follows that
    \begin{equation}
        \label{eq:relative_bound}
        \|\theta x \varphi\| \leq |\theta|M\|\varphi\| + \frac{2|\theta|}M \left\| \widetilde{\mathfrak H}_0 \varphi\right\|,
    \end{equation}
    so that the claim follows by taking~$M\to \infty$.

    Since~$\theta x$ is~$\widetilde{\mathfrak H}_0$-bounded with relative bound 0, by the Kato--Rellich theorem (see for instance~\cite[Theorem 6.4]{T14}), the operator~$\widetilde{\mathfrak{G}}_\theta=\widetilde{\mathfrak H}_0 + \theta x$ is essentially self-adjoint on~$\mathcal D({\widetilde{\mathfrak{H}}}_0)$, and its unique self-adjoint extension has domain~
    $\mathcal D(\mathfrak{G}_\theta) = \mathcal D({\mathfrak{H}_0})\subset H_0^1(\R_+)$ independently of~$\theta$.
    We denote by~$\mathfrak{G}_\theta$ the closure of~${\widetilde{\mathfrak{G}}}_\theta$.
    A straightforward consequence of the relative bound~\eqref{eq:relative_bound} (which extends to the closures of the operators at play) and the compactness of the resolvent of~$\mathfrak{H}_0$ is that, for fixed~$\theta\in \R$ and~$\mathrm{Im}\,\lambda \neq 0$, the resolvent~$(\mathfrak{G}_\theta-\lambda)^{-1}$ is a compact operator. Hence~${\mathfrak G}_\theta$ also has a compact resolvent, and therefore purely discrete a spectrum. Since this spectrum is manifestly bounded from below, it consists of isolated eigenvalues of finite multiplicity tending to~$+\infty$.
    Standard results of perturbation theory~(see~\cite[Chapter VII]{K95}) apply. In particular, we get from~\eqref{eq:relative_bound} and~\cite[Theorems VII.2.6 and VII.3.9]{K95} that~${\mathfrak G}_\theta$ defines a self-adjoint holomorphic family of type (A) for~$\theta\in\R$, and that there exists, for every~$k\in\N$, holomorphic functions of~$\theta$~$\widetilde\mu_{k,\theta}$,~$\widetilde v_{k,\theta}$ satisfying~\eqref{eq:tilde_eigenrelations}.
    \end{proof}

    Let us denote by~$\mathfrak{H}_\theta$ the self-adjoint operator on~$L^2([-\theta,+\infty))$ obtained by translating back and appropriately shifting the spectrum by~$\theta^2/2$ (recalling~\eqref{eq:perturbation_conjugate}):
    \begin{equation}
        \label{eq:ho_1d}
    \mathfrak{H}_\theta := T_{-\theta}{\mathfrak{G}}_\theta T_\theta + \frac{\theta^2}2.
    \end{equation}
    We compute its eigenpairs
    \begin{equation}
        \label{eq:ho_1d_eigenpairs}
    \mu_{k,\theta} = \widetilde{\mu}_{k,\theta}+\frac{\theta^2}2,\qquad v_{k,\theta} = T_{-\theta}\widetilde{v}_{k,\theta},
    \end{equation}
    satisfying the relation:
    \begin{equation}
        \label{eq:holomorphic_eigensytem}
        {\mathfrak{H}}_\theta v_{k,\theta} = \mu_{k,\theta} v_{k,\theta},
    \end{equation}
    where~$(v_{k,\theta})_{k\in \N}$ is a dense orthonormal family in~$L^2([-\theta,+\infty))$. Moreover, the enumeration of these eigenpairs is fixed by the convention chosen for the harmonic half-oscillator, namely:
    \[v_{k,0} = \sqrt 2 v_{2k+1},\qquad \mu_{k,0} = 2k + \frac32,\]
    and the~eigenvalues $\mu_{k,\theta}$ depend holomorphically, hence continuously, on~$\theta$.

    We provide the following estimates on the principal eigenvalue~$\mu_{0,\theta}$ in the regimes~$\theta\to\pm\infty$, which are useful for the application of our results to asymptotic shape optimization (see Section~\ref{subsec:takeaways}).
    \begin{lemma}
        \label{lemma:principal_eigenvalue_dirichlet}
        It holds
        \begin{equation}
            \label{eq:principal_eigenvalue_dirichlet}
            \mu_{0,\theta} = \frac{\theta^2}2 + \O(|\theta|^{2/3})\text{ in the limit }\theta\to-\infty,
        \end{equation}
        \begin{equation}
            \label{eq:principal_eigenvalue_infinity}
            \text{and }\underset{\theta\to +\infty}{\lim}\,\mu_{0,\theta}= \mu_{0,\infty}= \frac12.
        \end{equation}
    \end{lemma}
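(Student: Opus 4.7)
The lemma splits into two independent asymptotic claims, each of which I would establish via a min-max upper bound built from an explicit trial function, combined with an elementary lower bound.

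For~\eqref{eq:principal_eigenvalue_infinity} ($\theta \to +\infty$), the lower bound $\mu_{0,\theta} \geq \mu_{0,\infty} = 1/2$ follows from domain monotonicity, since any $f \in H_0^1([-\theta, +\infty))$ extends by zero to an element of $H^1(\R)$ with unchanged norm and Dirichlet form, so the variational characterization of the ground state yields the inequality. For the matching upper bound, I would take the trial function $u_\theta = \chi_\theta v_{0,\infty}$, where $v_{0,\infty}(x) = \pi^{-1/4}\,\e^{-x^2/2}$ is the normalized ground state of $\mathfrak{H}_\infty$ on $\R$ and $\chi_\theta$ is a smooth cutoff equal to $1$ on $[-\theta + 1, +\infty)$ with $\chi_\theta(-\theta) = 0$ and with $\|\chi_\theta'\|_\infty$ uniformly bounded in $\theta$. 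The Gaussian decay of $v_{0,\infty}$ and $v_{0,\infty}'$ near $x=-\theta$ makes the corrections to both $\|u_\theta\|^2$ and $\langle \mathfrak{H}_\theta u_\theta, u_\theta\rangle$, relative to $1$ and $\langle \mathfrak{H}_\infty v_{0,\infty}, v_{0,\infty}\rangle = 1/2$, of size $\O(\e^{-c\theta^2})$, yielding $\mu_{0,\theta} \leq 1/2 + \O(\e^{-c\theta^2})$.

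For~\eqref{eq:principal_eigenvalue_dirichlet} ($\theta \to -\infty$), I would pass to the half-line realization using the identity $\mu_{0,\theta} = \widetilde{\mu}_{0,\theta} + \theta^2/2$ recorded just before the lemma statement, where $\widetilde{\mu}_{0,\theta}$ is the ground-state eigenvalue of $\mathfrak{G}_\theta = \mathfrak{H}_0 - \theta x$ on $L^2(\R_+)$. The lower bound $\widetilde{\mu}_{0,\theta} \geq 0$ is immediate because, for $\theta < 0$, the multiplication potential $x^2/2 - \theta x$ is non-negative on $\R_+$. For the upper bound $\widetilde{\mu}_{0,\theta} \leq C|\theta|^{2/3}$, the key observation is that the dominant part of $\mathfrak{G}_\theta$ near the Dirichlet boundary is an Airy-type operator $-\tfrac{1}{2}\partial_x^2 - \theta x$, whose ground-state eigenvalue on $L^2(\R_+)$ scales as $|\theta|^{2/3}$ under the rescaling $y = |\theta|^{1/3} x$. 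Accordingly I would fix any $\phi \in \testfuncs((0, +\infty))$ with $\phi \not\equiv 0$ and take $u_\theta(x) = |\theta|^{1/6}\phi(|\theta|^{1/3} x)$; a direct change of variables gives $\|u_\theta\|^2 = \|\phi\|^2$ together with
\[ \langle \mathfrak{G}_\theta u_\theta, u_\theta\rangle = |\theta|^{2/3}\left(\frac{\|\phi'\|^2_{L^2(\R_+)}}{2} + \int_0^{+\infty} y\,\phi(y)^2\,\d y\right) + \O(|\theta|^{-2/3}), \]
so that the min-max principle closes the argument with an explicit constant depending only on $\phi$.

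The only genuinely delicate point is the identification of the Airy length scale $|\theta|^{-1/3}$ in the trial function for~\eqref{eq:principal_eigenvalue_dirichlet}; once this scale is dictated by the balance between the kinetic term and the leading linear potential $|\theta| x$, the residual quadratic term $x^2/2$ contributes only at subleading order $\O(|\theta|^{-2/3})$ and all remaining computations reduce to elementary rescalings.
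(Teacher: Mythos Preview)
Your proposal is correct. For the $\theta\to+\infty$ limit, your argument (domain monotonicity for the lower bound, cutoff of the Gaussian ground state for the upper bound) is essentially the same as the paper's, which formalizes the upper-bound computation via the IMS localization formula but arrives at the same Gaussian tail estimates.

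For the $\theta\to-\infty$ limit, your route is genuinely different. The paper does \emph{not} use a variational trial function: instead it observes that if $\zeta_n$ is the largest zero of the $n$-th Hermite polynomial, then the restriction of the full-line eigenfunction $v_{n,\infty}$ to the nodal domain $(\zeta_n,+\infty)$ is a signed Dirichlet eigenfunction, hence the ground state, so $\mu_{0,-\zeta_n}=n+\tfrac12$. Domain monotonicity then sandwiches $\mu_{0,\theta}$ between $n(\theta)+\tfrac12$ and $n(\theta)+\tfrac32$, and Szeg\H{o}'s classical asymptotic $\zeta_n=\sqrt{2n+1}+\O(n^{-1/6})$ is inverted to give $n(\theta)=\theta^2/2+\O(|\theta|^{2/3})$. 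Your approach, by contrast, works through the translated operator $\mathfrak{G}_\theta$ and identifies the Airy length scale $|\theta|^{-1/3}$ directly via a rescaled test function; it is more elementary and self-contained, since it avoids importing the Hermite-zero asymptotics. The paper's argument, on the other hand, pinpoints the exact eigenvalues at the discrete sequence $\theta=-\zeta_n$ and makes transparent why the exponent $2/3$ appears (it is inherited from the $n^{-1/6}$ correction in Szeg\H{o}'s formula), and the paper remarks that its nodal-domain method extends readily to higher eigenvalues $\mu_{k,\theta}$ by tracking the $k$-th largest Hermite zero.
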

    \begin{proof}
        By unitary transformation, we equivalently estimate the principal Dirichlet eigenvalue of~$\mathfrak{H}_\theta$ on~$L^2(-\theta,+\infty)$. 
        
        Let us show~\eqref{eq:principal_eigenvalue_dirichlet}. Note first that, since~$x^2/2$ is bounded from below by~$\theta^2/2$ on~$(-\theta,+\infty)$, it clearly holds that~$\mu_{0,\theta}\geq \theta^2/2$, so there is a trivial lower bound. The proof of the estimate~\ref{eq:principal_eigenvalue_dirichlet} given below does not rely on a variational argument, but rather on a connection with the asymptotics of Hermite polynomials.
        Let, for any~$n\geq 1$,~$\zeta_n \geq 0$ be the largest root of the~$n$-th Hermite polynomial~$H_n$, where we recall the definition~\eqref{eq:hermite_eigenfunction} of the eigenmodes for the full harmonic oscillator~$\mathfrak{H}_\infty$.
        Then, it holds that~$\mu_{0,-\zeta_n} = n + \frac12$. Indeed, the restriction of the~$n$-th eigenfunction~$w_{n,\infty}$ to the nodal domain~$(\zeta_n,+\infty)$ is a signed eigenfunction of~$\mathfrak{H}_{-\zeta_n}$, with eigenvalue~$\omega_{n,\infty}=n+\frac12$. By standard arguments, it must in fact be a principal eigenfunction. Indeed, if~$u$ minimizes the quadratic form, so does~$|u|$, hence there exist a signed principal eigenfunction. If~$v$ is an eigenfunction for some higher eigenvalue, it may therefore not be signed without violating the orthogonality condition.

        From the domain monotonicity property of Dirichlet eigenvalues (see Proposition~\ref{prop:comparison_principle}), it holds for~$ -\zeta_{n+1}\leq \theta\leq -\zeta_{n}$, that~$ n + \frac32\geq \mu_{0,\theta}\geq n+\frac12$. Therefore,~$ n(\theta)+\frac32\geq\mu_{0,\theta}\geq n(\theta) + \frac12$, where~$n(\theta) = \max\,\{n\geq 1:\,\theta\leq -\zeta_n \}$.
        
        In~\cite[Theorem 6.32]{S39}, Szeg\"o gives the estimate~$\zeta_n = \sqrt{2n+1} + \O(n^{-1/6})$, from which we get\newline
        ~${\sqrt{2n(\theta)+1}(1+\smallo(1))\leq |\theta|}$ and~$\theta^2/C \leq n(\theta)\leq C\theta^2$ for some~$C>0$. Using Szeg\"o's estimate once again,
        \[\sqrt{2n(\theta)+1}+ \O(|\theta|^{-1/3})\geq |\theta|\geq \sqrt{2n(\theta)+3} + \O(|\theta|^{-1/3}),\]
        from which~$n(\theta) = \frac{\theta^2}2 + \O(|\theta|^{2/3})$ and the final estimate~\eqref{eq:principal_eigenvalue_dirichlet} easily follows.

        We now show~\eqref{eq:principal_eigenvalue_infinity}. The domain monotonicity principle yields the lower bound~$\mu_{0,\theta}\geq \mu_{0,\infty} = \frac12$.
        Let us show an asymptotic upper bound. We introduce~$\chi_\theta:\R\to\R$ a~$\mathcal C^\infty$ cutoff function such that
        \[\1_{(-\theta+1,+\infty)} \leq \chi_\theta\leq \1_{(-\theta,+\infty)},\]
        and denote by~$\overline{\chi}_\theta = \sqrt{1-\chi_\theta^2}$. We may choose~$\chi_\theta$ such that~$\|\partial_x \chi_\theta\|_{L^\infty(\R)},\|\partial_x\overline{\chi}_\theta\|_{L^\infty(\R)}\leq C$ for some~$C>0$ independent of~$\theta$. This may be enforced simply by setting~$\chi_\theta = \chi_0(\cdot +\theta)$ for a suitably chosen~$\chi_0$.      
        
        Consider the trial quasimode~$u_\theta=\chi_\theta v_{0,\infty}\in \mathcal D(\mathfrak{H}_\theta)$, where~$v_{0,\infty}(x) = \pi^{-1/4}\e^{-\frac{x^2}2}$ is defined in~\eqref{eq:hermite_eigenfunction}.
        It first holds that
        \[1 = \|v_{0,\infty}\|^2_{L^2(\R)} = \|u_\theta\|^2_{L^2(\R)} + \|\overline{\chi}_\theta v_{0,\infty}\|^2_{L^2(\R)},\]
        using~$\chi_\theta^2+\overline{\chi}_\theta^2 = 1$. Then, since~$\overline{\chi_\theta}\leq \1_{(-\infty,-\theta+1)}$, we obtain~$\|\overline{\chi_\theta}v_{0,\infty}\|_{L^2(\R)}^2 = \O\left(\e^{-(\theta-1)^2}\right)=\O\left(\e^{-\theta^2/2}\right)$ as~$\theta\to+\infty$ by a Gaussian tail bound. Hence,~$\|u_\theta\|_{L^2(-\theta,+\infty)}^2=\|u_\theta\|^2_{L^2(\R)} = 1 + \O(\e^{-\theta^2/2})$.

        The IMS localization formula~(see~\cite{S83,CFKS87}) then gives
        \[\frac12=\langle \mathfrak{H}_\infty v_{0,\infty},v_{0,\infty}\rangle_{L^2(\R)} = \left\langle \mathfrak{H}_\infty u_\theta,u_\theta \right\rangle_{L^2(\R)} + \left\langle \mathfrak{H}_\infty \overline{\chi}_\theta v_{0,\infty},\overline{\chi}_\theta v_{0,\infty}\right\rangle_{L^2(\R)}-\frac12\left\|\sqrt{\left(\partial_x\chi_\theta\right)^2 + \left(\partial_x\overline\chi_\theta\right)^2} v_{0,\infty}\right\|^2_{L^2(\R)}.\]
        Since~$\supp\, \partial_x\chi_\theta,\,\supp\, \partial_x \overline{\chi}_\theta \subset (-\theta,-\theta+1)$ and these derivatives are uniformly bounded in~$\theta$, it holds that~$\left\|\sqrt{\left(\partial_x\chi_\theta\right)^2 + \left(\partial_x\overline\chi_\theta\right)^2} v_{0,\infty}\right\|^2_{L^2(\R)} = \O(\e^{-\theta^2/2})$ by the same Gaussian estimate.
        Finally,
        \[\langle \mathfrak{H}_\infty \overline{\chi}_\theta v_{0,\infty},\overline{\chi}_\theta v_{0,\infty}\rangle_{L^2(\R)} = \frac12\|\partial_x(\overline{\chi_\theta}v_{0,\infty})\|^2_{L^2(\R)} + \frac12\|x\overline{\chi_\theta}v_{0,\infty}\|^2_{L^2(\R)}=\O(\e^{-\theta^2/3}),\]
        using again~$\supp\,\partial_x\overline{\chi}_\theta\subset (-\theta,-\theta+1)$,~$\overline{\chi}_\theta\leq \1_{(-\infty,-\theta+1)}$ and~$\|\partial_x\overline{\chi}_\theta\|^2_{L^\infty(\R)}\leq C$. The loss of a multiplicative constant in the exponent is due to the absorption of the~$\frac{x^2}2$ term from the potential.

        It follows that
        \[\mu_{0,\theta} \leq \frac{\langle \mathfrak{H}_\theta u_\theta,u_\theta\rangle_{L^2(-\theta,+\infty)}}{\|u_\theta\|^2_{L^2(-\theta,+\infty)}}=\frac{\langle \mathfrak{H}_\infty u_\theta,u_\theta\rangle_{L^2(\R)}}{\|u_\theta\|^2_{L^2(\R)}} = \frac{1}{2}\left(1+\O(\e^{-\theta^2/3})\right),\]
        and~\eqref{eq:principal_eigenvalue_infinity} follows upon taking the~$\lim\sup$ as~$\theta\to+\infty$ on both sides of this inequality.        
    \end{proof}
    Note that the strategies to show~\eqref{eq:principal_eigenvalue_dirichlet} and~\eqref{eq:principal_eigenvalue_infinity} can easily be adapted to treat the asymptotics of higher eigenvalues~$\mu_{k,\theta}$ of the Dirichlet oscillators~$\mathfrak{H}_\theta$, using respectively estimates on the~$k$-th largest root of the Hermite polynomials, and the trial family of quasimodes~$\left\{\chi_\theta v_{0,\infty},\dots,\chi_\theta v_{k,\infty}\right\}$.
    We now construct the local harmonic oscillators entering in the harmonic approximation to the Witten Laplacian~\eqref{eq:witten_laplacian}, by considering tensorized eigenmodes of one-dimensional oscillators. 

    \paragraph{One-dimensional case.}
    In view of the change of variables~$z=\sqrt{\frac{|\nu_1^{(i)}|}2}x$ which is such that~$\frac{\nu_1^{(i)2}}4 x^2-\partial_x^2 = |\nu_1^{(i)}|\frac12\left(z^2-\partial_z^2\right)$, we denote, for~$k\geq 0$ and~$\theta\in \R\cup\{+\infty\}$, 
    \begin{equation}
        \label{eq:local_ho_1d}
    K^{(i)}_\theta = |\hessEigval{i}{1}|R D_{|\hessEigval{i}{1}/2|^{\frac12}}\mathfrak{H}_{\theta|\hessEigval{i}{1}/2|^{1/2}}D_{|\hessEigval{i}{1}/2|^{-\frac12}}R - \frac{\hessEigval{i}{1}}2,
    \end{equation}
    where~$R u(x) = u(-x)$ denotes the reflection operator, which accounts for the orientation convention chosen for~$\hessEigvec{i}{1}$. The spectrum of~$K^{(i)}_\theta$ is explicit in terms of the~$\mu_{k,\theta}$, with the following expressions for the eigenpairs:
    \begin{equation}
        \label{eq:hermite_eigenfunction_scaled}
        \wi{k}{\theta}(x) = \left(\frac{|\nu_1^{(i)}|}{2}\right)^{\frac14}v_{k,\theta(|\hessEigval{i}{1}|/2)^{\frac12}}\left(-\sqrt{\frac{|\nu^{(i)}_1|}2}x\right) ,\qquad \omegai{k}{\theta} = |\nu^{(i)}_1|\mu_{k,\theta(|\hessEigval{i}{1}|/2)^{\frac12}} - \frac{\nu^{(i)}_1}2.
    \end{equation}
    It is immediate, from the construction performed above, that~$(\wi{k}{\theta})_{k\geq 0}$ forms a complete orthonormal eigenbasis for the Dirichlet realization of the oscillator~$\Ki{\theta}$ on~$L^2((-\infty,\theta))$, hence~$\Ki{\theta}$ is self-adjoint with
        \begin{equation}
            \label{eq:full_oscillator_domain}
            \mathcal D(\Ki{\theta}) \subset H_0^1(-\infty,\theta).
        \end{equation}
        
    \paragraph{Multidimensional case.}
    The higher dimensional case is obtained by considering a separable Schr\"odinger operator acting on the first coordinate as a one-dimensional Dirichlet oscillator, and on the~$(d-1)$ transverse coordinates as full one-dimensional (scaled) harmonic oscillators.
    More precisely, we define~$K_{\theta}^{(i)}$ as the closure of the essentially self-adjoint operator (see e.g.~\cite[Chapter X]{RS75} for background on self-adjoint extensions)
    \begin{equation}
        \label{eq:full_ho_nd}
    \left[|\hessEigval{i}{1}|R D_{|\hessEigval{i}{1}/2|^{\frac12}}\mathfrak{H}_{\theta|\hessEigval{i}{1}/2|^{1/2}}D_{|\hessEigval{i}{1}/2|^{-\frac12}}R\right]_{1} \otimes\,\mathbb{I}_{1}+\sum_{j=2}^d \left[|\hessEigval{i}{j}|D_{|\hessEigval{i}{j}/2|^{\frac12}}\mathfrak{H}_{\infty}D_{|\hessEigval{i}{j}/2|^{-\frac12}} \right]_{j}\otimes\,\mathbb{I}_{j} - \frac{\Delta V(z_i)}{2},
    \end{equation}
    where we define, given~$d$ Hilbert spaces~$(\mathcal H_j)_{j=1,\dots,d}$ and an unbounded operator~$A$ on~$\mathcal H_j$, the operator $A_{j}\otimes\mathbb{I}_{j}$ acting on~$\bigotimes_{j=1}^d \mathcal H_j$ via~${A_{j}\otimes\mathbb{I}_{j}(f_1\otimes\dotsm\otimes f_j) = f_1\otimes\dotsm\otimes f_{j-1}\otimes Af_j \otimes f_{j+1}\otimes \dotsm\otimes f_d}$.
    We naturally identify~$K_{\theta}^{(i)}$ with a self-adjoint operator on~$L^2((-\infty,\theta)\times \R^{d-1})$. Its eigendecomposition is explicit, and enumerated by~$n = (n_1,\dots,n_d)\in\N^d$:
    \begin{equation}
        \label{eq:full_harmonic_eigenstates}
        \lambdai{n}{\theta} = \omegai{n_1}{\theta} + \sum_{j=2}^{d}\left[|\nu_j^{(i)}|\mu_{n_j,\infty} - \frac{\nu_j^{(i)}}2\right],\qquad\psii{n}{\theta}(x)= \wi{n_1}{\theta}(x_1)\prod_{j=2}^d \left[\left(\frac{|\nu_j^{(i)}|}{2}\right)^{\frac14}v_{n_j,\infty}\left(\sqrt{\frac{|\nu^{(i)}_j|}2}x_j\right)\right].
    \end{equation}
    Moreover, the domain satisfies the inclusion
    \begin{equation}
        \label{eq:full_harmonic_domain}
        \mathcal D(\Ki{\theta}) \subset H_0^1\left[(-\infty,\theta)\times\R^{d-1}\right].
    \end{equation}
    It is often more convenient to enumerate the spectrum of~$\Ki{\theta}$ with integers instead. Here we slightly abuse notation, and again write the spectrum
    \begin{equation}
        \label{eq:ki_enumeration}
        \mathrm{Spec}(\Ki{\theta}) = \left(\lambdai{n}{\theta}\right)_{n\geq 1}
    \end{equation}
    in a non-decreasing sequence of eigenvalues. The indexing convention used will be clear from the context.

    When the need arises, we will consider~$\psii{n}{\theta}$ as an element of~$L^2(\R^d)$ by extending it by zero outside of~$(-\infty,\theta)\times\R^{d-1}$.
    A crucial tool in our analysis is the following pointwise decay estimate for these harmonic Dirichlet eigenmodes.
    \begin{lemma}
        For any~$n\in\N^d$,~$\theta\in(-\infty,\infty]$ and~$0\leq i< N$, there exists a constant~$C_{i,n,\theta}>0$ such that the following inequality holds for every~$x\in\R^d$:
        \begin{equation}
            \label{eq:exponential_decay}
            |\psii{n}{\theta}(x)| \leq C_{i,n,\theta}\,\e^{-\frac{|x|^2}{C_{i,n,\theta}}}.
        \end{equation}
    \end{lemma}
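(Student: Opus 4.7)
The plan is to exploit the tensor product structure of $\psii{n}{\theta}$ given explicitly in \eqref{eq:full_harmonic_eigenstates}. Writing
$$|\psii{n}{\theta}(x)| = \left|\wi{n_1}{\theta}(x_1)\right|\prod_{j=2}^d \left(\frac{|\nu_j^{(i)}|}{2}\right)^{\frac14}\left|v_{n_j,\infty}\left(\sqrt{|\nu^{(i)}_j|/2}\,x_j\right)\right|,$$
and using $|x|^2 = \sum_j x_j^2$, the statement reduces to proving a one-dimensional pointwise Gaussian bound for each factor separately, of the form $|f(y)| \leq Ce^{-y^2/C}$.

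For the $d-1$ transverse factors indexed by $j\geq 2$, this is an immediate consequence of the explicit formula \eqref{eq:hermite_eigenfunction}: since $v_{n_j,\infty}(y) = (2^{n_j} n_j!\sqrt\pi)^{-1/2} e^{-y^2/2} H_{n_j}(y)$ with $H_{n_j}$ a polynomial of degree $n_j$, the bound $|v_{n_j,\infty}(y)| \leq C e^{-y^2/4}$ follows, and linear rescaling of $y$ preserves the Gaussian form. The case $\theta = +\infty$ for the first factor is identical, since then $\wi{n_1}{\infty}$ is a scaled-and-reflected Hermite function by construction. The only nontrivial case is the first factor when $\theta<+\infty$: then $\wi{n_1}{\theta}$ vanishes identically on $[\theta,+\infty)$ thanks to \eqref{eq:full_oscillator_domain}, so the bound for $x_1\geq 0$ is automatic, and we only need to control the decay as $x_1\to-\infty$.

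On $(-\infty,\theta)$, the function $\wi{n_1}{\theta}$ is an $L^2$ eigenfunction of the one-dimensional Schr\"odinger operator $-\partial^2 + \frac{\hessEigval{i}{1}{}^2}{4}x^2 - \frac{\Delta V(z_i)}{2}$ with eigenvalue $\omegai{n_1}{\theta}$, vanishing at $\theta$. I would apply a standard Agmon estimate: choose a Lipschitz weight $\rho$ with $\rho'(x)^2 \leq \bigl(\frac{\hessEigval{i}{1}{}^2 x^2}{4} - \omegai{n_1}{\theta} - \frac{\Delta V(z_i)}{2}\bigr)_+ - 1$ outside a compact set, which admits a choice with $\rho(x) \sim |\hessEigval{i}{1}|x^2/4$ as $|x|\to\infty$. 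Multiplying the eigenvalue equation by $e^{2\rho}\wi{n_1}{\theta}$ and integrating by parts on $(-\infty,\theta)$ (the boundary terms vanish by the Dirichlet condition at $\theta$ and by the $L^2$ decay as $x\to-\infty$) yields $\|e^{\rho}\wi{n_1}{\theta}\|_{L^2(-\infty,\theta)}<+\infty$. A one-dimensional Sobolev embedding, combined with using the eigenfunction equation to control $(e^\rho \wi{n_1}{\theta})'$ in $L^2$, upgrades this weighted $L^2$ estimate to the desired pointwise bound $|\wi{n_1}{\theta}(x)| \leq C e^{-c x^2}$ on all of $\R$.

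The main technical care is in the Agmon step: one must verify admissibility of the weight (boundary terms and integrability) and extract a rate that matches the quadratic growth of the potential, so that the exponent in the final bound is genuinely quadratic rather than merely superlinear. Otherwise, the argument is routine and follows exactly the semiclassical template of~\cite{S83,CFKS87}.
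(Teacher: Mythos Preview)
Your approach is correct and genuinely different from the paper's. You exploit the explicit tensor product structure~\eqref{eq:full_harmonic_eigenstates} and reduce to one-dimensional estimates: the transverse factors are handled by the explicit Hermite formula, and the Dirichlet factor by a direct Agmon estimate on the half-line (with the standard loss-of-$\epsilon$ step to pass from weighted~$L^2$ to pointwise via Sobolev embedding). The paper instead keeps the problem $d$-dimensional and uses a reflection trick: it anti-symmetrizes $\psii{n}{\theta}$ across the hyperplane $\{x_1=\theta\}$, observes that the resulting function is an eigenfunction of a full-space Schr\"odinger operator whose potential still dominates $\varepsilon|x|^2$ at infinity, and then invokes a black-box pointwise decay result of Carmona~\cite{C78}. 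Your route is more self-contained and makes the constants more explicit; the paper's is shorter once one accepts the external reference, and the reflection idea avoids having to redo the Agmon machinery by hand. Two minor inaccuracies in your write-up: the function vanishes on $[\theta,+\infty)$, not $[0,+\infty)$ (the region $[0,\theta)$ when $\theta>0$ is handled by continuity on a bounded set), and the constant shift in the one-dimensional operator should be $-\hessEigval{i}{1}/2$ rather than $-\Delta V(z_i)/2$; neither affects the argument.
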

    \begin{proof}
        The proof relies on a probabilistic estimate obtained in~\cite{C78}, and a reflection argument.
        We consider the following anti-symmetrization of the eigenmode~$\psii{n}{\theta}$:
        \begin{equation}
            \widetilde{\psi}^{(i)}_{n,\theta}(x) = \psii{n}{\theta}(x)\1_{x_1\leq \theta} - \psii{n}{\theta}(\iota_\theta x)\1_{x_1>\theta},
        \end{equation}
        where~$\iota_\theta(x) = (2\theta-x_1,\dots,x_d)$ denotes the reflection with respect to the~$\{x_1 = \theta\}$ hyperplane. Then, it is easy to check that~$\widetilde{\psi}^{(i)}_{n,\theta}$
        is also an eigenmode (for the same eigenvalue~$\lambdai{n}{\theta}$) of the Schr\"odinger operator associated with the symmetrized potential:
        \begin{equation}
            \widetilde K^{(i)}_\theta = -\Delta + \widetilde W^{(i)} ,\quad \widetilde W^{(i)}(x) = W^{(i)}(x)\1_{x_1\leq \theta} + W^{(i)}(\iota_\theta x)\1_{x_1>\theta},
        \end{equation}
        where~$W^{(i)}(x)=x^\intercal \Sigma^{(i)}x$.
        Note that there exist~$\varepsilon>0$ and a compact set~$B\subset \R^d$ such that
        \begin{equation}
            \label{eq:lemma1_minorization}
            \widetilde W^{(i)} \geq \varepsilon|x|^2,\qquad \forall\,x\in \R^d\setminus B,
        \end{equation}
        owing to the strict positivity of~$\Sigma^{(i)}$ (recalling that~$z_i$ is a non-degenerate critical point).
        We consider~$K^{(i)}_{\theta,\mathrm{sym}}$ to be the self-adjoint operator obtained by the Friedrichs extension of the lower-bounded quadratic form associated with~$\widetilde K^{(i)}_\theta$.
        Then, it immediately follows from~\cite[Proposition 3.1]{C78} and the lower bound~\eqref{eq:lemma1_minorization} that the pointwise estimate~\eqref{eq:exponential_decay} holds for~$\widetilde\psi^{(i)}_{n,\theta}$ and some constant~$C_{i,n,\theta}>0$.
        The proof is concluded, in view of the inequality~$|\psii{n}{,\theta}(x)|\leq |\widetilde\psi^{(i)}_{n,\theta}(x)|$ for all~$x\in\R^d$.
    \end{proof}

    \subsection{Global harmonic approximation}\label{subsec:global_harm_operator}
    We now define global harmonic approximations to~$H_\beta$ defined in~\eqref{eq:witten_laplacian}. Because of the geometric flexibility afforded by Assumption~\eqref{hyp:locally_flat}, we will in fact use this harmonic approximation for a variety of Dirichlet boundary conditions. Each of these boundary conditions will be encoded with a vector of extended real numbers~$\alpha' = (\alpha'_i)_{0\leq i <N} \in (-\infty,\infty]^{N}$.
    In this context, a distinguished role is played by~$\alpha'=\alpha$, where we recall the definition~\eqref{eq:global_alpha}.
    Its components correspond to the asymptotic signed distance of each critical point to the boundary, on the scale~$\beta^{-\frac12}$, in view of Assumption~\eqref{hyp:alpha_exists}.
    
    For general~$\alpha'$, we define local oscillators~$H^{(i)}_{\beta,\alpha'_i}$ from the definition~$\Ki{\alpha'_i}$ (in the~$d$-dimensional case) using the unitary equivalence~\eqref{eq:harmonic_conjugation}.
    In particular, the domain of~$H_{\beta,\alpha'_i}^{(i)}$ is given by
    \begin{equation}
        \label{eq:local_harm_domain}
        \mathcal D(H^{(i)}_{\beta,\alpha'_i}) = T_{z_i}D_{\sqrt\beta}\mathcal U^{(i)}\mathcal D\left(K_{\alpha'_i}^{(i)}\right)\subset H_0^1\left[z_i + \halfSpace{i}\left(\frac{\alpha'_i}{\sqrt\beta}\right)\right].
    \end{equation}
    We denote, for~$n\in\N^d$,
    \begin{equation}
        \label{eq:harmonic_eigenmode}
        \psii{\beta,n}{\alpha_i'}(x) = \beta^{\frac d4}\psii{n}{\alpha_i'}\left(\sqrt\beta U^{(i)\intercal}(x-z_i)\right),\qquad\beta\lambda^{(i)}_{n,\alpha_i'}
    \end{equation}
    the eigenpair of~$H^{(i)}_{\beta,\alpha_i'}$ associated with~$\left(\psii{n}{\alpha_i'},\lambda^{(i)}_{n,\alpha_i'}\right)$ under this correspondence. 
    Notice that we introduce a prefactor~$\beta$ in the definition of the eigenvalues of~$H_{\beta,\alpha_i'}^{(i)}$, which is related to the fact that~$-\cL_\beta$ is unitarily equivalent to~$H_\beta/\beta$, see~\eqref{eq:witten_laplacian}.
    Note the~$\beta^{\frac d4}$ factor in~\eqref{eq:harmonic_eigenmode}, which accounts for~$L^2$ normalization.

    The global approximation is formed by a direct sum of these local oscillators:
    \begin{equation}
        \label{eq:global_harmonic_approximation_conj}
        H_{\beta,\alpha'}^{\mathrm H} = \bigoplus_{i=0}^{N-1} H^{(i)}_{\beta,\alpha'_i},\qquad \mathcal D(H_{\beta,\alpha'}^{\mathrm H}) = \prod_{i=0}^{N-1} \mathcal D(H^{(i)}_{\beta,\alpha'_i}),
    \end{equation}
    hence the harmonic spectrum is given by
    \begin{equation}
        \label{eq:full_harmonic_approximation_spectrum}
        \mathrm{Spec}(H_{\beta,\alpha'}^{\mathrm H}) = \left\{\beta\lambdai{n}{\alpha'_i}\right\}_{\substack{0\leq i<N\\n\in\N^d}}.
    \end{equation}
    
    Let us specify the convention we use to enumerate the various spectra at play.
    First, we enumerate the spectrum of~$\Ki{\alpha_i'}$ in non-decreasing order, according to the convention~\eqref{eq:ki_enumeration}, with corresponding eigenmodes~$(\psii{m}{\alpha})_{m\geq 1}$.
    We then enumerate the full harmonic spectrum in non-decreasing order, by defining two integer-valued sequences
    \begin{equation}
        \label{eq:eigenstate_enumeration}
        (n_j)_{j\geq 1}\in \left(\N^*\right)^{\N^*},\qquad (i_j)_{j\geq 1}\in \{0,\dots,N-1\}^{\N^*},
    \end{equation}
    defined by the condition that the~$j$-th largest eigenvalue of~$H_\beta^{\mathrm H}$, counted with multiplicity, is given by
    \begin{equation}
        \label{eq:harm_spectrum_enumeration}
        \beta\lambda_{j,\alpha'}^{\mathrm H} = \beta\lambda^{(i_j)}_{n_j,\alpha'_{i_j}},
    \end{equation}
    where we first defer to the ordering on~$\{0,\dots,N-1\}$ and then to the ordering on each~$\mathrm{Spec}(\Ki{\alpha_i'})$ to resolve ambiguities due to degenerate eigenvalues.
    We note that this choice is arbitrary, since the ordering convention plays no particular part in the analysis.
    For convenience, we also define, for each~$0\leq i <N$, the function which gives for a~$n\geq 1$ the number of states with energy lower than~$\lambda_{n,\alpha'}^{\mathrm H}$ and localized around~$z_i$:
    \begin{equation}
        \label{eq:def_ni}
        \mathfrak{N}_{i}(n) = \#\{ 1 \leq j \leq n: i_j = i\}.
    \end{equation}
    To lighten the notation, we have omitted to include the dependence of~$\mathfrak{N}_i$,~$i_j$ and~$n_j$ in~$\alpha'$, which will be clear from the context.
    We also note the following equalities, valid by definition for any~$n\geq 1$:
    \begin{equation}
        \label{eq:enumeration_relations}
        \underset{0\leq i<N}{\max}\,\lambda^{(i)}_{\mathfrak{N}_i(n),\alpha_i'}  = \lambda_{n,\alpha'}^{\mathrm H},\qquad \underset{0\leq i<N}{\min}\,\lambda^{(i)}_{\mathfrak{N}_i(n)+1,\alpha_i'} = \lambda_{n+1,\alpha'}^{\mathrm H},\qquad \sum_{i=0}^{N-1} \mathfrak{N}_i(n) = n.
    \end{equation}

    In particular, an expression for the second eigenvalue of the harmonic approximation is available.
    \begin{remark}
    In the case where the bottom eigenvalue is associated to a local minimum~$z_0$, i.e.~$\mathfrak{N}_0(1)=1$,~$\mathfrak{N}_i(1)=0$ for~$0< i < N$, then it holds
    \begin{equation}
    \lambda_{2,\alpha'}^{\mathrm{H}} = \min\left\{\lambda_{1,\alpha_0'}^{(0)},\underset{0<i<N}{\min}\,\lambda_{0,\alpha_i'}^{(i)}\right\}.
    \end{equation}
    In the particular case~$\alpha_0'=+\infty$, this gives, using the expression~\eqref{eq:full_harmonic_eigenstates} and~$\mu_{0,\infty}=1/2$,
   
    \begin{equation}
    \label{eq:lambda_2_expr}
    \begin{aligned}
     \lambda_{2,\alpha'}^{\mathrm{H}} &= \min\left\{\underset{1\leq j\leq d}{\min}\,\hessEigval{0}{j},\underset{0<i<N}{\min}\,\omega_{0,\alpha_i'}^{(i)}+\sum_{j=2}^d |\hessEigval{i}{j}|\1_{\hessEigval{i}{j}<0}\right\}\\
     &= \min\left\{\underset{1\leq j\leq d}{\min}\,\hessEigval{0}{j},\underset{0<i<N}{\min}\,|\nu^{(i)}_1|\mu_{k,\theta(|\hessEigval{i}{1}|/2)^{\frac12}} - \frac{\nu^{(i)}_1}2+\sum_{j=2}^d |\hessEigval{i}{j}|\1_{\hessEigval{i}{j}<0}\right\},
    \end{aligned}
    \end{equation}
    using equation~\eqref{eq:hermite_eigenfunction_scaled} in the last line.
    \end{remark}
    
    Finally, we note that the analyticity of the map~$\alpha \mapsto \mu_{k,\alpha}$ obtained for all~$k\in \N$ in Lemma~\ref{lemma:spectral_analysis} implies the continuity of the mapping~$\alpha' \mapsto \lambda_{n,\alpha'}^{\mathrm H}$ for any~$n\geq 1$.

    \subsection{Construction of harmonic quasimodes and associated localization estimates}\label{subsubsec:harm_quasimodes}
    Approximate eigenmodes of~$H_\beta$ may be obtained by localizing the eigenmodes of the harmonic approximation around the corresponding critical point, in such a way that the Dirichlet boundary conditions in~$\Omega_\beta$ are met.
    We consider, for~$\theta\in\R\cup\{+\infty\}$, so-called quasimodes of the form
    \begin{equation}
        \label{eq:harm_quasimode}
        \widetilde\psi^{(i)}_{\beta,n,\theta} = \frac{\chi_\beta^{(i)}\psi_{\beta,n,\theta}^{(i)}}{\|\chi_\beta^{(i)}\psi_{\beta,n,\theta}^{(i)}\|_{L^2(\Omega_\beta)}},
    \end{equation}
    where~$\psi_{\beta,n,\theta}^{(i)}$ is a harmonic mode of~$H_{\beta,\theta}^{(i)}$, defined in~\eqref{eq:harmonic_eigenmode} multiplied by the cutoff function~$\chi_\beta^{(i)}$, and normalized in~$L^2(\Omega_\beta)$.
    The role of~$\chi_\beta^{(i)}$ is to localize the quasimode in the vicinity of~$z_i$. To this effect, we fix a reference~$\testfuncs(\R)$ cutoff function~$\chi$ such that
    \begin{equation}
        \label{eq:reference_cutoff}
        \1_{[-\frac12,\frac12]} \leq \chi \leq \1_{[-1,1]}.
    \end{equation}
    We furthermore require that
    \begin{equation}
        \label{eq:partition_of_unity_condition}
        \left\|\frac{\d}{\d x}\left[\sqrt{1-\chi^2}\right]\right\|_{L^\infty(\R)} < +\infty.
    \end{equation}
    Let us next define the localized cutoff function:
    \begin{equation}
        \label{eq:cutoff}
        \chi_\beta^{(i)}(x) = \chi\left(\largeRadius(\beta)^{-1}|x-z_i|\right).
    \end{equation}
    Recalling that~$\sqrt\beta\largeRadius(\beta)\xrightarrow{\beta\to\infty} + \infty$, we have that~$\supp\,\chi_\beta^{(i)}$ is contained in a ball around~$z_i$ whose radius is large with respect to~$\frac1{\sqrt\beta}$, while~$\supp\,\nabla \chi_\beta^{(i)}$ is contained in a hyperspherical shell around~$z_i$:
    \begin{equation}
        \label{eq:cutoff_supports}
        \supp\,\chi_\beta^{(i)} \subset B(z_i,\largeRadius(\beta)),\qquad\supp\,\nabla\chi_\beta^{(i)} \subset B(z_i,\largeRadius(\beta))\setminus B\left(z_i,\frac12\largeRadius(\beta)\right).
    \end{equation}
    In fact we will assume in the proof of the Theorem~\ref{thm:harm_approx} that~\eqref{hyp:locally_flat} is satisfied with $\delta(\beta)\ll \beta^{-\frac13}$ (this comes at no cost of generality), so that the support of~$\chi_\beta^{(i)}$ is localized around $z_i$.
    We have the bounds
    \begin{equation}
        \label{eq:linf_bound_nabla_chi}
        \left\|\partial^p\chi_\beta^{(i)}\right\|_\infty = \|\partial^p \chi\|_\infty \largeRadius(\beta)^{-|p|},
    \end{equation}
    for any multi-index~$p\in\N^d$, and thus~$\|\partial^p \chi_\beta^{(i)}\|_\infty = \smallo\left(\beta^{\frac{|p|}2}\right)$.
    
    We stress that, assuming that~$\beta$ is large enough for~\eqref{hyp:locally_flat} to hold, although~$\chi_\beta^{(i)}$ does not necessarily vanish on~$\partial \Omega_\beta$ when~$\epsLimit{i}<+\infty$, we still have~$\widetilde\psi_{\beta,k,\theta}^{(i)}\in H_0^1(\Omega_\beta)$ (the form domain of $Q_\beta$) provided~$\theta<\epsLimit{i} - \sqrt\beta\smallRadius^{(i)}(\beta)$ and for~$\beta$ large enough.
    
    The following result records some crucial localization estimates.
    \begin{lemma}
        \label{lemma:localization}
        We consider, for~$n\in\N^d$, eigenvectors $\psi_{\beta,n,\theta}^{(i)}$ of~$H^{(i)}_{\beta,\theta}$ normalized in~$L^2\left(z_i+\halfSpace{i}(\frac{\theta}{\sqrt\beta})\right)$, extended by~0 in~$L^2(\R^d)$, and define the associated quasimodes~$\widetilde\psi_{\beta,n,\theta}^{(i)}$ according to~\eqref{eq:harm_quasimode}.
        Then, for any~$n,m\in\N^d$, there exists~$\beta_0>0$ and constants~$M_{i,n,\theta},M_{i,n,m,\theta}>0$, independent of~$\beta$ such that the following estimates hold for any~$\beta>\beta_0$.
        \begin{enumerate}[]
            \item{\begin{equation}\label{eq:loc_eqa}\left\|\left(1-\chi_\beta^{(i)}\right)\psi_{\beta,n,\theta}^{(i)}\right\|_{L^2(\R^d)} = \O\left(\e^{-\frac{\beta\largeRadius(\beta)^2}{M_{i,n,\theta}}}\right),\end{equation}}
            \item{\begin{equation}\label{eq:loc_eqb}\left|\left\langle \widetilde\psi_{\beta,n,\theta}^{(i)},\widetilde\psi_{\beta,m,\theta}^{(i)}\right\rangle_{L^2(\R^d)}-\delta_{nm}\right| =\O\left(\e^{-\frac{\beta\largeRadius(\beta)^2}{M_{i,n,m,\theta}}}\right),\end{equation}} 
            \item{\begin{equation}\label{eq:loc_eqc}\left|\left\langle H_\beta^{(i)}(1-\chi_\beta^{(i)})\psi_{\beta,n,\theta}^{(i)},(1-\chi_\beta^{(i)})\psi_{\beta,m,\theta}^{(i)}\right\rangle_{L^2(\R^d)}\right| = \O\left(\beta\e^{-\frac{2\beta\largeRadius(\beta)^2}{M_{i,n,m,\theta}}}\right).\end{equation}}
        \end{enumerate}
        If the scaling~\eqref{hyp:scaling_deltai} holds, the upper bounds decay superpolynomially in~$\beta$.
    \end{lemma}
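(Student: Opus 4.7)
\medskip

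The plan is to exploit the pointwise Gaussian decay estimate~\eqref{eq:exponential_decay} for the unscaled eigenmodes $\psi_{n,\theta}^{(i)}$, combined with the support properties of $1-\chi_\beta^{(i)}$ and its derivatives. The key observation is that the scaling factor $\sqrt{\beta}$ in the definition~\eqref{eq:harmonic_eigenmode} converts the $O(1)$ Gaussian decay into decay at rate $\beta |x-z_i|^2$, while $1-\chi_\beta^{(i)}$ is supported in $\{|x-z_i| \geq \delta(\beta)/2\}$, a region where this Gaussian is of order $\e^{-c\beta\delta(\beta)^2}$. Assumption~\eqref{hyp:locally_flat} guarantees $\sqrt{\beta}\delta(\beta)\to +\infty$, so this quantity tends to zero and dominates any polynomial prefactors in $\beta$ or $\delta(\beta)^{-1}$ that appear along the way.

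For~\eqref{eq:loc_eqa}, I would change variables $y = \sqrt{\beta}U^{(i)\intercal}(x-z_i)$, reducing the integral $\int_{|x-z_i|\geq \delta(\beta)/2}|\psi_{\beta,n,\theta}^{(i)}|^2$ to $\int_{|y|\geq \sqrt{\beta}\delta(\beta)/2}|\psi_{n,\theta}^{(i)}(y)|^2\,\d y$, then bound the latter by a standard Gaussian tail estimate using~\eqref{eq:exponential_decay}. For~\eqref{eq:loc_eqb}, since the $\psi_{\beta,n,\theta}^{(i)}$ (extended by zero) are orthonormal in $L^2(\R^d)$, I would write
\[
\bigl\langle \chi_\beta^{(i)}\psi_{\beta,n,\theta}^{(i)}, \chi_\beta^{(i)}\psi_{\beta,m,\theta}^{(i)}\bigr\rangle = \delta_{nm} - \bigl\langle (1-(\chi_\beta^{(i)})^2)\psi_{\beta,n,\theta}^{(i)}, \psi_{\beta,m,\theta}^{(i)}\bigr\rangle,
\]
then bound the second term by Cauchy--Schwarz using $1-(\chi_\beta^{(i)})^2 \leq \mathbf{1}_{|x-z_i|\geq \delta(\beta)/2}$ and the estimate from~\eqref{eq:loc_eqa}. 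A similar computation with $n=m$ gives $\|\chi_\beta^{(i)}\psi_{\beta,n,\theta}^{(i)}\|_{L^2}^2 = 1 + \O(\e^{-c\beta\delta(\beta)^2})$, and dividing by the normalizing factors in~\eqref{eq:harm_quasimode} yields the claim.

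For~\eqref{eq:loc_eqc}, I would use the eigenrelation $H_\beta^{(i)}\psi_{\beta,n,\theta}^{(i)} = \beta\lambda_{n,\theta}^{(i)}\psi_{\beta,n,\theta}^{(i)}$ together with the commutator identity $[H_\beta^{(i)},1-\chi_\beta^{(i)}] = [-\Delta,-\chi_\beta^{(i)}] = (\Delta\chi_\beta^{(i)}) + 2\nabla\chi_\beta^{(i)}\cdot\nabla$, so that
\[
H_\beta^{(i)}[(1-\chi_\beta^{(i)})\psi_{\beta,n,\theta}^{(i)}] = \beta\lambda_{n,\theta}^{(i)}(1-\chi_\beta^{(i)})\psi_{\beta,n,\theta}^{(i)} + (\Delta\chi_\beta^{(i)})\psi_{\beta,n,\theta}^{(i)} + 2\nabla\chi_\beta^{(i)}\cdot\nabla\psi_{\beta,n,\theta}^{(i)}.
\]
Pairing with $(1-\chi_\beta^{(i)})\psi_{\beta,m,\theta}^{(i)}$ yields three terms: the first is bounded by Cauchy--Schwarz and~\eqref{eq:loc_eqa}, giving $\O(\beta \e^{-2\beta\delta(\beta)^2/M})$; the other two are supported on the annulus $\delta(\beta)/2 \leq |x-z_i|\leq \delta(\beta)$, where~\eqref{eq:linf_bound_nabla_chi} controls $\|\Delta\chi_\beta^{(i)}\|_\infty$ and $\|\nabla\chi_\beta^{(i)}\|_\infty$ by $\O(\delta(\beta)^{-2})$ and $\O(\delta(\beta)^{-1})$, while a pointwise Gaussian estimate analogous to~\eqref{eq:exponential_decay} for $\nabla\psi_{n,\theta}^{(i)}$ gives control over the integrands. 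The main technical subtlety is establishing this gradient estimate, which is not directly provided by~\eqref{eq:exponential_decay}; however, it follows either from the explicit tensor-product formula~\eqref{eq:full_harmonic_eigenstates} (since derivatives of Hermite functions are again polynomials times the same Gaussian, and the Dirichlet factor $w_{k,\theta}^{(i)}$ obeys a second-order ODE forcing Gaussian decay of its derivative as well) or by standard elliptic regularity applied to the eigenvalue equation. Finally, all polynomial prefactors in $\beta$ and $\delta(\beta)^{-1}$ arising from volumes of annuli and from derivative bounds are absorbed into a slight worsening of the constant $M_{i,n,m,\theta}$, using that $\beta\delta(\beta)^2\to\infty$ (or, under~\eqref{hyp:scaling_deltai}, $\beta\delta(\beta)^2/\log\beta\to\infty$, which additionally delivers the superpolynomial decay claim).
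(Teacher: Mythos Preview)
Your arguments for \eqref{eq:loc_eqa} and \eqref{eq:loc_eqb} are essentially identical to the paper's: change of variables to the scaled coordinates, invoke the pointwise Gaussian decay~\eqref{eq:exponential_decay}, and apply a Gaussian tail bound; then expand the inner product and use Cauchy--Schwarz together with the $n=m$ case to handle the normalizing constants.

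For \eqref{eq:loc_eqc}, your commutator approach is valid but takes a slightly longer path than the paper's. The paper derives the algebraic identity (valid for any Schr\"odinger operator $H=-\Delta+W$, eigenstates $u,v$ with eigenvalues $\lambda_u,\lambda_v$, and any cutoff $\eta$)
\[
\langle H\eta u,\eta v\rangle = \Bigl\langle u,\Bigl(\tfrac{\lambda_u+\lambda_v}{2}\eta^2+|\nabla\eta|^2\Bigr)v\Bigr\rangle,
\]
obtained by expanding $H(\eta u)$, symmetrizing in $(u,v)$, and integrating by parts once. Applied with $\eta=1-\chi_\beta^{(i)}$, this reduces the energy term directly to an $L^2$ estimate on the eigenmodes over the support of $\eta$ and $\nabla\eta$, with no gradient of $\psi$ appearing. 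Your route instead produces the term $\langle\nabla\chi_\beta^{(i)}\cdot\nabla\psi_{\beta,n,\theta}^{(i)},(1-\chi_\beta^{(i)})\psi_{\beta,m,\theta}^{(i)}\rangle$, which forces you to control $\nabla\psi_{n,\theta}^{(i)}$ pointwise. Your proposed justifications (the explicit tensor formula~\eqref{eq:full_harmonic_eigenstates} for the transverse Hermite factors, and the one-dimensional ODE for the Dirichlet factor $w^{(i)}_{k,\theta}$) do work, but note that for generic $\theta$ the Dirichlet mode has no closed form, so the ODE or an Agmon-type argument is genuinely needed there. The paper's identity sidesteps this entirely; in effect it is what you would get by symmetrizing your commutator expansion and integrating by parts, collapsing the gradient terms into $|\nabla\eta|^2$.
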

    \begin{proof}
        We begin by proving~\eqref{eq:loc_eqa}.
        Changing coordinates with~$y = \sqrt\beta U^{(i)\intercal}(x-z_i)$, we get, in view of~\eqref{eq:harmonic_eigenmode},
        \begin{equation}
            \begin{aligned}
                \left\|(1-\chi_\beta^{(i)})\psi_{\beta,n,\theta}^{(i)}\right\|^2_{L^2(\R^d)} &= \int_{\R^d}\left(1-\chi\right)\left(\frac1{\sqrt\beta\largeRadius(\beta)}\left|U^{(i)}y\right|\right)\psi^{(i)}_{n,\theta}(y)^2\,\d y\\
                &\leq \int_{\R^d\setminus B\left(0,\frac12 \sqrt\beta\largeRadius(\beta)\right)}\psi^{(i)}_{n,\theta}(y)^2\,\d y\\
                &\leq |\mathbb S^{d-1}|C_{i,n,\theta}^2\int_{\frac12 \sqrt\beta\largeRadius(\beta)}^\infty\,s^{d-1}\e^{-\frac{2s^2}{C_{i,n,\theta}}}\,\d s,\\
                &= \O\left(\int_{\frac12 \sqrt\beta\largeRadius(\beta)}^\infty\,\e^{-\frac{s^2}{C_{i,n,\theta}}}\,\d s\right),
            \end{aligned}
        \end{equation}
        where we used respectively the lower bound in~\eqref{eq:reference_cutoff} and the pointwise exponential decay estimate~\eqref{eq:exponential_decay} to obtain the first and second inequalities. Applying, for~$t>1$, a standard Gaussian tail bound
        $${\int_t^\infty \e^{-s^2/2}\,\d s \leq t^{-1}\int_t^\infty s\e^{-s^2/2}\,\d s \leq \e^{-t^2/2}}$$
        yields the desired bound~\eqref{eq:loc_eqa} for~$\beta$ sufficiently large, since~$\sqrt\beta\largeRadius(\beta)\to +\infty$, and where we set~$M_{i,n,\theta} = 2 C_{i,n,\theta}$.

        Unless otherwise specified, the norms and inner products in the remainder of the proof are on~$L^2(\R^d)$.
        To show~\eqref{eq:loc_eqb}, we first note that, since
       ~$\|\psi^{(i)}_{\beta,n,\theta}\|=1$ and~$0\leq \chi_\beta^{(i)}\leq 1$, a triangle inequality gives
        \[0\leq 1-\left\|\chi_\beta^{(i)}\psi^{(i)}_{\beta,n,\theta}\right\|\leq \left\|(1-\chi_\beta^{(i)})\psi^{(i)}_{\beta,n,\theta}\right\|,\]
        so that
        \begin{equation}
            \label{eq:lemme2_normalization}
            1\leq \frac1{\left\|\chi_\beta^{(i)}\psi^{(i)}_{\beta,n,\theta}\right\|} \leq \frac1{1-\left\|(1-\chi_\beta^{(i)})\psi^{(i)}_{\beta,n,\theta}\right\|}\leq 1 + 2\left\|(1-\chi_\beta^{(i)})\psi^{(i)}_{\beta,n,\theta}\right\|
        \end{equation}
        for~$\beta$ sufficiently large, where we use~\eqref{eq:loc_eqa} to obtain the final inequality.

        For convenience, we write~$\psi^{(i)}_{\beta,n,\theta} = \psi_n$,~$\psi^{(i)}_{\beta,m,\theta}=\psi_m$ and~$\chi_\beta^{(i)}=\chi$. Then,
        \[\left\langle \chi \psi_n,\chi \psi_m\right\rangle = \left\langle \psi_n,\psi_m\right\rangle - 2\left\langle \psi_n,(1-\chi)\psi_m\right\rangle + \left\langle\psi_n,(1-\chi)^2\psi_m\right\rangle.\]
        Thus, by Cauchy--Schwarz inequalities, and using the orthonormality of~$\psi_n$ and~$\psi_m$, we obtain:
       \[ \left|\left\langle \chi\psi_n,\chi\psi_m\right\rangle - \delta_{n,m}\right| \leq 2 \|\psi_n\|\|(1-\chi)\psi_m\| + \|\psi_n\|\|(1-\chi)^2\psi_m\| \leq 3\|(1-\chi)\psi_m\|,\]
       since~$(1-\chi)^2\leq (1-\chi)$. It follows by symmetry that
       \begin{equation}
       \label{eq:lemme2_normalization_bis}
       \left|\left\langle \chi\psi_n,\chi\psi_m\right\rangle - \delta_{n,m}\right|\leq \frac32\left(\|(1-\chi)\psi_n\|+\|(1-\chi)\psi_m\|\right).
       \end{equation}
       Denoting by~$\widetilde \psi_n = \chi\psi_n\|\chi\psi_n\|^{-1}$ and~$\widetilde \psi_m = \chi\psi_m\|\chi\psi_m\|^{-1}$, we obtain
       \[\begin{aligned}
        \left|\left\langle \widetilde \psi_n,\widetilde \psi_m\right\rangle -\delta_{nm}\right|&\leq\left|\left\langle \chi \psi_n,\chi\psi_m\right\rangle - \delta_{n,m}\right| + \left|1-\left(\left\|\chi\psi_n\right\|\left\|\chi\psi_m\right\|\right)^{-1}\right|\left|\left\langle \chi \psi_n,\chi\psi_m\right\rangle\right|,
       \end{aligned}\]
       and it follows from~\eqref{eq:lemme2_normalization},~\eqref{eq:lemme2_normalization_bis} that
       \[\left|\left\langle \widetilde \psi_n,\widetilde \psi_m\right\rangle -\delta_{nm}\right|=\O\left(\|(1-\chi)\psi_n\|+\|(1-\chi)\psi_m\|\right)\]
       in the limit~$\beta\to\infty$. The estimate~\eqref{eq:loc_eqa} then implies~\eqref{eq:loc_eqb} with~$M_{i,n,m,\theta} = \max\{M_{i,n,\theta},M_{i,m,\theta}\}$.

        For~\eqref{eq:loc_eqc}, we start with an algebraic computation for a Schrödinger operator~$H = -\Delta+V$ with domain~$\mathcal D(H)\subset H^1(\R^d)$, two eigenstates~$u,v$ with respective eigenvalues~$\lambda_u,\lambda_v$, and~$\eta\in\mathcal{C}^2(\R^d)$ with uniformly bounded derivatives.
        Using the relation
        \[H(\eta u) = \eta H u - 2\nabla \eta\cdot \nabla u - u \Delta \eta = \lambda_u \eta u - 2 \nabla \eta \cdot \nabla u  - u \Delta \eta,\]
        we get by integrating against~$\eta v$,
            \begin{align*}
                \left\langle H\eta u,\eta v\right\rangle = \lambda_u\left\langle u,\eta^2 v\right\rangle - \left\langle u,\left(\eta\Delta\eta\right)v\right\rangle -2 \left\langle \nabla u,\left(\eta\nabla\eta\right)v\right\rangle.
            \end{align*}
            Thus, by symmetry and an integration by parts,
            \begin{equation}
                \label{eq:energy_localization_formula}
                \left\langle H\eta u,\eta v\right\rangle = \left\langle u,\left(\frac{\lambda_u+\lambda_v}2\eta^2-\eta\Delta\eta\right)v\right\rangle -\frac12\left\langle \nabla[uv],\nabla[\eta^2]\right\rangle_{\mathcal D'(\R^d)\times \mathcal D(\R^d)} = \left\langle u,\left(\frac{\lambda_u+\lambda_v}2\eta^2+|\nabla\eta|^2\right)v\right\rangle.
            \end{equation}
        
        Applying~\eqref{eq:energy_localization_formula} to~$H = H_{\beta,\theta}^{(i)}$ with~$\eta =1-\chi_\beta^{(i)}$, we get, noting that the function~$\frac{\lambda_u+\lambda_v}2\eta^2+|\nabla\eta|^2$ is supported in~$S^{(i)}_\beta:=\R^d\setminus B\left(z_i,\frac12\sqrt\beta\largeRadius(\beta)\right)$ and using~\eqref{eq:linf_bound_nabla_chi}, that there exists a constant~$C_{n,m}>0$ such that
        \[\left\|\frac{\lambda_n+\lambda_m}{2}(1-\chi_\beta^{(i)})^2+|\nabla(1-\chi_\beta^{(i)})|^2\right\|_{L^\infty(\R^d)}\leq \frac{C_{n,m}}{\largeRadius(\beta)^2}.\]
        By a Cauchy--Schwarz inequality, it follows that
        \begin{equation}
            \begin{aligned}
                \left|\left\langle H_\beta^{(i)}(1-\chi_\beta^{(i)})\psi_{\beta,n,\theta}^{(i)},(1-\chi_\beta^{(i)})\psi_{\beta,m,\theta}^{(i)}\right\rangle \right| &\leq \frac{C_{n,m}}{\largeRadius(\beta)^2}\|\psi^{(i)}_{\beta,n,\theta}\|_{L^2\left(S^{(i)}_\beta\right)}\|\psi^{(i)}_{\beta,m,\theta}\|_{L^2\left(S^{(i)}_\beta\right)},
            \end{aligned}
        \end{equation}
        and identical arguments as the ones leading to~\eqref{eq:loc_eqa} give
        \[\|\psi^{(i)}_{\beta,n,\theta}\|_{L^2\left(S^{(i)}_\beta\right)}\|\psi^{(i)}_{\beta,m,\theta}\|_{L^2\left(S^{(i)}_\beta\right)} = \O\left(\e^{-\frac{2\beta\largeRadius(\beta)^2}{M_{i,n,m,\theta}}}\right),\]
        which implies~\eqref{eq:loc_eqc} upon using the scaling $\largeRadius(\beta)\gg \beta^{-\frac12}$.
    \end{proof}

    The following lemma, adapting~\cite[Equations 11.5--7]{CFKS87} to the Dirichlet context, justifies the local approximation of~$H_\beta$ by~$H_\beta^{(i)}$ around~$z_i$ to the order~$\beta$.
    \begin{lemma}
        \label{lemma:taylor_bound}
        Fix~$0\leq i <N$,~$u\in L^2(\Omega_\beta)$, and assume that~$\largeRadius(\beta)<\beta^{\deltaScalingExp-\frac12}$ for some~$0<\deltaScalingExp<\frac16$ in~\eqref{hyp:locally_flat}.
        The formal operator~$H_\beta-H_{\beta}^{(i)}$ extends to a bounded operator in~$L^2(\Omega_\beta)$, and
        there exist~$C>0$ and~$\beta_0>0$ such that, for all~$\beta>\beta_0$, the following estimate holds:
        \begin{equation}
            \label{eq:taylor_bound_witten}
            \left\|(H_\beta-H_{\beta}^{(i)})\chi_\beta^{(i)} u\right\|_{L^2(\Omega_\beta)} \leq C\beta^{3\deltaScalingExp+\frac12}\|\chi_\beta^{(i)}u\|_{L^2(\Omega_\beta)} = \smallo(\beta)\|\chi_\beta^{(i)}u\|_{L^2(\Omega_\beta)}.
        \end{equation}
    \end{lemma}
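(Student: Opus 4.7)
The plan is to exploit the fact that $H_\beta$ and $H_{\beta,\theta}^{(i)}$ share the same principal part $-\Delta$, so their difference is purely multiplicative. More precisely, defining
\begin{equation}
R^{(i)}_\beta(x) := U_\beta(x) - \beta^2 (x-z_i)^\intercal \Sigma^{(i)} (x-z_i) + \beta \frac{\Delta V(z_i)}{2},
\end{equation}
one has $(H_\beta - H_{\beta,\theta}^{(i)})\varphi = R^{(i)}_\beta \varphi$ for any $\varphi\in\testfuncs(\R^d)$. Since $R^{(i)}_\beta$ is a smooth (in particular, locally bounded) function, this formal difference extends to a bounded operator on $L^2$ when composed with the compactly supported localizer $\chi_\beta^{(i)}$, irrespective of the domain issues associated with the Dirichlet boundary condition. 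This bypasses the fact that $\chi_\beta^{(i)}u$ need not lie in $\mathcal D(H_\beta)$ or $\mathcal D(H_{\beta,\theta}^{(i)})$ individually.

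The main step is then a pointwise estimate of $R^{(i)}_\beta$ on $\supp\chi_\beta^{(i)}\subset B(z_i,\largeRadius(\beta))$, obtained by Taylor-expanding the two building blocks of $U_\beta$ around $z_i$. Since $\nabla V(z_i)=0$, a Taylor expansion of $|\nabla V|^2/4$ gives
\begin{equation}
\frac{|\nabla V(x)|^2}{4} = (x-z_i)^\intercal \Sigma^{(i)}(x-z_i) + \O(|x-z_i|^3),
\end{equation}
where the quadratic coefficient is precisely $\Sigma^{(i)}=\frac14(\nabla^2 V(z_i))^2$ (as recorded just before~\eqref{eq:local_harmonic_approx}), and the $\O(|x-z_i|^3)$ remainder is uniform on any compact containing the critical points, using smoothness of $V$ and \eqref{hyp:uniformly_bounded}. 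Similarly,
\begin{equation}
\frac{\Delta V(x)}{2} = \frac{\Delta V(z_i)}{2} + \O(|x-z_i|).
\end{equation}
Multiplying by the appropriate powers of $\beta$ and taking suprema over $B(z_i,\largeRadius(\beta))$, I would conclude
\begin{equation}
\|R^{(i)}_\beta\chi_\beta^{(i)}\|_{L^\infty(\R^d)} \leq C_1 \beta^2 \largeRadius(\beta)^3 + C_2\beta \largeRadius(\beta)
\end{equation}
for some constants $C_1,C_2>0$ depending only on $V$ and $\mathcal K$.

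The assumption $\largeRadius(\beta) < \beta^{\deltaScalingExp - \frac12}$ with $0 < \deltaScalingExp < \frac16$ then turns this into
\begin{equation}
\|R^{(i)}_\beta\chi_\beta^{(i)}\|_{L^\infty(\R^d)} \leq C_1 \beta^{3\deltaScalingExp + \frac12} + C_2 \beta^{\deltaScalingExp + \frac12} \leq C\beta^{3\deltaScalingExp + \frac12},
\end{equation}
for $\beta$ large enough, since $3\deltaScalingExp+\tfrac12>\deltaScalingExp+\tfrac12$. The constraint $\deltaScalingExp<\frac16$ forces $3\deltaScalingExp + \frac12 < 1$, yielding the $\smallo(\beta)$ conclusion. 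The final bound \eqref{eq:taylor_bound_witten} then follows from
\begin{equation}
\|(H_\beta - H_{\beta,\theta}^{(i)})\chi_\beta^{(i)}u\|_{L^2(\Omega_\beta)} = \|R^{(i)}_\beta \chi_\beta^{(i)}u\|_{L^2(\Omega_\beta)} \leq \|R^{(i)}_\beta \chi_\beta^{(i)}\|_{L^\infty}\|\chi_\beta^{(i)}u\|_{L^2(\Omega_\beta)},
\end{equation}
where the first equality uses that $R^{(i)}_\beta\chi_\beta^{(i)}u$ is supported inside $B(z_i,\largeRadius(\beta))$, which lies inside both $\Omega_\beta$ and the reference half-space domain of $H_{\beta,\theta}^{(i)}$ for $\beta$ large (by \eqref{hyp:locally_flat}), so the computation is unambiguous. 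Uniformity in $\theta$ is automatic because $\theta$ enters only through the Dirichlet boundary condition, not the coefficients of the differential operator. The whole argument is essentially routine; the only subtle point is phrasing the cancellation of the $-\Delta$'s correctly so as to avoid commenting on operator domains, which is what makes the extension to a bounded operator possible.
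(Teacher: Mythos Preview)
Your proof is correct and follows essentially the same approach as the paper's: both recognize that $H_\beta - H_{\beta,\theta}^{(i)}$ is a pure multiplication operator, then Taylor-expand $|\nabla V|^2$ and $\Delta V$ around $z_i$ to bound the potential difference in $L^\infty$ on $\supp\chi_\beta^{(i)}\subset B(z_i,\largeRadius(\beta))$, arriving at the same $C\beta^{3\deltaScalingExp+\frac12}$ estimate. One minor imprecision: your parenthetical claim that $B(z_i,\largeRadius(\beta))$ lies inside $\Omega_\beta$ is not true when $z_i$ is close to the boundary, but this is irrelevant since the difference is a multiplication operator and the $L^\infty$ bound on the support suffices regardless.
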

    \begin{proof}
        Since~$H_\beta-H_\beta^{(i)}$ is a multiplication operator by a smooth function over a bounded domain, it is bounded in~$L^2(\Omega_\beta)$, and we therefore only need to control the~$L^\infty$-norm of the difference in the potential parts
       ~$$U_\beta - \beta^2(x-z_i)^\intercal \Sigma^{(i)}(x-z_i) + \beta \frac{\Delta V(z_i)}2$$ on~$\Omega_\beta \cap\mathrm{supp}\,\chi_\beta^{(i)} \subset B(z_i,\largeRadius(\beta))$.
        We estimate separately the two contributions
       ~$$ \beta^2\left(\frac{|\nabla V(x)|^2}4 - (x-z_i)^\intercal \Sigma^{(i)}(x-z_i) \right) - \frac\beta2(\Delta V(x) - \Delta V(z_i)).$$
        Using a second-order Taylor expansion around~$z_i$, there exist~$\beta_0>0$ and $C_1>0$ depending only on~$V$ and~$i$ such that, for all~$\beta>\beta_0$ and every~$x\in \mathrm{supp}\,\chi_\beta^{(i)}$, 
       ~$$\left|\frac{|\nabla V(x)|^2}4 - (x-z_i)^\intercal \Sigma^{(i)}(x-z_i)\right| \leq C_1 |x-z_i|^3 \leq \beta^{3\deltaScalingExp-\frac32}.$$
        For the Laplacian term, since~$V$ is~$C^\infty$ and~$\Delta V$ is thus locally Lipschitz, we have, for~$\beta$ large enough, and for some constant~$C_2>0$,
       ~$$ \left| \Delta V(x) - \Delta V(z_i)\right| \leq C_2|x-z_i|\leq C_2 \beta^{\deltaScalingExp-\frac12}.$$
        By gathering these estimates and setting~$C = \max\{C_1,C_2\}$,
       ~$$\left\|(H_\beta-H_\beta^{(i)})\chi_\beta^{(i)} u\right\|_{L^2(\Omega_\beta)} \leq C\max\{\beta^{3\deltaScalingExp+\frac12},\beta^{\deltaScalingExp+\frac12}\}\|\chi_\beta^{(i)}u\|_{L^2(\Omega_\beta)},$$
        which yields the desired bound. Note that~$\beta^{3s+\frac12} = \smallo(\beta)$ since~$0<\deltaScalingExp<\frac16$.
    \end{proof}

    \subsection{Local perturbations of the boundary}
    \label{subsec:domain_extension}
    In both the proofs of the harmonic approximation (Theorem~\ref{thm:harm_approx}) and of the modified Eyring--Kramers formula (Theorem~\ref{thm:eyring_kramers}), we make use of the following technical result, which guarantees the existence of local extensions and contractions of the domains~$\Omega_\beta$, whose geometry around each critical point close to the boundary is precisely that of a half-space.
    More precisely, the existence of such an extension is used in the proof of Theorem~\ref{thm:harm_approx} to obtain a lower bound on the spectrum, using a domain monotonicity result (see~Proposition~\ref{prop:comparison_principle} below).
    In the proof of Theorem~\ref{thm:eyring_kramers}, both the extension and contraction are used to provide asymptotic bounds on the principal eigenvalue.
    Besides, the construction of approximate eigenmodes is greatly simplified on these perturbed domains.
    \begin{proposition}
        \label{prop:domain_extension}
        Let~$(\Omega_\beta)_{\beta\geq 0}$ be a family of smooth domains satisfying~\eqref{hyp:locally_flat}, and let~$\rho$ be a non-negative function such that
        \begin{equation}
            \label{eq:shift_scaling}
            \smallRadius(\beta)<\rho(\beta) = \smallo(\largeRadius(\beta)).
        \end{equation}
        Then, there exists~$\beta_0>0$, and for each~$\beta>\beta_0$, bounded, smooth, open domains~$\Omega_{\beta,\rho}^\pm$ with
        $${\Omega_{\beta,\rho}^-\subseteq \Omega_\beta\subseteq\Omega_{\beta,\rho}^+},$$ such that
        \begin{equation}
            \label{eq:inclusion_perturbed_domain}
           B\left(z_i,\frac12\largeRadius(\beta)\right) \cap \Omega_{\beta,\rho}^\pm = B\left(z_i,\frac12\largeRadius(\beta)\right) \cap \left[z_i + \halfSpace{i}\left(\frac{\epsLimit{i}}{\sqrt\beta}\pm \rho(\beta)\right)\right]
        \end{equation}
        for all~$0\leq i < N$ for which~$z_i$ is close to the boundary (i.e. $\epsLimit{i}<+\infty$).
    \end{proposition}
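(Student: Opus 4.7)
The plan is to construct $\Omega_{\beta,\rho}^\pm$ by a local surgery around each critical point $z_i$ with $\epsLimit{i}<+\infty$. Since $\largeRadius(\beta)\to 0$ and there are finitely many critical points, for $\beta$ large enough the balls $B(z_i,\largeRadius(\beta))$ associated with distinct such $z_i$ are pairwise disjoint and separated from the remaining critical points, so the problem reduces to an independent modification of $\Omega_\beta$ around each such $z_i$, which is then trivially glued to $\Omega_\beta$ itself outside of these balls.

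\textbf{Construction via interpolated defining functions.} I would fix a smooth defining function $\psi_\beta$ for $\Omega_\beta$ that coincides with $-\sigma_{\Omega_\beta}$ in a fixed tubular neighborhood of $\partial\Omega_\beta$ (so $\Omega_\beta=\{\psi_\beta<0\}$ and $|\nabla\psi_\beta|=1$ on $\partial\Omega_\beta$), and introduce the affine defining function $L_i^\pm(x)=y_1^{(i)}(x)-\epsLimit{i}/\sqrt\beta\mp\rho(\beta)$ for the shifted half-space $z_i+\halfSpace{i}(\epsLimit{i}/\sqrt\beta\pm\rho(\beta))$. Picking a smooth radial cutoff $\eta_\beta^{(i)}$ equal to $1$ on $B(z_i,\tfrac12\largeRadius(\beta))$, supported in $B(z_i,\tfrac34\largeRadius(\beta))$, with $\|\nabla\eta_\beta^{(i)}\|_\infty=O(\largeRadius(\beta)^{-1})$, I would set
\begin{equation*}
    \Psi_\beta^\pm(x)\;=\;\psi_\beta(x)\;+\sum_{i\,:\,\epsLimit{i}<+\infty}\eta_\beta^{(i)}(x)\bigl(L_i^\pm(x)-\psi_\beta(x)\bigr),\qquad \Omega_{\beta,\rho}^\pm:=\{\Psi_\beta^\pm<0\}.
\end{equation*}
By construction, $\Psi_\beta^\pm$ reduces to $L_i^\pm$ on $B(z_i,\tfrac12\largeRadius(\beta))$ (yielding~\eqref{eq:inclusion_perturbed_domain}) and to $\psi_\beta$ outside all transition annuli. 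The inclusions $\Omega_{\beta,\rho}^-\subseteq\Omega_\beta\subseteq\Omega_{\beta,\rho}^+$ reduce to a sign check: for $x\in\Omega_\beta\cap B(z_i,\largeRadius(\beta))$, \eqref{hyp:locally_flat} gives $y_1^{(i)}(x)<\epsLimit{i}/\sqrt\beta+\smallRadius(\beta)$, so $L_i^+(x)<\smallRadius(\beta)-\rho(\beta)<0$ by \eqref{eq:shift_scaling}, and the convex combination with $\psi_\beta(x)<0$ gives $\Psi_\beta^+(x)<0$; the reverse inclusion is analogous, using that for $x\in B(z_i,\largeRadius(\beta))\setminus\Omega_\beta$ one has $y_1^{(i)}(x)\geq\epsLimit{i}/\sqrt\beta-\smallRadius(\beta)$, hence $L_i^-(x)>0$. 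Boundedness of $\Omega_{\beta,\rho}^\pm$ is inherited from~\eqref{hyp:uniformly_bounded}, since the surgery is localized in $\mathcal K$.

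\textbf{Main obstacle: non-degeneracy of $\nabla\Psi_\beta^\pm$.} The hard part is verifying $\nabla\Psi_\beta^\pm\neq 0$ on $\{\Psi_\beta^\pm=0\}$, so that the perturbed boundary is a smooth hypersurface. Outside the transition annuli this is immediate, as $\nabla\Psi_\beta^\pm$ is either $\nabla\psi_\beta$ or $\hessEigvec{i}{1}=\nabla L_i^\pm$. In the transition annulus $\tfrac12\largeRadius(\beta)<|x-z_i|<\tfrac34\largeRadius(\beta)$,
\begin{equation*}
    \nabla\Psi_\beta^\pm=(1-\eta_\beta^{(i)})\nabla\psi_\beta+\eta_\beta^{(i)}\hessEigvec{i}{1}+\nabla\eta_\beta^{(i)}\bigl(L_i^\pm-\psi_\beta\bigr).
\end{equation*}
At a zero $x$ of $\Psi_\beta^\pm$ in this annulus, the identity $(1-\eta_\beta^{(i)})\psi_\beta+\eta_\beta^{(i)}L_i^\pm=0$ combined with \eqref{hyp:locally_flat} forces $|\psi_\beta(x)|,|L_i^\pm(x)|=O(\rho(\beta)+\smallRadius(\beta))=O(\rho(\beta))$, so the third term is $O(\rho(\beta)/\largeRadius(\beta))=\smallo(1)$ by~\eqref{eq:shift_scaling}. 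The surviving convex combination of unit vectors $\nabla\psi_\beta$ and $\hessEigvec{i}{1}$ then has to be controlled: the orientation convention for $\hessEigvec{i}{1}$ fixed below~\eqref{eq:capped_balls} makes both vectors outward-pointing, and a refined use of~\eqref{hyp:locally_flat} guarantees that $\nabla\psi_\beta\cdot\hessEigvec{i}{1}$ remains bounded below by a positive constant on the set $\{\Psi_\beta^\pm=0\}$ for $\beta$ large, ruling out cancellation. This is the delicate step of the construction, most sensitive to the geometric flexibility allowed by~\eqref{hyp:locally_flat}, and it crucially exploits the scale separation $\smallRadius(\beta)<\rho(\beta)=\smallo(\largeRadius(\beta))$.
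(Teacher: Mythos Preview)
Your overall strategy---interpolating between a defining function for $\Omega_\beta$ and the affine function $L_i^\pm$ via a radial cutoff---is natural, and your verification of the inclusions $\Omega_{\beta,\rho}^-\subseteq\Omega_\beta\subseteq\Omega_{\beta,\rho}^+$ and of~\eqref{eq:inclusion_perturbed_domain} is correct. However, the step you yourself flag as delicate is not just delicate: it does not go through under the full generality of~\eqref{hyp:locally_flat}.

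The claim that ``a refined use of~\eqref{hyp:locally_flat} guarantees that $\nabla\psi_\beta\cdot\hessEigvec{i}{1}$ remains bounded below by a positive constant on $\{\Psi_\beta^\pm=0\}$'' is unjustified, and in fact false in general. Assumption~\eqref{hyp:locally_flat} only constrains the \emph{location} of $\partial\Omega_\beta$ (it lies in a strip of width $2\smallRadius(\beta)$), not its normal direction: the paper explicitly remarks that the curvature of $\partial\Omega_\beta$ in that strip may diverge arbitrarily fast, and that one may have $\n_{\Omega_\beta}(z_i)\neq\hessEigvec{i}{1}$ for every $\beta$. In particular the boundary can fold back on itself within the strip (think of a small disk of complement sitting inside the strip), producing points where $\n_{\Omega_\beta}=-\hessEigvec{i}{1}$. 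A related and more basic issue is that the reach of $\partial\Omega_\beta$ can be as small as $O(\smallRadius(\beta))$, so the signed distance $-\sigma_{\Omega_\beta}$ is smooth only in a tubular neighborhood of width $O(\smallRadius(\beta))$, whereas your zero level set lives at distance $O(\rho(\beta))\gg\smallRadius(\beta)$ from $\partial\Omega_\beta$. Hence your $\psi_\beta$ cannot coincide with $-\sigma_{\Omega_\beta}$ there, and you have no control on $|\nabla\psi_\beta|$ or its direction on $\{\Psi_\beta^\pm=0\}$; the convex combination $(1-\eta)\nabla\psi_\beta+\eta\hessEigvec{i}{1}$ may well vanish.

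This is precisely why the paper takes a different route. Rather than fixing the level set and trying to prove it is regular, the paper builds a smooth function $\sigma_{\varepsilon_0}=m_{\varepsilon_0}^+(f_\beta,m_{\varepsilon_0}^-(g_\beta,h_\beta))$ out of smoothed $\min/\max$ of the signed distances to $\partial\Omega_\beta$, to $\partial B(z_i,\largeRadius(\beta))$, and to the target hyperplane, checks that $\sigma_{\varepsilon_0}$ is $\mathcal C^\infty$ on a thin shell $\sigma_{\varepsilon_0}^{-1}(-\varepsilon_0,0)$ (this only uses smoothness of each ingredient on an arbitrarily small neighborhood of its own zero set, so the possibly tiny reach $r_\beta$ of $\partial\Omega_\beta$ causes no trouble), and then invokes \emph{Sard's theorem} to find some regular value $-\varepsilon_1(\beta)$ in that shell. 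This produces a smooth domain without ever needing to control normal directions of $\partial\Omega_\beta$. A final explicit indentation step then forces the boundary to coincide exactly with the hyperplane on $B(z_i,\tfrac12\largeRadius(\beta))$. The contraction $\Omega_{\beta,\rho}^-$ is obtained by applying the same construction to the complement. The Sard step is the essential idea your proposal is missing.
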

    \begin{proof}
        To simplify the presentation, we provide the construction assuming that~$z_i$ is the single critical point of~$V$ close to the boundary. Since the construction is local, and critical points of~$V$ are isolated because of the Morse property, our proof can be straightforwardly adapted to the case of multiple critical points.
        
        Because~$\Omega_\beta$ may be viewed as the positive superlevel set of the signed distance function, namely
        \[\Omega_\beta = \sigma_{\Omega_\beta}^{-1}(0,+\infty),\]
        a natural approach is to construct the extended domain~$\Omega_{\beta,\rho}^+$ as the positive superlevel set of a local perturbation of~$\sigma_{\Omega_\beta}$ around each~$z_i$ close to the boundary.
        This is roughly this construction we perform. However, making this precise requires some technicalities to ensure the regularity of the boundary level set.
        For visual reference, the construction of the extension~$\Omega_{\beta,\rho}^+$ is sketched in Figures~\ref{fig:ext_domain_full} and~\ref{fig:ext_domain_zoom}.

        Let us assume for simplicity that~$z_i=0$ is the only critical point of~$V$ close to the boundary.
        We consider the signed distance functions~$f_\beta$ to~$\partial\Omega_\beta$,~$g_\beta$ to~$\partial B(0,\largeRadius(\beta))$, and~$h_\beta$ to the half-space~$\partial \halfSpace{i}\left(\frac{\epsLimit{i}}{\sqrt\beta}+ \rho(\beta)\right)$.
        Since~$\Omega_\beta = f_\beta^{-1}(0,+\infty)$ and $B(0,\largeRadius(\beta))=g_\beta^{-1}(0,+\infty)$ are smooth and bounded domains, there exists~$r_\beta>0$ such that~$f_\beta,g_\beta$ are~$C^\infty$ respectively on~$\{|f_\beta|<r_\beta\}$ and~$\R^d\setminus\{0\}$ (see for example~\cite[Lemma 14.16]{GT01}) whereas~$h_\beta$ as an affine map is smooth on~$\R^d$.
        
        The zero level-set of the function~$f_\beta\lor(g_\beta\land h_\beta)$ coincides with the boundary of the set
        $$\widetilde{\Omega}_{\beta,\rho}^+ :=\Omega_\beta \cup \left[ B(0,\largeRadius(\beta))\cap \halfSpace{i}\left(\frac{\epsLimit{i}}{\sqrt\beta}+ \rho(\beta)\right)\right].$$
        It satisfies~\eqref{eq:inclusion_perturbed_domain}, the inclusion~$\Omega_\beta\subseteq \widetilde{\Omega}_{\beta,\rho}^+$ and is bounded, but is generally not smooth.

        To enforce the regularity of the extended domain, we work with smooth versions of the~$\min$ and~$\max$ functions, constructed as follows. Take any~$\varepsilon>0$ and let~$a_\varepsilon^\pm\in \mathcal C^\infty(\R)$ be such that~$|x| \leq a^+_\varepsilon(x)\leq |x| + \varepsilon$, and~$|x|-\varepsilon\leq a^-_\varepsilon(x)\leq |x|$, with moreover~$a^\pm_\varepsilon(x) = |x|$ for all~$|x|\geq\varepsilon$.
        Define
        \[m^{\pm}_\varepsilon(x,y) = \frac{x+y\pm a^\pm_\varepsilon(x-y)}2,\]
        which are smooth functions on~$\R^2$ satisfying the inequalities:
        \begin{equation}
            \label{eq:smoothmax_ineq}
            x\land y \leq m^{-}_\varepsilon(x,y) \leq x\land y + \frac\varepsilon2,\qquad x\lor y \leq m^{+}_\varepsilon(x,y) \leq x\lor y + \frac\varepsilon2,
        \end{equation}
        with moreover
        \begin{equation}
        \label{eq:smoothmax_eq}
        \forall\,(x,y)\in\R^2\text{ such that }|x-y|\geq \varepsilon,\qquad m^{-}_\varepsilon(x,y) = x\land y,\qquad m^{+}_\varepsilon(x,y) = x\lor y.
    \end{equation}
        We introduce a temperature-dependent parameter~$\varepsilon_0(\beta)>0$, which will be reduced several times in the following proof. In order not to overburden the notation, we sometimes omit the dependence of~$\varepsilon_0$ on~$\beta$. We then define:
        $$\sigma_{\varepsilon_0} = m_{\varepsilon_0}^{+}(f_\beta,m_{\varepsilon_0}^{-}(g_\beta,h_\beta)).$$
        From~\eqref{eq:smoothmax_ineq}, we have by construction that:
        \begin{equation}
            \label{eq:sdf_ineq}
           f_\beta\lor (g_\beta\land h_\beta) \leq \sigma_{\varepsilon_0} \leq f_\beta\lor (g_\beta\land h_\beta) + \varepsilon_0(\beta),
        \end{equation}
        We will prove that, if~$\largeRadius(\beta)>\epsLimit{i}/\sqrt\beta + \rho(\beta)$ (which holds in the limit~$\beta\to\infty$ according to~\eqref{hyp:locally_flat} and~\eqref{eq:shift_scaling}), then for a sufficiently small~$\varepsilon_0(\beta)$, the function~$\sigma_{\varepsilon_0}$ is smooth on a small outward neighborhood~$\sigma_{\varepsilon_0}^{-1}(-\varepsilon_0(\beta),0)$ of its zero level set.
        Let us first write~$\sigma_{\varepsilon_0} = m_{\varepsilon_0}^{+}(f_\beta,\psi_{\varepsilon_0})$, where we define $\psi_{\varepsilon_0}=m_{\varepsilon_0}^{-}(g_\beta,h_\beta)$. The claimed regularity follows from the following observations.
        The function $\psi_{\varepsilon_0}$ is smooth on~$\R^d$ for~$\varepsilon_0(\beta)<\largeRadius(\beta)-\epsLimit{i}/\sqrt\beta -\rho(\beta)$: by the regularities of~$m_{\varepsilon_0}^{-}$,~$f_\beta$ and~$g_\beta$, it is enough to check that~$\psi_{\varepsilon_0}$ is smooth at~$0$. But~$h_\beta(0) = \frac{\epsLimit{i}}{\sqrt\beta}+\rho(\beta)$ and~$g_\beta(0)=\largeRadius(\beta)$, so for~$0<\varepsilon_0(\beta)<\largeRadius(\beta)-\epsLimit{i}/\sqrt\beta -\rho(\beta)$ it holds $h_\beta(0)<g_\beta(0)-\varepsilon_0(\beta)$ and~$\psi_{\varepsilon_0}$ thus coincides with~$h_\beta$ in a neighborhood of~$0$ by~\eqref{eq:smoothmax_eq}, and is therefore smooth at~$0$.

        Furthermore, the rightmost inequality in~\eqref{eq:smoothmax_ineq} implies the inclusion $${\sigma_{\varepsilon_0}^{-1}(-\varepsilon_0(\beta),0)\subset (f_\beta\lor \psi_{\varepsilon_0})^{-1}(-3\varepsilon_0(\beta)/2,0)},$$ and so it suffices to show the smoothness of~$\sigma_{\varepsilon_0}$ over this larger set. To achieve this, let~$x\in (f_\beta\lor \psi_{\varepsilon_0})^{-1}(-3\varepsilon_0(\beta)/2,0)$, and distinguish between four cases.
        \begin{itemize}
            \item{{\bf Case }~$\psi_{\varepsilon_0}(x)>f_\beta(x)+\varepsilon_0(\beta)$. Then,~$\sigma_{\varepsilon_0}$ coincides with~$\psi_{\varepsilon_0}$ in a neighborhood of~$x$, and is smooth at~$x$.}
            \item{{\bf Case}~$f_\beta(x)>\psi_{\varepsilon_0}(x)+\varepsilon_0(\beta)$. Then,~$\sigma_{\varepsilon_0}$ coincides with~$f_\beta$ in a neighborhood of~$x$, and is smooth at~$x$ provided~$3\varepsilon_0(\beta)/2<r_\beta$.}
            \item{{\bf Case}~$\psi_{\varepsilon_0}(x)\leq f_\beta(x)\leq \psi_{\varepsilon_0}(x) + \varepsilon_0(\beta)$. Then,~$f_\beta(x)=f_\beta(x)\lor \psi_{\varepsilon_0}(x) \in\left( -3\varepsilon_0(\beta)/2,0\right)$, thus~$f_\beta$ is smooth at~$x$, and likewise~$\sigma_{\varepsilon_0}$, provided~$3\varepsilon_0(\beta)/2<r_\beta$.}
            \item{{\bf Case}~$\psi_{\varepsilon_0}(x)-\varepsilon_0(\beta) \leq f_\beta(x) < \psi_{\varepsilon_0}(x)$. Then~$\psi_{\varepsilon_0}(x)\in\left(-3\varepsilon_0(\beta)/2,0\right)$ and~$f_\beta(x)\in \left(-5\varepsilon_0(\beta)/2,0\right)$. It follows that~$f_\beta$ and therefore~$\sigma_{\varepsilon_0}$ are smooth at~$x$ provided~$5\varepsilon_0(\beta)/2 < r_\beta$. }
        \end{itemize}
        We now assume here and in the following that~$\varepsilon_0(\beta) < \min\{\frac{2}{5}r_\beta,\largeRadius(\beta)-\frac{\epsLimit{i}}{\sqrt\beta}-\rho(\beta)\}$, and that~$\beta$ is large enough so that~$\varepsilon_0(\beta)$ can be chosen to be positive.
        The regularity of~$\sigma_{\varepsilon_0}$ on~$\sigma_{\varepsilon_0}^{-1}(-\varepsilon_0(\beta),0)$ allows for the construction of a smooth extended domain, as follows. By Sard's theorem~\cite{S42}, there exists~$\varepsilon_1(\beta) < \varepsilon_0(\beta)$ such that the level set~$\sigma_{\varepsilon_0}^{-1}\{-\varepsilon_1(\beta)\}$ contains no critical points of~$\sigma_{\varepsilon_0}$, and hence defines a smooth hypersurface of~$\R^d$ by the implicit function theorem.
        Therefore, the superlevel set~${\Omega_{\beta,\rho,\varepsilon_1}^{+} := \sigma_{\varepsilon_0}^{-1}(-\varepsilon_1(\beta),+\infty)}$, is a smooth open domain satisfying:
        \[(f_\beta \lor (g_\beta\land h_\beta))^{-1}(-\varepsilon_1(\beta),+\infty) \subseteq \Omega_{\beta,\rho,\varepsilon_1}^{+}\subseteq(f_\beta \lor (g_\beta\land h_\beta))^{-1}(-\varepsilon_1(\beta)-\varepsilon_0(\beta),+\infty),\]
        where we used the inequality~\eqref{eq:sdf_ineq}. It follows that~$\Omega_{\beta,\rho,\varepsilon_1}^+$ is bounded and contains~$(f_\beta \lor (g_\beta\land h_\beta))^{-1}(0,+\infty)=\widetilde{\Omega}_{\beta,\rho}^+\supseteq \Omega_\beta  $. Note that our construction (and in particular Sard's theorem) implies the existence of an appropriate~$\varepsilon_1(\beta)<\varepsilon_0(\beta)$ for any~$\varepsilon_0(\beta)>0$ sufficiently small, and so the existence of a smooth extension~$\Omega_\beta\subseteq\Omega_{\beta,\rho,\varepsilon_1}^+$ remains valid upon further reduction of~$\varepsilon_0(\beta)$.

        At this point, we will show that we can fix~$\beta_0>0$ to be sufficiently large and~$\varepsilon_0(\beta)$ sufficiently small so that the inclusion
        \begin{equation}\label{eq:strip_domain_inclusion}\O_{\beta,\varepsilon_0} := B\left(0,\largeRadius(\beta)-2\varepsilon_0(\beta)\right) \cap \halfSpace{i}\left(\frac{\epsLimit{i}}{\sqrt\beta}+\rho(\beta)+\varepsilon_0(\beta)\right)\setminus \overline{E}^{(i)} \left(\frac{\epsLimit{i}}{\sqrt\beta}+\rho(\beta)\right)\subset \left\{f_\beta+\varepsilon_0(\beta) < h_\beta < g_\beta-\varepsilon_0(\beta)\right\}
        \end{equation}
        holds for all~$\beta>\beta_0$, where~$\overline{E}^{(i)}$ denotes the closure of~$\halfSpace{i}$. The purpose of~\eqref{eq:strip_domain_inclusion} is to locate a set on which the geometry of the modified domain may be modified to enforce~\eqref{eq:inclusion_perturbed_domain}.
        Let us check that taking~$\varepsilon_0(\beta) < \frac{\rho(\beta)-\smallRadius(\beta)}2<\frac{\largeRadius(\beta)}{2}$ suffices to ensure~\eqref{eq:strip_domain_inclusion}.
        Let~$x\in\O_{\beta,\varepsilon_0}$: from~$x\in B(0,\largeRadius(\beta)-2\varepsilon_0(\beta))$, we get~$g_\beta(x)>2\varepsilon_0(\beta)$, and from~$x\in \halfSpace{i}\left(\frac{\epsLimit{i}}{\sqrt\beta}+\rho(\beta)+\varepsilon_0(\beta)\right)\setminus \overline{E}^{(i)} \left(\frac{\epsLimit{i}}{\sqrt\beta}+\rho(\beta)\right)$, we get $-\varepsilon_0(\beta)<h_\beta(x)<0$, whence~$\O_{\beta,\varepsilon_0(\beta)}\subset \{ -\varepsilon_0(\beta) < h_\beta < g_\beta-\varepsilon_0(\beta)\}$.
        From~$x\in B(0,\largeRadius(\beta)-2\varepsilon_0(\beta))\setminus\overline{E}^{(i)}\left(\frac{\epsLimit{i}}{\sqrt\beta}+\rho(\beta)\right)$, we deduce by~\eqref{hyp:locally_flat} that~$x\not\in \Omega_\beta$, with furthermore:
        $$d(x,\partial\Omega_\beta \setminus B(0,\largeRadius(\beta)))>2\varepsilon_0(\beta),$$
        $$d\left(x,\partial\Omega_\beta\cap B(0,\largeRadius(\beta))\right)>d\left(x,\partial\halfSpace{i}\left(\frac{\epsLimit{i}}{\sqrt\beta} + \smallRadius(\beta)\right)\right)>2\varepsilon_0(\beta),$$
        where the last inequality is obtained using~$\rho(\beta)-\smallRadius(\beta)>2\varepsilon_0(\beta)$ and~$x\not\in\overline{E}^{(i)}\left(\frac{\epsLimit{i}}{\sqrt\beta}+\rho(\beta)\right)$.
        Thus~$f_\beta(x)<-2\varepsilon_0(\beta)$, so that~$\O_{\beta,\varepsilon_0}\subset\{f_\beta<-2\varepsilon_0(\beta)\}$, and~$$\O_{\beta,\varepsilon_0} \subset \{f_\beta<-2\varepsilon_0(\beta)\}\cap\{-\varepsilon_0(\beta)<h_\beta<g_\beta-\varepsilon_0(\beta)\},$$ from which~\eqref{eq:strip_domain_inclusion} follows easily.

        We assume at no cost of generality upon taking~$\beta_0$ once again larger that for all~$\beta>\beta_0$, it holds~$\largeRadius(\beta)>\frac{\epsLimit{i}}{\sqrt\beta}+\rho(\beta)$,~$\largeRadius(\beta)/4>\rho(\beta)-\smallRadius(\beta)>0$, and~$\varepsilon_0(\beta)< \min\,\left\{\frac25 r_\beta,\largeRadius(\beta)-\frac{\epsLimit{i}}{\sqrt\beta}-\rho(\beta),\frac{\rho(\beta)-\smallRadius(\beta)}2\right\}$. 
        The previous construction can still be performed for this potentially smaller~$\varepsilon_0(\beta)$, upon accordingly updating~$\varepsilon_1(\beta)$.
        Furthermore, from the inclusion~\eqref{eq:strip_domain_inclusion} and~\eqref{eq:smoothmax_eq}, the boundary of the so-constructed set~$\Omega_{\beta,\rho,\varepsilon_1(\beta)}^+$ locally coincides with the hyperplane:
        $$\partial \Omega_{\beta,\rho,\varepsilon_1(\beta)}^{+} \cap \O_{\beta,\varepsilon_0} = \partial\halfSpace{i}\left(\frac{\epsLimit{i}}{\sqrt\beta}+\rho(\beta) + \varepsilon_1(\beta)\right)\cap \O_{\beta,\varepsilon_0}.$$

        The last step in the construction consists in indenting this hyperplane locally in~$\O_{\beta,\varepsilon_0}$, so that it coincides with~$\partial \halfSpace{i}\left(\frac{\epsLimit{i}}{\sqrt\beta}+\rho(\beta)\right)$ inside~$B\left(0,\frac12\largeRadius(\beta)\right)$.
        
        This indentation amounts to setting:
        $$\Omega_{\beta}^+ = \left[\Omega_{\beta,\rho,\varepsilon_1(\beta)}^{+}\setminus \O_{\beta,\varepsilon_0}\right] \cup \left[\hessPassage{i}\mathcal H^{(i)}_\beta \cap \O_{\beta,\varepsilon_0}\right],$$
        where~$\mathcal H^{(i)}_\beta$ is the hypograph:
        \[\mathcal H^{(i)}_\beta = \left\{(x,x')\in \R\times \R^{d-1} : x < \frac{\epsLimit{i}}{\sqrt\beta}+\rho(\beta) + \varepsilon_1(\beta)\eta(|x'|)\right\},\]     
        with~$\eta \in \testfuncs(\R)$, $0\leq \eta\leq 1$ is chosen such that:
        
        \begin{equation}
            \label{eq:indentation_cutoff}
            \begin{cases}
            \eta(|x'|)= 0,&\text{for } |x'|^2\leq \frac{\largeRadius(\beta)^2}4-\left(\frac{\epsLimit{i}}{\sqrt\beta}+\rho(\beta)+\varepsilon_1(\beta)\right)^2,\\
            \eta(|x'|) = 1, &\text{for }|x'|^2 > \left(\largeRadius(\beta)-3\varepsilon_0(\beta)\right)^2 -\left(\frac{\epsLimit{i}}{\sqrt\beta}+\rho(\beta)+\varepsilon_1(\beta)\right)^2.
        \end{cases}
        \end{equation}
        In the second line, one could possibly replace~$3\varepsilon_0(\beta)$ by~$(2+t)\varepsilon_0(\beta)$ for some other~$t>0$. This is related to the fact that~$\O_{\beta,\varepsilon_0}\subset B\left(0,\largeRadius(\beta)-2\varepsilon_0(\beta)\right)$ by~\eqref{eq:strip_domain_inclusion}.

        The requirement~\eqref{eq:indentation_cutoff} places additional constraints on~$\varepsilon_0(\beta)$ (and thus on~$\varepsilon_1(\beta))$. Namely, one must ensure that
        $\frac{\largeRadius(\beta)^2}4>\left(\frac{\epsLimit{i}}{\sqrt\beta}+\rho(\beta)+\varepsilon_1(\beta)\right)^2$, and that~$\left(\largeRadius(\beta)-3\varepsilon_0(\beta)\right)^2 >\frac{\largeRadius(\beta)^2}4$, which lead to the condition~$0<\varepsilon_1(\beta)<\varepsilon_0(\beta) < \min\left\{\frac{\largeRadius(\beta)}{2}-\frac{\epsLimit{i}}{\sqrt\beta}-\rho(\beta),\frac{\largeRadius(\beta)}{6}\right\}$.
        It is then easily checked that the first condition on~$\eta$ in~\eqref{eq:indentation_cutoff} ensures the property~\eqref{eq:inclusion_perturbed_domain}, and the second the smoothness of~$\Omega_\beta^+$.

        Therefore, choosing~$\beta_0>0$ sufficiently large, $\varepsilon_0(\beta)$ sufficiently small and~$0<\varepsilon_1(\beta)<\varepsilon_0(\beta)$ such that~$\Omega_{\beta,\varepsilon_1}^+$ is smooth,  we have shown, since~$\O_{\beta,\varepsilon_0}$ is bounded and disjoint from~$\Omega_\beta$~(which follows from~\eqref{hyp:locally_flat}), the inclusions~$\Omega_\beta \subset \Omega_\beta^+$ and~\eqref{eq:inclusion_perturbed_domain}, as well as the boundedness of~$\Omega_\beta^+$.

        To construct the included domains~$\Omega_\beta^-\subseteq \Omega_\beta$, one can perform precisely the same construction, working instead on the open complement~$\R^d\setminus \overline{\Omega_\beta}$, which satisfies a symmetric version of Assumption~\eqref{hyp:locally_flat} for each~$z_i$ such that~$\epsLimit{i}<+\infty$. Denoting the resulting extension by~$\R^d\setminus\overline{\Omega_\beta}\subseteq\Omega_\beta^{-,\mathrm{c}}$, which by construction satisfies the condition

        $$B\left(z_i,\frac12\largeRadius(\beta)\right)\cap \Omega_\beta^{-,\mathrm{c}}= B\left(z_i,\frac12\largeRadius(\beta)\right)\cap\R^d\setminus\left[z_i+\overline{\halfSpace{i}}\left(\frac{\epsLimit{i}}{\sqrt\beta}-\rho(\beta)\right)\right]$$
        for each~$i$ such that~$\epsLimit{i}<+\infty$, we define
        \[\Omega_\beta^{-} := \R^d\setminus \overline{\Omega_\beta^{-,\mathrm{c}}},\]
        and indeed recover equation~\eqref{eq:inclusion_perturbed_domain} for~$\Omega_\beta^{-}\subset\Omega_\beta$, which is also clearly bounded.
    \end{proof}
    \begin{figure}
        \subfigure[Schematic representation of the boundary of the extended domain~$\partial \Omega_\beta^\shift$ in the case~$\epsLimit{i}<0$.]{
            \label{fig:ext_domain_full}
        \begin{tikzpicture}
            \node (background) at (0,0) {\includegraphics[width=\linewidth]{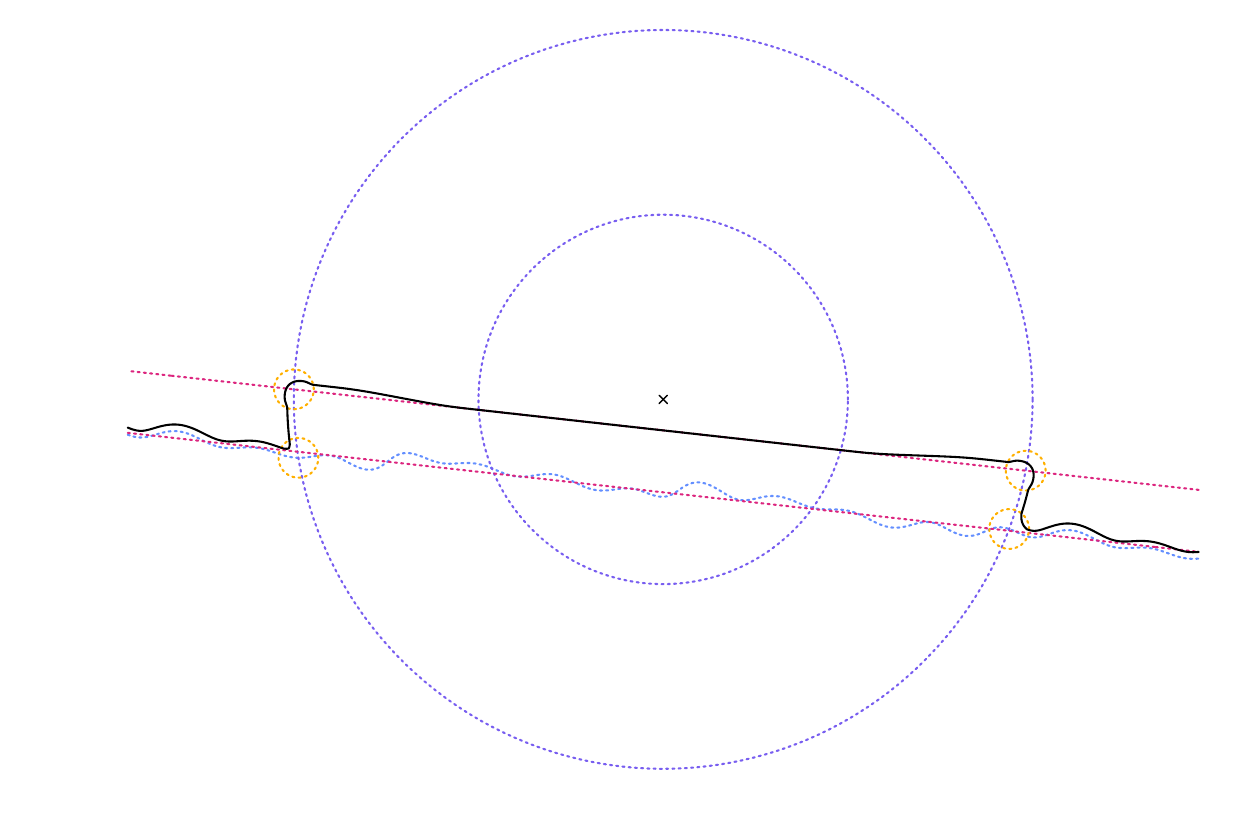}};
            \node (zi) at (0.457,0.21) {};
            \draw (zi) node[left,scale=0.7] {$z_i$};
            \draw[<->,dotted,thin] ($(zi)+(0,0)$)--($(zi)+(1.414,1.68)$);
            \draw[<->,dotted,thin] ($(zi)+(1.414,1.68)$)-- ($(zi)+(2.828,3.36)$);
            \draw[<->,dotted,thin] ($(zi)+(0,0)$)--($(zi)+(-0.11,-1.1)$);
            \draw (zi)+(0,1.1) node[right,scale=0.7] {$\frac12\largeRadius(\beta)$};
            \draw ($(zi)+(1.414,1.68)+(0.707,0.84)$) node[above left,scale=0.7] {$\frac12\largeRadius(\beta)$};
            \draw ($(zi)+(-0.05,-0.7)$) node[left,scale=0.7]{$\frac{|\epsLimit{i}|}{\sqrt\beta}$};
        \end{tikzpicture}
        }
    \subfigure[Enlarged view of the construction in the rightmost region of interest.]{
        \begin{tikzpicture}
            \label{fig:ext_domain_zoom}
            \node (background) at (0,0) {\includegraphics[width=\linewidth]{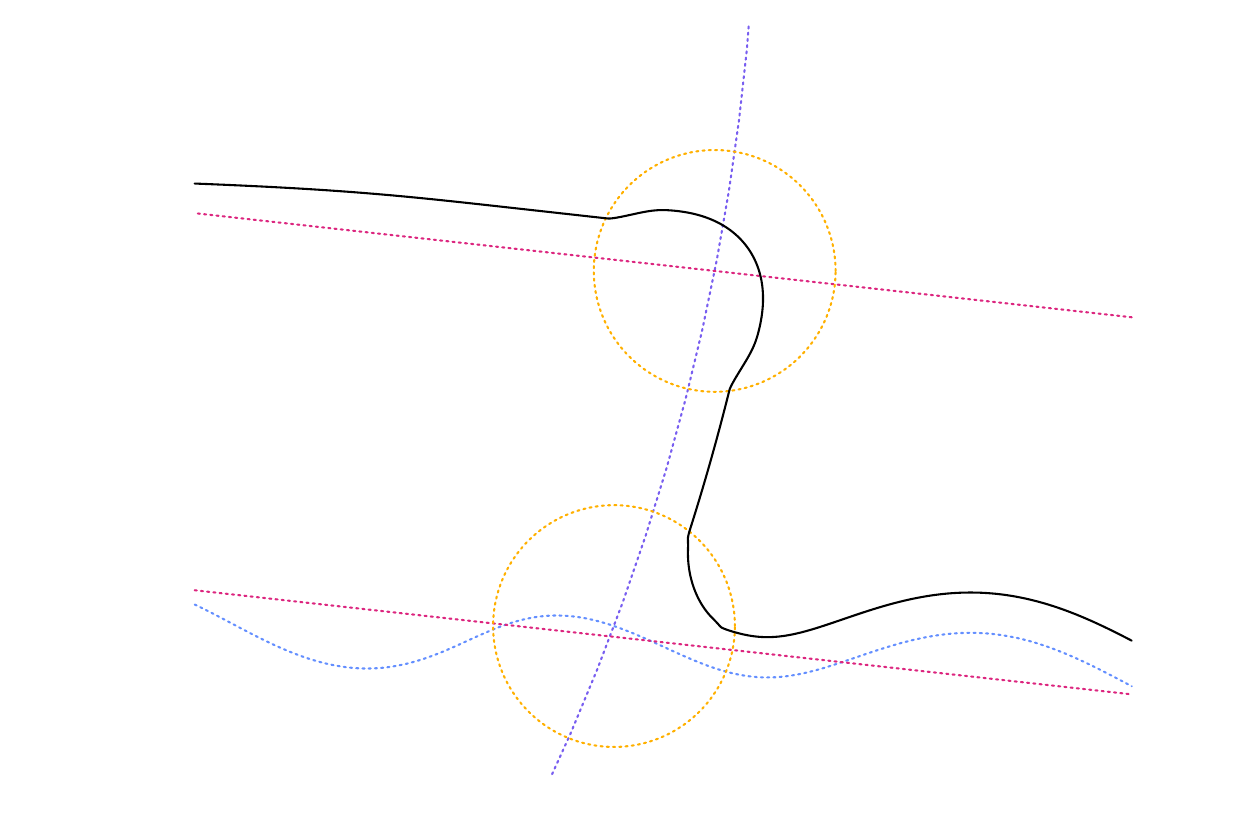}};
            \draw (-3.5,-3.5) node[scale=0.7] {$\partial \Omega_\beta$};
            \draw (4,-1.7) node[scale=0.7] {$\partial \Omega_\beta^\shift$};
            \draw (4,-3.5) node[scale=0.7] {$\partial\halfSpace{i}\left(\frac{\epsLimit{i}}{\sqrt\beta}\right)$};
            \draw (4,2) node[scale=0.7] {$\partial\halfSpace{i}\left(\frac{\epsLimit{i}}{\sqrt\beta}+\rho(\beta)\right)$};
            \draw (2.2,4.0) node[scale=0.7] {$\partial B(z_i,\largeRadius(\beta))$};
            \draw[<->,dotted,thin] (1.07,1.74)--(2.36,2.4);
            \draw ($(1.07,1.74)+(0.645,0.15)$) node[right,scale=0.7] {$\varepsilon_0$};
            \draw[<->,dotted,thin] (0.66,0)--(1.145,-0.11);

            \node (A) at (4.15,-2.59){};
            \node (B) at (-0.23,1.87){};
            \node (C) at (-3,-2.315){};
            \node (D) at (-0.18,-2.625){};

            \draw[<->,dotted,thin] ($(A)+(0,0)$)--($(A)+(0,0.497)$);
            \draw (0.902,-0.055) node[scale=0.7,below] {$\varepsilon_1$};
            \draw ($(A)+(0,0.235)$) node[scale=0.7,right] {$\varepsilon_1$};

            \draw[<->,dotted,thin] ($(B)+(-0.04,0.02)$)--($(B)+(0,0.497)$);
            \draw ($(B)+(-0.02,0.259)$) node[scale=0.7,right] {$\varepsilon_1$};

            \draw[<->,dotted,thin] ($(C)+(0,0)$)--($(C)+(0.45,4.467)$);
            \draw ($(C)+(0.225,2.234)$) node[scale=0.7,right] {$\rho(\beta)$};

            \draw[<->,dotted,thin] ($(D)+(0,0)$)--($(D)+(-0.1,1.565)$);
            \draw ($(D)+(-0.05,0.783)$) node[scale=0.7,left] {$\varepsilon_0$};

        \end{tikzpicture}
    }
        \caption{Schematic representation of the extended domain~$\Omega_\beta^+$ satisfying~\eqref{eq:inclusion_perturbed_domain}, depicted here in the vicinity of~$z_i$, a critical point close to, but outside, the boundary of $\partial \Omega_\beta$.}
    \end{figure}

    \subsection{Conclusion of the proof of Theorem~\ref{thm:harm_approx}}
    \label{subsubsec:harm_approx_final_proof}
    We conclude this section by giving the proof of the harmonic approximation in the case of a temperature-dependent boundary, which shows that the first-order asymptotics of any given eigenvalue at the bottom of the spectrum of~$H_{\beta}$ is given by the corresponding eigenvalue of~$H_{\beta,\alpha}^{\mathrm H}$ defined in the limit~$\beta\to\infty$.
    The proof extends the results of~\cite{CFKS87,S83} to the case of moving Dirichlet boundary conditions, allowing the computation of first-order spectral asymptotics for eigenstates localized (in the Witten representation) around critical points which are close to the boundary.
    To show~\eqref{eq:harm_limit}, we study the eigenvalues of the Witten Laplacian~\eqref{eq:witten_laplacian}, since these are equal to those of~$-\cL_\beta$ up to a factor of~$\beta$.
    The (standard) strategy we follow is to construct approximate eigenvectors for~$H_\beta$, which (roughly speaking) consist of eigenvectors of each of the local oscillators~$H_{\beta,\epsLimit{i}}^{(i)}$, localized around the critical points~$z_i$ by appropriate cutoff functions.
    The main technical novelty compared to previous works, besides the construction of critical harmonic models performed in Section~\ref{subsec:dirichlet_oscillators}, is the technique used to ensure that the quasimodes belong to the form domain of the Dirichlet Witten Laplacian on~$\Omega_\beta$, relying namely on Proposition~\ref{prop:domain_extension}, rather than on the fact that the domain is locally diffeomorphic to a half-space, as in~\cite{HN06} and~\cite{LLPN22}.

    Once we have constructed valid quasimodes, we obtain coarse estimates similar to those of~\cite{CFKS87,S83}, at the level of the quadratic form and Courant--Fischer variational principles, allowing to compute the limit spectrum of~$\cL_\beta$ as~$\beta\to\infty$, explicitly in terms of the spectra of the Dirichlet oscillators of Section~\ref{subsec:dirichlet_oscillators}.
    The constant~$\beta_0>0$ will be increased a finite number of times in the following proof, without changing notation.
    \begin{proof}[Proof of Theorem~\ref{thm:harm_approx}]
        Without loss of generality, we assume in this proof that~$\delta(\beta)<\beta^{s-\frac12}$ in~\eqref{hyp:locally_flat} for some~$0<s<\frac16$, so that the assumptions of Lemma~\ref{lemma:localization} are satisfied.
        \subsubsection{{Step 1: Upper bound on~$\lambda_{k,\beta}$}}
        The proof of the upper bound proceeds in two steps. First, we choose an appropriate realization of the harmonic approximation~\eqref{eq:global_harmonic_approximation_conj} so that the associated quasimodes are in the form domain~$H_0^1(\Omega_\beta)$. The second step is essentially identical to the analysis performed in~\cite[Theorem 11.1]{CFKS87}, given the localization estimates of Lemma~\ref{lemma:localization}. We include it for the sake of completeness.
        \medskip

        \noindent
        {\underline{\bf Step 1a: Perturbation of the local oscillators.}\newline}
        We fix~$k\geq 1$. We construct families of quasimodes~$\{\varphi_1,\dots,\varphi_k\}$ for~$H_\beta$ associated to the first~$k$ harmonic eigenvalues~$\beta\lambda_{1,\alpha}^{\mathrm H},\dots,\beta\lambda_{k,\alpha}^{\mathrm H}$.
        Recall Table~\ref{tab:harm_notation} for notation used in the definition of the harmonic approximation.
        The functions~$\varphi_{j}$ are of the general form~\eqref{eq:harm_quasimode}. We must however be careful with the choice of realization for the formal operator~$H_\beta^{(i_j)}$ defined in~\eqref{eq:local_harmonic_approx}, to ensure that the quasimodes are in the form domain of~$H_\beta$.
        In fact we need, for each~$0\leq i<N$, to distinguish between two cases:
        \begin{enumerate}[a)]
            \item{If~$z_i$ is far from the boundary, then by Assumption~\eqref{hyp:locally_flat},~$\chi_\beta^{(i)}$ is supported inside~$\Omega_\beta$, and thus the associated quasimodes are indeed in~$H_0^1(\Omega_\beta)$ (and indeed in the domain of~$\cL_\beta$).}
            \item{If~$z_i$ is close to the boundary, then by the third condition in Assumption~\eqref{hyp:locally_flat},~$\localNeighborhood[-]{i}(\beta)\subseteq B(z_i,\largeRadius(\beta)) \cap \Omega_\beta$ for $\beta$ large enough. In this case, the construction presented below yields again a quasimode in~$H_0^1(\Omega_\beta)$.}
        \end{enumerate}
        Let~$0\leq i<N$, and~$0<\shift<\epsLimit{i}$. Then, Assumption~\eqref{hyp:locally_flat} implies that there exists~$\beta_0>0$ such that, for all $\beta>\beta_0$, we have the inclusion:
        \[B(z_i,\largeRadius(\beta))\cap \left[z_i + \halfSpace{i}\left(\frac{\epsLimit{i}-\shift}{\sqrt\beta}\right)\right]\subseteq \localNeighborhood[-]{i}(\beta) \subseteq \Omega_\beta.\]
        Defining the vector
        \[\alpha^{\shift,-} = (\epsLimit{i} - \shift\1_{\epsLimit{i} < +\infty})_{0\leq i<N},\]
        we consider the perturbed harmonic approximation corresponding to the operator~$H_{\beta,\alpha^{\shift,-}}^{\mathrm H}$, as defined in~\eqref{eq:global_harmonic_approximation_conj}.
        By construction, for all~$\beta>\beta_0$, eigenmodes~$\psi_{\beta,n_j,\alpha^{\shift,-}_{i_j}}^{(i_j)}$ of~$H_{\beta,\alpha^{\shift,-}_{i_j}}^{(i_j)}$, are supported in
        $z_{i_j} + \halfSpace{i_j}\left(\frac{\epsLimit{i_j}-\shift}{\sqrt\beta}\right)$, 
        as noted in~\eqref{eq:local_harm_domain}, and it follows that the associated quasimode~$\widetilde\psi_{\beta,n_j,\alpha^{\shift,-}_{i_j}}^{(i_j)}$, as defined by~\eqref{eq:harm_quasimode}, belongs to~$H_0^1(\O_{i_j}^{-}(\beta)) \subset H_0^1(\Omega_\beta)$.
        \medskip

        \noindent{\underline{\bf Step 1b: Energy upper bound.}\newline}
        The aim of this step is to show the upper bound for all~$k\geq 1$:
        \begin{equation}
            \label{eq:harm_final_upper_bound}
            \underset{\beta\to\infty}{\overline\lim}\,\lambda_{k,\beta} \leq \lambda_{k,\alpha}^{\mathrm{H}}.
        \end{equation}
        For a fixed~$k\geq 1$, we consider the quasimodes
        \[\varphi_j = \widetilde\psi_{\beta,n_j,\alpha^{\shift,-}_{i_j}}^{(i_j)}\qquad \text{for }1\leq j\leq k.\]
        Note that~$(\varphi_j)_{1\leq j \leq k}$ span a~$k$-dimensional subspace of~$H_0^1(\Omega_\beta)$, since the Gram matrix~$$\left(\left\langle \varphi_j,\varphi_{j'}\right\rangle_{L^2(\Omega_\beta)}\right)_{1\leq j,j'\leq k}$$ is quasi-unitary, i.e. can be written in the form~$I + \smallo(1)$ as~$\beta\to\infty$. Indeed, since the~$\varphi_j$ are normalized in~$L^2(\Omega_\beta)$, it suffices to check that the off-diagonal entries of the Gram matrix vanish in this limit.
        Fixing~$1\leq j,j'\leq k$, this is clear for~$i_j \neq i_{j'}$, since the corresponding test functions~$\chi_\beta^{(i_j)}$,~$\chi_\beta^{(i_{j'})}$ have disjoint supports. If~$i_j = i_{j'}$, the statement is an immediate consequence of the quasi-orthogonality estimate~\eqref{eq:loc_eqb}.
        
        By the Min-Max Courant--Fischer principle, it suffices to show that
        \begin{equation}
            \label{eq:thm1_ub}
            \forall\,u\in \mathrm{Span}\{\varphi_j\}_{\,1\leq j \leq k},\qquad Q_\beta(u) \leq (\beta\lambda_{k,\alpha^{\shift,-}}^{\mathrm{H}}+\smallo(\beta))\|u\|_{L^2(\Omega_\beta)}^2,
        \end{equation}
        to conclude
        \[\underset{\beta\to\infty}{\overline\lim}\,\lambda_{k,\beta} \leq \lambda_{k,\alpha^{\shift,-}}^{\mathrm{H}}.\]
        Since the map~$\shift\mapsto\lambda_{k,\alpha^{\shift,-}}^{\mathrm{H}}$ is continuous and~${\alpha^{\shift,-} \xrightarrow{\shift\to 0} \alpha}$, 
        and therefore~$\lambda_{k,\alpha^{\shift,-}}^{\mathrm{H}}\xrightarrow{\shift\to 0} \lambda_{k,\alpha}^{\mathrm H}$, the latter inequality implies~\eqref{eq:harm_final_upper_bound}.

        Let us prove~\eqref{eq:thm1_ub}. Unless otherwise specified, in Step 1, norms and inner products are on~$L^2(\Omega_\beta)$.
        For a fixed~$1\leq j\leq k$, first consider~$u = \chi_\beta^{(i_j)} \psi_{\beta,n_j,\alpha^{\shift,-}}^{(i_j)} \in \mathrm{Span}\{\varphi_j\}$. Recall that~$H_{\beta,\alpha_{i_j}^{\shift,-}}^{(i_j)}\psi_{\beta,n_j,\alpha^{\shift,-}}^{(i_j)} = \beta \lambda_{n_j,\alpha^{\shift,-}}^{(i_j)}\psi_{\beta,n_j,\alpha^{\shift,-}}^{(i_j)}$. For convenience, we drop the indices and superscripts in the following computation, so that~${u=\chi\psi}$, with~$H \psi = \beta\lambda\psi$.
        By our choice of~$(\varphi_j)_{1\leq j\leq k}$, we have~${\lambda \in \left\{\lambda_{j,\alpha^{\shift,-}}^{\mathrm H}\right\}_{1\leq j\leq k}}$, and so~$\lambda\leq \lambda_{k,\alpha^{\shift,-}}^{\mathrm H}$.
        \[Q_\beta(u)=\left\langle H_\beta u,u\right\rangle= \left\langle H u,u\right\rangle + \left\langle (H_\beta - H)u,u\right\rangle,\]
        and since~$\left|\left\langle (H_\beta-H)u,u\right\rangle\right| \leq C_{j,\alpha,\shift}\beta^{3\deltaScalingExp+\frac12}\|u\|^2 = \smallo(\beta)\|u\|^2$ for some constant~$C_{j,\alpha,\shift}>0$ by Lemma~\ref{lemma:taylor_bound}, the only remaining task is to estimate the first term.
        Expanding the quadratic form, we get:
        \begin{equation}\begin{aligned}
            \label{eq:quad_form_expansion}
            \left\langle H\chi\psi,\chi\psi\right\rangle &= \left\langle H\psi,\psi\right\rangle - 2\left\langle H\psi,(1-\chi)\psi\right\rangle + \left\langle H(1-\chi)\psi,(1-\chi)\psi\right\rangle\\
            & = \beta\lambda\|\psi\|^2 - 2\left\langle H\psi,(1-\chi)\psi\right\rangle + \left\langle H(1-\chi)\psi,(1-\chi)\psi\right\rangle\\
            &\leq \beta\lambda\|u\|^2 + \O(\beta\|(1-\chi)\psi\|) + \left\langle H(1-\chi)\psi,(1-\chi)\psi\right\rangle,\\
        \end{aligned}
    \end{equation}
        where we used a Cauchy--Schwarz inequality and the estimate (see~\eqref{eq:loc_eqa})
        \[\|\psi\|^2 = \|u\|^2 + 2\left\langle u,(1-\chi)\psi\right\rangle + \|(1-\chi)\psi\|^2 = \|u\|^2 + \O(\|(1-\chi)\psi\|).\]
        We next use the localization estimates given in Lemma~\ref{lemma:localization} to control the two rightmost terms in the last line of~\eqref{eq:quad_form_expansion}. Here, we make crucial use of the hypothesis~\eqref{hyp:scaling_deltai},
        which gives the superpolynomial decay of the bound~\eqref{eq:loc_eqa}, implying that~$\beta\|(1-\chi)\psi\|=\smallo(\beta)$, and similarly~$\left\langle H(1-\chi)\psi,(1-\chi)\psi\right\rangle = \smallo(\beta)$, using~\eqref{eq:loc_eqc}.
        Finally,~\eqref{eq:loc_eqb} implies that~$\|u\|^2 = 1 + \smallo(\beta)$, so that we may write~$Q_\beta(u)\leq \|u\|^2(\beta\lambda + \smallo(\beta))$.
        Since~$\lambda\leq \lambda_{k,\alpha^{\shift,-}}^{\mathrm H}$, this implies the upper bound~\eqref{eq:thm1_ub} for this particular choice of~$u$, and thus for any~$u\in \mathrm{Span}\{\varphi_j\}$, as we now show.

        For~$u = \sum_{j=1}^k g_j \varphi_j \in \mathrm{Span}\{\varphi_j\}_{1\leq j\leq k}$, in view of the previous estimate, it is enough to show that the cross terms
        \[g_jg_{j'}\left\langle H_{\beta,\alpha_{i_j}^\shift}^{(i_j)}\varphi_j,\varphi_{j'}\right\rangle\]
        are small whenever~$i_j = i_{j'}$ with~$j\neq j'$, since the terms for which~$i_j \neq i_{j'}$ vanish for~$\beta$ large enough as the corresponding cutoff functions have disjoint supports.
        To check that the non-zero terms decay superpolynomially in~$\beta$, we denote, for convenience~$H_{\beta,\alpha_{i_j}^\shift}^{(i_j)}=H$,~$\chi = \chi_\beta^{(i_j)}$, and~$\psi = \psi^{(i_j)}_{\beta,n_j,\alpha^{\shift,-}}$, $\psi' = \psi^{(i_{j'})}_{\beta,k_{j'},\alpha^{\shift,-}}$, so that~$\varphi_j = \chi \psi/Z$,~$\varphi_{j'} = \chi\psi'/Z'$, where~$Z,Z'$ are normalizing constants ensuring that the quasimodes have unit~$L^2(\Omega_\beta)$-norm. With this notation,
        \[\begin{aligned}
            \left\langle H\chi\psi,\chi\psi'\right\rangle &= \left\langle H\psi,\psi'\right\rangle - \left\langle H\psi,(1-\chi)\psi'\right\rangle - \left\langle H(1-\chi)\psi,\psi'\right\rangle + \left\langle H(1-\chi)\psi,(1-\chi)\psi'\right\rangle\\
            &= \left\langle H\psi,\psi'\right\rangle - \lambda\left\langle \psi,(1-\chi)\psi'\right\rangle - \lambda'\left\langle (1-\chi)\psi,\psi'\right\rangle + \left\langle H(1-\chi)\psi,(1-\chi)\psi'\right\rangle\\
            &= 0 + \smallo(\beta)\|\psi\|\|\psi'\|,
        \end{aligned}
        \]
        where we used again Cauchy--Schwarz inequalities and the superpolynomial decay of the estimates~\eqref{eq:loc_eqa} and~\eqref{eq:loc_eqc} under~\eqref{hyp:scaling_deltai}, as well as the orthogonality relation~$\left\langle \psi,\psi'\right\rangle = 0$.
        Since~$Z,Z' = 1 +\smallo(\beta)$ by~\eqref{eq:loc_eqb}, it follows that~\eqref{eq:thm1_ub} holds, which concludes the proof of~\eqref{eq:harm_final_upper_bound}.

        \subsubsection{{Step 2: Lower bound on~$\lambda_{k,\beta}$}}
        We show the lower bound in~\eqref{eq:harm_limit}, namely that for all~$k\geq 1$,
        \begin{equation}
            \label{eq:harm_final_lower_bound}
            \underset{\beta\to\infty}{\underline\lim}\,\lambda_{k,\beta} \geq \lambda_{k,\alpha}^{\mathrm{H}}.
        \end{equation}
        As in the proof of the upper bound, we proceed in two steps.
        The first step is to construct an appropriate extension of the domain~$\Omega_\beta$ around critical points which are close to the boundary, relying on Proposition~\ref{prop:domain_extension}.
        The second step is similar to what is done in related proofs of~\cite{S83,CFKS87} in the boundaryless case, and we include it for completeness.
        \medskip

        \noindent{\underline{\bf Step 2a: Perturbation of the domain.}\newline}
        Our analysis requires to consider, in addition to perturbed harmonic eigenvalues, a perturbed domain~$\Omega_\beta^\shift$ which contains~$\Omega_\beta$, parametrized by some small~$\shift>0$.
        We construct~$\Omega_\beta^\shift$ to be smooth and bounded, so that the Dirichlet realization of~$H_\beta$ in~$\Omega_\beta^\shift$ is self-adjoint on~$L^2(\Omega_\beta^\shift)$, with compact resolvent and form domain~$H_0^1(\Omega_\beta^\shift)$.
        This domain is constructed by a direct application of Proposition~\ref{prop:domain_extension} with $\rho(\beta)=\frac{\shift}{\sqrt\beta}$, that is, $$\Omega_\beta^\shift = \Omega_{\beta,h/\sqrt\beta}^{+}.$$ Crucially, it satisfies the inclusion~\eqref{eq:inclusion_perturbed_domain}, which corresponds to the requirement that, locally around each~$z_i$ close to the boundary,~$\partial\Omega_\beta^\shift$ coincides precisely with a hyperplane located at a distance~$\beta^{-\frac12}(\epsLimit{i}+\shift)$ away from the saddle in the direction~$\hessEigvec{i}{1}$, and normal to the same vector.
        In the remainder of the proof, norms and inner products are by default on~$L^2(\Omega_\beta^\shift)$.
        We denote by~$\lambda^\shift_{k,\beta}:=\lambda_{k,\beta}(\Omega_\beta^\shift)$ its~$k$-th eigenvalue, which is positive with finite multiplicity, and by~$H_\beta^\shift$ the Dirichlet realization of~$H_\beta$ on~$\Omega_\beta^\shift$, with~$Q_\beta^\shift$ its associated quadratic form.
        By the comparison principle given in Proposition~\ref{prop:comparison_principle}, it holds that $\lambda_{k,\beta}(\Omega_\beta)\geq \lambda_{k,\beta}(\Omega_\beta^\shift)$, and so it is sufficient to prove~\eqref{eq:harm_final_lower_bound} with~$\lambda_{k,\beta}=\lambda_{k,\beta}^\shift$.
        \medskip

        \noindent{\underline{\bf Step 2b: Energy lower bound.}\newline}
        From now on, $\alpha^{\shift,+}$ denotes the vector~$(\epsLimit{i}+\shift\1_{\epsLimit{i}<+\infty})_{0\leq i<N},$
         and~$\varphi_{j}$, for~$1\leq j \leq k-1$ are $k-1$ harmonic quasimodes, associated with the Dirichlet harmonic approximation~$H^{\mathrm{H}}_{\beta,\alpha^{\shift,+}}$.
        However, due to the construction performed in the previous step, one must slightly adjust the definition~\eqref{eq:harm_quasimode}, and more precisely the support of the cutoff function.
        For this, we set:
        \[\varphi_j = \frac{\eta_\beta^{(i)}\psi_{\beta,n_j,\alpha^{\shift,+}_{i_j}}^{(i_j)}}{\|\eta_\beta^{(i)}\psi_{\beta,n_j,\alpha^{\shift,+}_{i_j}}^{(i_j)}\|}\,1\leq j\leq k-1,\]
        where~$\eta_\beta^{(i)}(x-z_i) = \chi_\beta^{(i)}(2(x-z_i))$.
        By construction, each~$\varphi_j$ is supported in~$B(z_i,\frac12\largeRadius(\beta))$, and thus belongs to~$H_0^1(\Omega_\beta^\shift)$, by the inclusion~\eqref{eq:inclusion_perturbed_domain} and the fact that~$\psi_{\beta,n_j,\alpha^{\shift,+}_{i_j}}^{(i_j)}$ has support in~$z_{i_j}+\halfSpace{i_j}\left(\frac{\epsLimit{i_j}+\shift}{\sqrt\beta}\right)$ whenever~$\epsLimit{i_j}<+\infty$.
        We note that this choice of quasimodes amounts to rescaling~$\largeRadius(\beta)$ by a factor~$\frac12$, and consequently has no impact on the superpolynomial decay of the estimates of Lemma~\ref{lemma:localization}, nor on the conclusion of Lemma~\ref{lemma:taylor_bound}, which we will use freely in the remainder of the proof, upon replacing~$\chi_\beta^{(i)}$ by~$\eta_\beta^{(i)}$,~$\Omega_\beta$ by~$\Omega_\beta^\shift$ and~$H_\beta$ by~$H_\beta^\shift$.
        As in Step 1b, for~$\beta$ large enough, the~$\varphi_j$ span a $(k-1)$-dimensional subspace of~$L^2(\Omega_\beta^\shift)$.
      
        This time, we rely on the Courant--Fischer principle in its Max-Min form: it suffices to show that, for any~$u \in H_0^1(\Omega_\beta^\shift)\cap\mathrm{Span}\{\varphi_j\}_{ 1\leq j \leq k-1}^\perp$, the following inequality holds:
       ~$$ Q^\shift_\beta(u) \geq (\beta\lambda_{k,\alpha^{\shift,+}}^{\mathrm{H}} + \smallo(\beta))\|u\|^2.$$
        This will indeed imply:
        \[\underset{\beta\to\infty}{\underline\lim}\,\lambda_{k,\beta}\geq\underset{\beta\to\infty}{\underline\lim}\,\lambda^\shift_{k,\beta}\geq \lambda_{k,\alpha^{\shift,+}}^{\mathrm{H}},\]
        and the desired lower bound~\eqref{eq:harm_final_lower_bound} follows by continuity of the harmonic eigenvalues with respect to the boundary position~$\alpha^{\shift,+}$, taking the limit~$\shift\to 0$.
        Hence, let~$u\in H_0^1(\Omega_\beta^\shift)$ be orthogonal to~$\varphi_j$ for every~$1\leq j \leq k-1$. The IMS localization formula (see for instance~\cite[Chapter 3]{CFKS87}) gives:
        \begin{equation}
        \label{eq:ims_formula}
        Q^\shift_\beta(u) = \sum_{i=0}^{N} Q^\shift_\beta\left(\eta_\beta^{(i)}u\right) - \left\|u\nabla \eta_\beta^{(i)}\right\|^2,
        \end{equation}
        with~$\eta_\beta^{(N)} = \sqrt{\mathbbm{1}_{\Omega_\beta^\shift}-\sum_{i=0}^{N-1}\eta_\beta^{(i)2}}.$
        We first estimate, for~$0\leq i < N$, the terms
        \begin{equation}
            Q_\beta^\shift\left(\eta_\beta^{(i)}u\right)-\left\langle H_{\beta,\alpha^{h,+}_i}^{(i)}\eta_\beta^{(i)}u,\eta_\beta^{(i)}u\right\rangle = \left\langle (H_\beta^\shift-H_{\beta,\alpha_i^{h,+}}^{(i)})\eta_\beta^{(i)}u,\eta_\beta^{(i)}u\right\rangle = \smallo(\beta)\|u\|^2,
        \end{equation}
        using Lemma~\ref{lemma:taylor_bound}.
        On the other hand, the assumption that~$u$ is $L^2(\Omega_\beta^\shift)$-orthogonal to the~$\varphi_j$ for~$1\leq j \leq k-1$ implies, for all $0\leq i<N$ and all $1\leq n\leq\mathfrak N_i(k-1)$, that~$\eta_\beta^{(i)}u$ is $L^2\left[z_{i} + \halfSpace{i}\left(\frac{\epsLimit{i}+\shift}{\sqrt\beta}\right)\right]$-orthogonal to~$\psi_{\beta,n,\alpha_{i}^{\shift,+}}^{(i)}$.
        Here, we made crucial use of the property~\eqref{eq:inclusion_perturbed_domain} of the extended domain.
        Furthermore,~$\eta_\beta^{(i)}u$ is in the form domain of the self-adjoint operator~$H_{\beta,\alpha^{\shift,+}_{i}}^{(i)}$, and thus the Courant--Fischer principle implies:
        \begin{equation}
            \left\langle H_{\beta,\alpha^{\shift,+}_{i}}^{(i)}\eta_\beta^{(i)}u,\eta_\beta^{(i)}u\right\rangle \geq \beta\lambda_{\mathfrak N_i(k-1)+1,\alpha^{\shift,+}_{i}}^{(i)}\|\eta_\beta^{(i)}u\|^2\geq \beta \lambda_{k,\alpha^{\shift,+}}^{\mathrm H}\|\eta_\beta^{(i)}u\|^2,
        \end{equation}
        where we used the identity~\eqref{eq:enumeration_relations}.
        The crude~$L^\infty$ bound~\eqref{eq:linf_bound_nabla_chi} implies,~since~$\largeRadius(\beta)^{-1} = \smallo\left(\sqrt\beta\right)$, that~$\left\|u\nabla\eta_\beta^{(i)}\right\|^2 = \smallo(\beta)\|u\|^2$.
        At this point, we have shown that 
        \[\sum_{i=0}^{N-1} Q^\shift_\beta\left(\eta_\beta^{(i)}u\right) - \left\|u\nabla \eta_\beta^{(i)}\right\|^2\geq \beta\lambda_{k,\alpha^{\shift,+}}^{\mathrm{H}}\sum_{i=0}^{N-1}\|\eta_\beta^{(i)}u\|^2 +\smallo(\beta)\|u\|^2\]

        We are left with the following term in~\eqref{eq:ims_formula}:
        \[\left\langle H_\beta^\shift \eta_\beta^{(N)}u,\eta_\beta^{(N)}u\right\rangle - \left\|u\nabla \eta_\beta^{(N)}\right\|^2.\]
        Note that, since
       ~$$ \supp\,\eta_\beta^{(N)} \subset \R^d\setminus \bigcup_{i=0}^m B\left(z_i,\frac12\largeRadius(\beta)\right),$$
        the fact that~$V$ is a smooth Morse function in~$ \Omega_\beta^\shift$ ensures that there exist~$C,\beta_0>0$ such that, for all~$\beta>\beta_0$,
        \[|\nabla V|^2_{\big|\supp\,\eta_\beta^{(N)}} \geq \frac1C\largeRadius(\beta)^2,\qquad |\Delta V|_{\big|\supp\,\eta_\beta^{(N)}} \leq C,\]
        hence:
        \[\forall\,x\in\supp\,\eta_\beta^{(N)},\qquad U_\beta(x)=\frac{\beta^2}4|\nabla V(x)|^2 -\frac\beta 2 \Delta V(x) \geq  \frac{\beta^2 \largeRadius(\beta)^2}{4C}-\beta C,\]
        The following ground state estimate then shows that, for~$\beta>\beta_0$ and any $v\in H_0^1(\supp\,\eta_\beta^{(N)})$:
        \[\left\langle H_\beta^\shift v,v\right\rangle_{L^2(\supp\,\eta_\beta^{(N)})} \geq \left\langle U_\beta v,v\right\rangle_{L^2(\supp\,\eta_\beta^{(N)})}^2 \geq \beta\left(\frac{\beta\largeRadius(\beta)^2}{4C}-C\right)\|v\|_{L^2(\supp\,\eta_\beta^{(N)})}^2.\]
        Since~$\sqrt\beta\largeRadius(\beta)\xrightarrow{\beta\to\infty}+\infty$, we conclude that, for~$\beta$ large enough, and since~$\eta_\beta^{(N)}u$ belongs to~$H_0^1(\supp\,\eta_\beta^{(N)})$, we have:
        \[\left\langle H_\beta^\shift \eta_\beta^{(N)}u,\eta_\beta^{(N)}u\right\rangle \geq \beta\lambda_{k,\alpha^{\shift,+}}^{\mathrm{H}}\left\|\eta_\beta^{(N)}u\right\|^2.\]
        Finally, we note that, owing to the condition~\eqref{eq:partition_of_unity_condition}, and the fact that the~$\eta_\beta^{(i)}$ have disjoint supports for~$i=0,\dots,N-1$, it also holds:
       ~$$\|\nabla \eta_\beta^{(N)}u\|^2 = \smallo(\beta)\|u\|^2.$$

        In conclusion, we have shown that
        \begin{equation}
            \left\langle H_\beta^h u,u \right\rangle \geq \beta\lambda_{k,\alpha^{\shift,+}}^{\mathrm H} \sum_{i=0}^{N}\|\eta_\beta^{(i)}u\|_{L^2(\Omega_\beta^\shift)}^2 + \smallo(\beta)\|u\|_{L^2(\Omega_\beta^\shift)}^2.
        \end{equation}
        The lower bound~\eqref{eq:harm_final_lower_bound} follows easily since the~$\eta_\beta^{(i)}$ form a quadratic partition of unity on~$\Omega_\beta^\shift$, that is
       ~$$\forall\,v\in L^2(\Omega_\beta^\shift),\quad\|v\|_{L^2(\Omega_\beta^\shift)}^2 = \sum_{i=0}^{N} \|\eta_\beta^{(i)}v\|_{L^2(\Omega_\beta^\shift)}^2.$$
    \end{proof}

    \section{Proof of Theorem~\ref{thm:eyring_kramers}}
    \label{sec:proof_ek}
    In this section, we derive finer asymptotics for the first Dirichlet eigenvalue~$\lambda_{\beta,1}$ of~$-\cL_\beta$, in the regime~$\beta\to\infty$, as described by Theorem~\ref{thm:eyring_kramers}.
    The proof of~\eqref{eq:eyring_kramers}, inspired by a construction performed in~\cite{LPN21} in the case of a static domain, relies on the definition of accurate approximations of the first Dirichlet eigenvector for~$-\cL_\beta$, so-called quasimodes.
    In Section~\ref{subsec:local_energy_estimates}, we derive the necessary local estimates to ensure the well-posedness of the construction, in the presence of a temperature-dependent boundary. In Section~\ref{subsec:ek_quasimodes}, we perform the construction of precise quasimodes, on slightly perturbed domains chosen so that the Dirichlet boundary condition is satisfied. In Section~\ref{subsec:laplace}, we present a technical result to deal with Laplace asymptotics in the presence of moving boundaries.
    In Section~\ref{subsec:semiclassical_estimates}, we prove the key semiclassical estimates needed to conclude the proof of Theorem~\ref{thm:eyring_kramers}, which is done in Section~\ref{subsec:eyring_kramers_final_proof}.
    
    \subsection{Local energy estimates}
    \label{subsec:local_energy_estimates}
            A technical detail one has to address in this construction is that in order for the quasimodes~\eqref{eq:global_quasimode} introduced below to be smooth and supported in~$\Omega_\beta^\pm$, we may need to reduce the value of~$\largeRadius(\beta)$ in Assumption~\eqref{hyp:locally_flat} in order for various energy estimates to hold in the neighborhoods~$B(z_i,\largeRadius(\beta))$ of low-energy saddle points.
            In fact, we show in Proposition~\ref{prop:qm_energy_estimates} that there exists some constant~$\varepsilon_0(V,z_0)>0$ independent of~$\beta$ such that requiring~$\delta(\beta)<\varepsilon_0(V,z_0)$ (\eqref{hyp:bound_delta}) allows for the construction of valid quasimodes.
            
            We begin by recalling some facts concerning the geometry of the basin of attraction~$\basin{z_0}$ and the behavior of~$V$ near low-energy critical points.
            Recall the definition~\eqref{eq:vstar}.
            \begin{lemma}
                \label{lemma:local_minimum}
                Let~$z\in \partial \basin{z_0}$ be a local minimum for~$V|_{\mathcal S(z_0)}$. Then,~$\nabla V(z) = 0$ and~$\mathrm{Ind}(z)=1$.
            \end{lemma}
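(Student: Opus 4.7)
My plan is first to derive $\nabla V(z)=0$ from the flow-invariance of $\mathcal S(z_0)$, and then to determine the Morse index by ruling out $\Ind(z)=0$ and $\Ind(z)\ge 2$ separately. The defining property of $\basin{m}$ is preserved under the one-parameter group $(\phi_t)_{t\in\R}$, so each $\basin{m}$, and hence each $\partial\basin{m}$, is $\phi_t$-invariant, and so is
\[\mathcal S(z_0)=\partial\basin{z_0}\cap\bigcup_{m\ne z_0}\partial\basin{m}.\]
Thus $\phi_t(z)\in\mathcal S(z_0)$ for all $t\in\R$ and stays close to $z$ for small $|t|$. The local-minimum hypothesis gives $V(\phi_t(z))\ge V(z)$ for small $|t|$, while $\frac{d}{dt}V(\phi_t(z))=-|\nabla V(\phi_t(z))|^2\le 0$ yields the reverse inequality for $t\ge 0$; hence $V(\phi_t(z))$ is constant on $[0,\tau]$ for some $\tau>0$, and differentiating at $t=0$ yields $|\nabla V(z)|^2=0$.

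\textbf{Excluding $\Ind(z)=0$.} If $z$ were a local minimum of $V$, its own open basin $\basin{z}\ni z$ would be disjoint from $\basin{z_0}$ (since $z\ne z_0$ follows from $z\in\partial\basin{z_0}$), so a whole neighborhood of $z$ would lie in $\basin{z}\subset\R^d\setminus\basin{z_0}$, contradicting $z\in\partial\basin{z_0}$.

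\textbf{Excluding $\Ind(z)\ge 2$.} Suppose $k:=\Ind(z)\ge 2$. The unstable manifold $\mathcal W^-(z)$ (see \eqref{eq:stable_manifold}) is a smooth $k$-dimensional embedded submanifold near $z$, tangent to the span of the negative eigenvectors of $\nabla^2 V(z)$, on which $V<V(z)$ strictly away from $z$; and for $k\ge 2$, any sufficiently small punctured ball $\mathcal W^-(z)\cap B(z,\varepsilon)\setminus\{z\}$ is connected. Consider the two disjoint open subsets of $\mathcal W^-(z)$
\[U_0:=\mathcal W^-(z)\cap\basin{z_0},\qquad V_0:=\mathcal W^-(z)\cap(\overline{\basin{z_0}})^c.\]
I claim that both $U_0$ and $V_0$ accumulate at $z$. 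Granted this, by the connectedness above there exists $y$ arbitrarily close to $z$ in $\mathcal W^-(z)\setminus\{z\}$ on the topological boundary of $V_0$ within $\mathcal W^-(z)$. Such a $y$ lies in $\overline{V_0}\subseteq\overline{(\overline{\basin{z_0}})^c}$ but outside $V_0$, hence also in $\overline{\basin{z_0}}$; combining, $y\in\partial\overline{\basin{z_0}}=\mathcal S(z_0)$ by \eqref{eq:s_alt_rep}, and $V(y)<V(z)$ since $y\in\mathcal W^-(z)\setminus\{z\}$, contradicting the local-minimum hypothesis.

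\textbf{Main obstacle.} The heart of the proof is the accumulation claim for both $U_0$ and $V_0$. Since $z\in\mathcal S(z_0)=\partial\overline{\basin{z_0}}$, there exist sequences $y_n\to z$ in $\basin{z_0}$ and $y'_n\to z$ in $(\overline{\basin{z_0}})^c$; using that $\mathcal W^+(z)$ has positive codimension, these may be chosen off $\mathcal W^+(z)$. By the stable manifold theorem, the forward trajectories $\phi_t(y_n)$ and $\phi_t(y'_n)$ exit any fixed small ball $B(z,\varepsilon)$ at times $T_n\to\infty$, and a standard application of the $\lambda$-lemma (inclination lemma) produces subsequential limits of $\phi_{T_n}(y_n)$ (resp.\ $\phi_{T_n}(y'_n)$) on $\mathcal W^-(z)\cap\partial B(z,\varepsilon)$. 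Since $\basin{z_0}$ and $(\overline{\basin{z_0}})^c$ are open and $\phi_t$-invariant, the limits lie in the corresponding sets, yielding points of $U_0$ and $V_0$ at prescribed distance $\varepsilon$ from $z$ for every sufficiently small $\varepsilon>0$.
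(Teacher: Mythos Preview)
Your argument for $\nabla V(z)=0$ via flow-invariance of $\mathcal S(z_0)$ matches the paper's, and your exclusion of $\Ind(z)=0$ is a clean substitute for the paper's appeal to the decomposition $\mathcal S(z_0)=\bigcup_{\mathbf m\in\mathcal Z(z_0)}\mathcal W^+(\mathbf m)$. The real divergence is in ruling out $\Ind(z)\ge 2$. The paper argues by a dimension count: since $\mathcal S(z_0)$ is $\mathcal H^{d-1}$-a.e.\ a union of $(d{-}1)$-dimensional stable manifolds, one has $z\in\overline{\mathcal W^+(\mathbf m_1)}$ for some index-$1$ critical point $\mathbf m_1$; then $\dim\mathcal W^-(z)+\dim\mathcal W^+(\mathbf m_1)\ge d+1$ forces a nonzero $v\in T_z\mathcal W^+(\mathbf m_1)\cap T_z\mathcal W^-(z)$, and expanding $V$ along a path into $\mathcal W^+(\mathbf m_1)\subset\mathcal S(z_0)$ with initial velocity $v$ yields $v^\intercal\nabla^2 V(z)v\ge 0$, contradicting $v\in T_z\mathcal W^-(z)$. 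Your connectedness argument on $\mathcal W^-(z)$ is a legitimate alternative that avoids the structure theorem for $\partial\basin{z_0}$, at the price of a dynamical accumulation step.

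There is, however, a gap in your final paragraph. The sentence ``Since $\basin{z_0}$ and $(\overline{\basin{z_0}})^c$ are open and $\phi_t$-invariant, the limits lie in the corresponding sets'' is unjustified: limits of sequences in an open invariant set may land on its boundary. (Relatedly, the $\lambda$-lemma is a statement about discs transverse to $\mathcal W^+(z)$; what you actually use is that any subsequential limit of the exit points has backward orbit trapped in $\overline{B(z,\varepsilon)}$ and hence lies on $\mathcal W^-(z)$, since $z$ is the sole equilibrium there.) The repair is short. For the $y'_n$-limit $p'\in\overline{(\overline{\basin{z_0}})^c}$: either $p'\in(\overline{\basin{z_0}})^c$, giving a point of $V_0$ at distance $\varepsilon$, or $p'\in\partial\overline{\basin{z_0}}=\mathcal S(z_0)$ and you are done at once. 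For the $y_n$-limit $p$ you only obtain $p\in\overline{\basin{z_0}}$, but that already suffices: the connectedness step merely needs $V_0$ to have nonempty \emph{proper} intersection with the connected punctured disc, and properness follows from $p\in\mathcal W^-(z)\cap\overline{\basin{z_0}}=\mathcal W^-(z)\setminus V_0$. With this adjustment your argument is complete.
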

            \begin{proof}
                From~\cite[Theorem B.13]{MS14}, we may write~$\mathcal S(z_0)$ as an at most countable disjoint union of submanifolds
                \begin{equation}
                \label{eq:boundary_decomposition}
                \mathcal S({z_0}) = \bigcup_{\mathbf{m}\in\mathcal Z(z_0)} \mathcal W^+(\mathbf m),
                \end{equation}
                where~$\mathcal Z(z_0)$ is the set of critical points of $V$ on~$\mathcal S(z_0)=\partial\basin{z_0} \cap \bigcup_{z\in \mathcal M(V)\setminus \{z_0\}}\partial\basin{z}$, which all have index at least $1$, and we recall the definition~\eqref{eq:stable_manifold} of the stable manifold~$\mathcal W^+$.
                Moreover,~$\mathcal W^+(\mathbf m)$ is a~$(d-\mathrm{Ind}(\mathbf m))$-dimensional manifold.

                From~\eqref{eq:boundary_decomposition}, there exists a unique critical point~$\mathbf m$ of $V$ such that~$z\in\mathcal W^+(\mathbf m)$, so that by definition the gradient flow~$\phi_t(z)$ converges to~$\mathbf m$ in the limit~$t\to\infty$, and is included in~$\mathcal S({z_0})$, which is positively stable for the gradient flow, as a union of stable manifolds. Since~$V$ decreases along trajectories of~$\phi$ and~$z$ is a local minimum of~$V$ on~$\mathcal S(z_0)$, it must hold that~$\nabla V(z)=0$. In particular, $\mathbf m=z$.

                It only remains to check that~$\Ind(z) = 1$. We first show that there exists~$\mathbf m_1\in\mathcal Z(z_0)$ with~$\Ind(\mathbf m_1)=1$ such that~$z\in \overline{\mathcal W^+(\mathbf m_1)}$. Assume for the sake of contradiction that this is not the case, hence there exists~$r>0$ such that
                \begin{equation}
                    \mathcal S({z_0})\cap B(z,r) \subset \bigcup_{\stackrel{\mathbf{m}\in\mathcal Z(z_0)}{\mathrm{Ind}(\mathbf{m})\geq 2}}\mathcal{W}^+(\mathbf{m}).
                \end{equation}
                Since the set on the right-hand side has dimension at most~$d-2$, the set~$B(z,r)\setminus\mathcal S(z_0)$ is connected (see~\cite[Theorem IV 4, Corollary 1]{HW15}). On the other hand, the sets
                \begin{equation}
                    \mathcal A'(y) := \overline{\basin{y}}\setminus \mathcal S(y),\qquad y\in\mathcal M(V)
                \end{equation}
                are open and pairwise disjoint, and the disjoint open cover
                \begin{equation}
                    B(z,r)\setminus \mathcal S(z_0) \subset \bigcup_{y\in\mathcal M(V)}\mathcal A'(y)
                \end{equation}
                has at least two non-empty components by definition of~$\mathcal S(z_0)$, and therefore cannot be connected.
 
                We finally show that~$\Ind(z)=1$. We already know that~$\Ind(z)\geq 1$, and again assume for the sake of contradiction that~$\Ind(z)>1$.
                Since~$\dim(\mathcal W^-(z)) = \Ind(z)$ and~$\dim(\mathcal W^+(\mathbf m_1))=d-1$, there exists~$v \in T_z \mathcal W^+(\mathbf{m}_1)\cap T_z \mathcal W^-(z)$ such that~$v\neq 0$. Here,~$T_z \mathcal W^+(\mathbf{m}_1)$ denotes the tangent half-space at the boundary, consisting of initial velocities of paths entering~$\mathcal W^+(\mathbf{m}_1)$ starting from~$z$.
                Let then $f$ be a~$\mathcal C^\infty$ map~$[0,1)\to \{z\}\cup \mathcal W^+(\mathbf m_1) \subset \mathcal S(z_0)$ such that~$f(0)=z$ and~$f'(0^+)=v$.
                Expanding, we get
                \[V(f(t))=V(z)+\frac{t^2}2 v^\intercal \nabla^2 V(z) v + \mathcal{O}(t^3),\]
                where we used~$\nabla V(z)=0$ twice. Since~$V(f(t))\geq V(z)$ for~$t$ sufficiently small, this implies~$v^\intercal \nabla^2 V(z) v \geq 0$. We have reached a contradiction, since~$T_z\mathcal W^{-}(z)$ is spanned by the negative eigendirections of~$\nabla^2 V(z)$.
            \end{proof}

            This leads to the following estimation on the position of~$\partial \basin{z_0}$ near a low-energy separating saddle point. Recall the definition of the local coordinates~\eqref{eq:local_coordinates}.
            \begin{lemma}
                \label{lemma:stable_manifold}
                For all~$T>0$, there exists~$\varepsilon_0(T),K(T)>0$ such that for all~$\varepsilon<\varepsilon_0(T)$ and all~$i\in I_{\min}$,
                \begin{equation}
                    \label{eq:basin_local_geometry}
                    \begin{aligned}
                        &\{y_1^{(i)} < -T\varepsilon\} \cap B(z_i,K(T)\sqrt{\varepsilon}) \subset \overline{\basin{z_0}},\\
                        &\{y_1^{(i)} > T\varepsilon\} \cap B(z_i,K(T)\sqrt{\varepsilon}) \subset \R^d \setminus \basin{z_0}.
                    \end{aligned}
                \end{equation}
            \end{lemma}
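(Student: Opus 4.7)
The plan is to leverage the stable manifold theorem at each $z_i$ with $i \in I_{\min}$, which yields a smooth local parametrization of $\mathcal W^+(z_i)$ as a graph transverse to $\hessEigvec{i}{1}$, and then to identify this graph with $\partial \basin{z_0}$ near $z_i$ to conclude via a quadratic bound.

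First, I would fix $i \in I_{\min}$ and work in the local coordinates $y^{(i)} = (y_1, y')$ defined in \eqref{eq:local_coordinates}. Since $\Ind(z_i) = 1$ with unique negative eigenvalue $\hessEigval{i}{1}$ associated to $\hessEigvec{i}{1}$, the stable manifold theorem gives that $T_{z_i}\mathcal W^+(z_i)$ is the hyperplane orthogonal to $\hessEigvec{i}{1}$. Hence there exist $r_i > 0$ and a smooth function $g_i$ on a neighborhood of $0 \in \R^{d-1}$, with $g_i(0) = 0$ and $\nabla g_i(0) = 0$, such that
\[
\mathcal W^+(z_i) \cap B(z_i, r_i) = \left\{y_1 = g_i(y')\right\} \cap B(z_i, r_i).
\]
A second-order Taylor expansion at $0$ furnishes a constant $C_i > 0$ such that $|g_i(y')| \leq C_i |y'|^2$ throughout a possibly smaller neighborhood.

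Next, I would establish the identity $\partial \basin{z_0} \cap B(z_i, r_i) = \mathcal W^+(z_i) \cap B(z_i, r_i)$, upon further reducing $r_i$ if needed. Using~\eqref{eq:s_alt_rep} and the decomposition~\eqref{eq:boundary_decomposition}, $\partial\overline{\basin{z_0}}$ is a union of stable manifolds of critical points of $V$; Assumption~\eqref{hyp:bound_delta} guarantees that $z_i$ is the only critical point in $B(z_i, 2\varepsilon_0(V,z_0))$, so that any stable manifold of another critical point passing through $B(z_i, r_i)$ would need to exit before converging, and shrinking $r_i$ sufficiently confines such trajectories away from $z_i$ by the local structure of the flow near a hyperbolic saddle. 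Since $z_i$ is separating, $\mathcal W^+(z_i) \cap B(z_i, r_i)$ splits $B(z_i, r_i)$ into exactly two connected components, each lying in a distinct basin of attraction. The orientation convention $\hessEigvec{i}{1} = \n_{\basin{z_0}}(z_i)$ then identifies
\[
\basin{z_0} \cap B(z_i, r_i) = \left\{y_1 < g_i(y')\right\} \cap B(z_i, r_i),
\]
as the outward normal points toward increasing $y_1$.

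Finally, since $I_{\min}$ is finite, I would set $C = \max_{i \in I_{\min}} C_i$, $r = \min_{i \in I_{\min}} r_i$, $K(T) = \sqrt{T/C}$, and $\varepsilon_0(T) = (r/K(T))^2$. For $\varepsilon < \varepsilon_0(T)$ and any $x \in B(z_i, K(T)\sqrt{\varepsilon})$, we have $|y'| \leq K(T)\sqrt{\varepsilon}$, hence $|g_i(y')| \leq CK(T)^2 \varepsilon = T\varepsilon$. If $y_1^{(i)}(x) < -T\varepsilon$, then $y_1 < -T\varepsilon \leq g_i(y')$, so $x \in \basin{z_0} \subset \overline{\basin{z_0}}$; if $y_1^{(i)}(x) > T\varepsilon$, then $y_1 > T\varepsilon \geq g_i(y')$, so $x \notin \basin{z_0}$. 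The main subtlety lies in the local identification of $\partial \basin{z_0}$ with $\mathcal W^+(z_i)$: it requires both the separating nature of $z_i$ (inherited from $i \in I_{\min}$) and the local isolation of critical points afforded by~\eqref{hyp:bound_delta}, which together prevent contamination of the local boundary picture by stable manifolds of other critical points.
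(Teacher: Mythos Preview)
Your proof is correct and follows essentially the same strategy as the paper's: both rely on the stable manifold theorem to obtain that $\mathcal S(z_0)$ is locally a smooth hypersurface through $z_i$ tangent to $\{y_1^{(i)}=0\}$, then exploit the resulting quadratic deviation. The only cosmetic difference is that the paper Taylor-expands the signed distance function $\sigma_{\overline{\basin{z_0}}}$ directly (yielding $\sigma_{\overline{\basin{z_0}}}(z_i+h)=-y_1^{(i)}+O(|h|^2)$), whereas you parametrize the boundary as a graph $y_1=g_i(y')$ with $g_i=O(|y'|^2)$; both lead to the same choice $K(T)\sim\sqrt{T/C}$.
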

            \begin{proof}
                Let~$T>0$ and~$i\in I_{\min}$. In a neighborhood of~$z_i$, the decomposition~\eqref{eq:boundary_decomposition} shows that~$\partial \overline{\basin{z_0}} = \mathcal S(z_0)$ coincides with~$\mathcal W^{+}(z_0)$.The stable manifold theorem (see~\cite[Section 9.2]{T24}) implies that~$\mathcal S(z_0)$ is smooth in a neighborhood of~$z_i$, with furthermore~$\n_{\overline{\basin{z_0}}}(z_i) = \hessEigvec{i}{1}$.
                Recall that~$\sigma_{\overline{\basin{z_0}}}$ denotes the signed distance function to~$\partial\overline{\basin{z_0}}$ with the convention~\eqref{eq:sdf}. Its regularity in a neighborhood of~$z_i$ is guaranteed by the fact that, according to the definition~\eqref{eq:vstar_bis},~$z_i$ is a separating saddle point.
                Note that in the case where~$z\in\partial \basin{z_0}$ is a non-separating saddle point, the outward normal~$\n_{\basin{z_0}}(z) = -\nabla \sigma_{\basin{z_0}}(z)$ is not well-defined.
                A Taylor expansion gives
                \[\sigma_{\overline{\basin{z_0}}}(z_i+h) = \sigma_{\overline{\basin{z_0}}}(z_i) -h^\intercal\n_{\overline{\basin{z_0}}}(z_i) + R(h) = -h^\intercal\hessEigvec{i}{1} + R(h) = -y_1^{(i)}(z_i+h)+R(h),\]
                where~$|R(h)| < M_i|h|^2$ for some constant~$M_i>0$ and for all~$|h| < M_i^{-1}$. 
                Choosing~$\varepsilon_{0,i}=\frac{T}{M_i}$, we get, for all~$0<\varepsilon<\varepsilon_{0,i}$ and all~$x\in\{y_1^{(i)} < -T\varepsilon\}\cap B(z_i,K_i\sqrt\varepsilon)$,
                \begin{equation}
                    \sigma_{\overline{\basin{z_0}}}(x) \geq T\varepsilon - K_i^2M_i\varepsilon > 0
                \end{equation}
                for~$K_i<\sqrt{T/M_i}$, thus~$x\in\overline{\basin{z_0}}$ for~$K_i$ sufficiently small.
                
                Setting~$\varepsilon_0(T)=\min_i\varepsilon_{0,i}$ and~$K(T) =\min_i K_i$, we obtain the first inclusion in~\eqref{eq:basin_local_geometry}.
                 The second inclusion follows along the same lines.
            \end{proof}
            The following proposition allows to multiply~$\largeRadius(\beta)$ by an arbitrarily small positive constant factor in Assumption~\eqref{hyp:energy_well}, which will be convenient for the construction of accurate quasimodes.
            \begin{proposition}
                \label{prop:suff_condition_delta}
                There exists~$\varepsilon_0(V,z_0),C(V,z_0),\beta_0>0$ such that, provided~\eqref{hyp:energy_well} holds and~$\largeRadius(\beta)<\varepsilon_0(V,z_0)$, for all~$0<\rho_0<1$, there exists~$\rho<\rho_0$ such that for all $\beta>\beta_0$, Assumption~\eqref{hyp:energy_well} again holds upon replacing~$\largeRadius(\beta)$ by~$\rho\largeRadius(\beta)$ and~$C_V$ by~$C(V,z_0)\rho^2$, i.e.
                \begin{equation}
                    \tag{\bf EK3'}
                    \label{hyp:energy_well_bis}
                    \left[\basin{z_0}\cap\{V<\Vstar+C(V,z_0)\rho^2\largeRadius(\beta)^2\}\right]\setminus \bigcup_{i\in I_{\min}} B(z_i,\rho\largeRadius(\beta)) \subset \Omega_\beta.
                \end{equation}
                Furthermore,~$\varepsilon_0(V,z_0)$ can be chosen such that, for each~$0\leq i < N$,~$z_i$ is the only critical point of~$V$ in~$B\left(z_i,2\varepsilon_0(V,z_0)\right)$.
            \end{proposition}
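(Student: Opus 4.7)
The plan is to show, by a case analysis on $x$ lying in the left-hand side of~\eqref{hyp:energy_well_bis}, that $x\in\Omega_\beta$, for appropriately small constants $\varepsilon_0(V,z_0)$ and $C(V,z_0)$ and any $\rho<\rho_0$. In the easy case where $x\notin\bigcup_{i\in I_{\min}}B(z_i,\largeRadius(\beta))$, one ensures that $C(V,z_0)\leq C_V$ by our choice of $C(V,z_0)$, so that $V(x)<\Vstar+C(V,z_0)\rho^2\largeRadius(\beta)^2\leq \Vstar+C_V\largeRadius(\beta)^2$, and one concludes via Assumption~\eqref{hyp:energy_well}.

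The substantive case is when $x\in B(z_i,\largeRadius(\beta))\setminus B(z_i,\rho\largeRadius(\beta))$ for some $i\in I_{\min}$; we will show this case is vacuous under our choice of constants. Write $h:=x-z_i$ with $\rho\largeRadius(\beta)\leq |h|\leq\largeRadius(\beta)\leq\varepsilon_0(V,z_0)$, and expand $V$ to third order near the saddle $z_i$ (recall $\hessEigval{i}{1}<0$, $\hessEigval{i}{j}>0$ for $j\geq 2$ since $\mathrm{Ind}(z_i)=1$ by Lemma~\ref{lemma:local_minimum}):
\begin{equation*}
V(x)=\Vstar-\tfrac{1}{2}|\hessEigval{i}{1}|\,y_1^{(i)}(x)^2+\tfrac{1}{2}\sum_{j\geq 2}\hessEigval{i}{j}\,y_j^{(i)}(x)^2+\mathcal{O}(|h|^3).
\end{equation*}
Since $x\in\basin{z_0}$, Lemma~\ref{lemma:stable_manifold} applied with $T=1$ and $\varepsilon=|h|^2/K(1)^2$ yields $y_1^{(i)}(x)\leq |h|^2/K(1)^2$ provided $\varepsilon_0(V,z_0)\leq K(1)\sqrt{\varepsilon_0(1)}$. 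Hence $y_1^{(i)}(x)^2\leq C_1\,\varepsilon_0(V,z_0)^2\,|h|^2$ for a constant $C_1>0$ depending only on $V$ and $z_0$, so $|y^{(i)\prime}(x)|^2\geq |h|^2/2$ once $\varepsilon_0(V,z_0)$ is small enough. Setting $\nu_{\min}^{(i)}:=\min_{j\geq 2}\hessEigval{i}{j}>0$, the bound $V(x)<\Vstar+C(V,z_0)\rho^2\largeRadius(\beta)^2$, combined with $\sum_{j\geq 2}\hessEigval{i}{j}y_j^{(i)}(x)^2\geq \nu_{\min}^{(i)}|h|^2/2$ and the absorption of both the $y_1^{(i)}$-contribution and the cubic remainder (each of order $\mathcal{O}(\varepsilon_0(V,z_0))\,|h|^2$) into the dominant quadratic term, gives after a further reduction of $\varepsilon_0(V,z_0)$
\begin{equation*}
\tfrac{1}{8}\nu_{\min}^{(i)}\,|h|^2\leq C(V,z_0)\,\rho^2\,\largeRadius(\beta)^2.
\end{equation*}
Choosing $C(V,z_0)\leq \min\bigl\{C_V,\,\tfrac{1}{16}\min_{i\in I_{\min}}\nu_{\min}^{(i)}\bigr\}$ then forces $|h|<\rho\,\largeRadius(\beta)$, contradicting $|h|\geq\rho\,\largeRadius(\beta)$ and completing the proof of~\eqref{hyp:energy_well_bis}.

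The main obstacle is essentially bookkeeping: $\varepsilon_0(V,z_0)$ and $C(V,z_0)$ must simultaneously be small enough to apply Lemma~\ref{lemma:stable_manifold}, absorb the cubic remainder and the $y_1^{(i)}$ term, and yield the strict inequality $|h|<\rho\largeRadius(\beta)$ in the final step. Because each condition depends only on the Morse data at the saddles $(z_i)_{i\in I_{\min}}$ (finitely many points), these constants can be chosen consistently, independently of $\beta$ and $\rho<\rho_0$. The supplementary statement that $z_i$ is the only critical point of $V$ in $B(z_i,2\varepsilon_0(V,z_0))$ follows from the Morse property (critical points are isolated) and the finiteness of $\{z_0,\dots,z_{N-1}\}\subset\mathcal K$, allowing a final reduction of $\varepsilon_0(V,z_0)$ if necessary.
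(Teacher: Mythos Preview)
Your argument has a genuine gap at the step where you conclude $y_1^{(i)}(x)^2\leq C_1\,\varepsilon_0(V,z_0)^2\,|h|^2$. The contrapositive of the second inclusion in Lemma~\ref{lemma:stable_manifold} only yields the one-sided bound $y_1^{(i)}(x)\leq |h|^2/K(1)^2$ from $x\in\basin{z_0}$; it gives no lower bound on $y_1^{(i)}(x)$, and there is none to be had. Points of the form $x\approx z_i-r\hessEigvec{i}{1}$ with $r=\largeRadius(\beta)/2$ lie in $\basin{z_0}$ (by the orientation convention for $\hessEigvec{i}{1}$ and the first inclusion of the lemma), have $|h|=r\in(\rho\largeRadius(\beta),\largeRadius(\beta))$ for any $\rho<1/2$, and satisfy $V(x)\approx\Vstar-\tfrac12|\hessEigval{i}{1}|r^2<\Vstar$. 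Such $x$ belong to the left-hand side of~\eqref{hyp:energy_well_bis}, yet $y_1^{(i)}(x)^2\approx|h|^2$, which violates your claimed bound. The annular case is therefore \emph{not} vacuous, and the argument collapses.

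The paper proceeds in the opposite direction. From the energy constraint $V(x)<\Vstar+C\rho^2\largeRadius(\beta)^2$ together with $|h|\geq\rho\largeRadius(\beta)$, a crude two-sided quadratic estimate on $V-\Vstar$ forces $|y_1^{(i)}(x)|>\zeta\rho\largeRadius(\beta)$ for some constant $\zeta>0$ (so $|y_1^{(i)}|$ is \emph{large}, not small). One then splits on the sign of $y_1^{(i)}(x)$. If $y_1^{(i)}(x)>\zeta\rho\largeRadius(\beta)$, Lemma~\ref{lemma:stable_manifold} gives $x\notin\basin{z_0}$, a contradiction --- this half your argument does capture. If $y_1^{(i)}(x)<-\zeta\rho\largeRadius(\beta)$, the case you miss, the paper invokes Assumption~\eqref{hyp:locally_flat} directly: since $-\zeta\rho\largeRadius(\beta)<\epsLimit{i}/\sqrt\beta-\smallRadius(\beta)$ for $\beta$ large, one obtains $x\in\localNeighborhood[-]{i}(\beta)\subset\Omega_\beta$. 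Your proof never appeals to~\eqref{hyp:locally_flat}, which is exactly the missing ingredient needed to place the points with $y_1^{(i)}<0$ inside $\Omega_\beta$.
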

            \begin{proof}
                We assume~\eqref{hyp:energy_well}, and show~\eqref{hyp:energy_well_bis}.
                For any~$C<C_V$, the inclusion~$\{V<\Vstar + C\rho^2\largeRadius(\beta)^2\}\subset \{V<\Vstar + C_V\largeRadius(\beta)^2\}$ implies that it is sufficient to show that, for each~$i\in I_{\min}$, it holds, for some~$C>0$ and~$0<\rho<1$, that
                \begin{equation}
                \label{eq:delta_reduction_inclusion}
                \mathscr S^{(i)}(\rho,C):=\basin{z_0}\cap\left\{V<\Vstar+C \rho^2\largeRadius(\beta)^2\right\}\cap\left[B(z_i,\largeRadius(\beta))\setminus B(z_i,\rho\largeRadius(\beta))\right]\subset \Omega_\beta.
                \end{equation}
                The Taylor expansion of~$V$ in the coordinates~\eqref{eq:local_coordinates} around~$z_i$ gives (recalling~$V(z_i)=\Vstar$)
                \begin{equation}
                    \label{eq:V_local}
                        V - \Vstar = \frac12\sum_{j=1}^d \hessEigval{i}{j}y^{(i)2}_j + \frac12 \int_0^1 (1-t)^2 D^3 V\left(y^{(i)-1}\left(ty^{(i)}\right)\right):y^{(i)\otimes 3}\,\d t = \frac12\sum_{j=1}^d \hessEigval{i}{j}y^{(i)2}_j + \O(|y^{(i)}|^3).
                    \end{equation}
                    It follows that the following estimate holds for all~$i\in I_{\min}$ and for some~$M>0$ on~$B(z_i,\frac1M)$:
                    \begin{equation}
                        \label{eq:quadratic_estimate}
                        Q_{-}(y^{(i)}) \leq V-\Vstar,
                    \end{equation}
                    where we denote
                    \begin{equation}
                        \label{eq:quadratic_forms}
                        Q_{-}(y) = M^{- 1}|y'|^2 - M y_1^2 = M^{-1}|y|^2 - (M+M^{-1})y_1^2.
                    \end{equation}
                    We note that the set~$\{Q_{-}>0\}$ is simply given by the cone~$\{|y_1| <  M^{-1}|y'|\}$.

                    We assume that~$\largeRadius(\beta)<M^{-1}$ for sufficiently large~$\beta$, and let~$x\in \mathscr S^{(i)}(\rho,C)$.
                    It follows from~\eqref{eq:quadratic_estimate} and~$V(x)<\Vstar + C \rho^2\largeRadius(\beta)^2$ that
                    $Q_{-}(y^{(i)}(x)) < C \rho^2\largeRadius(\beta)^2$, from which we gather
                    \[\frac{M^{-1}|y^{(i)}(x)|^2-C\rho^2\largeRadius(\beta)^2}{M+M^{-1}}<y_1^{(i)2}(x).\]
                    Therefore, for~$0<C<M^{-1}$, and since~$|y^{(i)}(x)|>\rho\largeRadius(\beta)$, it holds that~$|y_1^{(i)}(x)|>\sqrt{\frac{M^{-1}-C}{M+M^{-1}}}\rho\largeRadius(\beta)$.
                    We take~$C(V,z_0)=C=\min(C_V,M^{-1}/2)$, and set~$\zeta := \sqrt{\frac{M^{-1}-C}{M+M^{-1}}}>0 $.
                    We distinguish two cases
                    \begin{itemize}
                     \item{If~$y_1^{(i)}(x) < -\zeta \rho\largeRadius(\beta)<\epsLimit{i}/\sqrt\beta - \smallRadius(\beta)$, which holds for~$\beta$ sufficiently large, it holds asymptotically that~$x\in\Omega_\beta$ by~\eqref{hyp:locally_flat}. This fixes~$\beta_0$.}
                    \item{On the other hand, if~$y_1^{(i)}(x) >\zeta \rho\largeRadius(\beta)$ and~$\largeRadius(\beta)<K(\zeta)\sqrt{\varepsilon_0(\zeta)}$, applying Lemma~\ref{lemma:stable_manifold} with~$T=\zeta$ and~$\varepsilon = \largeRadius(\beta)^2/K(T)^2$, it holds that~$x\not\in\basin{z_0}$ provided~$\rho>\largeRadius(\beta)/K(\zeta)^2$. This in turn implies~$x\not\in\mathscr S^{(i)}(\rho,C)$, which contradicts the assumption on~$x$ and precludes this case.}
                    \end{itemize}   
                    It follows that, provided~$\largeRadius(\beta)< \varepsilon_0(V,z_0):=\min\{M^{-1},K(\zeta)^2/2,K(\zeta)\sqrt{\varepsilon_0(\zeta)}\}$, there exists~$0<\rho\leq1/2$ and~$C>0$, such that for sufficiently large~$\beta$, the required inclusion~\eqref{eq:delta_reduction_inclusion} holds.
                    The conclusion of Proposition~\ref{prop:suff_condition_delta} follows by iterating this argument a finite number of times.

                    The final condition on~$\varepsilon_0(V,z_0)$ can clearly be satisfied, since $V$ has finitely many isolated critical points in~$\mathcal K$.
            \end{proof}
            \begin{remark}
                The constant~$\varepsilon_0(V,z_0)$ is actually a function of~$\min_{i\in I_{\min}}\min_{1\leq j\leq d} |\hessEigval{i}{j}|$, where we recall the definition~\eqref{eq:eigvals_hessian}.
            \end{remark}
            We finally state the following simple estimates.
            \begin{lemma}
                \label{lemma:taylor_saddle}
                There exists~$\varepsilon_0>0$ and~$0<C_\xi<1$ and $0<M(V,z_0)$ such that, for all~$i\in I_{\min}$ and all~$0<\varepsilon<\varepsilon_0$, it holds
                \begin{equation}
                    \label{eq:corona_high_energy_strip}
                    \left[B(z_i,\varepsilon)\setminus B\left(z_i,\frac12\varepsilon\right)\right]\cap\left\{|y_1^{(i)}|<C_\xi\varepsilon\right\}\subset\left\{V>\Vstar+M(V,z_0)\varepsilon^2\right\},
                \end{equation}
                \begin{equation}
                    \label{eq:boundary_energy_estimate}
                    \mathcal S(z_0)\setminus \bigcup_{i\in I_{\min}}B(z_i,\varepsilon) \subset \left\{V>\Vstar+M(V,z_0)\varepsilon^2\right\}.
                \end{equation}
            \end{lemma}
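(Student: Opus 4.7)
The plan is to establish the two inclusions separately by combining Morse-type Taylor expansions with a compactness argument on~$\mathcal S(z_0)$.

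For the first inclusion~\eqref{eq:corona_high_energy_strip}, I would rely on the second-order Taylor expansion of $V$ around each~$z_i$ with~$i\in I_{\min}$ in the local coordinates~\eqref{eq:local_coordinates}. Since~$z_i$ is an index-1 saddle, $\hessEigval{i}{1}<0$ and~$\hessEigval{i}{j}>0$ for~$j\geq 2$, so
\[
V - \Vstar = \tfrac12 \hessEigval{i}{1} y_1^{(i)2} + \tfrac12 \sum_{j\geq 2} \hessEigval{i}{j} y_j^{(i)2} + O(|y^{(i)}|^3).
\]
On the constrained set~$[B(z_i,\varepsilon)\setminus B(z_i,\varepsilon/2)]\cap\{|y_1^{(i)}|<C_\xi\varepsilon\}$, one has~$|y^{(i)\prime}|^2 = |y^{(i)}|^2 - y_1^{(i)2}\geq (1/4-C_\xi^2)\varepsilon^2$, so writing~$\nu_\star := \min_{i\in I_{\min},\,j\geq 2}\hessEigval{i}{j}>0$ and~$\nu^\star := \max_{i\in I_{\min}}|\hessEigval{i}{1}|$, the quadratic part is bounded below by~$\tfrac12 \nu_\star (1/4-C_\xi^2)\varepsilon^2 - \tfrac12\nu^\star C_\xi^2\varepsilon^2$. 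Choosing~$C_\xi\in(0,1)$ small enough so that this bound equals some $M_1\varepsilon^2$ with~$M_1>0$, and then fixing~$\varepsilon_0$ small enough so that the cubic remainder~$O(\varepsilon^3)$ is dominated by~$M_1\varepsilon^2/2$ uniformly in~$i\in I_{\min}$, gives the inclusion with~$M(V,z_0)=M_1/2$.

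For the second inclusion~\eqref{eq:boundary_energy_estimate}, I would combine a compactness argument with a local quadratic lower bound along~$\mathcal S(z_0)$. By~\eqref{hyp:energy_well_bounded} together with the continuity of $V$, the set $\overline{\basin{z_0}}\cap\{V\leq \Vstar+1\}$ is bounded, hence compact, so $\mathcal S(z_0)\cap\{V\leq \Vstar+1\}$ is compact by~\eqref{eq:s_alt_rep}. By~\eqref{eq:vstar_bis} and Lemma~\ref{lemma:local_minimum}, the infimum of $V$ on~$\mathcal S(z_0)$ equals~$\Vstar$ and is attained precisely at the isolated points~$(z_i)_{i\in I_{\min}}$. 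Consequently, for any fixed~$\varepsilon_1>0$, compactness yields a constant~$\kappa(\varepsilon_1)>0$ such that~$V\geq\Vstar+\kappa(\varepsilon_1)$ on $\mathcal S(z_0)\setminus\bigcup_{i\in I_{\min}} B(z_i,\varepsilon_1)$. Locally near each such~$z_i$, the stable manifold theorem (as already invoked in the proof of Lemma~\ref{lemma:stable_manifold}) identifies~$\mathcal S(z_0)$ with~$\mathcal W^+(z_i)$, which is a smooth hypersurface tangent to $\mathrm{span}\{\hessEigvec{i}{j}:j\geq 2\}$, and can therefore be written as a graph~$y_1^{(i)}=\psi(y^{(i)\prime})$ with~$\psi(0)=0$, $\nabla\psi(0)=0$. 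Substituting~$y_1^{(i)} = O(|y^{(i)\prime}|^2)$ into the Taylor expansion of $V$ yields, on $\mathcal S(z_0)\cap B(z_i,\varepsilon_1)$ for some fixed small~$\varepsilon_1>0$, the bound $V-\Vstar\geq \tfrac14 \nu_\star |y^{(i)\prime}|^2 \geq M_2|x-z_i|^2$ for some $M_2>0$ independent of~$i$, after possibly shrinking~$\varepsilon_1$ so that the cubic remainder is absorbed.

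Combining these two regimes, for~$\varepsilon<\varepsilon_1$ any point $x\in\mathcal S(z_0)\setminus\bigcup_{i\in I_{\min}}B(z_i,\varepsilon)$ either lies in $\mathcal S(z_0)\setminus\bigcup_i B(z_i,\varepsilon_1)$, in which case $V(x)\geq\Vstar+\kappa(\varepsilon_1)\geq \Vstar+\kappa(\varepsilon_1)\varepsilon_1^{-2}\varepsilon^2$; or it lies in the annulus~$\mathcal S(z_0)\cap [B(z_i,\varepsilon_1)\setminus B(z_i,\varepsilon)]$ for some $i\in I_{\min}$, in which case $V(x)\geq \Vstar+M_2\varepsilon^2$. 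Taking $M(V,z_0)=\min\{M_1/2,\,M_2,\,\kappa(\varepsilon_1)\varepsilon_1^{-2}\}$ and $\varepsilon_0$ as the minimum of the various smallness thresholds encountered yields both inclusions simultaneously. The main technical point, and the only nonroutine step, is the derivation of the quadratic lower bound on $V-\Vstar$ \emph{along} $\mathcal S(z_0)$ near each~$z_i$: it depends crucially on the graph representation of the stable manifold and the resulting fact that the unstable coordinate $y_1^{(i)}$ is quadratically small in the transverse coordinates on $\mathcal W^+(z_i)$, which is what neutralizes the negative contribution from the unstable eigenvalue $\hessEigval{i}{1}$.
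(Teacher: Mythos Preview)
Your proof is correct and follows essentially the same approach as the paper's. For~\eqref{eq:corona_high_energy_strip}, both arguments use the second-order Taylor expansion of~$V$ around~$z_i$ together with the constraint~$|y_1^{(i)}|<C_\xi\varepsilon$ to force the positive transverse quadratic terms to dominate the single negative one (the paper packages this via the auxiliary form~$Q_-$ from the proof of Proposition~\ref{prop:suff_condition_delta}, while you work directly with the eigenvalues~$\hessEigval{i}{j}$); for~\eqref{eq:boundary_energy_estimate}, both combine the fact that the~$(z_i)_{i\in I_{\min}}$ are the unique minimizers of~$V|_{\mathcal S(z_0)}$ with the positive definiteness of the restricted Hessian near these points, your version being somewhat more explicit through the graph representation of~$\mathcal W^+(z_i)$ and the separate compactness-plus-local treatment.
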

            \begin{proof}
                We recall the bound~\eqref{eq:quadratic_estimate}.
                We first find~$C_\xi >0$ so that for all~$0<\varepsilon<\varepsilon'_0:=M^{-1}$,~\eqref{eq:corona_high_energy_strip} holds.
                For any~$0<C_\xi<1$ and all~$x\in \{|y_1^{(i)}|< C_\xi\varepsilon\} \cap \left[B(z_i,\varepsilon)\setminus B\left(z_i,\frac12\varepsilon\right)\right]$, it holds that
                \[Q_{-}(y^{(i)}(x)) \geq \left[\frac1{4M} - C_\xi^2\left(M+\frac1M\right)\right]\varepsilon^2.\]
                In particular, choosing~$C_\xi < \frac1{2\sqrt{1+M^2}}$ ensures that~$V(x)-\Vstar>Q_-(y^{(i)}(x))>0$, and taking $M(V,z_0) <\frac1{4M} - C_\xi^2\left(M+\frac1M\right)$ ensures~\eqref{eq:corona_high_energy_strip}.

                Let us next show~\eqref{eq:boundary_energy_estimate}. Since~$V$ is decreasing along trajectories of the gradient flow~$\phi_t$, it follows that~$V|_{\overline{\basin{z_0}}}$ is bounded from below by~$V(z_0)$. From the boundedness of~$\basin{z_0}\cap\{V<\Vstar\}$ (see Assumption~\eqref{hyp:energy_well_bounded}) and the Morse property of~$V$, the set~$I_{\min}$ is finite, and by Lemma~\ref{lemma:local_minimum} above, the points~$(z_i)_{i\in I_{\min}}$ are the unique minimizers of~$V|_{\mathcal S(z_0)}$. Since~$I_{\min}$ is finite, the stable manifold theorem implies that the restricted Hessian~$\nabla^2 V|_{\mathcal S(z_0)}$ is positive definite in a neighborhood of~$\{z_i\}_{i\in I_{\min}}$. It therefore holds that
                there exists constants~$C',\varepsilon_0>0$ with~$\varepsilon_0\leq\varepsilon_0'$ such that for all~$0<\varepsilon<\varepsilon_0$, it holds
                \[\mathcal S(z_0)\setminus \bigcup_{i\in I_{\min}}B(z_i,\varepsilon) \subset \left\{V>\Vstar+C'\varepsilon^2\right\},\]
                and choosing~$M(V,z_0)<C'$ suffices to ensure~\eqref{eq:boundary_energy_estimate}.
            \end{proof}
        \subsection{Construction of the quasimodes on perturbed domains}
        \label{subsec:ek_quasimodes}
        From now on, we assume that assumptions~\eqref{hyp:one_minimum},~\eqref{hyp:energy_well_bounded}~\eqref{hyp:energy_well} and ~\eqref{hyp:bound_delta} hold.

        In the literature dealing with semiclassical asymptotics in the presence of a Dirichlet boundary~(see e.g.~\cite{HN06,LPN21}), a common tool is to define quasimodes is to express them in a dedicated set of local coordinates around each (generalized) critical point of interest, which is adapted to the local quadratic behavior of~$V$ and in which the geometry of the boundary becomes locally linear. This allows to perform the analysis in a simpler geometric setting.
        However, the flexibility regarding the specific geometry of the boundary afforded by Assumption~\eqref{hyp:locally_flat} makes the definition of such a set of local coordinates rather difficult. Instead, we base our argument on the following comparison principle for Dirichlet eigenvalues, or so-called domain monotonicity, which is well-known in the case of the Laplacian~$V=0$. 
        \begin{proposition}
        \label{prop:comparison_principle}
        Let~$A \subset B$ be bounded open subsets of~$\R^d$,~$\beta>0$ and~$k\in\N^*$.
        Then
        \[\lambda_{k,\beta}(B)\leq\lambda_{k,\beta}(A),\]
        where we recall~$\lambda_{k,\beta}(A)$ is the~$k$-th eigenvalue of~$-\cL_\beta$ with Dirichlet boundary conditions on~$\partial A$.
    \end{proposition}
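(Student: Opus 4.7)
The plan is to deduce this standard domain monotonicity statement from the Courant--Fischer min-max variational characterization of the Dirichlet eigenvalues, combined with the observation that extension by zero is a form-preserving embedding $H^1_{0,\beta}(A) \hookrightarrow H^1_{0,\beta}(B)$.

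First I would record the min-max representation available from the compact resolvent and self-adjointness of~$-\cL_\beta$ discussed in Section~\ref{subsec:notation}:
\[
\lambda_{k,\beta}(A) = \inf_{\substack{F\subset H^1_{0,\beta}(A)\\ \dim F = k}}\ \sup_{u\in F\setminus\{0\}}\frac{\langle -\cL_\beta u, u\rangle_{\Lmu(A)}}{\|u\|_{\Lmu(A)}^2} = \inf_{\substack{F\subset H^1_{0,\beta}(A)\\ \dim F = k}}\ \sup_{u\in F\setminus\{0\}}\frac{\beta^{-1}\int_A|\nabla u|^2\e^{-\beta V}}{\int_A u^2\e^{-\beta V}},
\]
and similarly for~$B$, using the expression~\eqref{eq:quadratic_form_generator} of the quadratic form.

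Next I would introduce the extension-by-zero map. For~$u\in \testfuncs(A)$, define~$\widetilde{u}\in \testfuncs(B)$ by extending~$u$ by zero on~$B\setminus A$; this is legitimate since~$A\subset B$ are open and~$\supp\,u\Subset A$. One checks that the map~$u\mapsto \widetilde u$ extends by density to an isometric linear map~$E\colon H^1_{0,\beta}(A)\to H^1_{0,\beta}(B)$ (recall that these spaces were defined in~\eqref{eq:sobolev_spaces} as the closures of~$\testfuncs$ in the relevant norms). Moreover, $E$ preserves both the $\Lmu$-norm and the Dirichlet form:
\[
\|\widetilde u\|_{\Lmu(B)}^2 = \|u\|_{\Lmu(A)}^2,\qquad \int_B|\nabla \widetilde u|^2\e^{-\beta V} = \int_A|\nabla u|^2\e^{-\beta V},
\]
since the integrands vanish outside~$A$.

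Finally, given any $k$-dimensional subspace~$F\subset H^1_{0,\beta}(A)$, the image~$E(F)\subset H^1_{0,\beta}(B)$ is also $k$-dimensional (as~$E$ is injective), and the two isometries above imply that the Rayleigh quotient is identical on corresponding vectors. Hence
\[
\lambda_{k,\beta}(B)\leq \sup_{v\in E(F)\setminus\{0\}}\frac{\beta^{-1}\int_B|\nabla v|^2\e^{-\beta V}}{\int_B v^2\e^{-\beta V}} = \sup_{u\in F\setminus\{0\}}\frac{\beta^{-1}\int_A|\nabla u|^2\e^{-\beta V}}{\int_A u^2\e^{-\beta V}}.
\]
Taking the infimum over all such~$F$ on the right yields~$\lambda_{k,\beta}(B)\leq \lambda_{k,\beta}(A)$. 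The argument is essentially routine: the only point that demands any care is the verification that extension by zero lands in~$H^1_{0,\beta}(B)$, but this follows immediately from the density definition of~$H^1_{0,\beta}$ together with the fact that test functions on~$A$ extend canonically to test functions on~$B$ with the same weighted $H^1$-norm.
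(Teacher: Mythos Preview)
Your proof is correct and follows essentially the same approach as the paper: the paper's argument is simply the Min-Max Courant--Fischer principle together with the inclusion of form domains $H_{0,\beta}^1(A)\subset H_{0,\beta}^1(B)$. You have spelled out in more detail the extension-by-zero map underlying this inclusion, but the strategy is identical.
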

    \begin{proof}
        The fact that~$X\in\{A,B\}$ is bounded and open ensures that the Dirichlet realization of~$-\cL_\beta$ on~$X$ has pure point spectrum tending to~$+\infty$, due to the compact injections~$H_0^1(X)\hookrightarrow L^2(X)$, and~$\lambda_{k,\beta}(X)$ is therefore well-defined.
        The inequality follows immediately from the Min-Max Courant--Fischer principle and the inclusion of form domains:
        \[H_{0,\beta}^1(A)\subset H_{0,\beta}^1(B).\]        
    \end{proof}
    
    Our approach relies on the construction, for sufficiently large~$\beta$, of two modified domains~$\Omega_\beta^{\pm}$ such that
    \begin{equation}
        \label{eq:domain_sandwich}
        \Omega_\beta^- \subseteq \Omega_\beta \subseteq \Omega_\beta^+,
    \end{equation}
    and whose boundaries are flat in the neighborhood of low-lying separating saddle points close to the boundary. These domains are defined using Proposition~\ref{prop:domain_extension} with~$\rho(\beta)=2\smallRadius(\beta)$, and their boundaries are shaped like hyperplanes in the neighborhood of each~$z_i$ such that~$\epsLimit{i}<+\infty$, as made precise by Equation~\eqref{eq:inclusion_perturbed_domain}.

    \begin{remark}
    \label{rem:energy_well_still_holds}
    Since~\eqref{hyp:energy_well} holds for~$\Omega_\beta$, it also holds for~$\Omega_\beta^{\pm}$. For~$\Omega_\beta^{+}$, this is immediate since~$\Omega_\beta\subset\Omega_\beta^{+}$. For~$\Omega_\beta^{-}$, the proof of Proposition~\ref{prop:domain_extension} shows that one can take~$\Omega_\beta^-$ to be a~$\smallo(\largeRadius(\beta)^2)$-perturbation of~$\Omega_\beta$ outside of~$\bigcup_{i\in I_{\min}} B\left(z_i,\largeRadius(\beta)\right)$. Since~$V$ is uniformly Lipschitz on~$\mathcal K$, this implies that~\eqref{hyp:energy_well} still holds, possibly with a smaller constant~$C_V$.

    For similar reasons, choosing~$\Omega_\beta^-$ to be an~$\smallo(\delta(\beta))$-perturbation of~$\Omega_\beta$ near critical points~$z_i$ such that~$\epsLimit{i}=+\infty$, we may assume that~$B(z_i,\largeRadius(\beta))\subset\Omega_\beta^-$ for~$\beta$ sufficiently large, after possibly reducing~$\delta(\beta)$ by a constant factor.
    We will henceforth assume that both these properties hold for~$\Omega_\beta^-$.
    \end{remark}

    Upon replacing~$\largeRadius(\beta)$ by~$c\largeRadius(\beta)$ for some~$c\leq\frac12$ in Assumption~\eqref{hyp:locally_flat} (which is allowed according to~\eqref{hyp:bound_delta} and Proposition~\ref{prop:suff_condition_delta}), we henceforth assume that, for~$\beta$ sufficiently large,
    \begin{equation}
        \label{eq:perturbed_domain_shape}
        \forall\,i\in I_{\min},\qquad\Omega_\beta^\pm\cap B(z_i,\largeRadius(\beta)) = \left[z_i + \halfSpace{i}\left(\frac{\epsLimit{i}}{\sqrt\beta}\pm 2\gamma(\beta)\right)\right]\cap B(z_i,\largeRadius(\beta)),
    \end{equation}
    which will somewhat simplify the presentation.

    The proof of Theorem~\ref{thm:eyring_kramers} relies on the construction, inspired by~\cite{BGK05,LPM20,LPN21}, of approximate eigenmodes~$\psi_\beta^{\pm}$ for the Dirichlet realization of~$-\cL_\beta$ on each of the domains~$\Omega_\beta^\pm$, which will be sufficiently precise to provide an asymptotic equivalent for~$\lambda_{1,\beta}\left(\Omega_\beta^{\pm}\right)$. The comparison principle of Proposition~\ref{prop:comparison_principle} will finally yield the conclusion of Theorem~\ref{thm:eyring_kramers}. 
    More precisely, solutions to an elliptic PDE, obtained by linearizing~$\cL_\beta$ in the neighborhood of each of the~$\{z_i,\,i\in I_{\min}\}$, are used to define these quasimodes locally, which are then rather crudely extended to define elements of the operator domains~$H_{0,\beta}^1(\Omega_\beta^{\pm})\cap H_\beta^2(\Omega_\beta^\pm)$.
    Using Theorem~\ref{thm:harm_approx} to ensure that a spectral gap exists, one can then use a resolvent estimate (Lemma~\ref{lemma:resolvent_estimate} below) to control the error incurred by projecting the quasimodes onto the subspace spanned by the first eigenmode of the Dirichlet realization of~$-\cL_\beta$ on~$\Omega_\beta^{\pm}$. This allows to derive the Eyring--Kramers formula in a straightforward way.
    
            Our quasimodes are defined by the following convex combinations
            \begin{equation}
                \label{eq:global_quasimode}
                \psi^{\pm}_\beta = \frac1{Z^{\pm}_\beta}\left[\eta_\beta+\sum_{i\in I_{\min}}\chi_\beta^{(i)}\left(\varphi_\beta^{(i),\pm}- \eta_\beta\right)\right],
            \end{equation}
            where the~$\chi_\beta$ are defined in~\eqref{eq:cutoff}, and where~$Z^{\pm}_\beta$ are normalizing constants imposing
            $$\int_{\Omega^\pm_\beta}{\psi_\beta^\pm}^2 \e^{-\beta V}=1.$$
            This definition uses auxiliary functions~$\eta_\beta$ and~$\left(\varphi_\beta^{(i),\pm}\right)_{i\in I_{\min}}$.
            
            The function~$\eta_\beta$ is a crude cutoff function used to localize the analysis in some small neighborhood of the energy basin separating the minimum~$z_0$ from the low energy saddle points~$\{z_i,\,i\in I_{\min}\}$.
            More precisely, we define the energy cutoff~$\eta_\beta$ by
            \begin{equation}
                \label{eq:energy_cutoff}
                \eta_\beta(x) = \eta\left(\frac{V(x)-\Vstar}{\energyCutoffConst \largeRadius(\beta)^2}\right)\1_{\overline{\basin{z_0}}}(x),
            \end{equation}
            where~$\eta\in\testfuncs(\R)$ is a model cutoff function chosen such that
            \begin{equation}
            \label{eq:eta_cutoff_def}
            \1_{(-\infty,\frac12)}\leq \eta\leq \1_{(-\infty,1)},
            \end{equation}
            and~$\energyCutoffConst>0$ is a constant we make precise in Proposition~\ref{prop:qm_energy_estimates} below.
            This rough construction is, in the neighborhood of low-energy saddle points, replaced by a finer local approximation, which one can (roughly) view as the solution to a linearized Dirichlet problem.

            The functions~$\varphi_\beta^{(i),\pm}$ are defined, for~$i\in I_{\min}$, by
            \begin{equation}
                \label{eq:local_quasimode}
                \varphi_\beta^{(i),\pm}(x) = \frac{\displaystyle\int_{y_1^{(i)}(x)}^{\frac{\epsLimit{i}}{\sqrt\beta}\pm 2\smallRadius(\beta)}\e^{-\beta \frac{|\hessEigval{i}{1}|}{2}t^2} \xi_\beta(t)\,\d t}{\displaystyle\int_{-\infty}^{\frac{\epsLimit{i}}{\sqrt\beta}\pm2\smallRadius(\beta)} \e^{-\beta\frac{|\hessEigval{i}{1}|}{2}t^2} \xi_\beta(t)\,\d t}
            \end{equation}
            where~$\xi_\beta$ is again a~$\testfuncs(\R)$ cutoff function used to localize the support of~$\nabla\varphi_\beta^{(i)}$.
            It is convenient for the analysis to work with a~$\varphi_\beta^{(i),\pm}$ whose gradient is localized around~$z_i$. We thus take~$\xi_\beta\in\testfuncs(\R)$ to be an even, smooth cutoff function satisfying
            \begin{equation}
                \label{eq:local_quasimode_cutoff_support}
                \1_{\left(-\frac{\gaussianCutoffConst}2\largeRadius(\beta),\frac{\gaussianCutoffConst}2\largeRadius(\beta)\right)}\leq \xi_\beta\leq \1_{\left(-\gaussianCutoffConst\largeRadius(\beta),\gaussianCutoffConst\largeRadius(\beta)\right)}
            \end{equation}
            where~$\gaussianCutoffConst>0$ is a constant whose value we make precise in Proposition~\ref{prop:qm_energy_estimates} below. 

            \begin{remark}
            \label{rem:committor}
            The functions~$\varphi_\beta^{(i),\pm}$ have a simple probabilistic interpretation. Observe that, upon formally taking~$\xi_\beta = \1_{[-\delta,+\infty)}$ for some small constant~$\delta(\beta)=\delta>0$, and~$\smallRadius(\beta)=0$ in the definition~\eqref{eq:local_quasimode} (in which case we denote~$\varphi_\beta^{(i)}=\varphi_\beta^{(i),\pm}$), an easy computation gives
            \[\left\{\begin{aligned}\left((x-z_i)^\intercal\nabla^2 V(z_i)\nabla - \frac1\beta \Delta\right)\varphi_\beta^{(i)} &= 0\text{ on }\halfSpace{i}\left(\frac{\epsLimit{i}}{\sqrt\beta}\right),\\
            \varphi_\beta^{(i)} &= 0\text{ on }\partial\halfSpace{i}\left(\frac{\epsLimit{i}}{\sqrt\beta}\right),\\
        \varphi_\beta^{(i)} &= 1\text{ on } \partial\halfSpace{i}\left(-\delta\right).\end{aligned}\right.\]
            The solution~$\varphi_\beta^{(i)}$ can be expressed as the so-called committor function (or equilibrium potential, see~\cite{BEGK04})
            $$
            \varphi_\beta^{(i)}(x)=\P_x\left(\tau^{(i)}_{\partial \halfSpace{i}\left(-\delta\right)}<\tau^{(i)}_{\partial\halfSpace{i}\left(\frac{\epsLimit{i}}{\sqrt\beta}\right)}\right),
            $$
            where~$\tau^{(i)}_{A}$ denotes the hitting time of the set~$A\subset \R^d$ for the linearized dynamics around~$z_i$:
            $$
                \d X_t^{\beta,(i)} = -\nabla^2 V(z_i)\left(X_t^{\beta,(i)}-z_i\right)\,\d t + \sqrt{\frac{2}{\beta}}\,\d W_t.
            $$
            The functions~$\varphi_\beta^{(i),\pm}$ should therefore be understood as approximating (locally around~$z_i$) the probability of reaching the interior of the energy well before exiting the domain.
            \end{remark}

            \begin{proposition} 
                \label{prop:qm_energy_estimates}
                There exist a bounded open set~$\mathcal U_0\subset\R^d$, and positive constants~$\beta_0,C_\xi,C_\eta>0$, such that for all~$\beta>\beta_0$,
                the functions~$\psi_\beta^\pm$ defined in~\eqref{eq:global_quasimode} and~$\eta_\beta$ defined in~\eqref{eq:energy_cutoff} satisfy the following conditions:
                \begin{enumerate}[]
                    \item{\begin{equation}\label{eq:quasimode_support}
                        \supp\,\psi_\beta^\pm \subset \overline{\mathcal U_0} \text{ and $z_0$ is the unique minimum of~$V$ in~$\overline{\mathcal U_0}$},
                    \end{equation}}
                    \item{
                        \begin{equation}
                        \label{eq:quasimode_smooth}
                        \psi_\beta^\pm \in\mathcal{C}_{\mathrm{c}}^\infty\left(\R^d\right),
                        \end{equation}
                    }
                    \item{
                        \begin{equation}
                            \label{eq:quasimode_boundary_condition}
                            \psi_\beta^{\pm} \in H_{0,\beta}^1(\Omega_\beta^\pm)\cap H_\beta^2(\Omega_\beta^\pm),
                        \end{equation}
                    }
                    \item{\begin{equation}\label{eq:energy_cutoff_support}
                        \forall\,i\in I_{\min},\quad\supp\,\eta_\beta \cap \left[B(z_i,\largeRadius(\beta))\setminus B\left(z_i,\frac12\largeRadius(\beta)\right)\right] \subset \{y_1^{(i)}<-\gaussianCutoffConst \largeRadius(\beta)\},
                    \end{equation}
                    }
                    \item{\begin{equation}\label{eq:quasimode_high_energy}
                        \supp\,\nabla \psi_\beta^\pm \setminus \bigcup_{i\in I_{\min}} B\left(z_i,\largeRadius(\beta)\right) \subset \left\{ V \geq \Vstar + \frac{\energyCutoffConst}2\largeRadius(\beta)^2\right\}\cap \overline{\basin{z_0}},
                    \end{equation}}
                    \item{\begin{equation}\label{eq:quasimode_locally_fine}
                        \psi_\beta^\pm \equiv \frac1{Z_\beta^\pm}\varphi_\beta^{(i),\pm}\text{ on }B\left(z_i,\frac12\largeRadius(\beta)\right),
                    \end{equation}}

                    \item{\begin{equation}\label{eq:quasimode_flat}
                        \psi_\beta^\pm \equiv \frac1{Z_\beta^\pm}\text{ on }\left[\Omega_\beta^\pm \setminus \bigcup_{i\in I_{\min}} B\left(z_i,\largeRadius(\beta)\right)\right] \cap \left\{V < \Vstar + \frac{\energyCutoffConst}2\largeRadius(\beta)^2\right\} \cap \overline{\basin{z_0}}.
                    \end{equation}}
                \end{enumerate}
            \end{proposition}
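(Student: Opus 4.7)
The plan is to fix the free constants $\gaussianCutoffConst$ and $\energyCutoffConst$ appearing in~\eqref{eq:local_quasimode_cutoff_support} and~\eqref{eq:energy_cutoff} once and for all, and then to verify the seven conclusions in an order in which each feeds into the next. I would take $\gaussianCutoffConst$ to be the constant of Lemma~\ref{lemma:taylor_saddle}, and choose $\energyCutoffConst \in (0,\min\{M(V,z_0), C_V\})$ where $M(V,z_0)$ is the constant of Lemma~\ref{lemma:taylor_saddle} and $C_V$ is the one from~\eqref{hyp:energy_well}. In view of~\eqref{hyp:bound_delta} and Proposition~\ref{prop:suff_condition_delta}, I may also reduce $\largeRadius(\beta)$ by any constant factor without loss of generality, so that all local Taylor estimates from Lemma~\ref{lemma:taylor_saddle} apply on $B(z_i,\largeRadius(\beta))$ for $\beta$ large.

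The pivotal step is~\eqref{eq:energy_cutoff_support}, which controls the support of $\eta_\beta$ in the corona around each $z_i \in I_{\min}$. By~\eqref{eq:corona_high_energy_strip}, the strip $\{|y_1^{(i)}| < \gaussianCutoffConst\largeRadius(\beta)\}$ restricted to this corona lies in $\{V > \Vstar + M(V,z_0)\largeRadius(\beta)^2\} \subset \{V > \Vstar + \energyCutoffConst\largeRadius(\beta)^2\}$, where $\eta_\beta$ vanishes by definition. Applying Lemma~\ref{lemma:stable_manifold} with $T = \gaussianCutoffConst$ and $\varepsilon = \largeRadius(\beta)$ to rule out the positive-$y_1^{(i)}$ side (using the indicator $\mathbbm{1}_{\overline{\basin{z_0}}}$ in~\eqref{eq:energy_cutoff}) then yields~\eqref{eq:energy_cutoff_support}.

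With this in hand, properties~\eqref{eq:quasimode_locally_fine} and~\eqref{eq:quasimode_flat} follow by inspection of~\eqref{eq:global_quasimode}: the former because $\chi_\beta^{(i)} \equiv 1$ on $B(z_i,\largeRadius(\beta)/2)$, the latter because the $\chi_\beta^{(i)}$ all vanish outside $\bigcup_i B(z_i,\largeRadius(\beta))$ and $\eta_\beta \equiv 1$ on $\{V < \Vstar + \energyCutoffConst\largeRadius(\beta)^2/2\} \cap \overline{\basin{z_0}}$. Property~\eqref{eq:quasimode_high_energy} is an immediate corollary, since off the $B(z_i,\largeRadius(\beta))$'s one has $\nabla\psi_\beta^\pm = \nabla\eta_\beta/Z_\beta^\pm$, supported where $V \in [\Vstar + \energyCutoffConst\largeRadius^2/2,\,\Vstar + \energyCutoffConst\largeRadius^2]$ inside $\overline{\basin{z_0}}$. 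Property~\eqref{eq:quasimode_support} follows from boundedness of $\supp \eta_\beta \cup \bigcup_i \supp \chi_\beta^{(i)}$ by~\eqref{hyp:energy_well_bounded} and~\eqref{hyp:uniformly_bounded}, with $\mathcal U_0$ chosen small enough around this support so that $z_0$ is the only minimum of $V$ in it --- possible since the critical points of $V$ are isolated.

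The remaining two properties are the most delicate. For smoothness~\eqref{eq:quasimode_smooth}, the only worry is the indicator in~\eqref{eq:energy_cutoff}; I would check that $\eta_\beta$ vanishes in an open neighborhood of $\partial\overline{\basin{z_0}}$. Inside $B(z_i,\largeRadius/2)$, the combination collapses to the smooth $\varphi_\beta^{(i),\pm}$ so the issue is moot; within the corona, \eqref{eq:energy_cutoff_support} combined with the first inclusion of Lemma~\ref{lemma:stable_manifold} keeps $\supp \eta_\beta$ safely inside $\basin{z_0}$; away from all the balls, \eqref{eq:boundary_energy_estimate} places $\partial\basin{z_0}$ inside $\{V > \Vstar + M(V,z_0)\largeRadius^2\}$ where $\eta_\beta$ vanishes. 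For the Dirichlet condition~\eqref{eq:quasimode_boundary_condition}, I would decompose $\partial\Omega_\beta^\pm$ into its intersections with the balls $B(z_i,\largeRadius/2)$, the coronas, and their complement. Inside $B(z_i,\largeRadius/2)$, \eqref{eq:perturbed_domain_shape} identifies the boundary with the hyperplane $\{y_1^{(i)} = \epsLimit{i}/\sqrt\beta \pm 2\smallRadius(\beta)\}$, on which $\varphi_\beta^{(i),\pm}$ vanishes by construction~\eqref{eq:local_quasimode}. In the corona, both $\varphi_\beta^{(i),\pm}$ and $\eta_\beta$ vanish on this hyperplane --- the latter because~\eqref{eq:energy_cutoff_support} forces $\supp \eta_\beta$ into $\{y_1^{(i)} < -\gaussianCutoffConst\largeRadius(\beta)\}$, which by~\eqref{eq:scaling_inequalities} lies strictly below the hyperplane asymptotically. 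Outside all the balls, $\psi_\beta^\pm = \eta_\beta/Z_\beta^\pm$, and $\eta_\beta$ vanishes on $\partial\Omega_\beta^\pm$ by applying~\eqref{hyp:energy_well} (transferred to $\Omega_\beta^\pm$ via Remark~\ref{rem:energy_well_still_holds}) together with $\energyCutoffConst < C_V$. The main obstacle is not any single step but the coordination of the choices of $\gaussianCutoffConst$, $\energyCutoffConst$, the reduction of $\largeRadius$, and the geometry of $\Omega_\beta^\pm$ (themselves constructed via Proposition~\ref{prop:domain_extension}) so that all seven properties hold simultaneously; the delicate point is really the boundary analysis above, which requires that the domain modification in Proposition~\ref{prop:domain_extension} be of size $\smallo(\largeRadius(\beta)^2)$ away from the low-energy saddles so that~\eqref{hyp:energy_well} survives for $\Omega_\beta^\pm$.
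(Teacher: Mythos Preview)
Your proposal is correct and follows essentially the same approach as the paper: the same choice of $C_\xi$ from Lemma~\ref{lemma:taylor_saddle}, $C_\eta$ bounded by $\min\{M(V,z_0),C_V\}$, the same reliance on~\eqref{eq:corona_high_energy_strip} and Lemma~\ref{lemma:stable_manifold} for~\eqref{eq:energy_cutoff_support}, on~\eqref{eq:boundary_energy_estimate} for the smoothness across $\mathcal S(z_0)$, and the same region-by-region boundary analysis for~\eqref{eq:quasimode_boundary_condition}. The only cosmetic difference is that the paper also imposes $C_\eta < C(V,z_0)$ (the constant from Proposition~\ref{prop:suff_condition_delta}) so that the reduction of $\largeRadius(\beta)$ preserves~\eqref{hyp:energy_well} with a uniform constant; you should make this explicit when you invoke that proposition.
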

            \begin{proof}

                We take~$C_\xi$ as in Lemma~\ref{lemma:taylor_saddle}, and~$C_\eta < \min\{C_V,C(V,z_0),M(V,z_0)\}$, where~$C_V$ is given in~\eqref{hyp:energy_well},~$C(V,z_0)$ is given in~Proposition~\ref{prop:suff_condition_delta} and~$M(V,z_0)$ is given in Lemma~\ref{lemma:taylor_saddle}.
                In the following proof, we reduce the value of~$\delta(\beta)$ several times by invoking~\eqref{hyp:bound_delta} and Proposition~\ref{prop:suff_condition_delta}.

                The properties~\eqref{eq:quasimode_high_energy},~\eqref{eq:quasimode_locally_fine} and~\eqref{eq:quasimode_flat} are immediate consequences of the definitions~\eqref{eq:global_quasimode},~\eqref{eq:energy_cutoff}, and are verified by construction regardless of the value of~$\largeRadius(\beta)$.
                
                The property~\eqref{eq:energy_cutoff_support} follows from~\eqref{eq:energy_cutoff},~\eqref{eq:basin_local_geometry} and~\eqref{eq:corona_high_energy_strip}, once one imposes~$\largeRadius(\beta)$ to be smaller than~$\varepsilon_0(C_\xi)$ obtained in Lemma~\ref{lemma:stable_manifold} and than~$\varepsilon_0$ from Lemma~\ref{lemma:taylor_saddle}, at least for~$\beta$ sufficiently large.

                Let us now define~$\mathcal U_0$ and prove~\eqref{eq:quasimode_support}. For any local minimum~$m\in\R^d$ of~$V$, standard arguments (see~\cite[Chapter 8]{T24}) show that the basin~$\basin{z_0}$ is open and its boundary contains no local minimum of~$V$.
                Furthermore, for~$\varepsilon<2\varepsilon_0(V,z_0)$, $\overline{B(z_i,\varepsilon)}$ contains no local minimum of~$V$ for any~$i\in I_{\min}$. Let us then define
                \[\mathcal U_0 = \left[\right(\overline{\basin{z_0}}\setminus \mathcal S(z_0)\left)\cap \left\{V<\Vstar + \varepsilon \right\}\right] \cup\bigcup_{i\in I_{\min}} B(z_i,\varepsilon).\]
                Note that, according to~\eqref{hyp:uniformly_bounded} and~\eqref{hyp:energy_well}, there exist~$\delta_0,\varepsilon_0>0$ such that
                $$
                \basin{z_0}\cap\{V<\Vstar+\delta_0\}\subset \mathcal K \cup \bigcup_{i\in I_{\min}} B(z_i,\varepsilon_0),
                $$
                where the set on the right-hand side of the inclusion is bounded by~\eqref{hyp:energy_well_bounded} which implies that~$I_{\min}$ is finite. For~$0<\varepsilon\leq \delta_0$,
                $$
                \basin{z_0}\cap\{V<\Vstar+\varepsilon\}\subset  \basin{z_0}\cap\{V<\Vstar+\delta_0\}
                $$
                is bounded, so that~$\mathcal U_0$ is also bounded for a choice of~$\varepsilon$ sufficiently small. We may also choose~$\varepsilon$ sufficiently small so that $z_0$ is the unique minimum of~$V$ in~$\overline{\mathcal U}_0$.
                
                Let us check that~$\supp\,\psi_\beta^\pm \subset \overline{\mathcal U_0}$.
                For~$i\in I_{\min}$,~$\psi_\beta^\pm$ writes
                \[\psi_\beta^\pm = \left\{\begin{aligned}&\localCutoff{i}\varphi_\beta^{(i),\pm}\equiv\varphi_\beta^{(i)}\text{ on }B\left(z_i,\frac12\largeRadius(\beta)\right),\\&(1-\localCutoff{i})\eta_\beta + \localCutoff{i}\varphi_\beta^{(i),\pm}\text{ on }B\left(z_i,\largeRadius(\beta)\right)\setminus B\left(z_i,\frac12\largeRadius(\beta)\right),\\ &\eta_\beta\text{ on }\R^d \setminus \bigcup_{i\in I_{\min}} B\left(z_i,\largeRadius(\beta)\right),\end{aligned}\right.\]
                using, \eqref{eq:reference_cutoff},~\eqref{eq:cutoff_supports} and the definition~\eqref{eq:global_quasimode}.
                
                Therefore, it is clear that, for~$C_\eta\largeRadius(\beta)^2,\largeRadius(\beta) < \varepsilon$, it is the case that~$\supp\,\psi_\beta^\pm \subset \overline {\mathcal U_0}$:
                the first condition ensures~$\supp\,\eta_\beta\subset \overline{\mathcal U_0}$, while the second ensures~$\supp\,\sum_{i\in I_{\min}}\chi_\beta^{(i)}\subset \overline{\mathcal U_0}$.
                Reducing~$\largeRadius(\beta)$ once again to satisfy these constraints, using~\eqref{hyp:bound_delta} and Proposition~\ref{prop:suff_condition_delta},~\eqref{eq:quasimode_support} follows.

                Let us show~\eqref{eq:quasimode_smooth}.
                From~\eqref{eq:quasimode_support},~$\psi_\beta^\pm$ is compactly supported, and from the definition~\eqref{eq:global_quasimode} and the smoothness, for each~$i\in I_{\min}$, of~$\varphi_\beta^{(i),\pm}\chi_\beta^{(i)}$ on $B(z_i,\largeRadius(\beta))$, and since~$\psi_\beta^\pm$ coincides with~$\varphi_\beta^{(i),\pm}$ on~$B\left(z_i,\frac12\largeRadius(\beta)\right)$, it is sufficient to check that~$\eta_\beta$ is smooth on~$\R^d\setminus \bigcup_{i\in I_{\min}} B\left(z_i,\frac12\largeRadius(\beta)\right)$.
                In turn, it is sufficient to show, from the definition~\eqref{eq:energy_cutoff}, that~$\mathcal S(z_0)\cap\{V<\Vstar+\energyCutoffConst\largeRadius(\beta)^2\}$ is contained in~$\bigcup_{i\in I_{\min}} B\left(z_i,\frac12\largeRadius(\beta)\right)$ for~$\beta$ sufficiently large, where we use~\eqref{eq:s_alt_rep}. This in turn follows immediately from the estimate~\eqref{eq:boundary_energy_estimate}, and from the choice~$C_\eta<M(V,z_0)$, provided~$\largeRadius(\beta)/2<\varepsilon_0$ from Lemma~\ref{lemma:taylor_saddle}.
                
                We finally show~\eqref{eq:quasimode_boundary_condition}. From~\eqref{eq:quasimode_smooth}, it is clear that~$\psi_\beta^\pm \in H^2_\beta(\Omega_\beta^\pm) \cap H^1_\beta(\Omega_\beta^\pm)$. Thus it only remains to show~$\psi_\beta^\pm \in H_{0,\beta}^1(\Omega_\beta)$, for which we in fact show that~$\psi_\beta^{\pm}|_{\partial \Omega_\beta^\pm}\equiv 0$ holds. Again, we decompose the argument.
                On~$\partial\Omega_\beta^{\pm}\setminus \left[\overline{\basin{z_0}}\bigcup_{i\in I_{\min}} B(z_i,\largeRadius(\beta))\right]$, it holds that~$\psi_\beta^\pm\equiv \eta_\beta \equiv 0$, and on~$\left[\partial\Omega_\beta^\pm \setminus \bigcup_{i\in I_{\min}}B(z_i,\largeRadius(\beta)) \right]\cap \overline{\basin{z_0}}$, it holds that~$\psi_\beta^\pm\equiv \eta_\beta\equiv 0$ from the inclusion~\eqref{hyp:energy_well} and the choice~$C_\eta<C_V$.
                Now let~$i\in I_{\min}$. If~$\epsLimit{i}=+\infty$, then~$B(z_i,\largeRadius(\beta)) \subset \Omega_\beta \subset\Omega_\beta^+$ by~\eqref{hyp:locally_flat}. By slightly reducing~$\largeRadius(\beta)$ if necessary, it is possible to ensure that~$B(z_i,\largeRadius(\beta))\subset\Omega_\beta^-$ (see Remark~\ref{rem:energy_well_still_holds}). Thus, in this case,~$\partial\Omega_\beta^\pm \cap B(z_i,\largeRadius(\beta)) = \varnothing$.
                We now assume~$\epsLimit{i}<+\infty$. From~\eqref{eq:perturbed_domain_shape} and~\eqref{eq:local_quasimode}, the boundary condition~$\psi_\beta^{\pm}|_{\partial \Omega_\beta^\pm}\equiv 0$ is satisfied on~$\partial\Omega_\beta^{\pm}\cap B(z_i,\frac12\largeRadius(\beta))$, since on this set,~$\psi_\beta^\pm \equiv \varphi_\beta^{(i),\pm}$.
                On~$\partial\Omega_\beta^{\pm}\cap \left[B(z_i,\largeRadius(\beta))\setminus B\left(z_i,\frac12\largeRadius(\beta)\right)\right]$, this finally follows, for~$\beta$ sufficiently large, from the energy estimate~\eqref{eq:corona_high_energy_strip}, the choices~$\largeRadius(\beta)<\varepsilon$,~$C_\eta<M(V,z_0)$ and the fact that, on this set~$\left|y_1^{(i)}\right| = \left|\epsLimit{i}/\sqrt\beta \pm 2\smallRadius(\beta)\right| \ll C_\xi\largeRadius(\beta)$.
                Since~$\Omega_\beta^\pm$ are regular domains from Proposition~\ref{prop:domain_extension},~$\psi_\beta^\pm \in H_{0}^1(\Omega_\beta^\pm) = H_{0,\beta}^1(\Omega_\beta^\pm)$ by the trace theorem. The latter equality follows from the smoothness of~$V$ and the boundedness of~$\Omega_\beta^\pm$. This concludes the proof of~\eqref{eq:quasimode_boundary_condition}.
                \end{proof}
            \begin{figure}
                \center
                \begin{tikzpicture}
                    \definecolor{lblue}{rgb}{0.8,0.8,1.0};
                    \definecolor{lred}{rgb}{1.0,0.6,0.6};
                    \definecolor{dgreen}{rgb}{0.0,0.5,0.0};
                    \node (background) at (0,-0.16) {\includegraphics[width=\linewidth]{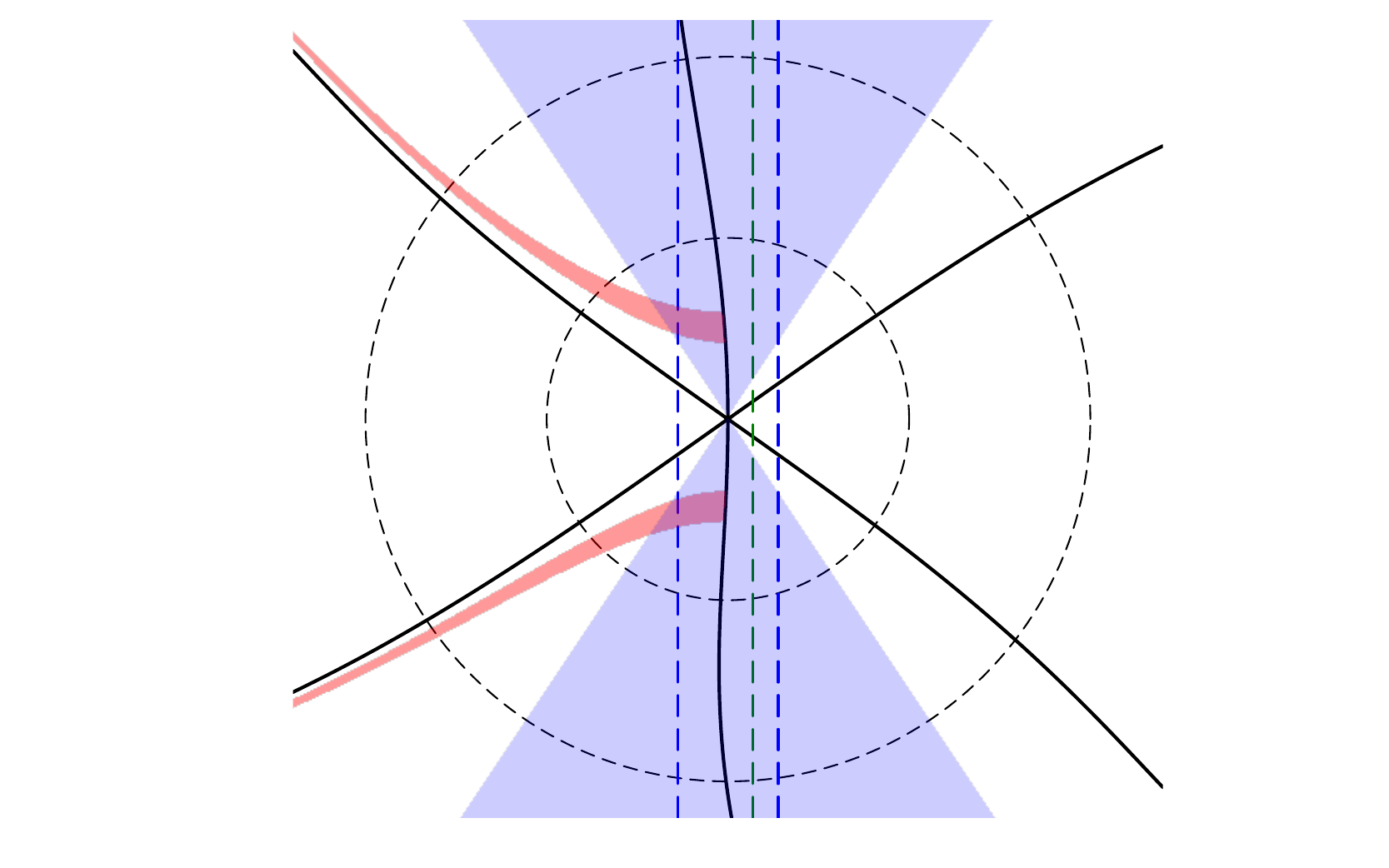}};
                    \draw[thick,->] (-6,0)--(5,0);
                    \draw (5,0) node[above] {$y_1^{(i)}$};

                    \draw[lred,fill=lred](-9,3.66) rectangle (-8.5,3.33);
                    \draw[lblue,fill=lblue] (-9,3.16) rectangle (-8.5,2.83);
                    
                    \draw (-8.5,3.5) node[black,right] {$\{\eta_\beta \neq 0,1\}$};
                    \draw (-8.5,3) node[black,right] {$\{Q_-(y^{(i)})>0\}$};
                    \draw[dgreen,dashed,very thick] (-9,2.5)--(-8.5,2.5) node[black,right] {$\{y_1^{(i)} = \epsLimit{i}/\sqrt\beta + 2\smallRadius(\beta)\}$};
                    \draw[blue,dashed,very thick] (-9,2)--(-8.5,2) node[black,right] {$\{y_1^{(i)}=\pm C_\xi\largeRadius(\beta)\}$};
                    \draw[black,dashed,very thick] (-9,1.5)--(-8.5,1.5) node[right] {$\partial\,\supp\,\nabla \localCutoff{i}$};
                    \draw (-8.75,1) node {\bf 1} (-8.5,1) node[right] {$\frac12\largeRadius(\beta)$};
                    \draw (-8.75,0.5) node {\bf 2} (-8.5,0.5) node[right] {$\largeRadius(\beta)$};
                    \draw (-8.75,0) node {\bf 3} (-8.5,0) node[right] {$\{V=\Vstar\}$};
                    \draw (-8.75,-0.5) node {\bf 4} (-8.5,-0.5) node[right] {$\mathcal W^+(z_i)$};

                    \draw[black,very thick,shift = {(2.31,0)}](0,-0.07)--(0,0.07) (0,0)+(0.23,0.0) node[below,scale=0.8] {\bf 1};
                    \draw[black,very thick,shift = {(4.25,0)}](0,-0.07)--(0,0.07) (0,0)+(0.23,0.0) node[below,scale=0.8] {\bf 2};
                    \draw (4.5,3.0) node[scale=0.8] {\bf 3};
                    \draw (0.2,3.5) node[scale=0.8] {\bf 4};
                \end{tikzpicture}
                \caption{Construction of the quasimode~\eqref{eq:global_quasimode} in the neighborhood of the low-energy saddle point~$z_i$, depicted in the adapted~$y^{(i)}$ coordinates. Here, we depict the elements entering into the construction of~$\psi_\beta^{+}$, in the case~$\epsLimit{i}<+\infty$. The shaded blue cone corresponds to the positive superlevel set of the quadratic form~$Q_{-}$ from the proof of Proposition~\ref{prop:suff_condition_delta}.}
            \end{figure}

        We assume, for the remainder of this work, and without loss of generality, that~$\largeRadius(\beta)$ is asymptotically sufficiently small for the conclusions of Proposition~\ref{prop:qm_energy_estimates} to hold.
        We now derive the first preliminary result for the proof of the formula~\eqref{eq:eyring_kramers}, which is a variant of the Laplace method for in moving domains.
        
        \subsection{Laplace's method on moving domains}
        \label{subsec:laplace}
        We present in this section a key technical tool, a variant of the Laplace method for exponential integrals in the case where the domain of integration varies with the asymptotic parameter. Moreover, we allow for a minimum of the argument of the exponential lying outside of the domain, but close to the boundary in some scaling made precise in Proposition~\ref{prop:laplace}.
        We recall the symmetric difference of sets, which we denote by
        \[A\triangle B := (A\cup B) \setminus (A\cap B) = (A\setminus B) \cup (B\setminus A).\]
        We show the following result.
        \begin{proposition}
            \label{prop:laplace}
                Consider a family~$(A_\lambda)_{\lambda>0}$ of Borel sets. Assume that there exists~$\mathcal K,A_\infty\in \mathcal B(\R^d)$ with non-empty interiors,~$x_0\in \overset{\circ}{\mathcal K}$, and~$\epsilon >0$ such that the following properties hold:
                \begin{itemize}
                    \item{The set~$\mathcal K$ is compact, and the following inclusion holds: \begin{equation}\label{eq:laplace_l1}\forall\,\lambda >0,\qquad A_\lambda \subseteq \mathcal K. \tag{\bf L1}\end{equation}}
                    \item{The functions \begin{equation}\label{eq:laplace_l2}f \in \mathcal C^{4}(\mathcal K),\qquad g \in \mathcal C^{2}(\mathcal K) \tag{\bf L2}\end{equation} are such that \begin{equation}\label{eq:laplace_l3}\underset{x\in\mathcal K}{\mathrm{Argmin}} f(x) = \{x_0\},\qquad \nabla^2 f(x_0) \geq \epsilon\Id,\qquad g(x_0) \neq 0.\tag{\bf L3}\end{equation}}
                    \item{The domains admit a limit in the semiclassical scaling around~$x_0$: \begin{equation}\label{eq:laplace_l4} \1_{\sqrt\lambda(A_\lambda -x_0)} \xrightarrow[\lambda\to+\infty]{\text{a.e.}} \1_{A_\infty}.\tag{\bf L4}\end{equation}}
                \end{itemize}
            Then, 
            \begin{equation}
                \label{eq:laplace_asymptotics}
                \int_{A_\lambda} \e^{-\lambda f(x)}g(x)\,\d x = \left(\frac{2\pi}{\lambda}\right)^{\frac d2}\e^{-\lambda f(x_0)}\left(\det \nabla^2 f(x_0)\right)^{-\frac12}g(x_0)\P(\mathcal G \in A_\infty)\left(1+\O(\varepsilon(\lambda)) + \O(\lambda^{-\frac r2})\right)
            \end{equation}
            as~$\lambda\to +\infty$, where~$\mathcal G$ is a Gaussian random variable with distribution~$\mathcal N\left(0,\nabla^2 f(x_0)^{-1}\right)$, and the dominant error terms are determined by:
            $$\varepsilon(\lambda) = \P\left(\mathcal G \in \left[\sqrt\lambda\left(A_\lambda-x_0\right)\right] \triangle\, A_\infty\right),$$
            and
            \[\begin{cases}
                r = 2,& \text{if } A_\infty = -A_\infty,\\
                r =1 & \text{otherwise}.
            \end{cases}
            \]
        \end{proposition}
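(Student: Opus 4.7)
The plan is to execute the classical Laplace method after the semiclassical rescaling $y = \sqrt\lambda(x-x_0)$, while carefully tracking the interplay between the moving domain $B_\lambda := \sqrt\lambda(A_\lambda - x_0)$ and its limit $A_\infty$. Writing $\Sigma := \nabla^2 f(x_0)$, which is positive definite by~\eqref{eq:laplace_l3}, the substitution transforms the integral into
\begin{equation}
\int_{A_\lambda} \e^{-\lambda f}g\,\d x = \lambda^{-d/2}\e^{-\lambda f(x_0)}\int_{B_\lambda}\e^{-\frac12 y^\intercal\Sigma y}\,G_\lambda(y)\,\d y,
\end{equation}
where $G_\lambda(y) = g(x_0+y/\sqrt\lambda)\exp(-S_\lambda(y))$ and $S_\lambda(y) = \lambda[f(x_0+y/\sqrt\lambda) - f(x_0) - \frac{1}{2\lambda}y^\intercal\Sigma y]$ collects the cubic and higher Taylor terms of $f$, using $\nabla f(x_0) = 0$ since $x_0 \in \overset{\circ}{\mathcal K}$.

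First I would split the domain of integration into an inner region $|y| < \lambda^{1/6}$ and an outer region. On the outer part, combining~\eqref{eq:laplace_l1}--\eqref{eq:laplace_l3} with the compactness of $\mathcal K$ gives a uniform quadratic lower bound $f(x) - f(x_0) \geq c|x-x_0|^2$, which controls the outer contribution by $\O(\e^{-c\lambda^{1/3}})$, superpolynomially small in $\lambda$. On the inner region, $S_\lambda(y) = \O(\lambda^{-1/2}|y|^3) = \smallo(1)$ uniformly, so $\exp(-S_\lambda)$ and $g(x_0 + y/\sqrt\lambda)$ admit Taylor expansions with explicit remainders (valid since $f \in \mathcal C^4$ and $g \in \mathcal C^2$). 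The leading contribution
\[
g(x_0)\int_{B_\lambda}\e^{-\frac12 y^\intercal\Sigma y}\,\d y = g(x_0)\,(2\pi)^{d/2}(\det\Sigma)^{-1/2}\,\P(\mathcal G \in B_\lambda)
\]
is converted into the desired Gaussian integral over $A_\infty$ via the elementary bound $|\P(\mathcal G \in B_\lambda) - \P(\mathcal G \in A_\infty)| \leq \varepsilon(\lambda)$, which is precisely the quantitative form of~\eqref{eq:laplace_l4}. Gaussian tails handle the inner-region truncation.

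The main obstacle, and the delicate part, is establishing the correct exponent $r \in \{1,2\}$ in the remainder. The order $\lambda^{-1/2}$ contribution to $G_\lambda(y) - g(x_0)$ is exactly $\lambda^{-1/2} P(y)$ with the odd polynomial $P(y) = \nabla g(x_0)\cdot y - \frac{g(x_0)}{6}\, D^3 f(x_0) : y^{\otimes 3}$, so its influence reduces to computing $\lambda^{-1/2}\int_{B_\lambda} P(y)\,\e^{-\frac12 y^\intercal\Sigma y}\,\d y$. When $A_\infty = -A_\infty$, parity of the Gaussian weight under $y \mapsto -y$ forces $\int_{A_\infty} P\,\e^{-\frac12 y^\intercal\Sigma y}\,\d y = 0$, and the residual $\lambda^{-1/2}\int_{B_\lambda \triangle A_\infty} P\,\e^{-\frac12 y^\intercal\Sigma y}\,\d y$ is controlled by a Cauchy--Schwarz inequality in the Gaussian measure together with finiteness of polynomial moments, yielding $\O(\lambda^{-1/2}\sqrt{\varepsilon(\lambda)}) = \O(\varepsilon(\lambda) + \lambda^{-1})$; the next Taylor correction of order $\lambda^{-1}$ then dominates and gives $r = 2$. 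In the generic case $A_\infty \neq -A_\infty$, no such cancellation occurs and the full $\O(\lambda^{-1/2})$ contribution persists, giving $r = 1$. Higher-order terms in both Taylor expansions are absorbed into $\O(\lambda^{-1})$ thanks to finite Gaussian moments of polynomials, closing the estimate and yielding~\eqref{eq:laplace_asymptotics}.
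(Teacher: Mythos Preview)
Your approach is correct and follows essentially the same route as the paper: rescale by $\sqrt\lambda$, isolate the Gaussian leading term, identify the $\lambda^{-1/2}$ correction as an \emph{odd} polynomial $P$ whose integral against the Gaussian over $A_\infty$ vanishes precisely when $A_\infty=-A_\infty$, and control the discrepancy between $B_\lambda$ and $A_\infty$ via $\varepsilon(\lambda)$. The paper packages the expansion of $\e^{-\lambda R_f}$ through a parametric interpolation $I(\lambda,t)$ between the quadratic approximation and $f$ itself (Taylor in $t$ to second order), whereas you expand $\exp(-S_\lambda)$ directly; and for the cross term $\lambda^{-1/2}\int_{B_\lambda\triangle A_\infty}P\,\e^{-\frac12 y^\intercal\Sigma y}$ the paper splits on $\{|P(\mathcal G)|\lessgtr\lambda^{1/2}\}$ while your Cauchy--Schwarz argument giving $\O(\lambda^{-1/2}\sqrt{\varepsilon(\lambda)})=\O(\varepsilon(\lambda)+\lambda^{-1})$ is slightly cleaner. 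One small slip: on the inner region $|y|<\lambda^{1/6}$ you only have $S_\lambda(y)=\O(\lambda^{-1/2}|y|^3)=\O(1)$, not $\smallo(1)$ uniformly; this is harmless since $\O(1)$ already bounds the Lagrange remainder $\e^{-\theta S_\lambda}$ uniformly, and the quadratic Taylor error $\O(S_\lambda^2)=\O(\lambda^{-1}|y|^6)$ integrates against the Gaussian to $\O(\lambda^{-1})$ as required.
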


        \begin{remark}
            The conclusion still holds true assuming only that~$\mathcal K$ is closed but not necessarily bounded, requiring instead that~$g\in L^1(\mathcal K)$, and that there exists~$\delta_0>0$ such that, for any~$0<\delta<\delta_0$,
            \begin{equation}
                \gamma(\delta) := \underset{x\in \mathcal K \setminus B(x_0,\delta)}{\inf}\, \{f(x)-f(x_0)\}>0.
            \end{equation}
            The proof of this variant is verbatim the same as the one given below, upon replacing~$\gamma$ by $\gamma(\delta)$ with~$\delta<\delta_0$ in~\eqref{eq:laplace_high_energyef_gamma}.
        \end{remark}
    
        \begin{proof}[Proof of Proposition~\ref{prop:laplace}]
            Up to a translation by~$x_0$ and considering instead~$\tilde f(x)=f(x)-f(x_0)$ and~$\tilde g(x)=g(x)/g(x_0)$, we may assume without loss of generality that~$x_0=0$,~$f(0) = 0$ and~$g(0)=1$.

            Let us denote by~$\mathcal Q := \nabla^2 f(0)$ the Hessian of~$f$ at the minimum, which, according to~\eqref{eq:laplace_l3}, is bounded from below by~$\epsilon>0$.
            We make use of the following Taylor expansions, which are valid in view of~\eqref{eq:laplace_l2}:
            \begin{equation}
                \label{eq:laplace_taylor_f}
                f(x) = \frac12 x^\intercal \mathcal Q x + R_f(x),\qquad R_f(x) = \frac12\int_0^1 (1-t)^2 D^3 f(tx):x^{\otimes 3} \,\d t,
            \end{equation}
            \begin{equation}
                \label{eq:laplace_taylor_g}
                g(x) = 1 + x^\intercal \nabla g(0) + R_g(x),\qquad R_g(x)=\int_0^1 (1-t) x^\intercal \nabla^2 g(tx)x\,\d t.
            \end{equation}
            Let~$\delta>0$ be such that~$B(0,\delta) \subset \mathcal K$, which exists since~$x_0\in\overset{\circ}{\mathcal K}$. We may furthermore assume (upon possibly reducing~$\delta$), according to~\eqref{eq:laplace_taylor_f}, that for some~$C>0$,
            \begin{equation}
                \label{eq:laplace_f_minorization}
                \forall\,x\in B(0,\delta),\qquad f(x) \geq \frac1C |x|^2,\qquad |R_f(x)| \leq C|x|^3,\qquad |R_g(x)| \leq C|x|^2.
            \end{equation}
            In addition, by~\eqref{eq:laplace_l3} and the compactness of~$\mathcal K$:
            \begin{equation}
                \label{eq:laplace_high_energyef_gamma}
                \gamma := \underset{x\in \mathcal K\setminus B(0,\delta)}{\min}\, f(x)> 0.
            \end{equation}
            Then, using the inclusion~\eqref{eq:laplace_l1}:
            \begin{equation}
                \left|\int_{A_\lambda \setminus B(0,\delta)} \e^{-\lambda f(x)}g(x)\,\d x\right| \leq \e^{-\lambda \gamma} \|g\|_{L^1(\mathcal K)} = \O(\e^{-\lambda\gamma}),
            \end{equation}
            since~$g$ is integrable on~$\mathcal K$ by~\eqref{eq:laplace_l2}.

            It remains to estimate
            \begin{equation}
                I(\lambda,1) := \int_{A_\lambda \cap B(0,\delta)} \e^{-\lambda f(x)}g(x)\,\d x,
            \end{equation}
            for which we introduce the following parametric integral, for~$0\leq t\leq 1$:
            \begin{equation}
                I(\lambda,t) := \int_{A_\lambda \cap B(0,\delta)} \e^{-\lambda\left(\frac12 x^\intercal \mathcal Q x + t R_f(x)\right)}g(x)\,\d x.
            \end{equation}
            The role of~$t$ is to interpolate between the quadratic approximation of~$f$ around the minimum and~$f$ itself.

            From~\eqref{eq:laplace_f_minorization} and~\eqref{eq:laplace_l3}, we deduce that~$c = \min\{\frac1C,\epsilon/2\}>0$ is such that
            \begin{equation}
                \label{eq:laplace_interpolation_positivity}
                \forall\, x\in B(0,\delta),\,\forall\,0\leq t\leq 1,\qquad\frac 12x^\intercal \mathcal Q x + t R_f(x) = \frac{1-t}2x^\intercal \mathcal Q x + tf(x) \geq c|x|^2.
            \end{equation}
            We then write, by Taylor's theorem:
            \begin{equation}
                I(\lambda,1) = I(\lambda,0) + \frac{\partial I}{\partial t}(\lambda,0)  + \int_0^1 \frac{\partial^2 I}{\partial t^2}(\lambda,t)(1-t)\,\d t,
            \end{equation}
            with
            \begin{equation}
                \frac{\partial^k I}{\partial t^k}(\lambda, t) = (-\lambda)^k\int_{A_\lambda \cap B(0,\delta)} \e^{-\lambda\left(\frac12x^\intercal \mathcal Q x +tR_f(x)\right)}R_f(x)^k g(x)\,\d x.
            \end{equation}
            We can then estimate, for~$k=2$, in view of~\eqref{eq:laplace_interpolation_positivity}:
            \begin{equation}
                \label{eq:laplace_error_a}
                \begin{aligned}
                    \left|\frac{\partial^2 I}{\partial t^2}(\lambda, t)\right| &\leq \|g\|_{L^\infty(B(0,\delta))} \lambda^2\int_{B(0,\delta)}\e^{-\lambda c|x|^2}R_f(x)^2\,\d x\\
                    &\leq K\lambda^2\int_{B(0,\delta)}\e^{- \lambda c|x|^2}|x|^6\,\d x = \lambda^{-\frac d2}\O(\lambda^{-1}),
                \end{aligned}
            \end{equation}
            uniformly in~$t\in(0,1)$, where we used the change of variables~$y=\sqrt\lambda x$ to obtain the last equality.
            It follows that
            \begin{equation}
                \label{eq:laplace_expansion}
                I(\lambda,1) = \left[I + \frac{\partial I}{\partial t}\right](\lambda,0) + \lambda^{-\frac d2}\O(\lambda^{-1}).
            \end{equation}
            The bracketed term may be rewritten as a Gaussian expectation:
            \begin{equation}
                \begin{aligned}
                    \left[I + \frac{\partial I}{\partial t}\right](\lambda,0) &= \int_{A_\lambda \cap B(0,\delta)} \e^{-\lambda \frac12 x^\intercal \mathcal Q x}\left(1 - \lambda R_f(x)\right)g(x)\, \d x\\
                    &= \left(\frac{2\pi}{\lambda}\right)^{\frac d2}|\det \mathcal Q|^{-\frac12}\E\left[\left(1-\lambda R_f({\mathcal G}/\sqrt\lambda)\right)g({\mathcal G}/\sqrt\lambda)\1_{{\mathcal G} \in \sqrt\lambda A_\lambda \cap B(0,\sqrt\lambda \delta)}\right],
                \end{aligned}
            \end{equation}
            where~${\mathcal G}\sim\mathcal N(0,\mathcal Q^{-1})$.
            Before estimating the expectation, we write the following expansion allowed by~\eqref{eq:laplace_l2}:
            \begin{equation}
                \label{eq:laplace_taylor_f2}
                R_f(x) = \frac 16 D^3 f(0):x^{\otimes 3} + \widetilde R_f(x),\qquad \left|\widetilde R_f(x)\right|\leq \widetilde C |x|^4\, \text{on }B(0,\delta),
            \end{equation}
            which, together with~\eqref{eq:laplace_taylor_g}, gives almost surely:
            \begin{equation}
                \begin{aligned}
                    \left(1-\lambda R_f({\mathcal G}/\sqrt\lambda)\right)g({\mathcal G}/\sqrt\lambda) &= \left(1- \frac {\lambda^{-\frac12}}6 D^3 f(0):{\mathcal G}^{\otimes 3} - \lambda\widetilde R_f({\mathcal G}/\sqrt\lambda)\right)\left(1 + \lambda^{-\frac12}{\mathcal G}^\intercal \nabla g(0) + R_g({\mathcal G}/\sqrt\lambda)\right)\\
                    &= 1 + \lambda^{-\frac12}\left[{\mathcal G}^\intercal \nabla g(0)-\frac16 D^3 f(0):{\mathcal G}^{\otimes 3}\right] + \lambda^{-1} S({\mathcal G},\lambda).
                \end{aligned}
            \end{equation}
            A straightforward computation and estimation using the bounds~\eqref{eq:laplace_f_minorization} and~\eqref{eq:laplace_taylor_f2} shows that there exist~$K,\lambda_0 >0$ such that, for all~$\lambda>\lambda_0$, it holds, almost surely,
            \begin{equation}
                \left|S({\mathcal G},\lambda)\right| \leq K\left(1+|{\mathcal G}|^6\right).
            \end{equation}
            In particular~$\E[|S({\mathcal G},\lambda)|] = \O(1)$ in the limit~$\lambda\to\infty$.
            It follows that
            \begin{equation}
            \begin{aligned}
                &\E\left[\left(1-\lambda R_f({\mathcal G}/\sqrt\lambda)\right)g({\mathcal G}/\sqrt\lambda)\1_{{\mathcal G} \in \sqrt\lambda A_\lambda \cap B(0,\sqrt\lambda \delta)}\right]\\
                &= \E\left[\left(1+\lambda^{-\frac12}P({\mathcal G})\right)\1_{{\mathcal G}\in A_\infty}\right]+ \E\left[\left(1+\lambda^{-\frac12}P({\mathcal G})\right)\left(\1_{{\mathcal G}\in\sqrt\lambda A_\lambda \cap B(0,\sqrt\lambda\delta)}-\1_{{\mathcal G}\in A_\infty}\right)\right] + \O(\lambda^{-1}),
            \end{aligned}
            \end{equation}
            where~$P({\mathcal G}) = {\mathcal G}^\intercal \nabla g(0)-\frac16 D^3 f(0):{\mathcal G}^{\otimes 3}$ is a polynomial involving only odd moments of~${\mathcal G}$. By symmetry, the first term in the sum is then given by:
            \begin{equation}
                \label{eq:laplace_error_b}
                \E\left[(1+\lambda^{-\frac12}P({\mathcal G}))\1_{{\mathcal G}\in A_\infty}\right] = \begin{cases}
                    \P({\mathcal G} \in A_\infty)&\text{if } A_\infty = - A_\infty,\\
                    \P({\mathcal G}\in A_\infty)\left(1+\O(\lambda^{-\frac12})\right)&\text{otherwise.}
                \end{cases}
            \end{equation}
            We are left with the task of estimating
            \begin{equation}
                \label{eq:laplace_error_bb}
                \begin{aligned}
                    \left|\E\left[(1+\lambda^{-\frac12}P({\mathcal G}))\left(\1_{{\mathcal G}\in\sqrt\lambda A_\lambda \cap B(0,\sqrt\lambda\delta)}-\1_{{\mathcal G}\in A_\infty}\right)\right]\right| &\leq \E\left[\left|1 + \lambda^{-\frac12}P({\mathcal G})\right|\1_{{\mathcal G}\in\sqrt\lambda A_\lambda \setminus B(0,\sqrt\lambda\delta)}\right]\\
                    &+ \E\left[\left|1 + \lambda^{-\frac12}P({\mathcal G})\right|\left|\1_{{\mathcal G} \in \sqrt\lambda A_\lambda}-\1_{{\mathcal G} \in A_\infty}\right|\right].
                \end{aligned}
            \end{equation}

            By a standard Gaussian decay estimate and the second condition in~\eqref{eq:laplace_l3}, the term
            \[ \E\left[\left|1 + \lambda^{-\frac12}P({\mathcal G})\right|\1_{\mathcal G\in\sqrt\lambda A_\lambda \setminus B(0,\sqrt\lambda\delta)}\right] = \O(\e^{-\frac{\lambda}3\epsilon \delta^2})\]
            is negligible with respect to~$\lambda^{-1}$. Noting that~$\1_{\mathcal G\in\sqrt\lambda A_\lambda \triangle A_\infty} = \abs{\1_{\mathcal G\in\sqrt\lambda A_\lambda}-\1_{\mathcal G\in A_\infty}}$, we further obtain by a triangle inequality
            \[\E\left[\left|1 + \lambda^{-\frac12}P({\mathcal G})\right|\left|\1_{{\mathcal G}\in\sqrt\lambda A_\lambda}-\1_{{\mathcal G} \in A_\infty}\right|\right] \leq \P\left({\mathcal G} \in \sqrt\lambda A_\lambda \triangle A_\infty\right) + \E\left[\lambda^{-\frac12}|P({\mathcal G})|\1_{\mathcal G\in\sqrt\lambda A_\lambda \triangle A_\infty}\right].\]
            The term~$\varepsilon(\lambda)=\P\left({\mathcal G} \in \sqrt\lambda A_\lambda \triangle A_\infty\right)=\smallo(1)$ by Assumption~\eqref{eq:laplace_l4}. Let us show that~$\E\left[\lambda^{-\frac12}|P({\mathcal G})|\1_{\mathcal G\in\sqrt\lambda A_\lambda \triangle A_\infty}\right]=\O(\lambda^{-1})$.
            Decomposing this term, we obtain
            \begin{equation}\label{eq:laplace_error_c}\begin{aligned}\E\left[\lambda^{-\frac12}|P({\mathcal G})|\1_{\mathcal G\in\sqrt\lambda A_\lambda \triangle A_\infty}\1_{|P({\mathcal G})|\leq \lambda^{\frac12}}\right] &+ \E\left[\lambda^{-\frac12}|P({\mathcal G})|\1_{\mathcal G\in\sqrt\lambda A_\lambda \triangle A_\infty}\1_{|P({\mathcal G})|>\lambda^{\frac12}}\right]\\
                &\leq \P\left({\mathcal G} \in \sqrt\lambda A_\lambda \triangle A_\infty\right) + \E\left[\lambda^{-\frac12}|P({\mathcal G})|\1_{|P({\mathcal G})|>\lambda^{\frac12}}\right]\end{aligned}.\end{equation}

            Furthermore, since~$P(x)$ is bounded by~$C_{d,f}|x|^{3}~$ for some constant~$C_{d,f}>0$ outside of a compact set, it holds, for sufficiently large~$\lambda$, that
            \begin{equation}
                \label{eq:laplace_error_d}
                \E\left[\lambda^{-\frac12}|P({\mathcal G})|\1_{|P({\mathcal G})|>\lambda^{\frac12}}\right] \leq C_{d,f}\E\left[\lambda^{-\frac12}|{\mathcal G}|^3 \1_{|{\mathcal G}|^3>C_{d,f}^{-1}\lambda^{\frac12}}\right] = \O(\lambda^{-\frac12}\e^{-\epsilon\lambda^{1/3}/3})=\O(\lambda^{-1}),
            \end{equation}
            by a Gaussian decay estimate.

            In view of~\eqref{eq:laplace_asymptotics}, and collecting the estimates~\eqref{eq:laplace_error_a},~\eqref{eq:laplace_error_b},~\eqref{eq:laplace_error_bb},~\eqref{eq:laplace_error_c} and~\eqref{eq:laplace_error_d} we conclude that
            \begin{equation}
                I(\lambda,1) = \left(\frac{2\pi}{\lambda}\right)^{\frac d2}\det \mathcal Q^{-\frac12}\P({\mathcal G} \in A_\infty)\left(1+\O(\lambda^{-1})+\O(\varepsilon(\lambda))+ \O(\lambda^{-\frac 12})\1_{A_\infty \neq - A_\infty}\right),
            \end{equation}
            which gives the claimed asymptotic behavior~\eqref{eq:laplace_asymptotics}.
        \end{proof}
        Note that the same strategy of proof can be deployed to compute higher order terms in the asymptotic expansion.

        \subsection{Low-temperature estimates}
        \label{subsec:semiclassical_estimates}
        We obtain in this section the key estimates on the quasimode~\eqref{eq:global_quasimode} needed to compute the asymptotic behavior of~$\left\|\nabla \psi_\beta\right\|_{\Lmu(\Omega_\beta)}$ and~$\left\|\cL_\beta\psi_\beta\right\|_{\Lmu(\Omega_\beta)}$ in the limit~$\beta\to\infty$.
        Good estimates for these quantities, summarized in Proposition~\ref{prop:semiclassical_estimates}, together with a resolvent estimate given by Lemma~\ref{lemma:resolvent_estimate} below, will yield the modified Eyring--Kramers formula~\eqref{eq:eyring_kramers} in the proof of Theorem~\ref{thm:eyring_kramers}, concluded in section~\ref{subsec:eyring_kramers_final_proof}.

        For convenience, we decompose the analysis according to the following partition of the domain~$\Omega_\beta$:
        \begin{equation}
            \label{eq:domain_partition}
            \Omega_\beta^\pm = \bigcup_{i\in I_{\min}} \left[\Aseti\cup\Bseti{\pm}\cup\Cseti\cup\Dseti{\pm}\right] \cup \Eset\cup \Fset\cup\Gset{\pm},
        \end{equation}
        where, for~$i\in I_{\min}$:
        \begin{enumerate}[]
            \item{\begin{equation}\label{eq:Aseti}\Aseti = \left[B\left(z_i,\largeRadius(\beta)\right)\setminus B\left(z_i,\frac12\largeRadius(\beta)\right)\right]\cap \left\{y_1^{(i)}< -\gaussianCutoffConst\largeRadius(\beta)\right\}\cap \left\{ V-\Vstar \geq \energyCutoffConst\largeRadius(\beta)^2\right\},\end{equation}}
            \item{\begin{equation}\label{eq:Bseti}\Bseti{\pm} =  \Omega_\beta^{\pm}\cap \left[B\left(z_i,\largeRadius(\beta)\right)\setminus B\left(z_i,\frac12\largeRadius(\beta)\right)\right]\cap \left\{|y_1^{(i)}| \leq \gaussianCutoffConst\largeRadius(\beta)\right\},\end{equation}}
            \item{\begin{equation}\label{eq:Cseti}\Cseti =  \left[B\left(z_i,\largeRadius(\beta)\right)\setminus B\left(z_i,\frac12\largeRadius(\beta)\right)\right]\cap \left\{ \frac12\energyCutoffConst\largeRadius(\beta)^2< V-\Vstar < \energyCutoffConst\largeRadius(\beta)^2\right\}\cap \left\{y_1^{(i)}<-\gaussianCutoffConst\largeRadius(\beta)\right\},\end{equation}}
            \item{\begin{equation}\label{eq:Dseti}\Dseti{\pm} = \Omega_\beta^{\pm} \cap B\left(z_i,\frac12\largeRadius(\beta)\right),\end{equation}}
            \item{\begin{equation}\label{eq:Eset}\Eset = \left[\overline{\basin{z_0}} \cap \left\{\frac{\energyCutoffConst}2\largeRadius(\beta)^2\leq V-\Vstar\leq \energyCutoffConst\largeRadius(\beta)^2\right\} \right] \setminus \bigcup_{i\in I_{\min}} B\left(z_i,\largeRadius(\beta)\right),\end{equation}}
            \item{\begin{equation}\label{eq:Fset}\Fset = \left[\overline{\basin{z_0}}\cap \left\{V-\Vstar<\frac{\energyCutoffConst}2\largeRadius(\beta)^2\right\}\right] \setminus \bigcup_{i\in I_{\min}} B\left(z_i,\frac12\largeRadius(\beta)\right),\end{equation}}
            \item{\begin{equation}\label{eq:Gset}\Gset{\pm} = \Omega_\beta\setminus \left(\bigcup_{i\in I_{\min}} \left[\Aseti\cup\Bseti{\pm}\cup\Cseti\cup\Dseti{\pm}\right] \cup \Eset\cup\Fset\right).\end{equation}}
        \end{enumerate}

        Note that, according to~\eqref{eq:perturbed_domain_shape}, the sets~$\Bseti{\pm}$ and~$\Dseti{\pm}$ have simple representations, namely
        \[\Dseti{\pm} = \left\{y_1^{(i)}<\epsLimit{i}/\sqrt\beta\pm2\smallRadius(\beta)\right\} \cap B\left(z_i,\frac12\largeRadius(\beta)\right),\]
        \[\Bseti{\pm} =  \left\{y_1^{(i)}<\epsLimit{i}/\sqrt\beta\pm2\smallRadius(\beta)\right\}\cap \left[B\left(z_i,\largeRadius(\beta)\right)\setminus B\left(z_i,\frac12\largeRadius(\beta)\right)\right]\cap \left\{|y_1^{(i)}| \leq \gaussianCutoffConst\largeRadius(\beta)\right\}.\]
        We refer the reader to Figure~\ref{fig:partition} for a pictorial representation of these sets.
        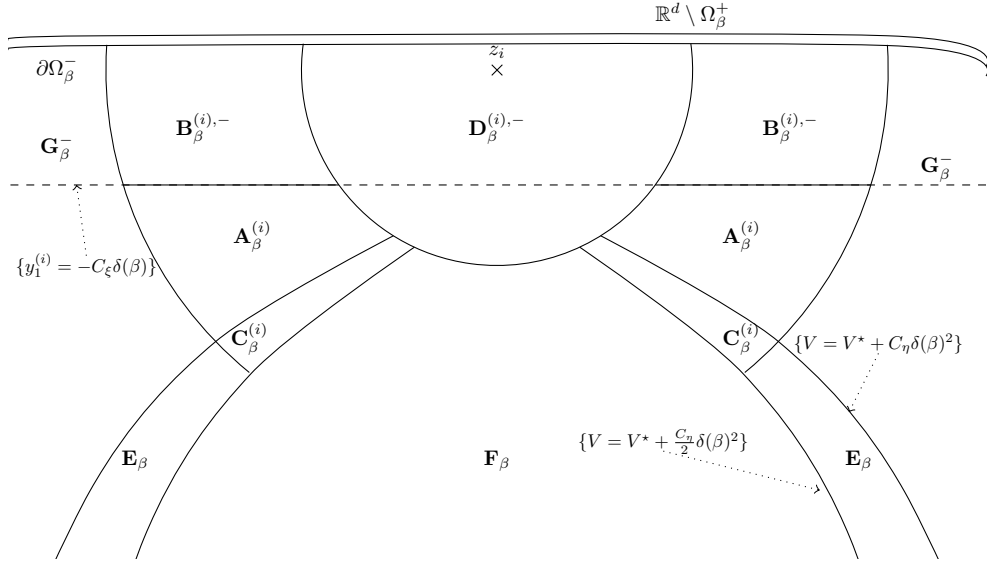
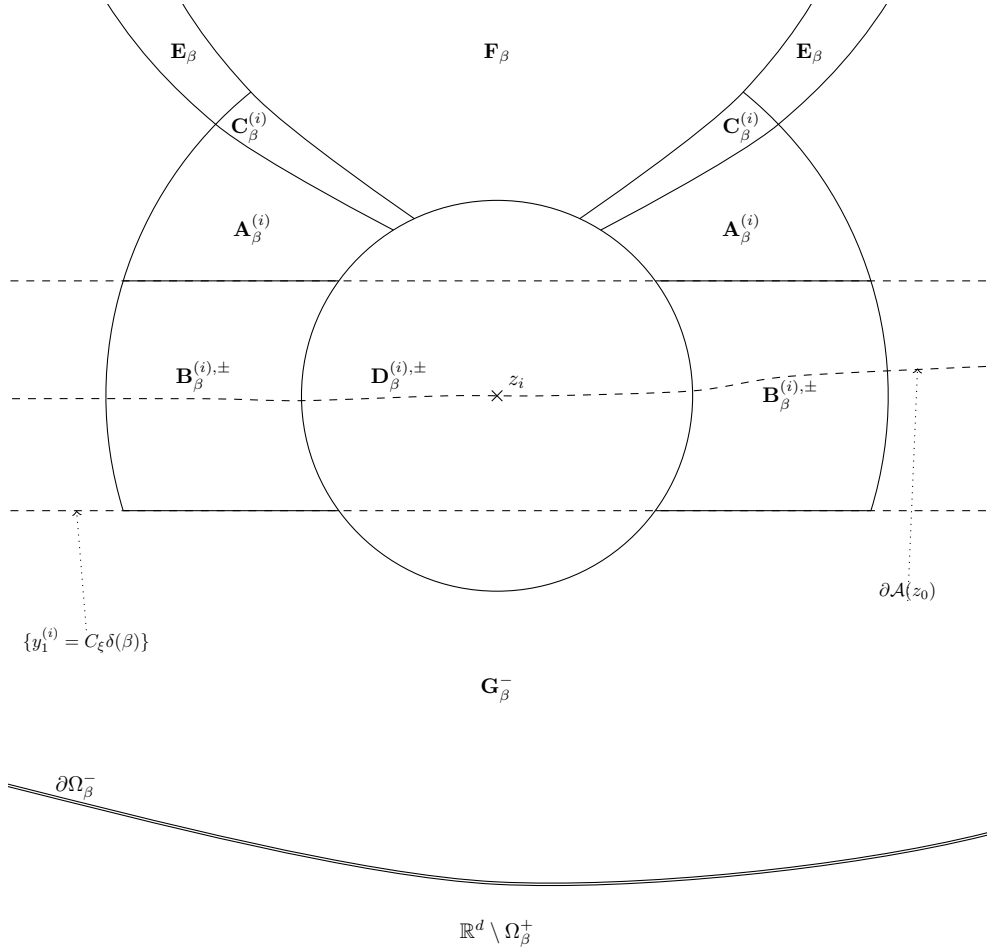
\begin{figure}
            \centering
            \subfigure[The case~$\epsLimit{i}<+\infty$. The boundaries~$\partial \Omega_\beta^{\pm}$ are hyperplanes inside~$B(z_i,\largeRadius(\beta))$, separated by a distance~$4\smallRadius(\beta)$, as described in~\eqref{eq:perturbed_domain_shape}.]{
\begin{tikzpicture}[xscale=1.3,yscale=1.3,rotate=90]
    \coordinate (sw) at (-5,-5);
    \coordinate (ne) at (0.7,5);
    \begin{scope}
        \clip (sw) rectangle (ne);
        \draw(0,0)+(82.243:2) arc (82.243:277.757:2);
        \draw(0,0)+(86.3:4) arc (86.3:140.7:4);
        \draw(0,0)+(-86.3:4) arc (-86.3:-140.7:4);
        \draw (-1.176,1.618) -- (-1.176,3.823);
        \draw[dashed] (-1.176,-5) -- (-1.176,5);
        \draw (-1.176,-1.618) -- (-1.176,-3.823);

    
        \draw plot [smooth] coordinates{+(148:2) +(134:4) +(136:6) +(144:10)};
        \draw plot [smooth] coordinates{+(155:2) +(141:4) +(143:6) +(151:10)};
        \draw plot [smooth] coordinates{+(-148:2) +(-134:4) +(-136:6) +(-144:10)};
        \draw plot [smooth] coordinates{+(-155:2) +(-141:4) +(-143:6) +(-151:10)};

        \draw plot [smooth] coordinates {(0.209,4.996) (0.251,3.992) (0.251,-3.992) (-0.061,-5.0)};
        \draw plot [smooth,shift={(0.1,0)}] coordinates {(0.209,4.996) (0.251,3.992) (0.251,-3.992) (-0.061,-5.0)};

        \draw (-0.6,0) node[scale=0.8] {$\Dseti{-}$};
    
        \draw  (-0.6,-3) node[scale=0.8] {$\Bseti{-}$};
        \draw  (-0.6,3) node[scale=0.8] {$\Bseti{-}$};
    
        \draw  (-1.7,-2.5) node[scale=0.8] {$\Aseti$};
        \draw  (-1.7,2.5) node[scale=0.8] {$\Aseti$};
    
        \draw  (-2.75,-2.5) node[scale=0.8] {$\Cseti$};
        \draw  (-2.75,2.53) node[scale=0.8] {$\Cseti$};
    
        \draw  (-4,-3.7) node[scale=0.8] {$\Eset$};
        \draw  (-4,3.7) node[scale=0.8] {$\Eset$};
    
        \draw  (-4,0) node[scale=0.8] {$\Fset$};
        \draw  (-1,-4.5) node[scale=0.8] {$\Gset{-}$};
        \draw  (-0.8,4.5) node[scale=0.8] {$\Gset{-}$};
        \draw  (-2,4.2) node[scale=0.7] {$\{y_1^{(i)} =-C_\xi\largeRadius(\beta)\}$};
        \draw[dotted,->] (-1.9,4.2) -- (-1.176,4.3) ;
        \draw  (-2.8,-3.9) node[scale=0.7] {$\{V=\Vstar+C_\eta\largeRadius(\beta)^2\}$};
        \draw[dotted,->] (-2.9,-3.9) -- (-3.5,-3.6) ;
        \draw  (-3.8,-1.7) node[scale=0.7] {$\{V=\Vstar+\frac{C_\eta}{2}\largeRadius(\beta)^2\}$};
        \draw[dotted,->] (-3.9,-1.7) -- (-4.3,-3.3) ;
        \draw  (0.55,-2) node[scale=0.8] {$\R^d\setminus \Omega_\beta^+$};
        \draw (0.0,4.5) node[scale=0.8] {$\partial\Omega_\beta^-$};

        \path (0,0) pic {cross=0.1};
        \draw (0.05,0) node[above,scale=0.8] {$z_i$};

    \end{scope}

    \end{tikzpicture}
            }
            \subfigure[The case~$\epsLimit{i}=+\infty$. The width of the shell~$\Omega_\beta^+\setminus\Omega_\beta^-$ is of order~$\varepsilon_1(\beta)$ around~$z_i$, as in the proof of Proposition~\ref{prop:domain_extension}.]{
                \begin{tikzpicture}[xscale=-1.3,yscale=1.3,rotate=-90]
                    \coordinate (sw) at (-4,-5);
                    \coordinate (ne) at (6,5);
                    \begin{scope}
                        \clip (sw) rectangle (ne);
                    \draw(0,0)+(0:2) arc (0:360:2);
                    \draw(0,0)+(72.909:4) arc (72.909:141:4);
                    \draw(0,0)+(-72.909:4) arc (-72.909:-141:4);
                    \draw (-1.176,1.618) -- (-1.176,3.823);
                    \draw (1.176,1.618) -- (1.176,3.823);
                    \draw (-1.176,-1.618) -- (-1.176,-3.823);
                    \draw (1.176,-1.618) -- (1.176,-3.823);
                    \draw[dashed] (-1.176,-5) -- (-1.176,5);
                    \draw[dashed] (1.176,-5) -- (1.176,5);
                
                    \draw plot [smooth] coordinates{+(148:2) +(134:4) +(136:6) +(144:10)};
                    \draw plot [smooth] coordinates{+(155:2) +(141:4) +(143:6) +(151:10)};
                    \draw plot [smooth] coordinates{+(-148:2) +(-134:4) +(-136:6) +(-144:10)};
                    \draw plot [smooth] coordinates{+(-155:2) +(-141:4) +(-143:6) +(-151:10)};

                    \draw[dashed]  plot [smooth] coordinates {(-0.3,-5) (-0.2,-3) (-0.05,-2) (0,-0.5) (0,0.5) (0.05,2) (0.03,3) (0.03,5)};
                    
                    \draw plot [smooth] coordinates{(2.5,-10) (4.5,-5) (5,0) (4,5) (2.5,10)};
                    \draw[shift={(-0.025,0.0)}] plot [smooth] coordinates{(2.5,-10) (4.5,-5) (5,0) (4,5) (2.5,10)};
                    \draw (-0.2,1) node[scale=0.8] {$\Dseti{\pm}$};
                
                    \draw  (0,-3) node[scale=0.8] {$\Bseti{\pm}$};
                    \draw  (-0.2,3) node[scale=0.8] {$\Bseti{\pm}$};
                
                    \draw  (-1.7,-2.5) node[scale=0.8] {$\Aseti$};
                    \draw  (-1.7,2.5) node[scale=0.8] {$\Aseti$};
                
                    \draw  (-2.75,-2.5) node[scale=0.8] {$\Cseti$};
                    \draw  (-2.75,2.53) node[scale=0.8] {$\Cseti$};
                
                    \draw  (-3.5,-3.2) node[scale=0.8] {$\Eset$};
                    \draw  (-3.5,3.2) node[scale=0.8] {$\Eset$};
                
                    \draw  (-3.5,0) node[scale=0.8] {$\Fset$};
                    \draw  (3,0) node[scale=0.8] {$\Gset{-}$}; 

                    \path (0,0) pic {cross=0.1};
                    \draw (0.0,-0.2) node[above,scale=0.8] {$z_i$};
                    
                    \draw  (5.5,0) node[scale=0.8] {$\R^d\setminus \Omega_\beta^+$};
                    \draw  (4.0,4.3) node[scale=0.8] {$\partial \Omega_\beta^-$};

                    \draw  (2.5,4.2) node[scale=0.7] {$\{y_1^{(i)} =C_\xi\largeRadius(\beta)\}$};
                    \draw[dotted,->] (2.4,4.2) -- (1.176,4.3) ;

                    \draw  (2,-4.2) node[scale=0.7] {$\partial\basin{z_0}$};
                    \draw[dotted,->] (2.1,-4.2) -- (-0.28,-4.3) ;
                    \end{scope}
                    \end{tikzpicture}
            }
        \caption{\label{fig:partition}Schematic representation of the partition~\eqref{eq:domain_partition}.}
        \end{figure}
        Before the statements and proof of the necessary estimates, let us give some informal indications of our strategy.
        On the high-energy sets~$\Aseti$,~$\Bseti{\pm}$,~$\Cseti$ and~$\Eset$ (contained in~$\{V\geq \Vstar+\energyCutoffConst\largeRadius(\beta)^2/2\}$), crude uniform bounds on the derivatives of~$\chi_\beta^{(i)}$,~$\eta_\beta$ and~$\varphi_\beta^{(i)}$ will suffice to identify the contribution of these sets to the~$\Lmu(\Omega_\beta)$-norms of both~$\nabla \psi_\beta^\pm$ and~$\cL_\beta\psi_\beta^\pm$ as superpolynomially decaying error terms.
        The sets~$\Fset$ and~$\Gset{\pm}$ are constructed so that~$\psi_\beta^\pm$ is constant on each of them, with respective values~$1/Z_\beta^{\pm}$ and~$0$, and will not contribute to the estimates.
        The only contribution left are from~$\Dseti{\pm}$, on which the~$\psi_\beta^\pm$ coincide with the finer approximations $\frac1{Z_\beta^{\pm}}\varphi_\beta^{(i),\pm}$, according to~\eqref{eq:quasimode_locally_fine}. The contribution of this set to the~$\Lmu(\Omega_\beta^\pm)$-norm of~$\nabla\psi_\beta^\pm$ is more finely analyzed using Proposition~\ref{prop:laplace}, giving sufficiently precise asymptotics.

        We now state the required estimates.
        \begin{proposition}
            \label{prop:semiclassical_estimates}
            The following estimates hold:
            \begin{equation}
                \label{eq:laplace_partition_function}
                Z_\beta^\pm = \e^{-\frac{\beta}2 V(z_0)} \left(\frac{2\pi}{\beta}\right)^{\frac d4}|\det \nabla^2 V(z_0)|^{-\frac14}\left(1+\O(\beta^{-1})\right),
            \end{equation}

            \begin{equation}
                \label{eq:laplace_high_energy}
                \left\|\nabla \psi_\beta^\pm\right\|^2_{\Lmu\left(\Omega_\beta^\pm \setminus \bigcup_{i\in{I_{\min}}} \Dseti{\pm}\right)} = \O\left(\e^{-\beta\left(\Vstar - V(z_0) +\frac{C_\eta}3 \largeRadius(\beta)^2\right)}\right),
            \end{equation}
            \begin{equation}
                \label{eq:laplace_generator_high_energy}
                \|\cL_\beta \psi_\beta^\pm\|^2_{\Lmu\left(\Omega_\beta^\pm \setminus \bigcup_{i\in{I_{\min}}} \Dseti{\pm}\right)} = \O\left(\e^{-\beta\left(\Vstar - V(z_0) +\frac{C_\eta}3 \largeRadius(\beta)^2\right)}\right),
            \end{equation}
            and for all~$i\in I_{\min}$, the following hold:
            \begin{equation}
                \label{eq:laplace_local_estimate}
                \left\|\nabla \psi_\beta^\pm\right\|^2_{\Lmu(\Dseti{\pm})} = \frac{\beta|\hessEigval{i}{1}|}{2\pi\Phi\left(|\hessEigval{i}{1}|^{\frac12}\epsLimit{i}\right)}\sqrt{\frac{\det \nabla^2 V(z_0)}{\left|\det \nabla ^2 V(z_i)\right|}} \e^{-\beta(\Vstar-V(z_0))} \left(1 +\O(r_i(\beta))\right),
            \end{equation}
            where~$\Phi$ is given by~\eqref{eq:gaussian_cdf}, and the dominant error term $r_i$ is given by:
            \begin{equation}
                \label{eq:laplace_error_terms}
                r_i(\beta)=\begin{cases}
                    \beta^{-1}, & \epsLimit{i} = +\infty,\\
                    \sqrt\beta\smallRadius(\beta)+\beta^{-\frac12}, & \epsLimit{i} < +\infty.
                \end{cases}
            \end{equation}
            
            Finally, we have
            \begin{equation}
                \label{eq:laplace_generator_local_estimate}
                \|\cL_\beta \psi_\beta^\pm\|^2_{\Lmu(\Dseti{\pm})} = \O\left(\beta^{-2}\largeRadius(\beta)^{-2}\right)\left\|\nabla\psi_\beta^\pm\right\|^2_{\Lmu(\Dseti{\pm})}
            \end{equation}
        \end{proposition}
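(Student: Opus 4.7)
The plan is to establish each estimate by decomposing the integrals along the partition \eqref{eq:domain_partition} and using two tools: crude pointwise bounds combined with high-energy exponential suppression outside a small neighborhood of the saddle points and of $z_0$, and the Laplace-type asymptotics of Proposition~\ref{prop:laplace} for the dominant contributions. The role of the perturbed domains $\Omega_\beta^\pm$ (which have exactly half-space geometry inside $B(z_i,\delta(\beta))$) is to produce clean $U^{(i)}$-rotated local coordinates in which $\varphi_\beta^{(i),\pm}$ depends only on $y_1^{(i)}$.

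I would first prove~\eqref{eq:laplace_partition_function}. By~\eqref{eq:quasimode_flat}, $\psi_\beta^\pm \equiv 1/Z_\beta^\pm$ on $\mathbf{F}_\beta$, which contains a fixed neighborhood of $z_0$ for $\beta$ large, and the other components of the partition either contribute an exponentially small amount (those contained in $\{V\geq\Vstar+\tfrac{C_\eta}{2}\delta(\beta)^2\}$, on which $|\psi_\beta^\pm|\leq 1/Z_\beta^\pm$ by construction) or lie at distance bounded away from $z_0$. Apply Proposition~\ref{prop:laplace} with $f=V$, $g=1$, $x_0=z_0$, $A_\lambda=A_\infty=\R^d$: the third condition of~\eqref{eq:laplace_l3} uses the Morse nondegeneracy at $z_0$ (which is a local minimum far from the boundary by~\eqref{hyp:one_minimum}), and $\varepsilon(\beta)=0$ because both domains equal all of $\R^d$ after rescaling. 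This yields~\eqref{eq:laplace_partition_function} with the $\O(\beta^{-1})$ remainder.

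Next, I would handle the high-energy estimates~\eqref{eq:laplace_high_energy} and~\eqref{eq:laplace_generator_high_energy}. By~\eqref{eq:quasimode_high_energy}, $\nabla\psi_\beta^\pm$ vanishes on $\mathbf{F}_\beta\cup\mathbf{G}_\beta^\pm$, so the integration reduces to $\mathbf{A}_\beta^{(i)}\cup\mathbf{B}_\beta^{(i),\pm}\cup\mathbf{C}_\beta^{(i)}\cup\mathbf{E}_\beta$. On each of these sets $V\geq \Vstar+\tfrac{C_\eta}{2}\delta(\beta)^2$, and combining this with crude $L^\infty$ bounds on $\nabla\chi_\beta^{(i)}$, $\nabla\eta_\beta$, $\nabla\varphi_\beta^{(i),\pm}$, and $\cL_\beta$ acting on the same quasimodes (all polynomially bounded in $\beta$ through factors of $\delta(\beta)^{-1}$, $\delta(\beta)^{-2}$, $\beta\delta(\beta)$, etc.), yields the claimed bound once multiplied by $e^{-\beta(\Vstar-V(z_0))}e^{-\tfrac{\beta C_\eta}{2}\delta(\beta)^2}\cdot(Z_\beta^\pm)^{-2}$ and $\delta(\beta)^2\sqrt\beta\gg \log\beta$ from~\eqref{hyp:scaling_deltai} is used to absorb the polynomial prefactors into one-third of the exponent.

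The main work is~\eqref{eq:laplace_local_estimate}. On $\mathbf{D}_\beta^{(i),\pm}$, $\psi_\beta^\pm=\varphi_\beta^{(i),\pm}/Z_\beta^\pm$ and a direct computation gives $\nabla\varphi_\beta^{(i),\pm}(x)=-N_\beta^{\pm,-1}e^{-\beta\frac{|\nu_1^{(i)}|}{2}y_1^{(i)}(x)^2}\xi_\beta(y_1^{(i)}(x))\,\hessEigvec{i}{1}$, where $N_\beta^\pm$ is the 1D normalizing integral in the denominator of~\eqref{eq:local_quasimode}. Thus
\[
\|\nabla\psi_\beta^\pm\|^2_{L_\beta^2(\mathbf{D}_\beta^{(i),\pm})}=\frac{1}{(Z_\beta^\pm N_\beta^\pm)^2}\int_{\mathbf{D}_\beta^{(i),\pm}}e^{-\beta(|\nu_1^{(i)}|y_1^{(i)}(x)^2+V(x))}\xi_\beta(y_1^{(i)}(x))^2\,\d x.
\]
Apply Proposition~\ref{prop:laplace} with $f(x)=V(x)-\Vstar+|\nu_1^{(i)}|y_1^{(i)}(x)^2$, $g=\xi_\beta^2$ (which equals $1$ on a neighborhood of $z_i$ large on the $\beta^{-1/2}$ scale since $\delta(\beta)\gg\beta^{-1/2}$), $x_0=z_i$, and $A_\lambda=\mathbf{D}_\beta^{(i),\pm}$. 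A Taylor expansion gives $\nabla^2 f(z_i)=\operatorname{diag}(|\nu_1^{(i)}|,\nu_2^{(i)},\dots,\nu_d^{(i)})$ in $y^{(i)}$-coordinates, so $\det\nabla^2 f(z_i)=|\det\nabla^2 V(z_i)|$. The rescaled domain converges to $\halfSpace{i}(\alpha^{(i)})$ (the condition~\eqref{hyp:locally_flat} ensures that the symmetric-difference error $\varepsilon(\beta)$ in Proposition~\ref{prop:laplace} is controlled by $\sqrt\beta\gamma(\beta)=\smallo(1)$ via a Gaussian-strip estimate), so $\P(\mathcal G\in A_\infty)=\Phi(|\nu_1^{(i)}|^{1/2}\alpha^{(i)})$. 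For $N_\beta^\pm$, the analogous 1D Laplace computation gives $N_\beta^\pm=\sqrt{2\pi/(\beta|\nu_1^{(i)}|)}\,\Phi(|\nu_1^{(i)}|^{1/2}\alpha^{(i)})(1+\O(\sqrt\beta\gamma(\beta)))$. Combining with~\eqref{eq:laplace_partition_function} produces the stated formula and the error terms~\eqref{eq:laplace_error_terms}: the case $\alpha^{(i)}=+\infty$ enjoys the symmetry $A_\infty=-A_\infty$, hence the improved $r=2$ rate in Proposition~\ref{prop:laplace} which combined with $\beta^{-1}\delta(\beta)^{-2}\ll\beta^{-1}$ yields $\O(\beta^{-1})$; the case $\alpha^{(i)}<+\infty$ is limited by the $\sqrt\beta\gamma(\beta)$ symmetric-difference error together with the $\beta^{-1/2}$ asymmetric-domain Laplace error.

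Finally,~\eqref{eq:laplace_generator_local_estimate} rests on a cancellation: since $\varphi_\beta^{(i),\pm}$ is the exact solution of the linearized Dirichlet problem with the harmonic potential $\frac{1}{2}\nu_1^{(i)}y_1^{(i)2}$ (up to the cutoff $\xi_\beta$), a direct computation gives
\[
-\cL_\beta\varphi_\beta^{(i),\pm}(x)=\frac{e^{-\beta\frac{|\nu_1^{(i)}|}{2}y_1^{(i)2}}}{N_\beta^\pm}\Bigl[\beta^{-1}\xi_\beta'(y_1^{(i)})+\bigl(\partial_{y_1}V(x)-\nu_1^{(i)}y_1^{(i)}\bigr)\xi_\beta(y_1^{(i)})\Bigr],
\]
where the bracketed Taylor remainder is $\O(|y^{(i)}|^2)=\O(\delta(\beta)^2)$ on $\mathbf{D}_\beta^{(i),\pm}$ and $\xi_\beta'=\O(\delta(\beta)^{-1})$. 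Squaring, integrating against $e^{-\beta V}$, and applying Proposition~\ref{prop:laplace} once more produces an extra factor $\beta^{-2}\delta(\beta)^{-2}+\beta^{-2}$ relative to $\|\nabla\psi_\beta^\pm\|^2_{L_\beta^2(\mathbf{D}_\beta^{(i),\pm})}$; the first term dominates since $\delta(\beta)\ll 1$. The main subtlety I expect to encounter is tracking the symmetric-difference error $\varepsilon(\beta)$ in Proposition~\ref{prop:laplace} applied to $\mathbf{D}_\beta^{(i),\pm}$, because the half-space geometry~\eqref{eq:perturbed_domain_shape} only holds exactly inside $B(z_i,\delta(\beta))$, but this is easily handled by the exponential concentration of the Gaussian at scale $\beta^{-1/2}\ll\delta(\beta)$.
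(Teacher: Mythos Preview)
Your proposal is correct and follows essentially the same approach as the paper: partition the domain via~\eqref{eq:domain_partition}, use crude $L^\infty$ bounds plus the high-energy suppression $V\geq\Vstar+\tfrac{C_\eta}{2}\delta(\beta)^2$ on $\mathbf{A}_\beta^{(i)},\mathbf{B}_\beta^{(i),\pm},\mathbf{C}_\beta^{(i)},\mathbf{E}_\beta$, and apply Proposition~\ref{prop:laplace} to $W_i=V+|\nu_1^{(i)}|y_1^{(i)2}$ on $\mathbf{D}_\beta^{(i),\pm}$ together with the one-dimensional Laplace estimate for the normalizer $N_\beta^\pm$ (the paper's $C_\beta^{(i),\pm}$). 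One small slip: the claim ``$\beta^{-1}\delta(\beta)^{-2}\ll\beta^{-1}$'' is backwards (since $\delta(\beta)$ is bounded); the reason the $\alpha^{(i)}=+\infty$ case yields $r_i(\beta)=\O(\beta^{-1})$ is rather that the symmetric-difference error $\varepsilon(\beta)=\P(\mathcal G\notin B(0,\tfrac12\sqrt\beta\delta(\beta)))=\O(e^{-c\beta\delta(\beta)^2})$ is superpolynomially small by~\eqref{hyp:scaling_deltai}, leaving only the $\O(\beta^{-1})$ Laplace remainder from the symmetric limiting domain.
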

        \begin{proof}
            As a first step, we derive the asymptotic behavior of the normalizing constant as stated in~\eqref{eq:laplace_partition_function}.
            \paragraph{Asymptotic behavior of~$Z_\beta^\pm$.\newline}
            Using~\eqref{eq:quasimode_support}, and since~$B(z_0,\largeRadius(\beta))\subset \Omega_\beta$ for sufficiently large~$\beta$ by Assumptions~\eqref{hyp:one_minimum} and~\eqref{hyp:locally_flat}, a direct application of Proposition~\ref{prop:laplace} to the inequality
            \[ \int_{B(z_0,\largeRadius(\beta))} \left[\eta_\beta+\sum_{i\in I_{\min}}\localCutoff{i}\left(\varphi_\beta^{(i),\pm}- \eta_\beta\right)\right]^2 \e^{-\beta V} \leq Z_\beta^{\pm 2}  \leq \int_{{\mathcal U_0}}\left[\eta_\beta+\sum_{i\in I_{\min}}\localCutoff{i}\left(\varphi_\beta^{(i),\pm}- \eta_\beta\right)\right]^2\e^{-\beta V},\]
            since~$ \1_{B(z_0,\largeRadius(\beta))} \leq \left[\eta_\beta+\sum_{i\in I_{\min}}\localCutoff{i}\left(\varphi_\beta^{(i)}- \eta_\beta\right)\right] \leq \1_{\overline{\mathcal U_0}}$,
            gives~\eqref{eq:laplace_partition_function}. Indeed, by a standard Gaussian decay estimate, for~$\xi\sim\mathcal N(0,\nabla^2 V(z_0)^{-1})$,~$\P(\xi\not\in B(0,\sqrt\beta\largeRadius(\beta)))=\O\left(\e^{-c\beta\largeRadius(\beta)^2}\right)$, and $~\P(\xi\not\in B(0,\sqrt\beta {\mathcal U_0})) = \O\left(\e^{-c\beta}\right)$ for some~$c>0$.
            It follows from the scaling~\eqref{hyp:scaling_deltai} that the error term~$\epsilon(\beta)$ in~\eqref{eq:laplace_asymptotics} decays superpolynomially, while the~$\O(\beta^{-\frac12})$ term vanishes by symmetry of the limiting domains~$B(0,\sqrt\beta\largeRadius(\beta)),~\sqrt\beta\left({\mathcal U_0}-z_0\right)\to \R^d$, leaving a dominant error term in~$\O(\beta^{-1})$. This, of course, corresponds to the usual Laplace method.

            As announced, the remainder of the analysis is split according to the partition~\eqref{eq:domain_partition}. We now let~$i\in I_{\min}$ throughout the remainder of the proof.
            \paragraph{Analysis on~$\Fset \cup \Gset{\pm}$.\newline}
            These sets do not contributes to the estimates~\eqref{eq:laplace_high_energy},~\eqref{eq:laplace_generator_high_energy}, since~$\left(\Fset\cup\Gset{\pm}\right) \cap \supp\,\nabla \psi^\pm_\beta = \varnothing$ and~$\supp\,\cL_\beta\psi_\beta^\pm \subseteq \supp\,\nabla\psi_\beta^\pm$.
            
            Indeed, on~$\Gset{\pm}$, it holds that both~$\chi_\beta^{(i)}$ and~$\eta_\beta$ are zero, since~$\Gset{\pm}\cap \bigcup_{i\in I_{\min}}B(z_i,\largeRadius(\beta)) = \varnothing$, which ensures~$\chi_\beta^{(i)}\equiv 0$ according to~\eqref{eq:cutoff_supports}, and~$\Gset{\pm} \subset \left(\R^d\setminus \overline{\basin{z_0}}\right) \cup \left\{V\geq\Vstar+C_\beta\largeRadius(\beta)^2\right\}$, which ensures~$\eta_\beta\equiv 0$ according to~\eqref{eq:energy_cutoff}.
            On~$\Fset$, it holds that~$\varphi_\beta^{(i),\pm}\equiv1$ for all~$i\in I_{\min}$, according to~\eqref{eq:energy_cutoff_support} and~$\eta_\beta\equiv 1$. Thus,~$\psi_\beta^\pm \equiv 1$ as well as a convex combination of~$\eta_\beta$ and the~$\varphi_\beta^{(i),\pm}$.
            Therefore, for~$x\in\Fset\cup\Gset{\pm}$, one has~$\nabla\psi_\beta^\pm=0$ and~$\cL_\beta\psi_\beta^\pm = 0$.

            \paragraph{Analysis on~$\Aseti$.\newline}
            From~\eqref{eq:energy_cutoff},\eqref{eq:eta_cutoff_def} and~\eqref{eq:Aseti}, we have~$\eta_\beta \equiv 0$ on~$\Aseti$. Furthermore, from~\eqref{eq:local_quasimode_cutoff_support},~${\varphi_\beta^{(i),\pm} \equiv 1}$ in this set, hence~$\psi_\beta^\pm$ coincides with~$\frac1{Z_\beta^\pm}\chi_\beta^{(i)}$ on~$\Aseti$,
            which gives
            \[\nabla \psi_\beta^\pm = \frac1{Z_\beta^\pm}\nabla \chi_\beta^{(i)},\quad \cL_\beta \psi_\beta^\pm = \frac1{Z_\beta^\pm}\left(-\nabla V\cdot \nabla\chi_\beta^{(i)} + \frac1\beta \Delta \chi_\beta^{(i)}\right),\]
            from which it follows that
            \[\|\nabla\psi_\beta^\pm\|_{L^\infty(\Aseti)}=\O\left(\e^{\frac\beta2 V(z_0)}\beta^{\frac d4}\largeRadius(\beta)^{-1}\right),\|\cL_\beta \psi_\beta^\pm\|_{L^\infty(\Aseti)} = \O\left(\e^{\frac\beta2 V(z_0)}\beta^{\frac d4}\right),\]
            where we used the estimates~\eqref{eq:laplace_partition_function} and~\eqref{eq:linf_bound_nabla_chi}, the first-order estimate~$\nabla V\cdot\nabla \chi_\beta^{(i)}=\O(1)$ on the domain~$\Aseti$, and~$\beta^{-1}\largeRadius(\beta)^{-2}=\smallo(1)$ to absorb the contribution of the Laplacian term~$\Delta \chi_\beta^{(i)}/\beta$.
            We then estimate
            \[\|\nabla\psi_\beta^\pm\|^2_{\Lmu(\Aseti)}=\O\left(\beta^{\frac d2}\largeRadius(\beta)^{d-2}\e^{-\beta(\Vstar-V(z_0)+\energyCutoffConst\largeRadius(\beta)^2)}\right)=\O\left(\e^{-\beta(\Vstar-V(z_0)+\frac\energyCutoffConst3\largeRadius(\beta)^2)}\right),\]
            \[\|\cL_\beta\psi_\beta^\pm\|^2_{\Lmu(\Aseti)} = \O\left(\beta^{\frac d2}\largeRadius(\beta)^{d}\e^{-\beta(\Vstar-V(z_0)+\energyCutoffConst\largeRadius(\beta)^2)}\right)=\O\left(\e^{-\beta(\Vstar-V(z_0)+\frac\energyCutoffConst3\largeRadius(\beta)^2)}\right),\]
            using the inclusions~$\Aseti \subset \left\{V-\Vstar \geq \energyCutoffConst \largeRadius(\beta)^2\right\}$,~$\Aseti\subset B(z_i,\largeRadius(\beta))$, the fact that~$\largeRadius(\beta)$ is bounded by~$\varepsilon$, and the superpolynomial decay of~$\e^{-\beta\largeRadius(\beta)^2}$ which follows from~\eqref{hyp:scaling_deltai}.

            \paragraph{Analysis on~$\Bseti{\pm}$.\newline}
            From~\eqref{eq:energy_cutoff},~\eqref{eq:energy_cutoff_support} and~\eqref{eq:Bseti}, we still have~$\eta_\beta \equiv 0$ on~$\Bseti{\pm}$, however~$\varphi_\beta^{(i),\pm}$ is not constant over this set. Thus,~$\psi_\beta^{\pm}$ is given by~$\frac1{Z_\beta^\pm}\chi_\beta^{(i)}\varphi_\beta^{(i),\pm}$ on~$\Bseti{\pm}$, which yields
            \[\nabla\psi_\beta^{\pm} = \frac1{Z_\beta^\pm}\left(\varphi_\beta^{(i),\pm}\nabla\chi_\beta^{(i)}+\chi_\beta^{(i)}\nabla\varphi_\beta^{(i),\pm}\right),\quad \cL_\beta\psi_\beta^{\pm} = \frac1{Z_\beta^\pm}\left(\varphi_\beta^{(i),\pm}\cL_\beta\chi_\beta^{(i)}+\chi_\beta^{(i)}\cL_\beta\varphi_\beta^{(i),\pm}+\frac2\beta\nabla\varphi_\beta^{(i),\pm}\cdot\nabla\chi_\beta^{(i)}\right).\]
            At this point, we need uniform estimates in~$\Bseti{\pm}$ for derivatives~$\partial^{\alpha}\varphi_\beta^{(i),\pm}$ for~$|\alpha|\leq 2$. Since~$\varphi_\beta^{(i),\pm}$ is in fact simply a function of the affine map~$y_1^{(i)}$, the problem is that of bounding the first two derivatives of
            \[y_1 \mapsto \frac1{C_\beta^{(i),\pm}}{\displaystyle\int_{y_1}^{\frac{\epsLimit{i}}{\sqrt\beta}\pm2\smallRadius(\beta)} \e^{-\beta\frac{|\hessEigval{i}{1}|}{2}t^2} \xi_\beta(t)\,\d t},\quad C_\beta^{(i),\pm}:={\displaystyle\int_{-\infty}^{\frac{\epsLimit{i}}{\sqrt\beta}\pm2\smallRadius(\beta)} \e^{-\beta\frac{|\hessEigval{i}{1}|}{2}t^2} \xi_\beta(t)\,\d t}.\]
            We first estimate~$C_\beta^{(i),\pm}$ by a direct application of Proposition~\ref{prop:laplace} in the one-dimensional case.
            It gives
            \begin{equation}
                \label{eq:c_beta_asymptotic}
                C_\beta^{(i),\pm} = \sqrt{\frac{2\pi}{\beta|\hessEigval{i}{1}|}}\Phi\left(|\hessEigval{i}{1}|^{\frac12}\epsLimit{i}\right)\left(1+e_i(\beta)\right),
            \end{equation}
            where
            \begin{equation}
            \label{eq:c_beta_error}
            \begin{cases}
                e_i(\beta) = \O\left(\sqrt\beta\smallRadius(\beta) + \beta^{-\frac12}\right),&\epsLimit{i}<+\infty,\\
                e_i(\beta) = \O\left(\beta^{-1}\right),&\epsLimit{i}=+\infty.
            \end{cases}
        \end{equation}
            To obtain the formula, we note that for~$\mathcal G\sim\mathcal N(0,|\hessEigval{i}{1}|^{-1})$,
            \begin{equation}
                \label{eq:c_beta_integral_asympt}
            \P\left(\mathcal G\in \left(-\infty,\epsLimit{i}\pm2\sqrt\beta\smallRadius(\beta)\right)\right) \xrightarrow{\beta\to\infty} \Phi\left(|\hessEigval{i}{1}|^{\frac12}\epsLimit{i}\right),
            \end{equation}
            and
            \begin{equation}
                \label{eq:c_beta_integral_error}
                \P\left(\mathcal G\in\left(-\infty,\epsLimit{i}\right)\triangle\left(-\infty,\epsLimit{i}\pm2\sqrt\beta\smallRadius(\beta)\right)\right) = \O\left(\sqrt\beta\smallRadius(\beta)\right)
            \end{equation}
            in the case~$\epsLimit{i}<+\infty$.
            In the case~$\epsLimit{i}=+\infty$, note that the domain of integration is identically equal to~$\R$ for any~$\beta$, and thus the integration error vanishes, leaving only an error term in~$\beta^{-1}$ corresponding to the symmetry of the limiting domain.
            In any case,~$C_\beta^{(i),\pm} = \O\left(\beta^{-\frac12}\right)$, which is sufficient for our purposes, although the finer estimate~\eqref{eq:c_beta_asymptotic} will be useful for the analysis on~$\Dseti{\pm}$.

            We then compute
            \begin{equation}
                \label{eq:local_derivatives}
                \nabla\varphi_\beta^{(i),\pm} = -\frac1{C_\beta^{(i),\pm}}\e^{-\beta\frac{|\hessEigval{i}{1}|}{2}y_1^{(i)2}}\xi_\beta(y_1^{(i)})\hessEigvec{i}{1},\qquad\Delta\varphi_\beta^{(i),\pm}=\frac1{C_\beta^{(i),\pm}}\left[\beta |\hessEigval{i}{1}|y_1^{(i)}\xi_\beta(y_1^{(i)}) -\xi_\beta'(y_1^{(i)})\right]\e^{-\beta\frac{|\hessEigval{i}{1}|}{2}y_1^{(i)2}},
            \end{equation}
            using~$\nabla y_1^{(i)} \equiv \hessEigvec{i}{1}$ and~$|\hessEigvec{i}{1}|=1$.
            It follows from~$0\leq \e^{-\beta\frac{|\hessEigval{i}{1}|}{2}y_1^2},\xi_\beta^{(i)}(y_1)\leq 1$ that
            \[\|\nabla \varphi_\beta^{(i),\pm}\|_{L^\infty(\Bseti{\pm})} = \O(\beta^{\frac12}),\qquad\|\Delta \varphi_\beta^{(i),\pm}\|_{L^\infty(\Bseti{\pm})} =\O\left(\beta^{\frac12}\left[\beta\largeRadius(\beta)+\largeRadius(\beta)^{-1}\right]\right)=\O(\beta^{\frac32}),\]
            using~$\|\xi^{(i)\prime}_\beta\|_{\infty} = \O\left(\largeRadius(\beta)^{-1}\right)$,~$y_1^{(i)}=\O(\largeRadius(\beta))$ on~$\Bseti{\pm}$,~$\largeRadius(\beta)=\O(1)$ and the scaling~$\beta^{\frac12}\largeRadius(\beta)^{-1}\ll \beta/\sqrt{\log\beta}$ given by Assumption~\eqref{hyp:scaling_deltai}.
            It follows that~$\|\cL_\beta \varphi_\beta^{(i),\pm}\|_{L^\infty(\Bseti{\pm})}=\O(\beta^{\frac12})$.
            
            Plugging in these estimates and collecting terms (reusing the estimates~$\nabla\chi_\beta^{(i)}=\O(\largeRadius(\beta)^{-1})$,~$\cL_\beta\chi_\beta^{(i)}=\O(1)$ on~$\Bseti{\pm}$) gives, using~$0\leq \chi_\beta^{(i)},\varphi_\beta^{(i),\pm}\leq 1$:
            \[\|\nabla\psi_\beta^{\pm}\|_{L^\infty(\Bseti{\pm})}=\O\left(\e^{\frac\beta2 V(z_0)}\beta^{\frac d4}\left[\largeRadius(\beta)^{-1}+\beta^{\frac12}\right]\right)=\O\left(\e^{\frac\beta2 V(z_0)}\beta^{\frac d4+\frac12}\right),\]
            \[\|\cL_\beta \psi_\beta^{\pm}\|_{L^\infty(\Bseti{\pm})} = \O\left(\e^{\frac\beta2 V(z_0)}\beta^{\frac d4}\left[1+\beta^{\frac12}+\beta^{-\frac12}\largeRadius(\beta)^{-1}\right]\right)=\O\left(\e^{\frac\beta2 V(z_0)}\beta^{\frac d4+\frac12}\right),\]
            using~$\largeRadius(\beta)^{-1}=\smallo\left(\beta^{\frac12}\right)$ to estimate the square bracketed terms. This leads to
            \[\|\nabla\psi_\beta^{\pm}\|^2_{\Lmu(\Bseti{\pm})},\|\cL_\beta \psi_\beta^{\pm}\|^2_{\Lmu(\Bseti{\pm})} = \O\left(\beta^{\frac d2+1}\largeRadius(\beta)^d\e^{-\beta(\Vstar-V(z_0)+\energyCutoffConst\largeRadius(\beta)^2)}\right)=\O\left(\e^{-\beta(\Vstar-V(z_0)+\frac\energyCutoffConst3\largeRadius(\beta)^2)}\right),\]
            similarly to the analysis on~$\Aseti$, since~$\Bseti{\pm} \subset B(z_i,\largeRadius(\beta))$ and~$\Bseti{\pm}\subset \{V>\Vstar+\energyCutoffConst\largeRadius(\beta)^2\}$ using~\eqref{eq:corona_high_energy_strip}.

            \paragraph{Analysis on~$\Cseti$.\newline}
            By the definition~\eqref{eq:Cseti},~$\Cseti \subset \{ y_1^{(i)} < -\gaussianCutoffConst\largeRadius(\beta)\}$, hence~$\varphi_\beta^{(i)}\equiv 1$ on this set. Thus, we have locally:
            \[\psi_\beta = \frac1{Z_\beta^{\pm}}\left(\eta_\beta + \chi_\beta^{(i)}(1-\eta_\beta)\right),\]
            whence
            \[\nabla\psi^\pm_\beta = \frac1{Z_\beta^{\pm}}\left(\left[1-\eta_\beta\right]\nabla \chi_\beta^{(i)}+\left[1-\chi_\beta^{(i)}\right]\nabla\eta_\beta\right),\quad \cL_\beta\psi_\beta^\pm = \frac1{Z_\beta^{\pm}}\left(\left[1-\eta_\beta\right]\cL_\beta \chi_\beta^{(i)}+\left[1-\chi_\beta^{(i)}\right]\cL_\beta\eta_\beta - \frac2\beta \nabla \chi_\beta^{(i)}\cdot\nabla\eta_\beta\right),\]
            by straightforward manipulations. One then only needs to check that
            \[\|\nabla \eta_\beta\|_{L^\infty(\Cseti)} = \O\left(\largeRadius(\beta)^{-2}\right),\quad \|\Delta \eta_\beta\|_{L^\infty(\Cseti)} = \O\left(\largeRadius(\beta)^{-4}\right)\]
            to obtain by similar arguments:
            \[\|\nabla\psi_\beta^\pm\|^2_{\Lmu(\Cseti)},\|\cL_\beta\psi_\beta^\pm\|^2_{\Lmu(\Cseti)} = \O\left(\beta^{\frac d2}\largeRadius(\beta)^{d-4}\e^{-\beta(\Vstar-V(z_0)+\frac\energyCutoffConst2\largeRadius(\beta)^2)}\right)=\O\left(\e^{-\beta(\Vstar-V(z_0)+\frac\energyCutoffConst3\largeRadius(\beta)^2)}\right),\]
            using the inclusions~$\Cseti\subset B(z_i,\largeRadius(\beta))$, $\Cseti \subset \left\{V-\Vstar \geq \frac\energyCutoffConst2 \largeRadius(\beta)^2\right\}$.
            \paragraph{Analysis on~$\Eset$.\newline}
            On the set~$\Eset$, we have~$\chi_\beta^{(i)}\equiv 0$ for all~$i\in I_{\min}$, hence~$\psi_\beta^\pm$ coincides with~$\frac1{Z_\beta^{\pm}}\eta_\beta$. Reusing the bounds on the derivatives of~$\eta_\beta$ from the analysis on~$\Cseti$ (here the fact that~$\Eset\subset\mathcal K$ is bounded), we obtain once again:
            \[\|\nabla\psi_\beta^\pm\|^2_{\Lmu(\Eset)},\|\cL_\beta\psi_\beta^\pm\|^2_{\Lmu(\Eset)} = \O\left(\beta^{\frac d2}\largeRadius(\beta)^{-4}\e^{-\beta(\Vstar-V(z_0)+\frac\energyCutoffConst2\largeRadius(\beta)^2)}\right)=\O\left(\e^{-\beta(\Vstar-V(z_0)+\frac\energyCutoffConst3\largeRadius(\beta)^2)}\right),\]
            Summing the estimates on~$\Aseti$,~$\Bseti{\pm}$,~$\Cseti$ and~$\Eset$, we obtain~\eqref{eq:laplace_high_energy} and~\eqref{eq:laplace_generator_high_energy}.            
            \paragraph{Analysis on~$\Dseti{\pm}$.\newline}
            By~\eqref{eq:quasimode_locally_fine},~$\psi_\beta^\pm$ coincides with~$\frac1{Z_\beta^{\pm}}\varphi_\beta^{(i),\pm}$ on~$\Dseti{\pm}$, and here we turn to the finer estimates provided by Proposition~\ref{prop:laplace}.
            Using the computation~\eqref{eq:local_derivatives} once again, we get
            \[|\nabla\varphi_\beta^{(i),\pm}|^2\e^{-\beta V}= \frac1{\left(C_\beta^{(i),\pm}\right)^2}\e^{-\beta\left(|\hessEigval{i}{1}|y^{(i)2}_1+ V\right)}\xi_\beta(y_1^{(i)})^2.\]
            We next note using the Taylor expansion~\eqref{eq:V_local} that $W_i(x):=|\hessEigval{i}{1}|y^{(i)}_1(x)^2+ V(x)$ has a strict local minimum at~$z_i$, with a Hessian given by~${\nabla^2 W_i(z_i) = \mathrm{abs}\left(\nabla^2 V(z_i)\right) = U^{(i)}\mathrm{diag}\left(|\hessEigval{i}{1}|,\dots, \hessEigval{i}{d}\right)}U^{(i)\top}$, see~\eqref{eq:eigvecs_hessian}. Moreover, this minimum is unique in~$B\left(z_i,\frac12\largeRadius(\beta)\right)$ for~$\largeRadius(\beta)$ sufficiently small, which we may assume upon reducing~$\largeRadius(\beta)$ once again.

            Since~$\xi_\beta(y_1^{(i)}(z_i))^2=\xi_\beta(0)=1$ we may estimate~$\|\nabla \varphi_\beta^{(i),\pm}\|_{\Lmu(\Dseti{\pm})}$ using Proposition~\ref{prop:laplace}.

            Let us first note that, according to~\eqref{eq:perturbed_domain_shape} and~\eqref{hyp:locally_flat},~$\sqrt\beta\left(\Dseti{\pm}-z_i\right)\xrightarrow{\beta\to\infty}\halfSpace{i}(\epsLimit{i})$ in the sense of~\eqref{eq:laplace_l4}.
            Let~$\mathcal G\sim \mathcal N\left(0,\mathrm{abs}\left(\nabla^2 V(z_i)\right)^{-1}\right)$.  
            It is then easy to check that
            \[\P\left(\mathcal G \in \halfSpace{i}(\epsLimit{i})\right)=\P\left(\mathcal G^\top\hessEigvec{i}{1}<\epsLimit{i}\right)=\Phi\left(|\hessEigval{i}{1}|^{\frac12}\epsLimit{i}\right),\]
            since~$\mathcal G^\top\hessEigvec{i}{1}\sim \mathcal N(0,|\hessEigval{i}{1}|^{-1})$. Furthermore,
            \begin{equation}
                \label{eq:hi_estimate}
                h_i(\beta):=\P\left(\mathcal G \in \sqrt\beta\left[\Dseti{\pm}-z_i\right] \triangle \halfSpace{i}(\epsLimit{i})\right) = \begin{cases}
                    \O\left(\sqrt\beta\smallRadius(\beta)+\e^{-c\beta\largeRadius(\beta)^2}\right),& \epsLimit{i}< +\infty,\\
                    \O\left(\e^{-c\beta\largeRadius(\beta)^2}\right),&\epsLimit{i} = +\infty.
                \end{cases}
            \end{equation}
            Indeed, it follows from~\eqref{eq:perturbed_domain_shape} and~\eqref{hyp:locally_flat} that the following inclusion holds:
            \begin{equation}
                \label{eq:union_bound}
                \sqrt\beta\left(\Dseti{\pm}-z_i\right)\triangle \halfSpace{i}(\epsLimit{i})\subset \left[\halfSpace{i}(\epsLimit{i})\triangle\halfSpace{i}(\epsLimit{i}\pm\sqrt\beta2\smallRadius(\beta))\right]\cup B\left(0,\frac{\sqrt\beta}{2}\largeRadius(\beta)\right)^{\mathrm c},
            \end{equation}
             which we use to estimate~$h_i(\beta)$ with the union bound. In the case~$\epsLimit{i}=+\infty$, the leftmost set in~\eqref{eq:union_bound} is empty and the only contribution is from the second term, which is handled using a standard Gaussian estimate
            $$\P\left(\mathcal G \not\in B\left(0,\frac{\sqrt\beta\largeRadius(\beta)}2\right)\right) = \O\left(\e^{-c\beta\largeRadius(\beta)^2}\right)$$ for some~$c>0$ depending only on~$i$.

            In the case~$\epsLimit{i}<+\infty$, we have a contribution from the leftmost set in~\eqref{eq:union_bound}
            \[\mathbb P\left(\mathcal G\in \halfSpace{i}(\epsLimit{i})\triangle\halfSpace{i}(\epsLimit{i}\pm\sqrt\beta\smallRadius(\beta))\right) = \mathbb P\left(\mathcal G^\top\hessEigvec{i}{1}\in \left(\epsLimit{i},\epsLimit{i}\pm2\smallRadius(\beta)\right)\right),\]
            whose asymptotic behavior has already been computed in~\eqref{eq:c_beta_integral_error}. The union bound yields~\eqref{eq:hi_estimate}.
            
            Apply Proposition~\ref{prop:laplace}, we estimate
            \begin{equation}
                \begin{aligned}
                \|\nabla\varphi_\beta^{(i),\pm}\|^2_{\Lmu(\Dseti{\pm})} &= (C_\beta^{(i),\pm})^{-2}\left(\frac{2\pi}{\beta}\right)^{\frac d2}\e^{-\beta \Vstar}\left|\det \nabla^2 V(z_i)\right|^{-\frac12}\Phi\left(|\hessEigval{i}{1}|^{\frac{1}{2}}\epsLimit{i}\right)\left(1+e_i(\beta)\right),\\
                &=\frac{\beta |\hessEigval{i}{1}|^{\frac12}}{\Phi\left(|\hessEigval{i}{1}|^{\frac{1}{2}}\epsLimit{i}\right)}\left(\frac{2\pi}{\beta}\right)^{\frac d2}\e^{-\beta \Vstar}\left|\det \nabla^2 V(z_i)\right|^{-\frac12}\left(1+e_i(\beta)\right).
                \end{aligned}
            \end{equation}
            The error term~$e_i$ is once again given by~\eqref{eq:c_beta_error} (since the limiting domain is symmetric if and only if~$\epsLimit{i}=+\infty$), and we used~\eqref{eq:c_beta_asymptotic} in the final line. Combining this estimate with~\eqref{eq:laplace_partition_function} finally yields~\eqref{eq:laplace_local_estimate}.
            
            Let us show~\eqref{eq:laplace_generator_local_estimate}. We write, for~$x \in \Dseti{\pm} \subseteq B\left(z_i,\frac12\largeRadius(\beta)\right)$, in the~$y^{(i)}$-coordinates and for~$\largeRadius(\beta)$ sufficiently small,
            \begin{align*}
                \cL_\beta\psi^{\pm}_\beta &= \frac1{Z_\beta^{\pm}}\left(-\nabla V \cdot \nabla \varphi_\beta^{(i),\pm} + \frac1\beta\Delta\varphi_\beta^{(i),\pm}\right)\\
                &=\frac1{Z_\beta^\pm C_\beta^{(i),\pm}}\left(\xi_\beta(y_1^{(i)})\nabla V \cdot \nabla y_1^{(i)}+\frac1\beta\left[- \xi_\beta(y_1^{(i)})\Delta y_1^{(i)} + \left(-\xi_\beta'(y_1^{(i)})+\beta \xi_\beta(y_1^{(i)})|\hessEigval{i}{1}|y_1^{(i)}\right)\abs{\nabla y_1^{(i)}}^2\right]\right)\e^{-\beta\frac{|\hessEigval{i}{1}|}{2}y_1^{(i)2}}\\
                &= \frac1{Z_\beta^\pm C_\beta^{(i),\pm}}\left(\xi_\beta(y_1^{(i)})\hessEigvec{i}{1}\cdot\left[\nabla V+|\hessEigval{i}{1}|y_1^{(i)}\hessEigvec{i}{1}\right]+\O\left(\beta^{-1}\|\xi_\beta'\|_{L^\infty(\R)}\right)\right)\e^{-\beta\frac{|\hessEigval{i}{1}|}{2}y_1^{(i)2}}\\
                &=\frac1{Z_\beta^\pm C_\beta^{(i),\pm}}\left(\O\left(|y^{(i)}|^2\right)+\O\left(\beta^{-1}\largeRadius(\beta)^{-1}\right)\right)\e^{-\beta\frac{|\hessEigval{i}{1}|}{2}y_1^{(i)2}}
            \end{align*}
            using a first-order Taylor expansion of $\nabla V$ around~$z_i$ in the last line. We now estimate the~$\Lmu\left(\Dseti{\pm}\right)$-norms.
            Noting that, by the change of variables~$z = \sqrt\beta y^{(i)}$,
            \[\int_{\Dseti{\pm}}|y^{(i)}|^4\e^{-\beta\left(|\hessEigval{i}{1}|y_1^{(i)2}+V\right)}=\O\left(\beta^{-\frac{d}2-2}\e^{-\beta\Vstar}\right),\]
            we get
            \begin{equation}
                \begin{aligned}
                \|\cL_\beta\psi_\beta^{\pm}\|^2_{\Lmu(\Dseti{\pm})} &= \left(Z_\beta^\pm C_\beta^{(i),\pm}\right)^{-2}\left[\O\left(\beta^{-\frac{d}2-2}\e^{-\beta\Vstar}\right)+\beta^{-2}\largeRadius(\beta)^{-2}\left\|\e^{-\beta\frac{|\hessEigval{i}{1}|}{2}y_1^{(i)2}}\right\|^2_{\Lmu(\Dseti{\pm})}\right]\\
                &=\left(Z_\beta^{\pm}C_\beta^{(i),\pm}\right)^{-2}\O\left(\beta^{-2}\largeRadius(\beta)^{-2}\right)\beta^{-\frac d2}\e^{-\beta \Vstar}\\
                &=\O\left(\beta^{-2}\largeRadius(\beta)^{-2}\right)\|\nabla \psi_\beta^\pm\|^2_{\Lmu(\Dseti{\pm})},
                \end{aligned}
            \end{equation}
            where we used the same change of variables in the second line, and the estimates~\eqref{eq:laplace_partition_function},~\eqref{eq:c_beta_asymptotic} and \eqref{eq:laplace_local_estimate} in the last line.
            This concludes the proof of~\eqref{eq:laplace_generator_local_estimate}.
        \end{proof}

        \subsection{Conclusion of the proof of Theorem~\ref{thm:eyring_kramers}}
        \label{subsec:eyring_kramers_final_proof}
        The last tool for the proof is the following resolvent estimate, which was already used for the estimation of metastable exit times in the semiclassical approach, see~\cite[Proposition 27]{LPN21} and~\cite[Proposition 3.4]{LRS24}.
        We include its proof (in our weighted~$\Lmu$ setting) for the sake of completeness.

        Throughout this section, let us denote by
        \[\lambda_{1,\beta}^{\pm} := \lambda_{1,\beta}(\Omega_\beta^\pm),\qquad u_{1,\beta}^\pm := u_{1,\beta}(\Omega_\beta^\pm),\]
        the principal Dirichlet eigenpairs of~$-\cL_\beta$ in~$\Omega_\beta^\pm$.

        We introduce the spectral projectors associated with the principal eigenspaces: for all~$\varphi\in \Lmu(\Omega_\beta^\pm)$,
        \[\pi^\pm_\beta \varphi := \frac{\left\langle u^\pm_{1,\beta},\varphi \right\rangle_{\Lmu(\Omega_\beta^\pm)}}{\|u^\pm_{1,\beta}\|^2_{\Lmu(\Omega_\beta^\pm)}}u^\pm_{1,  \beta}.\]
        \begin{lemma}
            \label{lemma:resolvent_estimate}
            Fix~$u\in H_{0,\beta}^1(\Omega_\beta^\pm)\cap H^2_\beta(\Omega_\beta^\pm)$. Then,
            \begin{equation}
                \label{eq:lemma_resolvent_eqa}
                \|(1-\pi^\pm_{\beta})u\|_{\Lmu(\Omega_\beta^\pm)} = \O(\|\cL_\beta u\|_{\Lmu(\Omega_\beta^\pm)}),
            \end{equation}
            \begin{equation}
                \label{eq:lemma_resolvent_eqb}
                \left\|\nabla \pi^\pm_{\beta} u\right\|_{\Lmu(\Omega_\beta^\pm)}^2 = \left\|\nabla u\right\|_{ \Lmu(\Omega_\beta^\pm)}^2 + \O\left(\beta\|\cL_\beta u\|_{\Lmu(\Omega_\beta^\pm)}^2\right).
            \end{equation}
        \end{lemma}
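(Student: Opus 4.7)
The plan is to exploit the self-adjointness and nonnegativity of the Dirichlet realization of~$-\cL_\beta$ on~$\Lmu(\Omega_\beta^\pm)$: the rank-one projector~$\pi_\beta^\pm$ onto the principal eigenspace commutes with~$-\cL_\beta$, and its complement~$1-\pi_\beta^\pm$ maps into a stable subspace on which~$-\cL_\beta$ is bounded below by the second eigenvalue~$\lambda_{2,\beta}(\Omega_\beta^\pm)$. The crucial preliminary step is to secure an asymptotic lower bound on~$\lambda_{2,\beta}(\Omega_\beta^\pm)$ that is uniform in~$\beta$. To this end, I would invoke Theorem~\ref{thm:harm_approx} for the perturbed domains~$\Omega_\beta^\pm$, which by their construction in Proposition~\ref{prop:domain_extension} inherit the geometric hypotheses~\eqref{hyp:uniformly_bounded}--\eqref{hyp:scaling_deltai} from~$\Omega_\beta$ with the same limiting vector~$\alpha$ (the boundary perturbation~$\rho(\beta)=2\smallRadius(\beta)$ is~$\smallo(\beta^{-1/2})$ and hence does not affect the limits~$\epsLimit{i}$), so that~$\lambda_{2,\beta}(\Omega_\beta^\pm)\to\lambda_{2,\alpha}^{\mathrm{H}}$, a quantity strictly positive under Assumption~\eqref{hyp:one_minimum}.

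Given this gap, I would establish~\eqref{eq:lemma_resolvent_eqa} by setting $v:=(1-\pi_\beta^\pm)u$, observing that $-\cL_\beta v=(1-\pi_\beta^\pm)(-\cL_\beta u)$ by commutation of~$\cL_\beta$ with the spectral projector, and combining the spectral lower bound on~$\mathrm{Ran}(1-\pi_\beta^\pm)$ with a Cauchy--Schwarz inequality to obtain
\[
\lambda_{2,\beta}(\Omega_\beta^\pm)\,\|v\|^2_{\Lmu(\Omega_\beta^\pm)}\leq \langle-\cL_\beta v,v\rangle_{\Lmu(\Omega_\beta^\pm)}=\langle-\cL_\beta u,v\rangle_{\Lmu(\Omega_\beta^\pm)}\leq \|\cL_\beta u\|_{\Lmu(\Omega_\beta^\pm)}\,\|v\|_{\Lmu(\Omega_\beta^\pm)}.
\]
Dividing by~$\|v\|_{\Lmu(\Omega_\beta^\pm)}$ and using the uniform positivity of the gap then yields~\eqref{eq:lemma_resolvent_eqa}.

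For~\eqref{eq:lemma_resolvent_eqb}, the key tool is the polarized reversibility identity $\langle\nabla\phi,\nabla\psi\rangle_{\Lmu(\Omega_\beta^\pm)}=\beta\langle-\cL_\beta\phi,\psi\rangle_{\Lmu(\Omega_\beta^\pm)}$, obtained from the quadratic form~\eqref{eq:quadratic_form_generator} and valid for any~$\phi,\psi\in H^1_{0,\beta}(\Omega_\beta^\pm)\cap H^2_\beta(\Omega_\beta^\pm)$. Applying it to $(\phi,\psi)=(\pi_\beta^\pm u,v)$ gives $\langle\nabla\pi_\beta^\pm u,\nabla v\rangle_{\Lmu(\Omega_\beta^\pm)}=\beta\lambda_{1,\beta}(\Omega_\beta^\pm)\langle\pi_\beta^\pm u,v\rangle_{\Lmu(\Omega_\beta^\pm)}=0$ by the~$\Lmu$-orthogonality of the spectral decomposition, yielding the Pythagorean identity $\|\nabla u\|^2_{\Lmu(\Omega_\beta^\pm)}=\|\nabla\pi_\beta^\pm u\|^2_{\Lmu(\Omega_\beta^\pm)}+\|\nabla v\|^2_{\Lmu(\Omega_\beta^\pm)}$. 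Applying the identity again with~$\phi=\psi=v$ and injecting the bound established in~(a), I would obtain $\|\nabla v\|^2_{\Lmu(\Omega_\beta^\pm)}=\beta\langle-\cL_\beta v,v\rangle_{\Lmu(\Omega_\beta^\pm)}=\O\!\left(\beta\|\cL_\beta u\|^2_{\Lmu(\Omega_\beta^\pm)}\right)$, and rearranging the Pythagorean decomposition then delivers~\eqref{eq:lemma_resolvent_eqb}.

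The whole argument is essentially algebraic modulo the uniform spectral-gap estimate, which is the only substantive step; once the latter is secured, both~(a) and~(b) follow routinely from orthogonal decomposition and the weighted integration-by-parts identity. The main (mild) obstacle is therefore the verification that Theorem~\ref{thm:harm_approx} does indeed apply to the perturbed domains~$\Omega_\beta^\pm$ with the same limit~$\alpha$, providing a positive asymptotic lower bound on $\lambda_{2,\beta}(\Omega_\beta^\pm)-\lambda_{1,\beta}(\Omega_\beta^\pm)$.
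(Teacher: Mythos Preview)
Your argument is correct and covers both estimates. The preliminary step (positivity of~$\lambda_2^{\mathrm H}$ under~\eqref{hyp:one_minimum} and applicability of Theorem~\ref{thm:harm_approx} to the perturbed domains) matches the paper's, which spells out the case analysis showing~$\lambda^{(i)}_{(0,\dots,0),\epsLimit{i}}>0$ for each~$i\neq 0$ but otherwise proceeds identically.

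The route differs in part~(a). The paper uses the Riesz contour-integral representation of the spectral projector, writing
\[
(1-\pi_\beta^\pm)u = -\Bigl(\tfrac{1}{2i\pi}\oint_{\Gamma_{2r}} z^{-1}(\cL_\beta^\pm - z)^{-1}\,\d z\Bigr)\cL_\beta^\pm u,
\]
and then invoking the uniform resolvent bound $\|(\cL_\beta^\pm - z)^{-1}\|\leq 1/r$ on the contour. Your approach bypasses this machinery entirely: you use the spectral lower bound~$\lambda_{2,\beta}\|v\|^2\leq\langle-\cL_\beta v,v\rangle$ on~$\mathrm{Ran}(1-\pi_\beta^\pm)$ together with Cauchy--Schwarz. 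This is more elementary and gives the same constant~$1/\lambda_{2,\beta}$ (indeed slightly sharper than~$1/r$). For part~(b), the two arguments are essentially equivalent rearrangements of the same identity: the paper computes $\|\nabla\pi_\beta^\pm u\|^2=-\beta\langle u,\cL_\beta u\rangle-\beta\langle(\pi_\beta^\pm-1)u,\cL_\beta u\rangle$ directly, while you go via the Dirichlet-form Pythagorean decomposition; both reduce to~(a) plus Cauchy--Schwarz.
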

        \begin{proof}
            Let us first check that, thanks to Assumption~\eqref{hyp:one_minimum},~$\lambda_1^{\mathrm H}=0$ and $\lambda_{2}^{\mathrm H}>0$, where we recall~$\lambda_{j}^{\mathrm H}$ is defined in~\eqref{eq:harm_spectrum_enumeration}.
            We recall the expressions~\eqref{eq:hermite_eigenfunction_scaled} and~\eqref{eq:full_harmonic_eigenstates} for the harmonic eigenvalues. In the following, we use the multi-index enumeration convention for the spectra, i.e.~$\mathrm{Spec}\left(K^{(i)}_{\alpha^{(i)}}\right)=\left\{\lambda^{(i)}_{n,\epsLimit{i}}\right\}_{n\in\N^d}$.
            Note that the ground state energy of each of these operators is given by~$\lambda^{(i)}_{(0,\dots,0),\epsLimit{i}}\geq 0$, which implies that~$\lambda_1^{\mathrm H}=0$. Let us now show that~$\lambda_2^{\mathrm H}>0$.
            
            It is clear that~$\lambda^{(0)}_{n,\infty}>0$ for~$n \neq 0\in\N^d$.
            Besides, for~$i\geq N_0$, it holds
            $$\lambda^{(i)}_{(0,\dots,0),\epsLimit{i}} = |\hessEigval{i}{1}||\mu_{0,\alpha^{(i)}(|\hessEigval{i}{1}|/2)^{1/2}} -\frac{\hessEigval{i}{1}}{2} + \frac12\sum_{j=2}^d\left[|\hessEigval{i}{j}|-\hessEigval{i}{j}\right] \geq \frac12\sum_{j=1}^d \left[|\hessEigval{i}{j}|-\hessEigval{i}{j}\right]>0,$$
            since~$\mathrm{Ind}(z_i)\geq 1$. We used the inequality~$\mu_{0,\theta}\geq \mu_{0,\infty} = \frac12$ for any $\theta \in \R \cup \{\infty\}$, which follows directly from the Courant--Fischer principle, similarly to the proof of Proposition~\ref{prop:comparison_principle}.\newline
            To get~$\lambda_2^{\mathrm H}>0$, is therefore remains to show that~$\lambda^{(i)}_{(0,\dots,0),\epsLimit{i}}>0$ for~$1\leq i < N_0$. For these local minima~$z_i$, it holds that~$\epsLimit{i}<+\infty$ by Assumption~\eqref{hyp:one_minimum}.
            It is thus sufficient to check that~$\mu_{0,\theta}>\frac12 = \mu_{0,\infty}$ for any~$\theta\in \R$. In fact it holds more generally that~$\mu_{k,\theta}>\mu_{k,\infty}$ for~$k\in\N$. The inequality~$\mu_{k,\theta}\geq\mu_{k,\infty}$ follows again from the domain monotonicity of Dirichlet eigenvalues.
            For the strict inequality, we note that the identity~$\mathfrak{H}_\theta v_{k,\theta}=\mu_{k,\infty}v_{k,\theta}$ would contradict the fact that~$\mu_{k,\infty}$ is a simple eigenvalue of~$\mathfrak{H}_{\infty}$, since~$v_{k,\infty}$ and the trivial extension of~$v_{k,\theta}$ are linearly independent in~$L^2(\R)$. This concludes the proof of~$\lambda_2^{\mathrm H}>0$.
            
            Hence, by Theorem~\ref{thm:harm_approx}, there exists~$r,\beta_0>0$ such that for all~$\beta>\beta_0$:
            \[|\lambda_{1,\beta}| <r,\quad \lambda_{2,\beta}>3r,\]
            so that the circular contour~$\Gamma_{2r} = \{2r\e^{2i\pi t}, 0\leq t \leq 1\}$ is at distance at least~$r$ from the Dirichlet spectrum of~$-\cL_\beta$ on~$\Omega_\beta^\pm$.
            When needed, in this proof, we indicate explicitly by~$\cL_\beta^\pm$ the fact that we consider the Dirichlet realization of~$\cL_\beta$ on~$\Omega_\beta^\pm$.\newline
            A standard corollary of the spectral theorem then yields the following uniform resolvent estimate:
            \[\forall z \in \Gamma_{2r},\quad\|(-\cL_\beta^\pm-z)^{-1}\|_{\mathcal B(\Lmu(\Omega_\beta^\pm))} \leq \frac 1r.\]
            Furthermore, $\pi_{\beta}^\pm$ may be expressed using the contour integral
            \begin{equation}
               \pi_\beta^\pm = -\frac{1}{2i\pi}\oint_{\Gamma_{2r}}(\cL_\beta^\pm-z)^{-1}\,\d z,
            \end{equation}
            so that, for all~$u\in H_{0,\beta}^1(\Omega_\beta^\pm)\cap H_\beta^1(\Omega_\beta^\pm)$, the second resolvent identity gives
            \begin{align*}
                (1-\pi^\pm_{\beta})u &= \frac{1}{2i\pi}\oint_{\Gamma_{2r}} \left[z^{-1}+(\cL_\beta^\pm-z)^{-1}\right]u\,\d z\\
                &= \left(\frac{1}{2i\pi}\oint_{\Gamma_{2r}}z^{-1}(\cL_\beta^\pm-z)^{-1}\,\d z\right)\cL_\beta^\pm u.
            \end{align*}
            Estimating the~$\Lmu$ norm then yields~\eqref{eq:lemma_resolvent_eqa}:
            \[\|(1-\pi^\pm_{\beta})u\|_{\Lmu(\Omega_\beta^\pm)} \leq \frac1r\|\cL_\beta u\|_{\Lmu(\Omega_\beta^\pm)}.
                \]

            For~\eqref{eq:lemma_resolvent_eqb}, we use commutativity and the projector identity~$\pi^\pm_{\beta}\cL_\beta^\pm\pi^\pm_{\beta} = \pi^\pm_{\beta}\cL_\beta^\pm$ to write:
            \begin{align*}
                \left\|\nabla \pi^\pm_{\beta}u\right\|^2_{\Lmu(\Omega_\beta^\pm)} &= -\beta\left\langle \pi^\pm_{\beta}u , \cL_\beta \pi^\pm_{\beta}u\right\rangle_{\Lmu(\Omega_\beta^\pm)}\\
                &= -\beta\left\langle \pi^\pm_{\beta}u,\cL_\beta u\right\rangle_{\Lmu(\Omega_\beta^\pm)}\\
                &=-\beta\left\langle u,\cL_\beta u\right\rangle_{\Lmu(\Omega_\beta^\pm)} - \beta\left\langle (\pi^\pm_{\beta}-1)u,\cL_\beta u\right\rangle_{\Lmu(\Omega_\beta^\pm)}\\
                &= \left\|\nabla u\right\|_{\Lmu(\Omega_\beta^\pm)}^2 + \O(\beta\|\cL_\beta u\|_{\Lmu(\Omega_\beta^\pm)}^2),
            \end{align*}
            where we used a Cauchy--Schwarz inequality and~\eqref{eq:lemma_resolvent_eqa} to obtain the last equality.
        \end{proof}
        We are now in a position to derive the modified Eyring--Kramers formula~\eqref{eq:eyring_kramers}.
        \begin{proof}[Proof of Theorem~\ref{thm:eyring_kramers}]
            Recall that~$\psi_\beta^\pm$ denotes the quasimode defined in Section~\ref{subsec:ek_quasimodes}.
            We write, using~\eqref{eq:lemma_resolvent_eqa} and~\eqref{eq:lemma_resolvent_eqb}:
            \begin{align*}
                \lambda_{1,\beta}^\pm &= \frac1\beta\frac{\left\|\nabla \pi^\pm_\beta\psi_\beta^\pm\right\|^2_{\Lmu(\Omega_\beta^\pm)} }{\|\pi^\pm_\beta\psi_\beta^\pm\|^2_{\Lmu(\Omega_\beta^\pm)}}\\
                &=\frac1\beta\frac{\|\nabla \psi_\beta^\pm\|^2_{\Lmu(\Omega_\beta^\pm)}+\O(\beta\|\cL_\beta \psi_\beta^\pm\|^2_{\Lmu(\Omega_\beta^\pm)})}{\|\psi_\beta^\pm-(1-\pi_\beta^\pm)\psi_\beta^\pm\|^2_{\Lmu(\Omega_\beta^\pm)}}\\
                &=\frac1\beta\frac{\|\nabla \psi_\beta^\pm\|^2_{\Lmu(\Omega_\beta^\pm)}+\O(\beta\|\cL_\beta \psi_\beta^\pm\|^2_{\Lmu(\Omega_\beta^\pm)})}{1-\|(1-\pi_\beta^\pm)\psi_\beta^\pm\|^2_{\Lmu(\Omega_\beta^\pm)}}\\
                &=\frac1\beta\frac{\|\nabla \psi_\beta^\pm\|^2_{\Lmu(\Omega_\beta^\pm)}+\O(\beta\|\cL_\beta \psi_\beta^\pm\|^2_{\Lmu(\Omega_\beta^\pm)})}{1+\O(\|\cL_\beta \psi_\beta^\pm\|^2_{\Lmu(\Omega_\beta^\pm)})}\\
                &=\frac1\beta\|\nabla \psi_\beta^\pm\|^2_{\Lmu(\Omega_\beta^\pm)}\left(1+\O(\|\cL_\beta \psi_\beta^\pm\|^2_{\Lmu(\Omega_\beta^\pm)})\right) + \O(\|\cL_\beta \psi_\beta^\pm\|^2_{\Lmu(\Omega_\beta^\pm)}),\\
            \end{align*}
            where used~\eqref{eq:lemma_resolvent_eqa} in the penultimate line, and the fact that~${\|\cL_\beta \psi_\beta^\pm\|_{\Lmu(\Omega_\beta^\pm)}=\smallo(1)}$ to conclude.
            Now, using the estimates~\eqref{eq:laplace_high_energy},~\eqref{eq:laplace_local_estimate} of Proposition~\ref{prop:semiclassical_estimates} yields
            \[\begin{aligned}\|\nabla \psi_\beta^\pm\|^2_{\Lmu(\Omega_\beta^\pm)} &= \sum_{i\in I_{\min}}\|\nabla \psi_\beta^\pm\|^2_{\Lmu(\Dseti{\pm})}+ \O\left(\e^{-\beta\left(\Vstar - V(z_0) +\frac{C_\eta}3 \largeRadius(\beta)^2\right)}\right)\\
                &=\e^{-\beta(\Vstar-V(z_0))} \left[\sum_{i\in I_{\min}} \frac{\beta|\hessEigval{i}{1}|}{2\pi\Phi\left(|\hessEigval{i}{1}|^{\frac12}\epsLimit{i}\right)}\sqrt{\frac{\det \nabla^2 V(z_0)}{\left|\det \nabla ^2 V(z_i)\right|}} \left[1 +\O\left(r_i(\beta)\right)\right] + \O\left(\e^{-\beta\frac{C_\eta}3 \largeRadius(\beta)^2}\right)\right]\\
            &=\e^{-\beta(\Vstar-V(z_0))} \left[\sum_{i\in I_{\min}} \frac{\beta|\hessEigval{i}{1}|}{2\pi\Phi\left(|\hessEigval{i}{1}|^{\frac12}\epsLimit{i}\right)}\sqrt{\frac{\det \nabla^2 V(z_0)}{\left|\det \nabla ^2 V(z_i)\right|}} \left[1 +\O\left(r_i(\beta)\right)\right]\right],\end{aligned}\]
            using the fact that~$\e^{-\beta\frac{C_\eta}3 \largeRadius(\beta)^2}=\O\left(\beta r_i(\beta)\right)$. The estimates~\eqref{eq:laplace_generator_high_energy} and~\eqref{eq:laplace_generator_local_estimate} give
            \[\begin{aligned}
                \|\cL_\beta \psi_\beta^\pm\|^2_{\Lmu(\Omega_\beta^\pm)} &= \sum_{i\in I_{\min}} \O(\beta^{-2}\largeRadius(\beta)^{-2})\|\nabla \psi_\beta^\pm\|^2_{\Lmu(\Dseti{\pm})} + \O\left(\e^{-\beta\left(\Vstar - V(z_0) +\frac{C_\eta}3 \largeRadius(\beta)^2\right)}\right)\\
                &= \O(\beta^{-2}\largeRadius(\beta)^{-2})\left(\|\nabla \psi_\beta^\pm\|^2_{\Lmu(\Omega_\beta^\pm)} - \|\nabla \psi_\beta^\pm\|^2_{\Lmu(\Omega_\beta^\pm\setminus \bigcup_{i\in I_{\min}}\Dseti{\pm})}\right) + \O\left(\e^{-\beta\left(\Vstar - V(z_0) +\frac{C_\eta}3 \largeRadius(\beta)^2\right)}\right)\\
                &= \O(\beta^{-2}\largeRadius(\beta)^{-2})\|\nabla \psi_\beta^\pm\|^2_{\Lmu(\Omega_\beta^\pm)} + \O\left(\e^{-\beta\left(\Vstar - V(z_0) +\frac{C_\eta}3 \largeRadius(\beta)^2\right)}\right)\\
                &= \O(\beta^{-2}\largeRadius(\beta)^{-2})\|\nabla \psi_\beta^\pm\|^2_{\Lmu(\Omega_\beta^\pm)}.
            \end{aligned}\]
            In the third line, we used~\eqref{hyp:bound_delta} to write~$\beta^{-2}\largeRadius(\beta)^{-2}\e^{-\beta\left(\Vstar - V(z_0) +\frac{C_\eta}3 \largeRadius(\beta)^2\right)} = \O\left(\e^{-\beta\left(\Vstar - V(z_0) +\frac{C_\eta}3 \largeRadius(\beta)^2\right)}\right)$.
            In the last line, we use the previous estimate~$\|\nabla \psi_\beta^\pm\|^2_{\Lmu(\Omega_\beta^\pm)} = \O(\beta\e^{-\beta(\Vstar-V(z_0))})$ to absorb the exponential error term in the prefactor~$\O(\beta^{-2}\largeRadius(\beta)^{-2})$. Combining these two estimates, we obtain
            \[\begin{aligned}\lambda^\pm_{1,\beta} &= \frac1\beta\|\nabla \psi_\beta^\pm\|^2_{\Lmu(\Omega_\beta^\pm)}\left(1+\O(\|\cL_\beta \psi_\beta^\pm\|^2_{\Lmu(\Omega_\beta^\pm)})+\O\left(\beta^{-1}\largeRadius(\beta)^{-2}\right)\right)\\
             &= \e^{-\beta(\Vstar-V(z_0))} \left[\sum_{i\in I_{\min}} \frac{|\hessEigval{i}{1}|}{2\pi\Phi\left(|\hessEigval{i}{1}|^{\frac12}\epsLimit{i}\right)}\sqrt{\frac{\det \nabla^2 V(z_0)}{\left|\det \nabla ^2 V(z_i)\right|}} \left[1 +\O\left(r_i(\beta)\right)\right]\right]\left(1+ \O(\beta^{-1}\largeRadius(\beta)^{-2})\right)\\
                &=\e^{-\beta(\Vstar-V(z_0))} \left[\sum_{i\in I_{\min}} \frac{|\hessEigval{i}{1}|}{2\pi\Phi\left(|\hessEigval{i}{1}|^{\frac12}\epsLimit{i}\right)}\sqrt{\frac{\det \nabla^2 V(z_0)}{\left|\det \nabla ^2 V(z_i)\right|}}\left[1+\O\left(r_i(\beta)+\beta^{-1}\largeRadius(\beta)^{-2}\right)\right]\right].
            \end{aligned}\]
        We note that, according to~\eqref{eq:laplace_error_terms}
        \[r_i(\beta)+\beta^{-1}\largeRadius(\beta)^{-2}=\left\{\begin{aligned}\beta^{-1}\largeRadius(\beta)^{-2},\quad&\epsLimit{i}=+\infty,\\
        \sqrt\beta\smallRadius(\beta)+\beta^{-1}\largeRadius(\beta)^{-2}+\beta^{-\frac12},\quad&\epsLimit{i}<+\infty.\end{aligned}\right.\]
        This concludes the proof of Theorem~\ref{thm:eyring_kramers} upon applying the comparison principle for Dirichlet eigenvalues (see Proposition~\ref{prop:comparison_principle}).
        \end{proof}

    \paragraph{Acknowledgments.}
    The authors thank Dorian Le Peutrec, Laurent Michel, Boris Nectoux, Danny Perez, Mohamad Rachid and Julien Reygner for helpful discussions and insightful comments, as well as pointing out some typos in preliminary versions of this work.
    This work received funding from the European Research Council (ERC) under the
    European Union's Horizon 2020 research and innovation programme (project
    EMC2, grant agreement No 810367), and from the Agence Nationale de la
    Recherche, under grants ANR-19-CE40-0010-01 (QuAMProcs) and
    ANR-21-CE40-0006 (SINEQ).
    \bibliographystyle{abbrv}
    \bibliography{bibliography.bib}
    
\end{document}